\documentclass[final,notitlepage,12pt,reqno]{amsart}
\usepackage{graphicx}
\usepackage{calc}
\usepackage{url}
\newlength{\depthofsumsign}
\setlength{\depthofsumsign}{\depthof{$\sum$}}

\newcommand{\nsum}[1][1.0]{
   \mathop{%
        \raisebox
            {-#1\depthofsumsign+1\depthofsumsign}
            {\scalebox
                {#1}
                {$\displaystyle\sum$}%
            }
    }
}

\makeatletter
\let\I\@undefined
\makeatother

\setlength{\emergencystretch}{0em}

\usepackage{geometry}
\geometry{body={17.5cm,24.5cm}}

\usepackage{indentfirst}
\usepackage[normalem]{ulem}
\usepackage{float}
\usepackage{amsthm}

\usepackage{enumitem}[2011/09/28]
\setenumerate{align=left, leftmargin=0pt,labelsep=.5em, labelindent=0\parindent,listparindent=\parindent,itemindent=*}
\usepackage{array}
\usepackage[all]{xy}

\usepackage{exscale,relsize}
\usepackage{multirow}

\usepackage{caption}[2012/02/19]

\usepackage{longtable,lscape}

\captionsetup[figure]{font=small}
\captionsetup[table]{font=small}
\setlength{\LTcapwidth}{\textwidth}
\usepackage{colonequals,extarrows}

\usepackage{amssymb,bm,amsmath}
\usepackage{amsfonts}

\usepackage[OT2,T1,T2A]{fontenc}

\usepackage[french,german,russian,english]{babel}
\usepackage{appendix}

\DeclareMathOperator{\dolS}{\mathsf S\kern-.5em\raisebox{.75pt}{$\mathsf |$}\kern-.15em\raisebox{.75pt}{$\mathsf |$}}

\DeclareMathOperator{\I}{Im}
\DeclareMathOperator{\R}{Re}

\DeclareMathOperator{\Span}{span}

\DeclareMathOperator{\Li}{Li}

\DeclareMathOperator{\D}{d}

\def\XXint#1#2#3{{\setbox0=\hbox{$#1{#2#3}{\int}$}
     \vcenter{\hbox{$#2#3$}}\kern-.5\wd0}}


\def\qed{\hfill$ \blacksquare$}
\def\eor{\hfill$ \square$}

\theoremstyle{plain}
\newtheorem{theorem}{Theorem}[section]
\newtheorem{proposition}[theorem]{Proposition}
\newtheorem{lemma}[theorem]{Lemma}
\newtheorem{corollary}[theorem]{Corollary}

\newenvironment{remark}[1][Remark]{\begin{trivlist}
\item[\hskip \labelsep {\bfseries #1}]}{\end{trivlist}}

\theoremstyle{definition}

\numberwithin{equation}{section}

\setcounter{tocdepth}{4}
\hyphenation{Kon-tse-vich}
\hyphenation{Zagier}
\hyphenation{Fuchs-ian}

\usepackage{Baskervaldx}
\usepackage[baskervaldx]{newtxmath}
\usepackage[cal=cm,scr=rsfs,frak=euler]{mathalfa}

\usepackage[scr=rsfs]{mathalfa}

\DeclareMathAlphabet{\mathsf}{OT1}{\sfdefault}{m}{n}
\SetMathAlphabet{\mathsf}{bold}{OT1}{\sfdefault}{m}{n}
\DeclareSymbolFontAlphabet{\mathbb}{AMSb}

\begin{document}

\pagenumbering{roman}
\selectlanguage{english}
\title{Notes on certain binomial harmonic sums of Sun's type}
\author{Yajun Zhou}
\address{Program in Applied and Computational Mathematics (PACM), Princeton University, Princeton, NJ 08544} \email{yajunz@math.princeton.edu}\curraddr{\textrm{} \textsc{Academy of Advanced Interdisciplinary Studies (AAIS), Peking University, Beijing 100871, P. R. China}}\email{yajun.zhou.1982@pku.edu.cn}
\date{\today}\thanks{\textit{Keywords}:  Complex multiplication, automorphic Green's functions, Epstein zeta functions, binomial coefficients, harmonic numbers, Legendre functions\\\indent\textit{MSC 2020}:  11F67, 11M06, 33C05\\\indent * This research was supported in part  by the Applied Mathematics Program within the Department of Energy
(DOE) Office of Advanced Scientific Computing Research (ASCR) as part of the Collaboratory on
Mathematics for Mesoscopic Modeling of Materials (CM4)}

\maketitle

\begin{abstract}
     We prove and generalize some recent conjectures of Z.-W.\ Sun on infinite series whose summands involve products of harmonic numbers and several binomial coefficients. We evaluate various classes of   infinite sums in closed form by interpreting them as automorphic objects on the moduli spaces for  Legendre curves $Y^{ g+1}=(1-X)^{ g}X(1-t X)$ of positive genera $ g\in\{1,2,3,5\}$.  \end{abstract}


{\footnotesize\setcounter{tocdepth}{2}
\tableofcontents}
\pagenumbering{arabic}

\section{Introduction} Set binomial coefficients $ \binom mk\colonequals\frac{m!}{k!(m-k)!}$ for $m\in\mathbb Z_{\geq0},k\in\mathbb Z\cap[0,m]$ and harmonic numbers $ \mathsf H_m^{(r)}\colonequals\sum_{k=1}^m\frac1{k^{r}}$ of order $ r\in\mathbb Z_{>0}$ for $ m\in\mathbb Z_{\geq0}$. Customarily, one abbreviates $ \mathsf H_m^{(1)}$ as $ \mathsf H_m^{}$, and simply calls it   the $m$-th harmonic number. In this note, \textit{binomial harmonic sums} are convergent series in  the form of \begin{align}
\sum_{k=1}^\infty\frac{\binom{2k}k^{\mathscr N}}{k^s}\left( \frac{T}{2^{2\mathscr N}} \right)^{k}\left[ \prod_{j=1}^M\mathsf H_{k-\smash[t]{1}}^{(r_j)} \right]\left[\prod_{\vphantom{j}\smash[b]{j'}=1}^{M'}{{\mathsf H}}_{\smash[t]{2}k-1}^{( {r}'_{j'})}\right],\label{eq:binomH}
\end{align} where $\mathscr N,s\in\mathbb Z
$, $ M,M'\in\mathbb Z_{\geq0}$, and $ |T|<1$.   We will be equally interested in some  variations of  \eqref{eq:binomH}, as considered by Z.-W.\ Sun in his large collections of recent conjectures \cite{Sun2023,Sun2022}.  For example, we may  do one or more of the following things:\begin{itemize}
\item
allow $ |T|=1$ when $ s\in\mathbb Z_{>1}$;\item trade $\binom{2k}{k} ^{\mathscr N}k^{-s},k\in\mathbb Z_{>0}$ for $\binom{2k}{k}  ^{\mathscr N}(2k+1)^{-s},k\in\mathbb Z_{\geq0}$;\item
 accommodate  other binomial coefficients like  $ \binom{3k}k$, $ \binom{4k}{2k}$, and $ \binom{6k}{3k}$; \item invite harmonic numbers like $\mathsf H^{(r)}_{3k-1} $, $\mathsf H^{(r)}_{4k-1} $, and $\mathsf H^{(r)}_{6k-1} $  into the summands.\end{itemize}
In our recent work \cite[\S3]{Zhou2022mkMpl}, we have produced closed-form evaluations for \eqref{eq:binomH} under the additional constraint that  $\mathscr N\in\{-1,0,1\}$, by studying integrals over Goncharov's multiple polylogarithms \cite{Goncharov1997,Goncharov1998}, which are effectively motivic periods on the moduli space of genus-zero curves \cite{Brown2009a,Brown2009arXiv,Brown2009b}. Here, the condition  $\mathscr N\in\{-1,0,1\}$  is relevant to  $ \varepsilon$-expansions in quantum electrodynamics and quantum chromodynamics \cite{Ablinger2017,Ablinger2022,ABRS2014,ABS2011,ABS2013,BorweinBroadhurstKamnitzer2001,Broadhurst1996,Broadhurst1999,DavydychevKalmykov2001,DavydychevKalmykov2004,KalmykovVeretin2000,Kalmykov2007,Weinzierl2004bn}. Explicit evaluations of   \eqref{eq:binomH}  for   $\mathscr N\in\{-1,1\}$ are also available when the central binomial coefficients  $\binom{2k}k $ in the summands  are replaced by $ \binom{3k}k$ or $ \binom{4k}{2k}$ \cite{SunZhou2024sum3k4k,Zhou2023SunCMZV}.

For $\nu\in(-1,0)$ and $ t\in\mathbb C\smallsetminus[1,\infty)$, define the Legendre function of degree $\nu$ by\begin{align}
P_\nu(1-2t)\colonequals-\frac{\sin(\nu\pi)}{\pi}\int_0^{1}\left[ \frac{X(1-tX)}{1-X} \right]^\nu\frac{\D X}{1-X},
\end{align}where the path of integration runs along the unit interval. In the current work devoted (mainly) to  the computations of    \eqref{eq:binomH}  for $ \mathscr N\in\{2,3,4\}$, our analytic tools will be the key players in Ramanujan's elliptic function theory \cite[Chapter 33]{RN5}, namely {\allowdisplaybreaks\begin{align}
\sum_{k=0}^\infty\binom{2k}k^2\left(\frac{t}{2^{4}}\right)^{k}=P_{-1/2}(1-2t)={}&\frac{1}{\pi}\int_0^1\frac{\D X}{\sqrt{(1-X)X(1-tX)}},\label{eq:P2si}\\\sum_{k=0}^\infty\binom{2k}k\binom{3k}k\left(\frac{t}{3^3}\right)^{k}=P_{-1/3}(1-2t)={}&\frac{\sqrt{3}}{2 \pi }\int_0^1\frac{\D X}{\sqrt[3]{(1-X)^{2}X(1-tX)}},\label{eq:P3sumint}\\\sum_{k=1}^\infty\binom{2k}k\binom{4k}{2k}\left(\frac{t}{2^6}\right)^{k}=P_{-1/4}(1-2t)={}&\frac{1}{\sqrt{2} \pi }\int_0^1\frac{\D X}{\sqrt[4]{(1-X)^{3}X(1-tX)}},\label{eq:P4sumint}\\\sum_{k=1}^\infty\binom{3k}k\binom{6k}{3k}
 \left(\frac{t}{2^43^3}\right)^{k}=P_{-1/6}(1-2t)={}&\frac{1}{2 \pi }\int_0^1\frac{\D X}{\sqrt[6]{(1-X)^{5}X(1-tX)}},
\end{align}}for $ |t|<1$. These abelian   integrals on Legendre curves $Y^{ g+1}=(1-X)^{ g}X(1-t X)$ of positive genera $ g\in\{1,2,3,5\}$
 correspond to  Sun's binomial harmonic sums in the $\mathscr N=2 ,s=0$   scenario \cite[\S2]{Sun2022}. Hypergeometric deformations of the Legendre functions $ P_\nu$ will   generate harmonic numbers in the summands. By considering products of several Legendre functions, one can produce further analogs and extensions with different values of $ \mathscr N$ and $s$ in   Sun's binomial harmonic sums.

In \S\ref{sec:FZ_Sun}, we exploit the Taylor expansion of the hypergeometric  series\begin{align}
_2F_1\left( \left.\begin{array}{@{}c@{}}
a_{1},a_{2} \\
b_{1} \\
\end{array}\right|t \right)=1+\sum_{k=1}^\infty\frac{(a_{1})_k(a_{2})_k}{(b_{1})_k}\frac{t^k}{k!},\quad|t|\leq 1,t\neq1, \label{eq:2F1_defn}
\end{align}
with $ (A)_{k}=\prod_{m=0}^{k-1}(A+m)$ being the rising factorial, and investigate the following perturbations  (Lemma \ref{lm:FB})\begin{align}
{_2}F_1\left(\left. \begin{array}{@{}c@{}}
-\nu+\varepsilon,\nu+1+\varepsilon\ \\
1\ \\
\end{array} \right| t\right)\text{ and }{_2}F_1\left(\left. \begin{array}{@{}c@{}}
-\nu,\nu+1\ \\
1+\varepsilon\ \\
\end{array} \right| t\right)\label{eq:2F1_eps}
\end{align}of the Legendre functions \begin{align}
P_\nu(1-2t)={_2}F_1\left(  \left.\begin{array}{@{}c@{}}
-\nu,\nu+1 \\
1 \\
\end{array}\right| t\right)\label{eq:Pnu2F1}
\end{align} in the $ \varepsilon\to0$ regime. This investigation brings us, among other things, series identities like\footnote{In this note, we prescribe the complex logarithms of non-zero numbers  as $ \R\log \xi=\log|\xi|$ and $\I \log \xi=\arg \xi\in(-\pi,\pi]$, while setting fractional powers as $ \xi^\beta\colonequals e^{\beta\log\xi}$ for $\xi\in\mathbb C\smallsetminus\{0\} $. }\begin{align}
\sum_{k=1}^\infty\binom{2k}k^2\left(\frac{t}{2^4}\right)^{k}(2\mathsf H_{2k}-\mathsf H_k)={}&-\frac{P_{-1/2}(1-2t)}{2}\log(1-t),\label{eq:K_eg1}\\\sum_{k=1}^\infty \binom{2k}k^2\left(\frac{t}{2^4}\right)^{k}\mathsf H_k={}&\frac{\pi P_{-1/2}(2t-1)}{2}-\frac{P_{-1/2}(1-2t)}{2}\log\frac{2^{4}(1-t)}{t},\label{eq:K_eg2}
\end{align} where     $|t|\leq 1$ and $ t\notin [-1,0]\cup\{1\}$.  All these $ t$-parametrized formulae cover and extend Sun's recent experimental discoveries \cite[Conjectures 10, 12--16]{Sun2022} as well as  their analytic verifications by   Wei \cite{Wei2023c,Wei2023Sun}.
In the same section, we will also introduce modular parametrizations of series identities, to pave the way for the rest of this note.

In \S\ref{sec:ClausenCouplingSun}, we treat infinite sums whose addends contain   $ \binom{2k}k^3$ in  their numerators. Our discussions there  will also apply to other cubic forms of  binomial coefficients like  $ \binom{2k}k^2\binom{3k}k$, $\binom{2k}k^2\binom{4k}{2k} $, and $ \binom{2k}k\binom{3 k}{k}\binom{6k}{3k}$. The starting point in \S\ref{sec:ClausenCouplingSun} is Clausen's coupling formula (Lemma \ref{lm:CFZ}) that represents the products of two Legendre functions as generalized hypergeometric $ _3F_2$ series, where the Taylor expansion
\begin{align}{_pF_q}\left(\left.\begin{array}{@{}c@{}}
a_{1},\dots,a_p \\[4pt]
b_{1},\dots,b_q \\
\end{array}\right| t\right):=1+\sum_{k=1}^\infty\frac{\prod_{\ell=1}^p(a_{\ell})_k}{\prod_{m=1}^q(b_m)_k }\frac{t^k}{k!}\label{eq:defn_pFq}\end{align}
extends \eqref{eq:2F1_defn}. Amidst various applications of  Clausen's  coupling formula and its $\varepsilon $-expansions, we will   demonstrate\begin{align}\begin{split}&
\sum_{k=0}^\infty\binom{2k}k\binom{3 k}{k}\binom{6 k}{3k}\left(-\dfrac{1}{2^{18} \cdot3^3 \cdot5^3 \cdot23^3 \cdot29^3}\right)^k\left[2\cdot3^2\cdot 7\cdot11\cdot19\cdot127\cdot163+{}\right.\\{}&+\left.3\big(2\cdot3^2\cdot 7\cdot11\cdot19\cdot127\cdot163k+13\cdot 1045493\big)(2 \mathsf H_{6 k}-\mathsf H_{3 k}-\mathsf H_k)\right]\\={}&-\frac{2^7 \sqrt{3\cdot5^3 \cdot23^3 \cdot29^3}}{\pi }\log\dfrac{1}{2^{18} \cdot3^3 \cdot5^3 \cdot23^3 \cdot29^3}\end{split}
\end{align} as Sun's companion   \cite[Conjecture 29(iv)]{Sun2022} to the Chudnovsky--Chudnovsky series \cite[(1.5)]{ChudnovskyChudnovsky1988} for $ \frac1\pi$.

In \S\ref{sec:GreenSun} and \S\ref{sec:CG_coupling}, we study some binomial harmonic series that are related to Sun's studies but are not part of Sun's conjectures.

In \S\ref{sec:GreenSun}, we consider integral representations for derivatives of the Legendre functions $ P_\nu(1-2t)$ with respect to their degrees $\nu\in(-1,0)$, or equivalently, leading-order   $ \varepsilon$-expansions of \begin{align}
{_2}F_1\left(\left. \begin{array}{@{}c@{}}
-\nu-\varepsilon,\nu+1+\varepsilon\ \\
1\ \\
\end{array} \right| t\right).
\end{align}  These efforts lead us to infinitely many analogs of Sun's series that represent automorphic Green's functions (Theorem \ref{thm:GreenSun}) and Epstein zeta functions (Theorem \ref{thm:EpsteinSun}) on the  moduli spaces for  the Legendre curves  $Y^{ g+1}=(1-X)^{ g}X(1-t X)$ of genera $ g\in\{1,2,3,5\}$.
 One particular example is the following series evaluation (cf.\  \cite[Conjecture 9]{Sun2022}):\begin{align}
\sum_{k=1}^\infty\frac{\binom{2k}k^2}{2^{5k}}\left[\mathsf H_{2k}^{(2)}-\frac14 \mathsf H_k^{(2)}\right]=\frac{\sqrt{\pi}\big[\Gamma\big(\frac14\big)\big]^2}{4}\left( \frac{1}{8} -\frac{G}{\pi^2}\right),
\end{align} where   $ \Gamma(s)\colonequals\int_0^\infty e^{-t}t^{s-1}\D t$ defines  Euler's gamma function  for $ s>0$ and $G\colonequals \sum_{k=0}^\infty\frac{(-1)^k}{(2k+1)^2}$ is Catalan's constant \cite{Catalan1865,Catalan1883}. In the same section, we will also revisit the Guillera--Rogers theory \cite{GuilleraRogers2014} for the $ \mathscr N=-s=-3$  case of \eqref{eq:binomH}, using automorphic representations of Epstein zeta functions (Theorem \ref{thm:GR_sum}).

In  \S\ref{sec:CG_coupling}, we give a new perspective on our previous works \cite{Zhou2013Pnu,Zhou2013Int3Pnu}
computing certain integrals over the products of three Legendre functions. The closed forms of these integral formulae, when properly reinterpreted, grant us  access to series evaluations like\begin{align}\sum_{k=0}^\infty \frac{\binom{2k}k^4}{2^{8k}}\frac{4k+1}{(k+1)(1-2k)}={}&\dfrac{8}{\pi^2},\label{eq:CGeg1}\\
\sum_{k=0}^\infty\frac{\binom{2k}k^4}{2^{8k}}\frac{1}{4k+1}={}&\frac{\big[\Gamma\big(\frac14\big)\big]^8}{96\pi^{5}},\label{eq:CGeg2}\\\sum_{k=0}^\infty\frac{\binom{2k}k^4}{2^{8k}}\frac{1}{4k+1}\left[ \frac{2}{(4k+1)^{2}} -\frac{\pi^{2}}{3}-2\mathsf H_{2k}^{(2)}+\mathsf H_k^{(2)}\right]={}&\dfrac{\big[\Gamma\big(\frac14\big)\big]^8}{4\pi^{5}} \left( G-\dfrac{5\pi^{2}}{48} \right).\label{eq:CGeg3}
\end{align}

Before closing this introduction, we note that Sun's recent conjectures \cite[\S4]{Sun2022} have set   \eqref{eq:binomH} in the context of $ \mathscr N\in\{-7,-5,5,7\}$ and $ \min\{M,M'\}>0$. Some of these conjectural identities  have been confirmed by Hou--He--Wang \cite[Theorem 1.7]{HouHeWang2023}, Wei (see \cite[Theorems 1.1--1.4]{Wei2023b}, \cite[Theorems 1.4--1.10]{Wei2023Sun},  as well as \cite[Theorem 1.3]{Wei2023c})  and Wei--Xu \cite[Theorems 1.3--1.4]{WeiXu2023} via combinatorial transformations that have effectively reduced the problems to binomial harmonic sums with smaller values of $|\mathscr N|$. Similar  reductions of  large-$ |\mathscr N|$ scenarios in    \eqref{eq:binomH} for $ \min\{M,M'\}\geq0$ are also found in Wilf--Zeilberger (WZ) approaches employed by Amdeberhan--Zeilberger \cite{AZ1998}, Guillera \cite{Guillera2003,Guillera2010}, Chu--Zhang \cite{ChuZhang2014}, Cohen--Guillera \cite{CohenGuillera2021}, and Au \cite{Au2025a,Au2025b}, which are fundamentally different from the methods we develop in this note.
\section{Legendre functions and Sun's series\label{sec:FZ_Sun}}

In \S\ref{subsec:FZ_LS}, we prove the theorem below by differentiating \eqref{eq:2F1_eps} with respect to $ \varepsilon$.

\begin{theorem}[Legendre--Sun series]\label{thm:LegendreSun} \begin{enumerate}[leftmargin=*,  label=\emph{(\alph*)},ref=(\alph*),
widest=d, align=left] \item For     $|t|\leq 1$ and $ t\neq1$, we have the following  identities:{\allowdisplaybreaks\begin{align}\begin{split}
\mathsf S_{-1/2}^L(t)\colonequals{}&2\sum_{k=1}^\infty\binom{2k}k^2\left(\frac{t}{2^4}\right)^{k}(2\mathsf H_{2k}-\mathsf H_k)=-P_{-1/2}(1-2t)\log(1-t),\end{split}\label{eq:LS2}\\\begin{split}
\mathsf S_{-1/3}^L(t)\colonequals{}&\sum_{k=1}^\infty\binom{2k}k\binom{3k}k\left(\frac{t}{3^3}\right)^{k}(3 \mathsf H_{3k}- \mathsf H_k)=-P_{-1/3}(1-2t)\log(1-t),\end{split}\label{eq:LS3}\\\begin{split}
\mathsf S_{-1/4}^L(t)\colonequals{}&2\sum_{k=1}^\infty\binom{2k}k\binom{4k}{2k}\left(\frac{t}{2^6}\right)^{k}(2\mathsf H_{4k}-\mathsf H_{2k})=-P_{-1/4}(1-2t)\log(1-t),\end{split}\label{eq:LS4}\\\begin{split}
\mathsf S_{-1/6}^L(t)\colonequals{}&\sum_{k=1}^\infty\binom{3k}k\binom{6k}{3k}
 \left(\frac{t}{2^43^3}\right)^{k}(6\mathsf H_{6k}-3\mathsf H_{3k}-2\mathsf H_{2k}+\mathsf H_k)=-P_{-1/6}(1-2t)\log(1-t).\end{split}\label{eq:LS6}
\end{align}} \item For    $|t|\leq 1$ and $ t\notin [-1,0]\cup\{1\}$, we have the following
identities:{\allowdisplaybreaks\begin{align}\begin{split}
\widetilde{\mathsf S}_{-1/2}^L(t)\colonequals{}&-\sum_{k=1}^\infty \binom{2k}k^2\left(\frac{t}{2^4}\right)^{k}\mathsf H_k={}-\frac{\pi P_{-1/2}(2t-1)}{2}+\frac{P_{-1/2}(1-2t)}{2}\log\frac{2^{4}(1-t)}{t},\end{split}\label{eq:LS2-}\\\begin{split}
\widetilde{\mathsf S}_{-1/3}^L(t)\colonequals{}&-\sum_{k=1}^\infty\binom{2k}k\binom{3k}k\left(\frac{t}{3^3}\right)^{k}\mathsf H_k={}-\frac{\pi P_{-1/3}(2t-1)}{\sqrt{3}}+\frac{P_{-1/3}(1-2t)}{2}\log\frac{3^{3}(1-t)}{t},\end{split}\\\begin{split}\widetilde{\mathsf S}_{-1/4}^L(t)\colonequals{}&-\sum_{k=1}^\infty\binom{2k}k\binom{4k}{2k}\left(\frac{t}{2^6}\right)^{k}\mathsf H_k={}-\frac{\pi P_{-1/4}(2t-1)}{\sqrt{2}}+\frac{P_{-1/4}(1-2t)}{2}\log\frac{2^{6}(1-t)}{t},\end{split}\\\begin{split}
\widetilde{\mathsf S}_{-1/6}^L(t)\colonequals{}&-\sum_{k=1}^\infty\binom{3k}k\binom{6k}{3k}
 \left(\frac{t}{2^43^3}\right)^{k}\mathsf H_k={}-\pi P_{-1/6}(2t-1)+\frac{P_{-1/6}(1-2t)}{2}\log\frac{2^43^3(1-t)}{t}.\end{split}
\end{align}}\end{enumerate}For $ t\in[-1,0)$, one has $ \widetilde{\mathsf S}_{\nu}^L(t)=\lim_{\varepsilon\to0^+}\widetilde{\mathsf S}_{\nu}^L(t\pm i\varepsilon^+)$ for $ \nu\in\left\{-\frac12,-\frac13,-\frac14,-\frac16\right\}$.\qed\end{theorem}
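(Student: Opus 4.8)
The plan is to read off both families $\mathsf S^L_\nu$ and $\widetilde{\mathsf S}^L_\nu$ as the linear $\varepsilon$-coefficients of the perturbed hypergeometric functions in \eqref{eq:2F1_eps}, and then to fix their closed forms with two classical hypergeometric identities — Euler's transformation for part (a), the degenerate ($c=a+b$) connection formula at $z=1$ for part (b). First I would record the arithmetic dictionary: for each $\nu\in\{-\tfrac12,-\tfrac13,-\tfrac14,-\tfrac16\}$ the Pochhammer multiplication theorems identify $P_\nu(1-2t)={}_2F_1(-\nu,\nu+1;1;t)=\sum_{k\geq0}c_k(\nu)t^k$, where $c_k(\nu)$ is exactly the normalized binomial product occurring in \eqref{eq:P2si} and its three companions (the $\mathscr N=2$, $s=0$ case already quoted in \S1). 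Differentiating the defining power series of ${}_2F_1(-\nu+\varepsilon,\nu+1+\varepsilon;1;t)$ term by term in $\varepsilon$ at $\varepsilon=0$ gives $\sum_k c_k(\nu)\,[\psi(k-\nu)-\psi(-\nu)+\psi(k+\nu+1)-\psi(\nu+1)]\,t^k$, $\psi$ being the digamma function; writing each digamma difference as $\psi(a+k)-\psi(a)=\sum_{m=0}^{k-1}(a+m)^{-1}$ and collecting the reciprocals by residue classes (an inclusion–exclusion over the divisors of $2,3,4,6$) turns the bracket into $2\mathsf H_{2k}-\mathsf H_k$, $3\mathsf H_{3k}-\mathsf H_k$, $2(2\mathsf H_{4k}-\mathsf H_{2k})$ and $6\mathsf H_{6k}-3\mathsf H_{3k}-2\mathsf H_{2k}+\mathsf H_k$, respectively. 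Hence $\partial_\varepsilon|_{0}\,{}_2F_1(-\nu+\varepsilon,\nu+1+\varepsilon;1;t)=\mathsf S^L_\nu(t)$, and, using $\partial_\varepsilon(1+\varepsilon)_k^{-1}|_{0}=-\mathsf H_k/k!$, also $\partial_\varepsilon|_{0}\,{}_2F_1(-\nu,\nu+1;1+\varepsilon;t)=-\sum_k c_k(\nu)\mathsf H_k t^k=\widetilde{\mathsf S}^L_\nu(t)$ (all as locally uniformly convergent power series for $|t|<1$).

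For part (a) I would feed the first perturbation into Euler's transformation ${}_2F_1(a,b;c;t)=(1-t)^{c-a-b}{}_2F_1(c-a,c-b;c;t)$ with $(a,b,c)=(-\nu+\varepsilon,\nu+1+\varepsilon,1)$, where $c-a-b=-2\varepsilon$, so that
\[
{}_2F_1(-\nu+\varepsilon,\nu+1+\varepsilon;1;t)=(1-t)^{-2\varepsilon}\,{}_2F_1(\nu+1-\varepsilon,-\nu-\varepsilon;1;t).
\]
Differentiating at $\varepsilon=0$ and invoking the previous paragraph (the second factor now carries the shift $-\varepsilon$, hence contributes $-\mathsf S^L_\nu(t)$) gives $\mathsf S^L_\nu(t)=-2P_\nu(1-2t)\log(1-t)-\mathsf S^L_\nu(t)$, i.e.\ $\mathsf S^L_\nu(t)=-P_\nu(1-2t)\log(1-t)$. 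Term-by-term differentiation in $\varepsilon$ is justified on $|t|<1$, and both sides are continuous on $\{|t|\leq1\}\smallsetminus\{1\}$ — the left because $c_k(\nu)$ times the harmonic bracket is $O\bigl((\log k)/k\bigr)$ and eventually monotone (Abel/Dirichlet), the right because $\log(1-t)$ and $P_\nu(1-2t)$ are — so the identity extends to the advertised domain.

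For part (b) the decisive point is that the bottom parameter of ${}_2F_1(-\nu,\nu+1;1;z)$ equals $(-\nu)+(\nu+1)$, so this function is governed near $z=1$ by the \emph{logarithmic} connection formula; applied at $z=1-t$ (whence $1-z=t$) it reads
\begin{align*}
P_\nu(2t-1)={}_2F_1(-\nu,\nu+1;1;1-t)=-\frac{\sin\pi\nu}{\pi}\sum_{k\geq0}c_k(\nu)\,t^k\bigl[2\psi(k+1)-\psi(k-\nu)-\psi(k+\nu+1)-\log t\bigr].
\end{align*}
Splitting $2\psi(k+1)=2\mathsf H_k-2\gamma$ and $\psi(k-\nu)+\psi(k+\nu+1)=\psi(-\nu)+\psi(\nu+1)+[\text{the bracket of the first paragraph}]$, and evaluating the pure constant $2\gamma+\psi(-\nu)+\psi(\nu+1)$ by Gauss's digamma theorem — it equals $-\log 2^4,-\log 3^3,-\log 2^6,-\log 2^43^3$ for $\nu=-\tfrac12,-\tfrac13,-\tfrac14,-\tfrac16$ — one obtains a linear relation among $P_\nu(2t-1)$, $P_\nu(1-2t)$, $\log t$, $\mathsf S^L_\nu(t)$ and $\widetilde{\mathsf S}^L_\nu(t)$. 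Inserting $\mathsf S^L_\nu(t)=-P_\nu(1-2t)\log(1-t)$ from part (a) and solving for $\widetilde{\mathsf S}^L_\nu$ yields precisely \eqref{eq:LS2-} and its three companions, the coefficient $\tfrac{\pi}{2\sin\pi\nu}$ producing the prefactors $-\tfrac\pi2,-\tfrac\pi{\sqrt3},-\tfrac\pi{\sqrt2},-\pi$. The convergence/continuity remarks of part (a) push the identity from $\{|t|<1\}\smallsetminus(-1,0]$ out to $\{|t|\leq1\}\smallsetminus([-1,0]\cup\{1\})$.

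Finally, the boundary clause follows because the power series $\widetilde{\mathsf S}^L_\nu(z)=-\sum_k c_k(\nu)\mathsf H_k z^k$ has radius of convergence $1$ and is therefore analytic across the open segment $(-1,0)$, whereas the right-hand side of part (b) is merely a one-sided boundary value there — $\log t$ and $P_\nu(2t-1)$ (with $2t-1\in[-3,-1)$) both lying on their cuts; since the two expressions coincide on the slit disc, the analytic function $\widetilde{\mathsf S}^L_\nu$ must equal the boundary value of that formula from either side, the $\pm i\pi$ jumps of $\log(t\pm i0^+)$ cancelling against the discontinuity of $P_\nu$ across $(-\infty,-1)$ — a cancellation forced by the reality of $\widetilde{\mathsf S}^L_\nu(t)$; at $t=-1$ one supplements this with an Abel continuity step, the series converging there by Dirichlet's test. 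I expect the real obstacle to lie not in any transformation but in the constant bookkeeping of part (b): matching the integer inside $\log\frac{C_\nu(1-t)}{t}$ and the factor $\tfrac{\pi}{2\sin\pi\nu}$ requires tracking the Gauss digamma values together with the normalizations relating $c_k(\nu)$ to the binomial products, and a slip there is the likeliest failure mode.
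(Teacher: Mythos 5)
Your proposal is correct, and it reaches the paper's Lemma \ref{lm:FB} by a different engine. The first step is identical to the paper's: you identify $\mathsf S^L_\nu(t)$ and $\widetilde{\mathsf S}^L_\nu(t)$ with $\partial_\varepsilon|_{\varepsilon=0}$ of the two perturbed series in \eqref{eq:2F1_eps}, and your digamma-to-harmonic bookkeeping (including the Gauss values giving $-2\gamma_0-\psi^{(0)}(-\nu)-\psi^{(0)}(\nu+1)=\log 2^4,\log 3^3,\log 2^6,\log 2^43^3$ and the prefactors $\pi/(2\sin\nu\pi)=-\tfrac\pi2,-\tfrac\pi{\sqrt3},-\tfrac\pi{\sqrt2},-\pi$) checks out. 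Where you diverge is in how the closed forms \eqref{eq:FZ1}--\eqref{eq:FZ2} are obtained: the paper invokes the Frobenius--Zagier argument, i.e.\ the difference of the two sides satisfies the Legendre equation \eqref{eq:LegendreODE} and is bounded at both singular points $t=0,1$, hence vanishes identically; you instead get part (a) from Euler's transformation applied to the symmetric perturbation (the functional equation $F(\varepsilon)=(1-t)^{-2\varepsilon}F(-\varepsilon)$ forcing $F'(0)=-P_\nu(1-2t)\log(1-t)$ --- this is essentially Wei's route, which the paper explicitly mentions as an alternative), and part (b) from the classical logarithmic connection formula for ${}_2F_1(a,b;a+b;z)$ at $z=1$ evaluated at $z=1-t$, solved against part (a). Your route is self-contained and elementary, producing the constants of \eqref{eq:FZ2} mechanically from Gauss's digamma theorem, and unlike Wei's treatment it yields the full $t$-dependent form of \eqref{eq:FZ2}, not just special values; the paper's ODE-uniqueness lemma, on the other hand, is stated uniformly for all $\nu\in(-1,0)$ and is reused later (e.g.\ in the $\varepsilon$-expansions of the Clausen couplings), so it serves as infrastructure beyond this theorem. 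The analytic-continuation and boundary matters (Abel/Dirichlet convergence on $|t|=1$, the cancellation of the $\log t$ and $P_\nu(2t-1)$ discontinuities across $[-1,0)$) are handled by you at the same level of detail as the paper leaves implicit, so no gap there.
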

\begin{remark}We note that $ P_\nu(0)$ simplifies to $ \frac{\sqrt{\pi }}{\Gamma \left(\frac{1-\nu}{2}\right) \Gamma \left(\frac{\nu +2}{2}\right)}$ \cite[\S3.4(20)]{HTF1}. Setting $t=\frac{1}{2}$ in \eqref{eq:LS2}--\eqref{eq:LS6}, while employing some standard transformations for Euler's gamma function, we recover  Sun's identities \cite[(33) and (38) for   $ \mathsf S^L_{-1/4}(0)$  and $ \mathsf S^L_{-1/6}(0)$, respectively]{Sun2022},    which previously followed from the combinatorial analyses by  Wei \cite{Wei2023Sun}.  Plugging  the left halves of \eqref{eq:P3sumint} and \eqref{eq:P4sumint} into  \eqref{eq:LS3} and \eqref{eq:LS4}, one can check that Sun's conjectures \cite[(29) for $ \mathsf S^L_{-1/3}(-1/8)$,  (34) for $ \mathsf S^L_{-1/4}(8/9)$, (35) for  $ \mathsf S^L_{-1/4}(1/9)$, (36) for   $ \mathsf S^L_{-1/4}(-1/3)$, and (37) for $ \mathsf S^L_{-1/4}(-1/63)$]{Sun2022}  hold true, as noted by Wei \cite[Theorem 2.1]{Wei2023c}. The right-hand sides  of  \cite[(29) and   (34)--(37)]{Sun2022} will be further reduced  after
Corollary \ref{cor:LS_CM} below.\eor\end{remark}
\begin{remark}Let  $\mathbf K\big(\sqrt{t}\big)=\frac{\pi}{2}P_{-1/2}(1-2t)$   be the complete elliptic integral of the first kind. Its relations to Legendre functions of degrees $ -\frac{1}{3}$, $-\frac{1}{4}$, and $-\frac{1}{6}$ are elucidated by Ramanujan's theories for elliptic functions to alternative bases \cite[Chapter 33]{RN5}. 

Furthermore, the Chowla--Selberg formula \cite[(7)]{ChowlaSelberg} allows one to evaluate  $ \mathbf K\big(\sqrt{t}\big)$ in closed form (in terms of special values of Euler's gamma function at rational arguments) whenever $ z=\frac{i}{2}\mathbf K\big(\sqrt{1-t}\big)\big/\mathbf K\big(\sqrt{t}\big)$ represents a CM point [such that both $ \R z$ and $(\I z)^2$ are rational numbers, while $\I z>0$]. This brings us an infinite family of series identities, such as (cf.\ \cite[Appendix A.3]{LatticeSum2013}){\allowdisplaybreaks\begin{align}
\mathsf S_{-1/2}^L\left( \big(\sqrt{2}-1\big)^{2} \right)={}&-\frac{\Gamma\big(\frac18\big)\Gamma\big(\frac38\big)}{4\pi\sqrt{\pi}}\sqrt{\frac{2+\sqrt{2}}{2}}\left[\log2+\log\big(\smash{\sqrt{2}}-1\big)\right],\label{eq:LS2_k2}\\\mathsf S_{-1/2}^L\left( \frac{2-\sqrt{3}}{4} \right)={}&-\frac{\big[\Gamma\big(\frac13\big)\big]^3}{2\pi^{2}}\frac{\sqrt[4]3}{\sqrt[3]2}\left[\log\big(2+\smash{\sqrt{3}}\big)-2\log2\right],\label{eq:LS2_k3}\\\mathsf S_{-1/2}^L\left( \big(\sqrt{2}-1\big)^{4} \right)={}&-\frac{\big[\Gamma\big(\frac14\big)\big]^2}{8\pi \sqrt{\pi}{}}\big(2+\sqrt{2}\big)\left[2\log\big(\smash{\sqrt{2}}-1\big)+\frac{5}{2}\log2\right],\label{eq:LS2_k4}\\\mathsf S_{-1/2}^L\left( \frac{8-3 \sqrt{7}}{16}  \right)={}&-\frac{\Gamma\big(\frac17\big)\Gamma\big(\frac27\big)\Gamma\big(\frac47\big)}{2\pi^{2}}\frac{\log\big( 8+3 \sqrt{7} \big)-4\log 2}{\sqrt[4]7},\label{eq:LS2_k7}
\end{align}}thereby generalizing Sun's conjectures. \eor\end{remark}
\begin{remark}Sun's empirical evaluations  of infinite series  \cite[Conjectures 10, 12–16]{Sun2022} are motivated and accompanied by their supercongruence counterparts modulo $p^2$. It is not the purpose of  this note to touch upon the $p$-adic structure  of the complete elliptic integrals  \cite{McSpiritOno2023} or hypergeometric $ _2F_1$ series in general  \cite{FLST2022hypFF,GreeneStanton1986,TuYang2018}, yet the localization of Theorem \ref{thm:LegendreSun} over finite fields is perhaps a worthwhile direction for future research.    \eor\end{remark}

In \S\ref{subsec:mod_para_LegendreSun}, we give modular parametrizations of the infinite series in Theorem \ref{thm:LegendreSun}. The modular functions and modular forms appearing therein will also be used later in \S\ref{sec:ClausenCouplingSun} and \S\ref{sec:GreenSun}.

In \S\ref{subsec:int_LS}, we evaluate more binomial harmonic sums, by differentiating and integrating  the formulae in Theorem \ref{thm:LegendreSun}.

\subsection{Frobenius--Zagier process and Legendre--Sun series\label{subsec:FZ_LS}}Consider Euler's integral representation for hypergeometric functions \cite[Theorem 2.2.1]{AAR}\begin{align}&
_2F_1\left( \left.\begin{array}{@{}c@{}}
a,b \\
c \\
\end{array}\right|t \right)\notag\\={}&\frac{\Gamma(c)}{\Gamma(b)\Gamma(c-b)}\int_0^1\frac{X^{b-1}(1-X)^{c-b-1}}{(1-tX)^{a}}\D X,\quad \R c>\R b>0,-\pi<\arg(1-t)<\pi.\label{eq:Euler_int}\end{align}The next lemma shows that certain derivatives of $ _2F_1$ with respect to its parameters $a$, $b$, and $c$ can be expressed through Legendre functions and logarithms.

\begin{lemma}[Frobenius--Zagier]\label{lm:FB}For $ \nu\in(-1,0)$ and $ t\in(\mathbb C\smallsetminus\mathbb R)\cup(0,1)$, the following identities hold:\begin{align}\begin{split}&
\left.\frac{\partial}{\partial \varepsilon}\right|_{\varepsilon=0}{_2}F_1\left(\left. \begin{array}{@{}c@{}}
-\nu+\varepsilon,\nu+1+\varepsilon\ \\
1\ \\
\end{array} \right| t\right)\\={}&-P_\nu(1-2t)\log(1-t)\equalscolon P_\nu(1-2t)\mathfrak L_{\nu}(t),\end{split}\label{eq:FZ1}\\\begin{split}&\left.\frac{\partial}{\partial \varepsilon}\right|_{\varepsilon=0}{_2}F_1\left(\left. \begin{array}{@{}c@{}}
-\nu,\nu+1\ \\
1+\varepsilon\ \\
\end{array} \right| t\right)\\={}&\frac{\pi P_{\nu}(2t-1)}{2\sin(\nu\pi)}+\frac{P_\nu(1-2t)}{2}\left[ -2\gamma_{0}-\psi^{(0)}(-\nu)-\psi^{(0)}(\nu+1)+\log \frac{1-t}{t} \right]\\\equalscolon&\,P_\nu(1-2t)\widetilde{\mathfrak L}_{\nu}(t),\end{split}\label{eq:FZ2}
\end{align}where $ \psi^{(0)}(s)\colonequals \frac{\D}{\D s}\log\Gamma(s)$ is the digamma function, and $ \gamma_0\colonequals-\psi^{(0)}(1)$ is the Euler--Mascheroni constant.\end{lemma}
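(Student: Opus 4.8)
The plan is to differentiate Euler's integral representation \eqref{eq:Euler_int} with respect to a parameter, and then recognize the resulting integrand as a Legendre function defined by the integral in the text, times an elementary logarithmic factor. For \eqref{eq:FZ1}, I would start from
\begin{align*}
{_2}F_1\left(\left.\begin{array}{@{}c@{}}-\nu+\varepsilon,\nu+1+\varepsilon\\1\end{array}\right|t\right)=\frac{1}{\Gamma(\nu+1+\varepsilon)\Gamma(-\nu-\varepsilon)}\int_0^1\frac{X^{\nu+\varepsilon}(1-X)^{-\nu-1-\varepsilon}}{(1-tX)^{-\nu+\varepsilon}}\,\D X,
\end{align*}
valid for $\varepsilon$ in a punctured neighbourhood of $0$ with $\R(-\nu-\varepsilon)>0$ and $\R(\nu+1+\varepsilon)>0$, which holds since $\nu\in(-1,0)$. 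The prefactor $\frac{1}{\Gamma(\nu+1+\varepsilon)\Gamma(-\nu-\varepsilon)}$ equals $\frac{\sin((\nu+\varepsilon)\pi)}{\pi}\Gamma(\nu+1+\varepsilon)\Gamma(-\nu-\varepsilon)/(\Gamma(\nu+1+\varepsilon)\Gamma(-\nu-\varepsilon))$... more simply, by the reflection formula it equals $\frac{\sin((-\nu-\varepsilon)\pi)}{\pi}=-\frac{\sin((\nu+\varepsilon)\pi)}{\pi}$. Differentiating in $\varepsilon$ at $\varepsilon=0$, the prefactor contributes a term proportional to $P_\nu(1-2t)\cdot\pi\cot(\nu\pi)\cdot(\text{something})$ unless things cancel; the integrand contributes $\log\frac{X(1-tX)}{1-X}$ inside the integral. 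The key observation is that the $\varepsilon$-derivative of $\left[\frac{X(1-tX)}{1-X}\right]^{\nu+\varepsilon}$ is $\left[\frac{X(1-tX)}{1-X}\right]^{\nu}\log\frac{X(1-tX)}{1-X}$, and $\frac{\partial}{\partial X}\left[\frac{X(1-tX)}{1-X}\right]^{\nu+1}=(\nu+1)\left[\frac{X(1-tX)}{1-X}\right]^{\nu}\cdot\frac{d}{dX}\log\frac{X(1-tX)}{1-X}\cdot\frac{X(1-tX)}{1-X}$; after an integration by parts the boundary terms vanish (because $\nu+1>0$ kills the $X=0$ end and $\nu<0$... care is needed at $X=1$, but the factor $(1-X)^{-\nu}$ with $-\nu<1$ still integrates). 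The upshot is that $\int_0^1\left[\frac{X(1-tX)}{1-X}\right]^{\nu}\log X\,\frac{\D X}{1-X}$ and the analogous integral with $\log(1-tX)$ recombine, leaving only $-\log(1-t)$ times the defining integral for $P_\nu(1-2t)$. This matches the claimed right-hand side $-P_\nu(1-2t)\log(1-t)$.

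For \eqref{eq:FZ2} I would instead perturb the $c$-parameter: writing
\begin{align*}
{_2}F_1\left(\left.\begin{array}{@{}c@{}}-\nu,\nu+1\\1+\varepsilon\end{array}\right|t\right)=\frac{\Gamma(1+\varepsilon)}{\Gamma(\nu+1)\Gamma(\varepsilon-\nu)}\int_0^1 X^{\nu}(1-X)^{\varepsilon-\nu-1}(1-tX)^{\nu}\,\D X
\end{align*}
(again legitimate for small $\varepsilon>-\nu$... actually one needs $\R(\varepsilon-\nu)>0$, fine for small $|\varepsilon|$). Here the $X=1$ endpoint is delicate: as $\varepsilon\to0$ the integral develops a logarithmic singularity, which is exactly balanced by the pole of $1/\Gamma(\varepsilon-\nu)$... no, $\Gamma(\varepsilon-\nu)$ is finite at $\varepsilon=0$ since $-\nu>0$. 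The singular behaviour near $X=1$ is governed by $(1-X)^{\varepsilon-\nu-1}$; splitting the integral as $\int_0^1=\int_0^{1-\delta}+\int_{1-\delta}^1$ and extracting the $\varepsilon\to0$ asymptotics of the second piece produces the $\log\frac{1}{t}$ term together with the $P_\nu(2t-1)$ contribution coming from the "other" Frobenius solution near $X=1$ (the substitution $X\mapsto 1-X$ converts the local behaviour there into a hypergeometric function of argument $1-t$, hence into $P_\nu(2t-1)$ via a connection/quadratic-transformation identity such as $P_\nu(1-2t)$ versus $P_\nu(2t-1)$). The $\psi^{(0)}$ terms and $-2\gamma_0$ arise from expanding $\frac{\Gamma(1+\varepsilon)}{\Gamma(\varepsilon-\nu)}$ and from the Frobenius-type expansion of the integral; concretely, $\frac{\partial}{\partial\varepsilon}\big|_{\varepsilon=0}\log\frac{\Gamma(1+\varepsilon)}{\Gamma(\varepsilon-\nu)}=\psi^{(0)}(1)-\psi^{(0)}(-\nu)=-\gamma_0-\psi^{(0)}(-\nu)$, and the companion solution branch symmetrizes this into $-2\gamma_0-\psi^{(0)}(-\nu)-\psi^{(0)}(\nu+1)$, using $\psi^{(0)}(\nu+1)+\pi\cot((\nu+1)\pi)=\psi^{(0)}(-\nu)$ if needed to massage the form. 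The coefficient $\frac{\pi}{2\sin(\nu\pi)}$ of $P_\nu(2t-1)$ is pinned down by the reflection formula $\Gamma(\nu+1)\Gamma(-\nu)=-\pi/\sin(\nu\pi)$.

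The main obstacle will be the careful bookkeeping of the $X\to1$ endpoint in \eqref{eq:FZ2}: unlike \eqref{eq:FZ1}, differentiation and integration do not simply commute there, and one must isolate the $\log(1-t)$-type singularity by a local analysis (e.g. a Mellin--Barnes representation, or an explicit Frobenius expansion of ${_2}F_1$ about $t=1$, or Olver's connection formula for $P_\nu$), then match the non-singular remainder. A secondary issue is justifying that the remaining integrals in \eqref{eq:FZ1} — namely $\int_0^1[\cdots]^\nu\log X\,\frac{\D X}{1-X}$ and its $\log(1-tX)$ analogue — combine cleanly; this follows from the integration-by-parts identity sketched above, but one should verify the boundary terms vanish for all $\nu\in(-1,0)$ and confirm convergence of each piece separately (they do, since the worst endpoint singularity is $(1-X)^{-\nu-1}\log(1-X)$ with $-\nu-1\in(-1,0)$, which is integrable). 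Once these analytic points are dispatched, both identities reduce to the stated closed forms, and analytic continuation from $t\in(0,1)$ to $t\in(\mathbb C\smallsetminus\mathbb R)\cup(0,1)$ is immediate since every function appearing is holomorphic on that domain.
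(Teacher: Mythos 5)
Your route (differentiating Euler's integral \eqref{eq:Euler_int} in $\varepsilon$) is genuinely different from the paper's, but as written it has concrete errors and stops exactly where the lemma's content begins. For \eqref{eq:FZ1}: with $a=-\nu+\varepsilon$, $b=\nu+1+\varepsilon$, $c=1$ the $\varepsilon$-dependence of the integrand is $X^{\varepsilon}(1-X)^{-\varepsilon}(1-tX)^{-\varepsilon}$, so the derivative brings down $\log\frac{X}{(1-X)(1-tX)}$, not $\log\frac{X(1-tX)}{1-X}$ as you state (the $\log(1-tX)$ term has the opposite sign). More seriously, the derivative of the prefactor $-\sin((\nu+\varepsilon)\pi)/\pi$ contributes $\pi\cot(\nu\pi)P_\nu(1-2t)$, which is not small and must be cancelled by an exactly matching $\pi\cot(\nu\pi)$ piece coming out of the log-weighted integral; your ``upshot'' leaves only $-\log(1-t)P_\nu(1-2t)$ and never exhibits this cancellation, so the bookkeeping as given is off by $\pi\cot(\nu\pi)P_\nu(1-2t)$. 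The integration by parts you invoke also fails at the claimed step: with $u(X)=X(1-tX)/(1-X)$ one has $u^{\nu+1}\sim[(1-t)/(1-X)]^{\nu+1}\to\infty$ as $X\to1$ (since $\nu+1>0$), so the boundary term there diverges rather than vanishes, and the companion integral $\int_0^1u^{\nu}(1-X)^{-2}\,\D X$ diverges as well. Evaluating the combination $\int_0^1u^{\nu}\bigl[\log X-\log(1-X)-\log(1-tX)\bigr]\frac{\D X}{1-X}$ in closed form is essentially equivalent to \eqref{eq:FZ1} itself, and it is asserted, not proved. For \eqref{eq:FZ2} the difficulty you flag is misplaced: differentiation under the integral sign is unproblematic, since the extra $\log(1-X)$ still leaves an absolutely convergent integrand $(1-X)^{-\nu-1}\log(1-X)$ with $-\nu-1\in(-1,0)$; the real work is the closed-form evaluation of $\int_0^1X^{\nu}(1-X)^{-\nu-1}(1-tX)^{\nu}\log(1-X)\,\D X$, i.e.\ exactly the connection data producing $\frac{\pi P_\nu(2t-1)}{2\sin(\nu\pi)}$ and the digamma constants, which you defer wholesale to ``Mellin--Barnes / Frobenius expansion / Olver's formula'' without carrying it out.

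By contrast, the paper's proof avoids all endpoint asymptotics: the difference of the two sides of \eqref{eq:FZ1} [or \eqref{eq:FZ2}] satisfies the Legendre equation \eqref{eq:LegendreODE} and stays bounded as $t\to0^+$ and $t\to1^-$, whereas any nonzero combination $c_+P_\nu(1-2t)+c_-P_\nu(2t-1)$ diverges logarithmically at one of the two endpoints; hence the difference vanishes identically, and the identities extend by analytic continuation. If you prefer to stay hypergeometric, note that \eqref{eq:FZ1} has a two-line proof via Euler's transformation, $_2F_1(-\nu+\varepsilon,\nu+1+\varepsilon;1;t)=(1-t)^{-2\varepsilon}\,{_2F_1}(\nu+1-\varepsilon,-\nu-\varepsilon;1;t)$: differentiating at $\varepsilon=0$ and using the symmetry of the two upper parameters gives the result immediately (this is Wei's argument cited in the paper); but for \eqref{eq:FZ2} you would still need either the connection-coefficient computation you skipped or the ODE-uniqueness argument.
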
\begin{proof}Both   \eqref{eq:FZ1} and   \eqref{eq:FZ2} have been proved in  \cite[Lemma 2.2.1]{AGF_PartII}. The main idea is as follows:\begin{itemize}
\item
If a  function $ f_\nu(t)\in C^2(0,1)$ satisfies\begin{align}
\frac{\D}{\D t}\left[ t(1-t)\frac{\D f_{\nu}(t)}{\D t} \right]+\nu(\nu+1)f_\nu(t)=0\label{eq:LegendreODE}
\end{align}for a certain $ \nu\in(-1,0)$ and every  $t\in(0,1)$, then it is a linear combination of $ P_\nu(1-2t)$ and $P_\nu(2t-1)$.\item If it is also true that $ f_\nu(0^+)$ and $f_\nu(1^-)
$ are both finite, then  $f_\nu(t)\equiv0,t\in(0,1)$. \end{itemize}The difference between the two sides of    \eqref{eq:FZ1} [or   \eqref{eq:FZ2}] fits the description in the two items above, and the corresponding  identity  analytically continues to   $ t\in(\mathbb C\smallsetminus\mathbb R)\cup(0,1)$.    \end{proof}\begin{remark}As shown in  \cite[Lemma 2.2.1 and Remark 2.2.1.1]{AGF_PartII}, the lemma above is inspired by the classical Frobenius process  \cite[(1.3.8)]{Slater} and Zagier's solution for the $ \nu=-\frac{1}{3}$ case \cite{Zagier1998}, hence the first half in the title of this subsection. In \cite[\S2]{Wei2023c}, Wei  proved \eqref{eq:FZ1} via Euler's hypergeometric transformation [cf.\ \eqref{eq:EulerPnumu} below], without explicitly invoking the Frobenius process. In \cite[\S2]{Wei2023Sun}, Wei's approach also resulted in two special values of \eqref{eq:FZ2} for $ \nu\in\left\{ -\frac{1}{4},-\frac{1}{3} \right\}$ and $t=\frac12$.  \eor\end{remark}\begin{remark}By analytic continuation, the validity of \eqref{eq:FZ1} extends to $ t\in(-\infty,0]$. Moreover, the right-hand side of  \eqref{eq:FZ2}  encounters no  jump discontinuities  as the variable $t$ moves
across the ray $ (-\infty,0]$, thus \eqref{eq:FZ2} remains effective for
$ t\in(-\infty,0]$, so long as one reinterprets its right-hand side through limit procedures.\eor\end{remark}

It is  an elementary exercise on Euler's (di)gamma functions to transcribe Lemma \ref{lm:FB}  into  Theorem \ref{thm:LegendreSun},  when $ \nu\in\left\{-\frac16,-\frac14,-\frac13,-\frac12\right\}$.

\subsection{Modular parametrizations of Legendre--Sun series\label{subsec:mod_para_LegendreSun}}

In the following corollary to Theorem \ref{thm:LegendreSun}, we consider $ N=4\sin^2(\nu\pi)\in\{1,2,3,4\}$ corresponding to $ \nu\in\left\{-\frac16,-\frac14,-\frac13,-\frac12\right\}$. We say that a
Legendre--Sun series $ \mathsf S_\nu^L(t)$ or
$\widetilde{ \mathsf S}_\nu^L(t)$ has complex multiplication if $0<|t|\leq1$, $t\neq1$ and \begin{align}
z_{t,N}\colonequals\lim_{\varepsilon\to0^+}\frac{iP_\nu(2(t+i\varepsilon)-1)}{\sqrt{N}P_\nu(1-2(t+i\varepsilon))}\in \mathfrak H\colonequals\{w\in\mathbb C|\I w>0\}\label{eq:z_N_lim}\end{align}is a CM point (namely, it is a point in the upper half-plane $  \mathfrak H$ that generates a quadratic extension of $ \mathbb Q$, satisfying
$ \I z_{t,N}>0$ and $ [\mathbb Q(z_{t,N}):\mathbb Q]=2$). We denote the totality of algebraic numbers by $ \overline{\mathbb Q}$.

\begin{corollary}[Some arithmetic properties of Legendre--Sun series]\label{cor:LS_CM}Every Legendre--Sun series with complex multiplication can be expressed in closed form \big[cf.\ \eqref{eq:FZ1} and \eqref{eq:FZ2} for the definitions of   $ \mathfrak  L_{\nu}(t)$ and $\widetilde{\mathfrak L}_{\nu}(t)$\big]:\begin{align}
  \mathsf S_\nu^L(t)={}&P_\nu(1-2t)\mathfrak L_{\nu}(t),\\
\widetilde{ \mathsf S}_\nu^L(t)={}&P_\nu(1-2t)\widetilde{\mathfrak L}_{\nu}(t),
\end{align}along with effective algorithms to compute the left-hand sides of the following relations:\begin{align}
\mathfrak
L_{\nu}(t)\in{}&\log\overline{\mathbb Q},\label{eq:logQbar}\\\widetilde{\mathfrak L}_\nu(t)-\pi i z_{t,N}\in{}&\log\overline{\mathbb Q},\label{eq:logQbar'}\\\log P_\nu(1-2t) \in{}&\log\overline{\mathbb Q}+\mathbb Q\log\pi+\Span_{\mathbb Q}\{\log \Gamma(r)|r\in\mathbb Q\cap(0,1)\}.\label{eq:CS}
\end{align}\end{corollary}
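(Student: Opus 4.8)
The plan is to deduce Corollary~\ref{cor:LS_CM} from three ingredients: the modular parametrizations of \S\ref{subsec:mod_para_LegendreSun}, which realize $t$ as the value at $z_{t,N}$ of a weight-zero modular function for the relevant Fuchsian group; Gauss's digamma theorem at rational arguments; and the Chowla--Selberg formula in its incarnation as a theorem on periods of abelian varieties with complex multiplication. I would begin by recording two algebraicity facts for a Legendre--Sun series with complex multiplication. First, $z_{t,N}\in\mathfrak H$ generates an imaginary quadratic field, so it is an algebraic number and $\pi i z_{t,N}\in\pi i\,\overline{\mathbb Q}$. Second, by the modular parametrization $t$ is the value of a modular function at the CM point $z_{t,N}$, hence a singular modulus, so $t\in\overline{\mathbb Q}$ and likewise $1-t,\,1/t\in\overline{\mathbb Q}^{\times}$. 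Relation \eqref{eq:logQbar} is then immediate from \eqref{eq:FZ1}: $\mathfrak L_{\nu}(t)=-\log(1-t)=\log\frac{1}{1-t}\in\log\overline{\mathbb Q}$, and the attendant algorithm is to evaluate $z_{t,N}$ numerically, recognize the imaginary quadratic order it generates, and read off the minimal polynomial over $\mathbb Q$ of $1/(1-t)$ from the explicit modular parametrization (or from a numerical value followed by integer relation detection).

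For \eqref{eq:logQbar'} I would combine \eqref{eq:FZ2} with the identity $\pi i z_{t,N}=\frac{\pi P_\nu(2t-1)}{2\sin(\nu\pi)P_\nu(1-2t)}$, immediate from \eqref{eq:z_N_lim} once one notes that $\sqrt N=-2\sin(\nu\pi)$ for $\nu\in(-1,0)$. This yields
\[
\widetilde{\mathfrak L}_\nu(t)-\pi i z_{t,N}=\frac12\left[-2\gamma_0-\psi^{(0)}(-\nu)-\psi^{(0)}(\nu+1)+\log\frac{1-t}{t}\right].
\]
For $\nu\in\left\{-\frac16,-\frac14,-\frac13,-\frac12\right\}$ the arguments $-\nu$ and $\nu+1=1-(-\nu)$ are rationals in $(0,1)$ with denominator in $\{2,3,4,6\}$; in $\psi^{(0)}(-\nu)+\psi^{(0)}\big(1-(-\nu)\big)$ the two $\pi\cot$ contributions from Gauss's digamma theorem cancel by the reflection formula, leaving $-2\gamma_0$ plus a $\mathbb Q$-linear combination of terms $\log\sin(\pi n/q)$ whose coefficients $\cos(2\pi n(-\nu))$ are rational for these small denominators. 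Hence $-2\gamma_0-\psi^{(0)}(-\nu)-\psi^{(0)}(\nu+1)=\log\beta_\nu$ for an explicit rational $\beta_\nu$ (namely $2^4,3^3,2^6,2^43^3$ for $\nu=-\frac12,-\frac13,-\frac14,-\frac16$), and since $t\in\overline{\mathbb Q}$ the displayed right-hand side lies in $\log\overline{\mathbb Q}$, computed by the same algorithm.

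Finally, \eqref{eq:CS} is the Chowla--Selberg step. By \eqref{eq:P2si}--\eqref{eq:P4sumint}, $\pi P_\nu(1-2t)$ is a fixed algebraic multiple of the period of the holomorphic differential on the Legendre curve $Y^{g+1}=(1-X)^g X(1-tX)$, $g+1=-1/\nu\in\{2,3,4,6\}$, singled out by the automorphism $(X,Y)\mapsto(X,\zeta_{g+1}Y)$; equivalently, it is a $\overline{\mathbb Q}^{\times}$-multiple of a period of the rank-two sub-Hodge-structure of $H^1$ of this curve underlying $P_\nu(1-2t)$. When $z_{t,N}$ is a CM point that substructure acquires complex multiplication, so the Chowla--Selberg formula and its extension to periods of CM abelian varieties express the period as an element of $\overline{\mathbb Q}^{\times}$ times a rational power of $\pi$ times a finite product $\prod_i\Gamma(r_i)^{b_i}$ with $r_i\in\mathbb Q\cap(0,1)$, $b_i\in\mathbb Q$; taking logarithms and absorbing the $\log\pi$ from the prefactor gives \eqref{eq:CS}. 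For $\nu=-\frac12$ one may instead quote Chowla--Selberg directly via $P_{-1/2}(1-2t)=\frac{2}{\pi}\mathbf K(\sqrt t)$, and for the other three $\nu$ reduce to that case through Ramanujan's algebraic transformations between the alternative-base theories. Effectivity is classical: once the discriminant $D$ generated by $z_{t,N}$ is identified, class field theory together with the Chowla--Selberg formula for $D$ pins down the algebraic numbers, the power of $\pi$, and the $\Gamma$-exponents.

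I expect the main obstacle to be the precise formulation and citation of this Chowla--Selberg step for $\nu\in\left\{-\frac13,-\frac14,-\frac16\right\}$: one must verify that the period entering $P_\nu(1-2t)$ is genuinely one to which the CM-periods theorem applies and extract the $\Gamma$-product cleanly, which in the genus-$2,3,5$ setting requires care about which piece of the cohomology carries the complex multiplication. The singular-modulus input and the digamma computation, by contrast, are routine once the material of \S\ref{subsec:mod_para_LegendreSun} is in hand.
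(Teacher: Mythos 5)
Your proposal is correct, and for \eqref{eq:logQbar} and \eqref{eq:logQbar'} it runs along the same lines as the paper: both arguments rest on the algebraicity of $t$ as a singular modulus (the paper cites Selberg--Chowla for $\alpha_N(z)\in\overline{\mathbb Q}\cup\{\infty\}$ and $j(z)\in\overline{\mathbb Q}$ at CM points), combined with the identification $\pi i z_{t,N}=\frac{\pi P_\nu(2t-1)}{2\sin(\nu\pi)P_\nu(1-2t)}$ and the rational constants $2^4,3^3,2^6,2^43^3$ produced by your digamma computation, which are already built into Theorem \ref{thm:LegendreSun}(b); your verification of these constants is accurate. The genuine divergence is in \eqref{eq:CS}: the paper uses Ramanujan's base changes to write $[\eta(z)]^{24}$ as an explicitly algebraic multiple of $[P_\nu(1-2\alpha_N(z))]^{12}$ (with the $j$-variant for $\nu=-\frac16$) and then applies the Chowla--Selberg evaluation of $\log\eta$ at CM points, which handles all four degrees uniformly and is immediately effective, whereas your primary route through CM periods of the genus-$g$ Legendre curves invokes heavier machinery and, as you yourself flag, still requires showing that the rank-two piece of $H^1$ carrying $P_\nu$ acquires CM and that the $\Gamma$-product period theorem applies to it --- true, but an argument the paper's elementary route simply avoids; your fallback (algebraic transformation to the classical signature plus Chowla--Selberg for $\mathbf K$) is essentially the paper's mechanism with $\mathbf K$ in place of $\eta$, so the corollary is secured either way. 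One small omission, which the paper also leaves to the reader: the boundary case $t\in[-1,0)$, to be treated via the limit procedure in \eqref{eq:z_N_lim}.
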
\begin{proof}Define  modular invariants  \begin{align}
\alpha_N(z)\colonequals{}&
\left\{1+\frac{1}{N^{6/(N-1)}}\left[ \frac{\eta(z)}{\eta(Nz)} \right]^{24/(N-1)}\right\}^{-1}=1-\alpha_N\left( -\frac{1}{N z} \right),\label{eq:alpha_N}\\j(z)\colonequals{}&\frac{2^{8}\{1-\alpha_{4}(z/2)+[\alpha_{4}(z/2)]^{2}\}^{3}}{[\alpha_{4}(z/2)]^{2}[1-\alpha_{4}(z/2)]^2}=j\left(- \frac{1}{z} \right),\label{eq:jKlein}
\end{align}for $ N\in\{2,3,4\}$ by the Dedekind eta function $ \eta(z)\colonequals e^{\pi iz/12}\prod_{n=1}^\infty(1-e^{2\pi inz}),z\in\mathfrak H$. According to \cite[\S12, Lemmata 2 and 3]{SelbergChowla}, we have $ \alpha_N(z)\in\overline{\mathbb Q}\cup\{\infty\},N\in\{2,3,4\}$ and  $ j(z)\in\overline{\mathbb Q}$ when $ [\mathbb Q(z):\mathbb Q]=2$. This verifies  \eqref{eq:logQbar} and \eqref{eq:logQbar'} for  $  t\in(\mathbb C\smallsetminus\mathbb R)\cup(0,1)$.

By Ramanujan's base changes \cite[Chapter 33, Corollary~3.4, Theorem~9.9 and Theorem~11.6]{RN5}, we have \begin{align}
[\eta(z)]^{24}=\begin{cases}\frac{1}{j(z)}\lim_{\varepsilon\to0^+}\big[P_{-1/6}\big(\sqrt{[j(z+i\varepsilon)-1728]/j(z+i\varepsilon)}\big)\big]^{12}  \\
\frac{\alpha_{2}(z)[1-\alpha_2(z)]^{2}}{2^{6}} [P_{-1/4}(1-2\alpha_2(z))]^{12}\\
\frac{\alpha_{3}(z)[1-\alpha_3(z)]^{3}}{3^{3}} [P_{-1/3}(1-2\alpha_3(z))]^{12} \\\frac{\alpha_{4}(z)[1-\alpha_4(z)]^{4}}{2^{4}} [P_{-1/2}(1-2\alpha_4(z))]^{12}
\\
\end{cases}
\end{align}
if the ranges of the context-dependent $z$ are specified by \begin{align}
\I z>0,\quad |\R z|<\frac{1}{2},\quad \left\vert z+\frac{1}{N} \right\vert>\frac{1}{N},\quad \left\vert z-\frac{1}{N} \right\vert>\frac{1}{N}\label{eq:D234}
\end{align} for $ N=4\sin^2(\nu\pi)\in\{2,3,4\}$, and \begin{align}
\I z>0,\quad |\R z|<\frac{1}{2},\quad |z|\geq1\label{eq:D1}
\end{align}    for  $ N=4\sin^2(-\frac\pi6)=1$.
The Chowla--Selberg theory \cite{ChowlaSelberg,SelbergChowla} essentially provides an explicit formula for every CM value of the Dedekind eta function $\eta(z)$, such that $ \log \eta(z)\in \log\overline{\mathbb Q}+\mathbb Q\log\pi+\Span_{\mathbb Q}\{\log \Gamma(r)|r\in\mathbb Q\cap(0,1)\}$. Therefore,  we have confirmed \eqref{eq:CS} for  $ \nu\in\left\{-\frac16,-\frac14,-\frac13,-\frac12\right\}$ and $  t\in(\mathbb C\smallsetminus\mathbb R)\cup(0,1)$.

Thus far, our discussions are confined to the scenarios where $t\notin[-1,0)$ in the Legendre--Sun series with complex multiplications. The cases of $ t\in[-1,0)$ can be handled by  the limit procedures in \eqref{eq:z_N_lim}. We leave the details to diligent readers.
\end{proof}\begin{remark}The left-hand sides  of    \cite[(29) and (34)–-(37)]{Sun2022} represent Legendre--Sun series with complex multiplications, corresponding to  $ z_{t,3}=\frac{1+i\sqrt{3}}{2}$ and $ z_{t,4}\in\left\{\frac{i}{2},i,\frac{1+i\sqrt{3}}{2},\frac{1+i\sqrt{7}}{2}\right\}$. After invoking Ramanujan's base changes and the Chowla--Selberg theory, we may reduce   the right-hand sides of     \cite[(29) and (34)--(37)]{Sun2022}  to
{\allowdisplaybreaks\begin{align}\mathsf S_{-1/3}^L\left( -\frac{1}{8} \right)={}&-\frac{\big[\Gamma\big(\frac13\big)\big]^3}{2\pi^{2}}\log\frac{8}{9},
\tag{S-29$'$}\\\mathsf S_{-1/4}^L\left( \frac{8}{9} \right)={}&-\frac{\sqrt{6}\big[\Gamma\big(\frac14\big)\big]^2}{2\pi\sqrt{\pi}}\log3,\tag{S-34$'$}\\\mathsf S_{-1/4}^L\left( \frac{1}{9} \right)={}&-\frac{\sqrt{3}\big[\Gamma\big(\frac14\big)\big]^2}{4\pi\sqrt{\pi}}\log\frac{9}{8},\tag{S-35$'$}\\\mathsf S_{-1/4}^L\left(- \frac{1}{3} \right)={}&-\frac{\sqrt{3}\big[\Gamma\big(\frac13\big)\big]^3}{2^{11/6}\pi^{2}}\log\frac{3}{4},\tag{S-36$'$}\\\mathsf S_{-1/4}^L\left( -\frac{1}{63} \right)={}&-\frac{\sqrt{3}\Gamma\big(\frac17\big)\Gamma\big(\frac27\big)\Gamma\big(\frac47\big)}{4\sqrt{2}\pi^{2}}\log\frac{63}{64}.\tag{S-37$'$}
\end{align}By similar procedures, one can construct \begin{align}
\mathsf S_{-1/6}^{L}\left(\frac{1}{2}- \frac{7}{10}\sqrt{\frac{2}{5}} \right)={}&-\frac{\Gamma\big(\frac18\big)\Gamma\big(\frac38\big)}{4\pi\sqrt{\pi}}\sqrt[4]{\frac52}\log\left(\frac{1}{2}+ \frac{7}{10}\sqrt{\frac{2}{5}} \right),\tag{\ref{eq:LS2_k2}$'$}\\
\mathsf S_{-1/6}^{L}\left(\frac{1}{2}- \frac{11}{10\sqrt{5}} \right)={}&-\frac{\big[\Gamma\big(\frac13\big)\big]^3}{4\pi^{2}}\frac{\sqrt{3}\sqrt[4]5}{\sqrt[3]2}\log \left(\frac{1}{2}+ \frac{11}{10\sqrt{5}} \right),\tag{\ref{eq:LS2_k3}$'$}\\
\mathsf S_{-1/6}^{L}\left(\frac{1}{2}- \frac{21}{22}\sqrt{\frac{3}{11}} \right)={}&-\frac{\big[\Gamma\big(\frac14\big)\big]^2}{4\pi\sqrt{2\pi}}\sqrt[4]{33}\log\left(\frac{1}{2}+ \frac{21}{22}\sqrt{\frac{3}{11}} \right),\tag{\ref{eq:LS2_k4}$'$}\\
\mathsf S_{-1/6}^{L}\left(\frac{1}{2}- \frac{171}{170}\sqrt{\frac{21}{85}} \right)={}&-\frac{\Gamma\big(\frac17\big)\Gamma\big(\frac27\big)\Gamma\big(\frac47\big)}{8\pi^{2}}\sqrt[4]{\frac{255}{7}}\log\left(\frac{1}{2}+ \frac{171}{170}\sqrt{\frac{21}{85}} \right),\tag{\ref{eq:LS2_k7}$'$}
\end{align}}as companions to \eqref{eq:LS2_k2}--\eqref{eq:LS2_k7}.
\eor\end{remark} \subsection{Derivatives and integrals related to  Legendre--Sun series\label{subsec:int_LS}}

\subsubsection{Derivatives with respect to degrees of  Legendre functions}

Evaluating the higher-order  derivatives \cite[\S3.4(20)]{HTF1} \begin{align} \left.\frac{\partial ^{2m}}{\partial \nu^{2m}}\right|_{\nu=-1/2}P_\nu(0)=\left.\frac{\partial ^{2m}}{\partial \nu^{2m}}\right|_{\nu=-1/2}\frac{\sqrt{\pi }}{\Gamma \big(\frac{1-\nu}{2}\big) \Gamma \big(\frac{\nu +2}{2}\big)},\quad m\in\mathbb Z_{>0}\label{eq:Pnu0deriv}\end{align} via the $t=\frac12 $ case of \begin{align}
P_{\nu}(1-2t)={}&\sum_{k=0}^\infty\frac{(-\nu)_k(\nu+1)_k}{(k!)^{2}}t^k,\label{eq:PnuTaylor}
\end{align}one obtains an infinite family of series identities. One can use    \eqref{eq:FZ1}--\eqref{eq:FZ2} to compute the same set of higher-order  derivatives, some of which are displayed in Table \ref{tab:central_deriv}.
 \begin{table}[t]\caption{Selected series involving $ \binom{2k}k^2$,  where $ h_k^{(r)}\colonequals\mathsf H_{2k}^{(r)}-\frac{1}{2^{r}}\mathsf H_k^{(r)}$, $ \lambda\colonequals\log2$, $\zeta(s)\colonequals\sum_{k=1}^\infty\frac{1}{k^s} $, $ \beta(s)\colonequals\sum_{k=0}^\infty\frac{(-1)^k}{(2k+1)^{s}}$, and $ G\colonequals\beta(2)$ \label{tab:central_deriv}}

\begin{scriptsize}\begin{align*}\begin{array}{r@{}l}\hline\hline
\displaystyle \sum_{k=0}^\infty\vphantom{\frac{\frac{\frac11}{1}}{1}}h_k^{(2)} \frac{\binom{2k}k^2}{2^{5k}}={}&-\displaystyle \frac{\big[\Gamma\big(\frac{1}{4}\big)\big]^2}{4\pi\sqrt{\pi}}\left(G-\frac{\pi^2}8\right) \\[12pt]
\displaystyle \sum_{k=0}^\infty\left\{  h_k^{(4)}-\left[ h_k^{(2)}\right]^{2}\right\} \frac{\binom{2k}k^2}{2^{5k}}={}&-\displaystyle \frac{\big[\Gamma\big(\frac{1}{4}\big)\big]^2}{4\pi\sqrt{\pi}}\left[ \beta(4)+\frac{1}{2}\left(G-\frac{\pi^2}8\right)^{2}-\frac{\pi^{4}}{96}\right] \\[12pt]\displaystyle \sum_{k=0}^\infty \left\{ h_k^{(6)} -\frac{3h_k^{(4)}-\left[ h_k^{(2)}\right]^{2}}{2}h_k^{(2)}\right\}  \frac{\binom{2k}k^2}{2^{5k}}={}&-\displaystyle \frac{\big[\Gamma\big(\frac{1}{4}\big)\big]^2}{4 \pi\sqrt{\pi}} \left\{ \beta (6)+\frac{3}{4} \left[ \beta (4)-\frac{\pi^{4}}{96}\right] \left(G-\frac{\pi^2}8\right)\vphantom{\big(^2}\right.\\{}&\displaystyle\left.{}+\frac{1}{8} \left(G-\frac{\pi^2}8\right)^3 -\frac{\pi ^6}{960}\right\}\\[5pt]\hline \displaystyle \sum_{k=0}^\infty\vphantom{\frac{\frac{\frac11}{1}}{1}}\left[ h_k^{(3)}- h_k^{(1)} h_k^{(2)}\right]  \frac{\binom{2k}k^2}{2^{5k}}={}&\displaystyle \frac{\big[\Gamma\big(\frac{1}{4}\big)\big]^2}{16\pi\sqrt{\pi}}\left(G-\frac{\pi^2}8\right)\lambda  \\
\displaystyle \sum_{k=0}^\infty\left\{ h_{k} ^{(5)}-\frac{h_k^{(1)} h_k^{(4)}}{2}-h_k^{(2)} h_k^{(3)}+\frac{h_k^{(1)}\left[ h_k^{(2)}\right]^{2}}{2}\right\} \frac{\binom{2k}k^2}{2^{5k}}={}&\displaystyle \frac{\big[\Gamma\big(\frac{1}{4}\big)\big]^2}{32\pi\sqrt{\pi}}\left[ \beta(4)+\frac{1}{2}\left(G-\frac{\pi^2}8\right)^{2}-\frac{\pi^{4}}{96}\right]\lambda\\[15pt]\hline \displaystyle \sum_{k=0}^\infty \vphantom{\frac{\frac{\frac11}{1}}{1}}h_k^{(2)}\mathsf H_k^{\vphantom{()}}\frac{\binom{2k}k^2}{2^{5k}}={}&\displaystyle \frac{\big[\Gamma\big(\frac{1}{4}\big)\big]^2}{8\pi\sqrt{\pi}} \left[ 7\zeta(3)+G(4\lambda-\pi)-\frac{(4\lambda+\pi)\pi^{2}}{8} \right] \\\displaystyle \sum_{k=0}^\infty \left\{  h_k^{(4)}-\left[ h_k^{(2)}\right]^{2}\right\}\mathsf H_k^{\vphantom{()}} \frac{\binom{2k}k^2}{2^{5k}}={}&\displaystyle \frac{\big[\Gamma\big(\frac{1}{4}\big)\big]^2}{16\pi\sqrt{\pi}}\left\{ 31\zeta(5)+2\left[ \beta(4)-\frac{G^{2}}{2} \right](4\lambda-\pi)\right.\\{}&\displaystyle\left.{}+14\zeta(3)\left(G-\frac{\pi^2}8\right)-\frac{\pi^{2}G(4\lambda+\pi)}{4}-\frac{\pi ^4 \lambda }{48}-\frac{7 \pi ^5}{192} \right\}  \\[7pt]\hline\hline
\end{array}\end{align*}\end{scriptsize}
\end{table}

 Here,  the first entry in Table \ref{tab:central_deriv} [namely the $ m=1$ scenario of \eqref{eq:Pnu0deriv} and \eqref{eq:PnuTaylor}] was singled out by  Chu \cite[\S5, p.\ 588]{Chu2022} and  rediscovered in Sun's numerical experiments \cite[Conjecture 9]{Sun2022}, before being reworked by Wei \cite[\S3]{Wei2023c}. Later in Table \ref{tab:EZF4}, we will generalize \cite[Conjecture 9]{Sun2022} along a different direction.

Taking the second-order derivative of   \cite[\S3.4(6)]{HTF1}\begin{align}
\Gamma (1-\varepsilon )P_\nu^\varepsilon(0)={_{2}F_{1}}\left(\left.\begin{array}{@{}c@{}}
-\nu,\nu+1\\
1-\varepsilon\\
\end{array}\right| \frac{1}{2}\right)=\frac{2^{\varepsilon } \Gamma (1-\varepsilon) \Gamma \big(\frac{1+\varepsilon +\nu }{2} \big)\cos\frac{  (\nu +\varepsilon )\pi}{2} }{\sqrt{\pi } \Gamma \big(\frac{2-\varepsilon +\nu }{2} \big)}
\end{align}    with respect to $\varepsilon $, before specializing to $\varepsilon=0$ and $\nu=-\frac12$, we get \begin{align}
\sum_{k=0}^\infty\frac{\binom{2k}k^2}{2^{5k}}\left[ \mathsf H_k^{(2)}+\mathsf H_k^{2\vphantom)} \right]={}&\frac{2\big[\Gamma\big(\frac14\big)\big]^2}{\pi\sqrt{\pi}}\left[ G+\left(\frac{\pi}{4}  -\log2\right)^{2}-\frac{\pi^2}{12} \right],\label{eq:Chu_order'}
\end{align}as observed by Chu \cite[\S5, p.\ 587]{Chu2022}.

Applying Clausen's formula \cite[last equation]{Clausen1828} \begin{align}\frac{(\arcsin\sqrt{x})^2}{x}={_3F_2}\left(\left.\begin{array}{@{}c@{}}
1,1,1 \\
\frac{3}{2},2 \\
\end{array}\right| x\right)\label{eq:arcsin_3F2}
\end{align}to Campbell's identity \cite[(5.2)]{Campbell2023CCG}\begin{align}
\sum_{k=0}^\infty\frac{\binom{2k}k^2}{2^{5k}}\mathsf H_k^{(2)}=\frac{\sqrt{\pi}\big[\Gamma\big(\frac14\big)\big]^2}{12}-\frac{2\sqrt{2}}{\pi}\int_0^1\frac{(\arcsin\sqrt{x})^2}{\sqrt{x}\sqrt{1-x^2}}\D x,
\end{align} we obtain \begin{align}
\sum_{k=0}^\infty\frac{\binom{2k}k^2}{2^{5k}}\mathsf H_k^{(2)}=\frac{\sqrt{\pi}\big[\Gamma\big(\frac14\big)\big]^2}{12}-\frac{\big[\Gamma\big(\frac14\big)\big]^2}{9\pi\sqrt{\pi}} {_4F_3}\left(\left.\begin{array}{@{}c@{}}
1,1,1,\frac{3}{2} \\[4pt]
\frac{7}{4},\frac{7}{4},2 \\
\end{array}\right| 1\right)-\frac{8\sqrt{\pi}}{\big[\Gamma\big(\frac14\big)\big]^2}{_4F_3}\left(\left.\begin{array}{@{}c@{}}
\frac{1}{2},\frac{1}{2},1,1 \\[4pt]
\frac{5}{4},\frac{5}{4},\frac{3}{2} \\
\end{array}\right| 1\right)\label{eq:CC''}
\end{align}through termwise integration of the Taylor expansion for $_3F_2 $ [see \eqref{eq:defn_pFq}]. Currently, we do not know how to reduce the two $ _4F_3$ expressions in \eqref{eq:CC''} to well-known mathematical constants. Numerical experiments based on the \texttt{PSLQ} algorithm seem to suggest that both \begin{align}
\left\{ {_4F_3}\left(\left.\begin{array}{@{}c@{}}
1,1,1,\frac{3}{2} \\[4pt]
\frac{7}{4},\frac{7}{4},2 \\
\end{array}\right| 1\right),G,\pi^{2},\pi\log2,\log^{2}2\right\}
\end{align} and \begin{align}
\left\{ {_4F_3}\left(\left.\begin{array}{@{}c@{}}
\frac{1}{2},\frac{1}{2},1,1 \\[4pt]
\frac{5}{4},\frac{5}{4},\frac{3}{2} \\
\end{array}\right| 1\right),G,\pi^{2},\pi\log2,\log^{2}2\right\}
\end{align}are linearly independent over $\mathbb Q$.

Unlike our previous analysis  \cite[\S3]{Zhou2022mkMpl} of \eqref{eq:binomH}  for arbitrary polynomials of harmonic numbers in the summands, we have fewer degrees of freedom    when $ \binom{2k}k^2$ is brought
into play. Discounting the  $ _4F_3$ expressions in \eqref{eq:CC''}, we are still unable to render the individual sums  \cite{Campbell2023Sun,Chu2022}\begin{align}
\sum_{k=0}^\infty\frac{\binom{2k}k^2}{2^{5k}}\mathsf H_k^{2\vphantom)},\quad \sum_{k=0}^\infty\frac{\binom{2k}k^2}{2^{5k}}\mathsf H_k^{(2)}
,\quad \sum_{k=0}^\infty\frac{\binom{2k}k^2}{2^{5k}}\mathsf H_{2k}^{2\vphantom)},\quad \sum_{k=0}^\infty\frac{\binom{2k}k^2}{2^{5k}}\mathsf H_{2k}^{(2)}
\end{align}into closed forms.

\subsubsection{Integrals involving Legendre--Sun series}

By termwise integrations over the series representations of $\mathsf S^L_\nu(t) $ and $ \widetilde{\mathsf S}_\nu^L(t)$, one can evaluate several classes of non-trivial
infinite sums, as described in the next two corollaries.

\begin{corollary}[Binomial modulations of  Legendre--Sun series]\label{cor:binomLS}\begin{enumerate}[leftmargin=*,  label=\emph{(\alph*)},ref=(\alph*),
widest=d, align=left] \item We have {\allowdisplaybreaks\begin{align}
\begin{split}\int_0^1\frac{\mathsf S_{-1/2}^L(t)}{\sqrt{1-t}}\D t={}&4\sum_{k=0}^\infty\frac{\binom{2k}k}{2^{2k}}\frac{2\mathsf H_{2k}-\mathsf H_k}{2k+1}=4\pi \log 2,\end{split}\label{eq:LS2_ib}\\\begin{split}\int_0^1\frac{\mathsf S_{-1/3}^L(t)}{\sqrt{1-t}}\D t={}&2\sum_{k=0}^\infty\frac{2^{2k}\binom{3k}k}{3^{3k}}\frac{3\mathsf H_{3k}-\mathsf H_k}{2k+1}=9(2- \log 3),\end{split}\label{eq:LS3_ib}
\\\begin{split}\int_0^1\frac{\mathsf S_{-1/4}^L(t)}{\sqrt{1-t}}\D t={}&4\sum_{k=0}^\infty\frac{\binom{4k}{2k}}{2^{4k}}\frac{2\mathsf H_{4k}-\mathsf H_{2k}}{2k+1}=4\sqrt{2} (2-\log 2),\end{split}\label{eq:LS4_ib}
\\\begin{split}\int_0^1\frac{\mathsf S_{-1/6}^L(t)}{\sqrt{1-t}}\D t={}&2\sum_{k=0}^\infty\frac{\binom{3k}k\binom{6k}{3k}
 }{(2^{2}3^3)^k\binom{2k}k}\frac{6\mathsf H_{6k}-3\mathsf H_{3k}-2\mathsf H_{2k}+\mathsf H_k}{2k+1}= \frac{3\sqrt{3}}2\left(3+\log \frac{2^{4}}{3^{3}}\right).\end{split}
\end{align}More generally, for $ m\in\mathbb Z_{\geq0}$, we have \begin{align}\begin{split}
\int_0^1\frac{\mathsf S_{-1/2}^L(t)}{(1-t)^{\frac12-m}}\D t={}&4\sum_{k=0}^\infty\frac{\binom{2k}k}{2^{2k}}\frac{(2\mathsf H_{2k}-\mathsf H_k)(2k-1)!!(2m-1)!!}{(2k+2m+1)!!}\\\in{}&\mathbb Q\pi+\mathbb Q\pi \log 2,\end{split}\\\begin{split}
\int_0^1\frac{\mathsf S_{-1/3}^L(t)}{(1-t)^{\frac12-m}}\D t={}&2\sum_{k=0}^\infty\frac{2^{2k}\binom{3k}k}{3^{3k}}\frac{(3\mathsf H_{3k}-\mathsf H_k)(2k-1)!!(2m-1)!!}{(2k+2m+1)!!}\\\in{}&\mathbb Q+\mathbb Q \log 3,\end{split}\\\begin{split}
\int_0^1\frac{\mathsf S_{-1/4}^L(t)}{(1-t)^{\frac12-m}}\D t={}&4\sum_{k=0}^\infty\frac{\binom{4k}{2k}}{2^{4k}}\frac{(2\mathsf H_{4k}-\mathsf H_{2k})(2k-1)!!(2m-1)!!}{(2k+2m+1)!!}\\\in{}&\mathbb Q\sqrt{2}+\mathbb Q\sqrt{2} \log 2,\end{split}\\\begin{split}
\int_0^1\frac{\mathsf S_{-1/6}^L(t)}{(1-t)^{\frac12-m}}\D t={}&2\sum_{k=0}^\infty\frac{\binom{6k}{3k}\binom{3k}k
 }{(2^{2}3^3)^k\binom{2k}k}\frac{(6\mathsf H_{6k}-3\mathsf H_{3k}-2\mathsf H_{2k}+\mathsf H_k)(2k-1)!!(2m-1)!!}{(2k+2m+1)!!}\\\in{}&\mathbb Q\sqrt{3}+\mathbb Q\sqrt{3} \log \frac{2^{4}}{3^{3}},\end{split}\label{eq:LS6_ib'}
\end{align}}where the coefficients in the $ \mathbb Q$-linear combinations are explicitly computable. \big[Here, double factorials of odd integers
are defined as $ (-1)!!\colonequals 1$ and $ (2n+1)!!\colonequals\frac{(2n+1)!}{2^{n}n!}$ for $ n\in\mathbb Z_{\geq0}$.\big]\item We have {\allowdisplaybreaks\begin{align}
\begin{split}\int_0^1\frac{ \mathsf S_{-1/2}^L(t)+\widetilde{\mathsf S}_{-1/2}^L(t) }{\pi\sqrt{t(1-t)}}\D t={}&\sum_{k=0}^\infty\frac{\binom{2k}k^3}{2^{6k}}(4\mathsf H_{2k}-3\mathsf H_k)=\frac{\big[\Gamma\big(\frac14\big)\big]^4}{2\pi^3}\log 2,\end{split}\label{eq:LS2_ia}\\\begin{split}\int_0^1\frac{ \mathsf S_{-1/3}^L(t)+\widetilde{\mathsf S}_{-1/3}^L(t) }{\pi\sqrt{t(1-t)}}\D t={}&\sum_{k=0}^\infty\frac{\binom{2k}k^2\binom{3k}k}{2^{2k}3^{3k}}(3\mathsf H_{3k}-2\mathsf H_k)=\frac{9\sqrt[3]{2}\big[\Gamma\big(\frac13\big)\big]^6}{16\pi^4}\log 2,\end{split}\\\begin{split}\int_0^1\frac{ \mathsf S_{-1/4}^L(t)+\widetilde{\mathsf S}_{-1/4}^L(t) }{\pi\sqrt{t(1-t)}}\D t={}&\sum\frac{\binom{2k}k^2\binom{4k}{2k}}{2^{8k}}(4\mathsf H_{4k}-2\mathsf H_{2k}-\mathsf H_k)=\frac{2\pi\log 2}{\big[\Gamma\big(\frac58\big)\Gamma\big(\frac78\big)\big]^2},\end{split}\\\begin{split}\int_0^1\frac{ \mathsf S_{-1/6}^L(t)+\widetilde{\mathsf S}_{-1/6}^L(t) }{\pi\sqrt{t(1-t)}}\D t={}&\sum_{k=0}^\infty\frac{\binom{2k}k\binom{3k}k\binom{6k}{3k}}{2^{6k}3^{3k}}(6\mathsf H_{6k}-3\mathsf H_{3k}-2\mathsf H_{2k})=\frac{2\pi\log 2}{\big[\Gamma\big(\frac7{12}\big)\Gamma\big(\frac{11}{12}\big)\big]^2}.\end{split}
\end{align}}More generally, for $ m\in\mathbb Z_{\geq0}$,  we have {\allowdisplaybreaks\begin{align}\begin{split}\int_0^1\frac{ \mathsf S_{-1/2}^L(t)+\widetilde{\mathsf S}_{-1/2}^L(t) }{\pi[t(1-t)]^{\frac12-m}}\D t={}&\sum_{k=0}^\infty\frac{\binom{2k}k^3}{2^{6k+3m}}\frac{(4\mathsf H_{2k}-3\mathsf H_k)(2k+2m-1)!!}{\binom{k+2m}{k}(2k-1)!!m!^{}}\\\in{}&\mathbb Q\frac{\big[\Gamma\big(\frac14\big)\big]^4}{\pi^{3}}+\mathbb Q\frac{\big[\Gamma\big(\frac14\big)\big]^4}{\pi^{3}}\log 2,\end{split}\\\begin{split}\int_0^1\frac{ \mathsf S_{-1/3}^L(t)+\widetilde{\mathsf S}_{-1/3}^L(t) }{\pi[t(1-t)]^{\frac12-m}}\D t={}&\sum_{k=0}^\infty\frac{\binom{2k}k^2\binom{3k}k}{2^{2k+3m}3^{3k}}\frac{(3\mathsf H_{3k}-2\mathsf H_k)(2k+2m-1)!!}{\binom{k+2m}{k}(2k-1)!!m!^{}}\\\in{}&\mathbb Q\frac{\sqrt[3]{2}\big[\Gamma\big(\frac13\big)\big]^6}{\pi^4}+\mathbb Q\frac{\sqrt[3]{2}\big[\Gamma\big(\frac13\big)\big]^6}{\pi^4}\log 2,\end{split}\\\begin{split}\int_0^1\frac{ \mathsf S_{-1/4}^L(t)+\widetilde{\mathsf S}_{-1/4}^L(t) }{\pi[t(1-t)]^{\frac12-m}}\D t={}&\sum_{k=0}^\infty\frac{\binom{2k}k^2\binom{4k}{2k}}{2^{8k+3m}}\frac{(4\mathsf H_{4k}-2\mathsf H_{2k}-\mathsf H_k)(2k+2m-1)!!}{\binom{k+2m}{k}(2k-1)!!m!^{}}\\\in{}&\mathbb Q\frac{\pi}{\big[\Gamma\big(\frac58\big)\Gamma\big(\frac78\big)\big]^2}+\mathbb Q\frac{\pi\log 2}{\big[\Gamma\big(\frac58\big)\Gamma\big(\frac78\big)\big]^2},\end{split}\\\begin{split}\int_0^1\frac{ \mathsf S_{-1/6}^L(t)+\widetilde{\mathsf S}_{-1/6}^L(t) }{\pi[t(1-t)]^{\frac12-m}}\D t={}&\sum_{k=0}^\infty\frac{\binom{2k}k\binom{3k}k\binom{6k}{3k}}{2^{6k+3m}3^{3k}}\frac{(6\mathsf H_{6k}-3\mathsf H_{3k}-2\mathsf H_{2k})(2k+2m-1)!!}{\binom{k+2m}{k}(2k-1)!!m!^{}}\\\in{}&\mathbb Q\frac{\pi}{\big[\Gamma\big(\frac7{12}\big)\Gamma\big(\frac{11}{12}\big)\big]^2}+\mathbb Q\frac{\pi\log 2}{\big[\Gamma\big(\frac7{12}\big)\Gamma\big(\frac{11}{12}\big)\big]^2},\end{split}\label{eq:LS6_ia'}\end{align}}where the coefficients in the $ \mathbb Q$-linear combinations are explicitly computable.\end{enumerate}\end{corollary}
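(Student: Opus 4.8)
The plan is to take the closed-form evaluations from Theorem \ref{thm:LegendreSun}, multiply each series by an appropriate kernel, integrate term by term over $[0,1]$, and identify the resulting Beta-type integrals. For part (a), I start from \eqref{eq:LS2}--\eqref{eq:LS6}, which assert $\mathsf S^L_\nu(t)=-P_\nu(1-2t)\log(1-t)$ for the four values $\nu\in\{-\frac12,-\frac13,-\frac14,-\frac16\}$. The crucial observation is that $\int_0^1 t^k (1-t)^{m-1/2}\,\D t = B(k+1,m+\tfrac12)$, whose value is the rational-times-$\Gamma$ expression $\frac{k!\,(2m-1)!!\sqrt\pi}{2^{m}\,\Gamma(k+m+\frac32)}$, and dividing by $\binom{2k}{k}$ or its reciprocal turns this into a purely rational multiple of a fixed transcendental constant (here $\pi$, since $\nu=-\frac12$ is the $N=4$ case whose Legendre function is $\frac2\pi\mathbf K(\sqrt t)$). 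For the genus-zero right-hand side $-P_\nu(1-2t)\log(1-t)$, I would instead integrate $-P_\nu(1-2t)(1-t)^{m-1/2}\log(1-t)\,\D t$ directly: substituting $u=1-t$ and using the Euler-integral representation \eqref{eq:Euler_int} of $P_\nu(1-2t)={}_2F_1(-\nu,\nu+1;1;t)$, one gets a double integral that reduces, after the $X$-integration, to elementary antiderivatives of $u^{a}\log u$; the $\log$ produces the $\log 2$ (or $\log 3$, $\log\frac{2^4}{3^3}$) and the $2$ in $(2-\log 3)$ etc. comes from differentiating the Beta function with respect to its parameter (i.e. from $\psi^{(0)}$ values at half-integers and integers, which are rational combinations of $\log 2$ plus the constant $2$). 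The membership statements $\in\mathbb Q\pi+\mathbb Q\pi\log2$ then follow because every ingredient is a rational linear combination of $1$ and $\log 2$ (resp.\ $\log 3$), scaled by the overall constant dictated by the $P_\nu(0)$-type normalization.

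For part (b) the kernel is $\frac{1}{\pi\sqrt{t(1-t)}}$, and the key identity is the reproducing property $\frac1\pi\int_0^1\frac{t^k}{\sqrt{t(1-t)}}\,\D t=\frac{1}{4^k}\binom{2k}{k}$, i.e. integrating against this kernel multiplies the $k$-th Taylor coefficient by $\binom{2k}{k}4^{-k}$. Applied to $\mathsf S^L_\nu(t)+\widetilde{\mathsf S}^L_\nu(t)$, whose combined series has summand proportional to $\binom{2k}{k}^{?}\cdots$ times a harmonic-number bracket, this raises the power of the central binomial by one, which is exactly how $\binom{2k}{k}^2$ becomes $\binom{2k}{k}^3$ in \eqref{eq:LS2_ia}, etc. On the closed-form side, one must compute $\frac1\pi\int_0^1\frac{P_\nu(1-2t)\mathfrak L_\nu(t)+P_\nu(1-2t)\widetilde{\mathfrak L}_\nu(t)}{\sqrt{t(1-t)}}\,\D t$; here the algebraic/logarithmic and the $\log\frac{1-t}{t}$ pieces of $\mathfrak L_\nu+\widetilde{\mathfrak L}_\nu$ combine so that the $\log t$ and the $\pi P_\nu(2t-1)$ contributions telescope against each other, leaving only a single $\log 2$ term. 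The constant in front comes from a standard Legendre-function moment $\frac1\pi\int_0^1\frac{P_\nu(1-2t)}{\sqrt{t(1-t)}}\,\D t$, which equals $\big[P_{(\nu-1)/2}(0)\big]^2$ or the equivalent $\Gamma$-quotient via a quadratic transformation; this accounts for $\frac{[\Gamma(1/4)]^4}{2\pi^3}$ and its analogs. The general-$m$ statements are obtained the same way with the kernel $[t(1-t)]^{m-1/2}$, whose moments against $t^k$ give the double-factorial ratios displayed, and the closure $\in\mathbb Q\,C+\mathbb Q\,C\log2$ holds because again only $\log 2$ (and rationals) enter.

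The main obstacle I anticipate is \emph{justifying the termwise integration and the analytic-continuation bookkeeping near $t=0$ and $t=1$}. The series in Theorem \ref{thm:LegendreSun} converge only conditionally (at best) at the endpoints $t=\pm1$, and $\widetilde{\mathsf S}^L_\nu(t)$ is defined on $[-1,0)$ only through the boundary-value limits $\lim_{\varepsilon\to0^+}\widetilde{\mathsf S}^L_\nu(t\pm i\varepsilon^+)$; meanwhile the kernels $1/\sqrt{1-t}$ and $1/\sqrt{t(1-t)}$ are integrable but non-bounded at the endpoints. I would handle this by integrating over $[\delta,1-\delta]$ first, where uniform convergence on compacta (Abel summation plus the explicit $P_\nu$ growth bounds $P_\nu(1-2t)=O(1)$ as $t\to1^-$, $P_\nu(1-2t)=O(\log\frac1t)$ as $t\to0^+$) legitimizes swapping sum and integral, then sending $\delta\to0^+$ using dominated convergence against the integrable majorant $|\log(1-t)|(1-t)^{m-1/2}$ (resp.\ the majorant for part (b)), and finally invoking Abel's theorem on the power series side to match the $\delta\to0$ limit with the claimed value of the full series at the endpoint. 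The secondary nuisance is the $\psi^{(0)}$-arithmetic in part (a): one must check that the digamma values $\psi^{(0)}(m+\tfrac32)$, $\psi^{(0)}(1)$ occurring after differentiating the Beta integral collapse to the stated $\mathbb Q$-combinations of $1$ and $\log 2$ — this is the "elementary exercise on Euler's (di)gamma functions" already flagged after Lemma \ref{lm:FB}, so I would simply cite the reflection and duplication formulae and the value $\psi^{(0)}(\tfrac12)=-\gamma_0-2\log2$.
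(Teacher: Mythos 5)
Your proposal matches the paper's own proof: the paper likewise integrates Theorem \ref{thm:LegendreSun} termwise against the kernels $(1-t)^{m-\frac12}$ and $[t(1-t)]^{m-\frac12}$, obtaining the displayed sums from the Beta moments $\int_0^1 t^k(1-t)^{m-\frac12}\D t$ (giving the double factorials) resp.\ the reproducing property $\frac1\pi\int_0^1 t^k[t(1-t)]^{-1/2}\D t=\binom{2k}k4^{-k}$, and it evaluates the closed-form sides by differentiating the moment formulae $\int_0^1P_\nu(1-2t)(1-t)^\sigma\D t$ and $\int_0^1P_\nu(1-2t)[t(1-t)]^\sigma\D t$ at $\sigma=m-\tfrac12$ — exactly your digamma bookkeeping, with your Euler-integral derivation of these moments a harmless substitute for the paper's citation of the tabulated identities. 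One cosmetic correction: the normalizing moment in part (b) is $\frac1\pi\int_0^1\frac{P_\nu(1-2t)}{\sqrt{t(1-t)}}\,\D t=[P_\nu(0)]^2$, not $\big[P_{(\nu-1)/2}(0)\big]^2$.
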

\begin{proof}\begin{enumerate}[leftmargin=*,  label={(\alph*)},ref=(\alph*),
widest=d, align=left]\item We paraphrase  \cite[item 7.127]{GradshteynRyzhik} (which reproduces \cite[\S18.1(15)]{ET2}) as\begin{align}
\int_0^1 P_{\nu}(1-2t)(1-t)^\sigma\D t=\frac{[\Gamma(1+\sigma)]^2}{\Gamma(2+\nu+\sigma)\Gamma(1-\nu+\sigma)},\quad \R \sigma>-1.
\end{align}In particular, for each $ m\in\mathbb Z_{\geq0}$, the derivative of the formula above at $ \sigma=m-\frac12$ brings us\begin{align}\begin{split}
&\int_0^1 \frac{P_{\nu}(1-2t)}{(1-t)^{\frac12-m}}\log(1-t)\D t\\={}&\frac{\left[\Gamma \big(m+\frac{1}{2}\big)\right]^2 }{\Gamma \big(m-\nu +\frac{1}{2}\big) \Gamma \big(m+\nu +\frac{3}{2}\big)}\left[-\psi ^{(0)}\left(m-\nu +\frac{1}{2}\right)-\psi ^{(0)}\left(m+\nu +\frac{3}{2}\right)+2 \psi ^{(0)}\left(m+\frac{1}{2}\right)\right]\end{split}
\end{align}for $ \nu\neq-1/2$. Exploring the $\nu\to-1/2$ limit and the special cases of $\nu\in\left\{-\frac13,-\frac14,-\frac16\right\}$, while bearing in mind that \begin{align}
\int_0^1 \frac{t^{k}\D t}{(1-t)^{\frac12-m}}=\frac{2^{1+2k}}{\binom{2k}k}\frac{(2k-1)!!(2m-1)!!}{(2k+2m+1)!!},
\end{align}we may turn Theorem \ref{thm:LegendreSun}(a) into \eqref{eq:LS2_ib}--\eqref{eq:LS6_ib'}.
\item We  specialize \cite[item 7.132.1]{GradshteynRyzhik} (which corrects \cite[\S18.1(16)]{ET2}) into\begin{align}\begin{split}&
\int_0^1 P_{\nu}(1-2t)[t(1-t)]^\sigma\D t\\={}&\frac{\pi}{2^{1+2\sigma}}\frac{[\Gamma(1+\sigma)]^2}{\Gamma\big(\frac{\nu +3}{2}+\sigma\big)\Gamma\big(\sigma -\frac{\nu -2}{2}\big)\Gamma\big(\frac{2+\nu}{2}\big)\Gamma\big(\frac{1-\nu }{2}\big)},\quad \R \sigma>-1.\end{split}\label{eq:Pnu_avg}
\end{align}For each $ m\in\mathbb Z_{\geq0}$,  the zeroth- and first-order derivatives of the equation above at  $ \sigma=m-\frac12$ leave us\begin{align}\begin{split}&
\int_0^1 \frac{P_{\nu}(1-2t)}{[t(1-t)]^{\frac12-m}}\D t=\int_0^1 \frac{P_{\nu}(2t-1)}{[t(1-t)]^{\frac12-m}}\D t\\={}&\frac{\pi   \left[\Gamma \big(m+\frac{1}{2}\big)\right]^2}{2^{2m}\Gamma \big(\frac{1-\nu}{2}\big) \Gamma \big(\frac{\nu+2 }{2}\big) \Gamma \big(m-\frac{\nu-1 }{2}\big) \Gamma \big(m+\frac{\nu+1 }{2}\big)}\equalscolon M_{m,\nu}\end{split}
\end{align}and \begin{align}\begin{split}
&\int_0^1 \frac{P_{\nu}(1-2t)}{[t(1-t)]^{\frac12-m}}[\log t+\log(1-t)]\D t\\={}&\left[ 2 \psi ^{(0)}\left(m+\frac{1}{2}\right)-\psi ^{(0)}\left(m+\frac{\nu+1 }{2}\right)-\psi ^{(0)}\left(m-\frac{\nu-1 }{2}\right)-2 \log 2 \right]M_{m,\nu}.\end{split}
\end{align}As in our proof of  \eqref{eq:LS2_ib}--\eqref{eq:LS6_ib'}, we may  set   $\nu\in\left\{-\frac12,-\frac13,-\frac14,-\frac16\right\}$ in the last two equations and integrate over the identities in Theorem  \ref{thm:LegendreSun}, to verify  \eqref{eq:LS2_ia}--\eqref{eq:LS6_ia'}.
\qedhere\end{enumerate}\begin{remark}The series evaluation in \eqref{eq:LS2_ib} is a special case of \cite[Corollary 3.8]{Zhou2022mkMpl}. One may  also compute  \eqref{eq:LS3_ib} and \eqref{eq:LS4_ib} by extending some arguments in \cite[\S3]{Zhou2023SunCMZV}.  \eor\end{remark}
\end{proof}\begin{corollary}[Harmonic modulations of  Legendre--Sun series]\label{cor:HkLS}We have {\allowdisplaybreaks\begin{align}\begin{split}
-\int_0^1\mathsf S^L_{-1/2}(t)\log(1-t)\D t={}&2\sum_{k=0}^\infty\frac{\binom{2k}k^2}{2^{4k}}\frac{(2\mathsf H_{2k}-\mathsf H_k)\mathsf H_{k+1}}{k+1}\\={}&\frac{96}{\pi }-\frac{8 \pi }{3}-\frac{128 \log 2}{\pi }+\frac{64 \log ^22}{\pi },\end{split}\label{eq:LS2_ic}\\\begin{split}
-\int_0^1\mathsf S^L_{-1/3}(t)\log(1-t)\D t={}&\sum_{k=0}^\infty\frac{\binom{2k}k\binom{3k}k}{3^{3k}}\frac{(3\mathsf H_{3k}-\mathsf H_k)\mathsf H_{k+1}}{k+1}\\={}&\frac{567 \sqrt{3}}{8 \pi }-\frac{9 \sqrt{3} \pi }{4}-\frac{243 \sqrt{3} \log3}{4 \pi }+\frac{81 \sqrt{3} \log ^23}{4 \pi },\end{split}\\\begin{split}
-\int_0^1\mathsf S^L_{-1/4}(t)\log(1-t)\D t={}&2\sum_{k=0}^\infty\frac{\binom{2k}k\binom{4k}{2k}}{2^{6k}}\frac{(2\mathsf H_{4k}-\mathsf H_{2k})\mathsf H_{k+1}}{k+1}\\={}&\frac{3328 \sqrt{2}}{27 \pi }-\frac{40 \sqrt{2} \pi }{9}-\frac{512 \sqrt{2} \log 2}{3 \pi }+\frac{96 \sqrt{2} \log ^22}{\pi },\end{split}\\\begin{split}
-\int_0^1\mathsf S^L_{-1/6}(t)\log(1-t)\D t={}&\sum_{k=0}^\infty\frac{\binom{3k}k\binom{6k}{3k}}{2^{4k}3^{3k}}\frac{(6\mathsf H_{6k}-3\mathsf H_{3k}-2\mathsf H_{2k}+\mathsf H_k)\mathsf H_{k+1}}{k+1}\\={}&\frac{40176}{125 \pi }-\frac{66 \pi }{5}-\frac{5184 \log 2}{25 \pi }-\frac{3888 \log 3}{25 \pi }\\{}&+\frac{288 \log ^22}{5 \pi }+\frac{432(\log 2)( \log 3)}{5 \pi }+\frac{162 \log ^23}{5 \pi }.\end{split}
\end{align}}\end{corollary}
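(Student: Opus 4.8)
The entry point is Theorem~\ref{thm:LegendreSun}(a), which in all four cases packages the Legendre--Sun series in the uniform shape $\mathsf S_\nu^L(t)=-P_\nu(1-2t)\log(1-t)$ for $t\in[0,1)$ (the factors of $2$ in the definitions of $\mathsf S_{-1/2}^L$ and $\mathsf S_{-1/4}^L$ being absorbed here), so that
\begin{align*}
-\int_0^1\mathsf S_\nu^L(t)\log(1-t)\,\D t=\int_0^1 P_\nu(1-2t)[\log(1-t)]^2\,\D t .
\end{align*}
This is an absolutely convergent improper integral: on $[0,1)$ one has the crude bound $|P_\nu(1-2t)|\le C_\nu\big(1+|\log(1-t)|\big)$ (Legendre functions of non-integer degree have at most a logarithmic singularity as their argument approaches $-1$), so the integrand is $O\big(|\log(1-t)|^3\big)$ near $t=1$ and hence integrable.

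For the series displayed on the left I would expand $\mathsf S_\nu^L(t)=\sum_{k\ge0}c_kt^k$; since $\mathsf S_\nu^L(t)=P_\nu(1-2t)\cdot\big(-\log(1-t)\big)$ is a product of two power series with nonnegative coefficients (those of $P_\nu(1-2t)=\sum_k\frac{(-\nu)_k(\nu+1)_k}{(k!)^2}t^k$ are positive for $\nu\in(-1,0)$, and $-\log(1-t)=\sum_{n\ge1}t^n/n$), one has $c_k\ge0$, so termwise integration against $-\log(1-t)\ge0$ on $[0,1)$ is legitimate by Tonelli's theorem. With $\int_0^1 t^k[-\log(1-t)]\,\D t=\sum_{n\ge1}\frac1{n(n+k+1)}=\frac{\mathsf H_{k+1}}{k+1}$, this reproduces exactly the four series on the left-hand sides (the $k=0$ term vanishes in each case, so the summations may start at $k=0$).

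For the closed forms I would reuse the moment evaluation already invoked in the proof of Corollary~\ref{cor:binomLS}(a),
\begin{align*}
\int_0^1 P_\nu(1-2t)(1-t)^\sigma\,\D t=\frac{[\Gamma(1+\sigma)]^2}{\Gamma(2+\nu+\sigma)\Gamma(1-\nu+\sigma)}\equalscolon\Lambda_\nu(\sigma),\qquad\R\sigma>-1,
\end{align*}
and differentiate it twice in $\sigma$ (permissible since both sides are holomorphic in $\R\sigma>-1$, with $\partial_\sigma^2$ and $\int_0^1$ interchangeable via a dominating function of the form $(1-t)^{\R\sigma}[\log(1-t)]^2$ times the logarithmic bound on $P_\nu$), then set $\sigma=0$. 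Logarithmic differentiation gives $\Lambda_\nu''(0)=\Lambda_\nu(0)\big[A_\nu^{2}+B_\nu\big]$ with $A_\nu=2\psi^{(0)}(1)-\psi^{(0)}(2+\nu)-\psi^{(0)}(1-\nu)$ and $B_\nu=2\psi^{(1)}(1)-\psi^{(1)}(2+\nu)-\psi^{(1)}(1-\nu)$, whence $\int_0^1 P_\nu(1-2t)[\log(1-t)]^2\,\D t=(A_\nu^2+B_\nu)/[\Gamma(2+\nu)\Gamma(1-\nu)]$. It then remains to specialize $\nu\in\{-\tfrac12,-\tfrac13,-\tfrac14,-\tfrac16\}$ and reduce the polygamma data: the prefactor via $\Gamma(x)\Gamma(1-x)=\pi/\sin(\pi x)$ and $\Gamma(x+1)=x\Gamma(x)$; the digamma entries via $\psi^{(0)}(x+1)=\psi^{(0)}(x)+x^{-1}$ and Gauss's digamma theorem (so that $\gamma_{0}$ cancels out of $A_\nu$, leaving combinations of $\log2$, $\log3$ and $\pi$); and the trigamma entries via $\psi^{(1)}(x+1)=\psi^{(1)}(x)-x^{-2}$ together with the reflection formula $\psi^{(1)}(x)+\psi^{(1)}(1-x)=\pi^2/\sin^2(\pi x)$ (only such reflected sums enter $B_\nu$, so no individual trigamma special value is needed). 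For $\nu=-\tfrac12$ this is a direct substitution, as $\Gamma(2+\nu)\Gamma(1-\nu)=[\Gamma(\tfrac32)]^2\neq0$ there and no limit is required.

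The argument carries no conceptual difficulty; the only point demanding genuine care is the justification of termwise integration and of differentiation under the integral sign at the endpoint $t=1$, where $P_\nu(1-2t)$ and $\log(1-t)$ are simultaneously singular---dispatched, as above, by the logarithmic bound on $P_\nu$ and the integrability of every power of $\log(1-t)$. The rest is lengthy but purely mechanical polygamma bookkeeping for the four degrees, checked against the stated right-hand sides; for instance when $\nu=-\tfrac13$ one finds $A_{-1/3}=3\log3-\tfrac92$, $B_{-1/3}=\tfrac{45}{4}-\pi^2$ and $\Gamma(\tfrac53)\Gamma(\tfrac43)=\tfrac29\,\Gamma(\tfrac13)\Gamma(\tfrac23)=\tfrac{4\pi}{9\sqrt3}$, which together yield $\tfrac{567\sqrt3}{8\pi}-\tfrac{9\sqrt3\,\pi}{4}-\tfrac{243\sqrt3\log3}{4\pi}+\tfrac{81\sqrt3\log^23}{4\pi}$, in agreement with the second identity of the corollary.
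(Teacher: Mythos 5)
Your proposal is correct and follows essentially the same route as the paper: the paper's proof likewise takes the second $\sigma$-derivative at $\sigma=0$ of $\int_0^1 P_{\nu}(1-2t)(1-t)^\sigma\,\D t=[\Gamma(1+\sigma)]^2/[\Gamma(2+\nu+\sigma)\Gamma(1-\nu+\sigma)]$ and integrates termwise over the identities of Theorem \ref{thm:LegendreSun}(a). You merely supply the convergence/interchange justifications and the polygamma bookkeeping that the paper leaves implicit, and your sample evaluations (e.g.\ the $\nu=-\tfrac13$ case) agree with the stated closed forms.
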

\begin{proof}All these are  consequences of \begin{align}\left.\frac{\partial^2}{\partial\sigma^2}\right|_{\sigma=0}
\int_0^1 P_{\nu}(1-2t)(1-t)^\sigma\D t=\left.\frac{\partial^2}{\partial\sigma^2}\right|_{\sigma=0}\frac{[\Gamma(1+\sigma)]^2}{\Gamma(2+\nu+\sigma)\Gamma(1-\nu+\sigma)}
\end{align}and integrations over the formulae in Theorem  \ref{thm:LegendreSun}(a). \end{proof}\begin{remark}One can of course extend the methods above to Theorem  \ref{thm:LegendreSun}(b) and also consider higher-order derivatives in  $ \sigma$. However, the resulting patterns are less systematic than Corollary \ref{cor:binomLS}, so we omit the details.\eor\end{remark}

\section{Clausen couplings  and Ramanujan--Sun series\label{sec:ClausenCouplingSun}}In response to a question   raised by Sun on MathOverflow, Kam Cheong Au posted on  \url{https://mathoverflow.net/questions/436205/}
a sketched proof of \cite[Conjecture 29]{Sun2022}. In this section, we will give a detailed exposition of  Au's answer. To facilitate the statement of Au's results in the theorem below, we recall the Legendre--Ramanujan function $ R_\nu$ from \cite[(2.2.11)]{AGF_PartI}: \begin{align}\begin{split}
R_{\nu}( \xi):={}&\frac{1-\xi^{2}}{P_{\nu}(\xi)}\frac{\D P_{\nu}(\xi)}{\D \xi}+\frac{1-\xi^{2}}{P_{\nu}(-\xi)}\frac{\D P_{\nu}(-\xi)}{\D \xi}\\{}&-\frac{\sin(\nu\pi)}{\pi}\left\{\frac{1}{ [P_{\nu}(\xi)]^{2}\I \frac{iP_{\nu}(-\xi)}{P_{\nu}(\xi)}}-\frac{1}{[P_{\nu}(-\xi)]^{2}\I \frac{iP_{\nu}(\xi)}{P_{\nu}(-\xi)}}\right\},\label{eq:R_nu_defn}\end{split}
\end{align}which is well-defined for  $\xi\in(\mathbb C\smallsetminus\mathbb R)\cup(-1,1)$, $P_\nu(\xi)\neq0$, and  $P_\nu(-\xi)\neq0$. If $ \nu\in\left\{-\frac12,-\frac13,-\frac14,-\frac16\right\}$, then the analytic expression in \eqref{eq:R_nu_defn} extends continuously to all $ \xi\in\mathbb C$,  being smooth across the branch cut of $ P_\nu(\xi),\xi\in\mathbb C\smallsetminus(-\infty,-1]$:\begin{align}
\begin{cases}R_\nu(1)=-R_\nu(-1):=\lim_{\xi\to1}R_\nu(\xi)=0; &  \\
R_\nu(\xi)=-R_\nu(-\xi):= R_\nu(\xi+i0^+)\equiv R_\nu(\xi-i0^+), & \xi>1.\ \\
\end{cases} \label{eq:R_nu_defn_ext}
\end{align}\begin{theorem}[Ramanujan--Sun series]\label{thm:RamanujanSun}For  complex numbers $a,b,t$ satisfying $ 0<|4t(1-t)|\leq1$, $\R t\leq\frac{1}{2}$, and a  sequence $ (c_k)_{k\in\mathbb Z_{\geq0}}$ with at most $ O(\log k)$ growth rate, define the convergent series {\allowdisplaybreaks\begin{align}\mathsf\Sigma_{-1/2}\left(a,b;c_{k};X_{-1/2}(t)=\frac{t(1-t)}{2^4}\right)\colonequals{}&\sum_{k=0}^\infty\binom{2k}k^3(ak+b)c_{k}[X_{-1/2}(t)]^k,\label{eq:SigmaG2}
\\
\mathsf\Sigma_{-1/3}\left(a,b;c_{k};X_{-1/3}(t)=\frac{t(1-t)}{3^3}\right)\colonequals{}&\sum_{k=0}^\infty\binom{2k}k^2\binom{3k}k(ak+b)c_{k}[X_{-1/3}(t)]^k,\label{eq:SigmaG3}\\
\mathsf\Sigma_{-1/4}\left(a,b;c_{k};X_{-1/4}(t)=\frac{t(1-t)}{2^{6}}\right)\colonequals{}&\sum_{k=0}^\infty\binom{2k}k^2\binom{4k}{2k}(ak+b)c_{k}[X_{-1/4}(t)]^k,\label{eq:SigmaG4}\\
\mathsf\Sigma_{-1/6}\left(a,b;c_{k};X_{-1/6}(t)=\frac{t(1-t)}{2^{4}3^3}\right)\colonequals{}&\sum_{k=0}^\infty\binom{2k}k\binom{3 k}{k}\binom{6 k}{3k} (ak+b)c_{k}[X_{-1/6}(t)]^k.
\label{eq:SigmaG6}\end{align}}For each  series of Ramanujan's type\begin{align}
\mathsf \Sigma_\nu (2(2t-1),-R_\nu(1-2t);1;X_\nu(t))=\frac{2\sin(\nu\pi)}{\pi\I \frac{iP_{\nu}(2t-1)}{P_{\nu}(1-2t)}},\label{eq:Rama_pi}
\end{align}there is a corresponding series of Sun's type\begin{align}
\begin{split}&
\mathsf \Sigma_\nu (2(2t-1),-R_\nu(1-2t);c_{\nu,k};X_\nu(t))+
\mathsf \Sigma_\nu (0,2(2t-1);1;X_\nu(t))\\={}& -\frac{2\sin(\nu\pi)\log X_\nu(t)}{\pi\I \frac{iP_{\nu}(2t-1)}{P_{\nu}(1-2t)}}-2i\frac{\R \frac{iP_{\nu}(2t-1)}{P_{\nu}(1-2t)}}{\I \frac{iP_{\nu}(2t-1)}{P_{\nu}(1-2t)}},\end{split}\label{eq:Sun_log}
\end{align}and yet another series in a similar spirit \begin{align}
\begin{split}
&\mathsf \Sigma_\nu (2(2t-1),-R_\nu(1-2t);c^{2}_{\nu,k}+d_{\nu,k}^{};X_\nu(t))+
\mathsf \Sigma_\nu (0,4(2t-1);d_{\nu,k};X_\nu(t))\\={}&-\frac{2\pi\left\vert \frac{iP_{\nu}(2t-1)}{P_{\nu}(1-2t)} \right\vert^{2}}{\sin(\nu\pi)\I \frac{iP_{\nu}(2t-1)}{P_{\nu}(1-2t)}}+4i\frac{\R \frac{iP_{\nu}(2t-1)}{P_{\nu}(1-2t)}}{\I \frac{iP_{\nu}(2t-1)}{P_{\nu}(1-2t)}}\log X_\nu(t)+\frac{2\sin(\nu\pi)}{\pi\I \frac{iP_{\nu}(2t-1)}{P_{\nu}(1-2t)}}\log^2X_\nu(t),\label{eq:Au_log_sqr}
\end{split}
\end{align}where\begin{align}
c_{\nu,k}\colonequals\begin{cases}6(\mathsf H_{2k}-\mathsf H_k), & \nu=-\frac{1}{2}, \\
3\mathsf H_{3k}+2\mathsf H_{2k}-5\mathsf H_k, & \nu=-\frac{1}{3}, \\
4(\mathsf H_{4k}-\mathsf H_k), & \nu=-\frac{1}{4}, \\
3(2 \mathsf H_{6 k}-\mathsf H_{3 k}-\mathsf H_k), & \nu=-\frac{1}{6}, \\
\end{cases}
\end{align}and \begin{align}
d_{\nu,k}\colonequals\begin{cases}6\left[-2\mathsf H_{2k}^{(2)}+\mathsf H_k^{(2)}
 \right], & \nu=-\frac{1}{2}, \\[5pt]
-9\mathsf H_{3k}^{(2)}-4\mathsf H_{2k}^{(2)}+5\mathsf H_k^{(2)}, & \nu=-\frac{1}{3}, \\[5pt]
4\left[-4\mathsf H_{4k}^{(2)}+\mathsf H_k^{(2)}
 \right], & \nu=-\frac{1}{4}, \\[5pt]
3\left[-12\mathsf H_{6k}^{(2)}+3\mathsf H_{3k}^{(2)}+\mathsf H_k^{(2)}
 \right], & \nu=-\frac{1}{6}. \\
\end{cases}
\end{align}\end{theorem}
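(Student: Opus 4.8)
The plan is to extract all three families of identities from a single master relation—Clausen's coupling formula (Lemma~\ref{lm:CFZ})—by differentiating its $\varepsilon$-deformed version at $\varepsilon=0$ up to second order, in the same spirit that Lemma~\ref{lm:FB} produced Theorem~\ref{thm:LegendreSun}. Concretely, for each $\nu\in\{-\tfrac12,-\tfrac13,-\tfrac14,-\tfrac16\}$ one writes the product of two Legendre functions $P_\nu(1-2t)P_\nu(2t-1)$ (and its companions $[P_\nu(1-2t)]^2$, $[P_\nu(2t-1)]^2$) as a $_3F_2$ evaluated at the Ramanujan argument $X_\nu(t)=\tfrac{t(1-t)}{N^{N/(N-1)}\cdot\text{(power of }2)}$ with $N=4\sin^2(\nu\pi)$, then perturbs the numerator parameters by $\varepsilon$. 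Matching the Taylor coefficients against the product of the series in \eqref{eq:P2si}--\eqref{eq:P4sumint} (with the hypergeometric deformations \eqref{eq:2F1_eps}) generates exactly the harmonic-number combinations $c_{\nu,k}$ at first order and $c_{\nu,k}^2+d_{\nu,k}$ at second order; the coefficients $6,3,4,3$ and the weights inside $c_{\nu,k},d_{\nu,k}$ are forced by the quadratic transformation relating $X_\nu(t)$ to $t$ via $dX_\nu/dt=\tfrac{1-2t}{N^{\cdots}}$, which is why the linear factor $2(2t-1)$ and the coefficients appear. The right-hand sides come from differentiating the closed form of Clausen's coupling, whose non-hypergeometric piece is governed by $R_\nu(1-2t)$ (the logarithmic-derivative combination in \eqref{eq:R_nu_defn}) and by the imaginary/real parts of $iP_\nu(2t-1)/P_\nu(1-2t)$; the $\log X_\nu(t)$ and $\log^2 X_\nu(t)$ terms are produced by the same mechanism that put $\log(1-t)$ into \eqref{eq:FZ1}, namely the $\partial_\varepsilon$ and $\partial_\varepsilon^2$ acting on $[X_\nu(t)]^{k\varepsilon}$-type factors after the change of variable.

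First I would fix $\nu$ and establish the Ramanujan-type seed \eqref{eq:Rama_pi}: this is classical (Clausen's formula applied to the Legendre representations \eqref{eq:P2si}--\eqref{eq:P4sumint}, combined with Ramanujan's base changes from \cite[Chapter~33]{RN5}), and it identifies the normalization $\tfrac{2\sin(\nu\pi)}{\pi\,\I\{iP_\nu(2t-1)/P_\nu(1-2t)\}}$ with $1/\,$(the relevant period). Second, I would set up the $\varepsilon$-family: define $F_\nu^{(\varepsilon)}(t)$ as the $_3F_2$ obtained by shifting the two numerator parameters of Clausen's $_3F_2$ by $\varepsilon$, verify via the Legendre ODE argument of Lemma~\ref{lm:FB} (uniqueness of the bounded solution of an inhomogeneous hypergeometric equation) that $\partial_\varepsilon F_\nu^{(\varepsilon)}|_{\varepsilon=0}$ and $\partial_\varepsilon^2 F_\nu^{(\varepsilon)}|_{\varepsilon=0}$ coincide with the stated combinations of $P_\nu$, $R_\nu$, logarithms of $X_\nu(t)$, and the $\I/\R$ data. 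Third, I would perform the termwise differentiation of the $_3F_2$ Taylor series: $\partial_\varepsilon$ hitting $(a_\ell+\varepsilon)_k$ produces $\sum_{j=0}^{k-1}\tfrac{1}{a_\ell+j}$, which for the specific parameters $\{-\nu,-\nu,\nu+1\}$ or $\{-\nu,-\nu,2(-\nu)+\tfrac12\}$ etc.\ telescopes into precisely $\mathsf H_{Nk-1}$-type harmonic numbers; the algebra of turning $\psi^{(0)}$-differences into $c_{\nu,k},d_{\nu,k}$ is the bookkeeping analog of the ``elementary exercise on Euler's (di)gamma functions'' cited after Lemma~\ref{lm:FB}. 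Finally I would assemble \eqref{eq:Sun_log} from the first-order identity and \eqref{eq:Au_log_sqr} from the second-order one, handling the boundary regime $|4t(1-t)|=1$ by the same analytic-continuation/limit procedure used in Corollary~\ref{cor:LS_CM} and noting that the $O(\log k)$ growth hypothesis on $c_k$ is exactly what keeps all the differentiated series convergent on $0<|4t(1-t)|\le1$.

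The main obstacle I anticipate is the second-order step: $\partial_\varepsilon^2$ of a product-of-two-deformed-$_2F_1$'s produces cross terms $(\partial_\varepsilon\,{}_2F_1)(\partial_\varepsilon\,{}_2F_1)$ in addition to $({}_2F_1)(\partial_\varepsilon^2\,{}_2F_1)$, and Clausen's formula couples these in a way that mixes $[\mathfrak L_\nu]^2$, $\mathfrak L_\nu\widetilde{\mathfrak L}_\nu$, and genuinely new second-order data. Disentangling this so that the harmonic-number part collapses to the clean $c_{\nu,k}^2+d_{\nu,k}$ (rather than an unwieldy sum of $\mathsf H^2$, $\mathsf H^{(2)}$, and product terms across different indices $k,2k,3k,6k$) requires the quadratic-transformation identities for $P_\nu$ at argument $1-2t$ versus $2t-1$ to interlock precisely; I expect this to be where the bulk of the genuine work lies, and where Au's original MathOverflow sketch is least explicit. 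The appearance of $\left|iP_\nu(2t-1)/P_\nu(1-2t)\right|^2$ on the right of \eqref{eq:Au_log_sqr} is the tell-tale sign that one is really computing $|\partial_\varepsilon(\text{period})|^2$, so the cleanest route may be to first prove the identity at the level of the coupled Legendre functions (an equality of real-analytic functions on $(\mathbb C\smallsetminus\mathbb R)\cup(0,1)$, checked by the ODE-uniqueness device) and only then read off the series identity, rather than trying to verify it coefficient-by-coefficient.
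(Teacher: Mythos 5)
Your first stage is essentially the paper's own route: the harmonic-number structures $c_{\nu,k}$ and $c_{\nu,k}^2+d_{\nu,k}$ do come from the first and second $\varepsilon$-derivatives of the Clausen-coupled series, and the cross-term obstacle you flag at second order is resolved in the paper by pairing Clausen's identity with its companion for $P_\nu^{\varepsilon}P_\nu^{-\varepsilon}$ (so that one differentiates a difference of two ${}_3F_2$'s rather than the square of one deformed ${}_2F_1$); this is exactly Lemma~\ref{lm:CFZ}, which you are entitled to cite. The genuine gap is in the second stage: you never supply the mechanism that turns the Clausen couplings into the weighted sums $\mathsf\Sigma_\nu(2(2t-1),-R_\nu(1-2t);\cdot\,;\cdot)$ with their closed-form right-hand sides. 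In the paper this is a single, concrete construction: one applies the first-order operator $\big(1-\xi^2\big)\frac{\D}{\D\xi}-R_\nu(\xi)$ (with $\xi=1-2t$, and $R_\nu$ as in \eqref{eq:R_nu_defn}) termwise to the three series of Lemma~\ref{lm:CFZ}, which produces the weights $ak+b$ with $a=-2\xi=2(2t-1)$ and $b=-R_\nu(1-2t)$, and simultaneously one evaluates the action of this operator on $[P_\nu(\xi)]^2$, $P_\nu(\xi)P_\nu(-\xi)$, $[P_\nu(-\xi)]^2$ in closed form via the Wro\'nskian $W[P_\nu(\xi),P_\nu(-\xi)]=-\frac{2\sin(\nu\pi)}{\pi(1-\xi^2)}$, giving precisely the three entries $1$, $-i\R\frac{iP_\nu(-\xi)}{P_\nu(\xi)}$, $-\big|\frac{iP_\nu(-\xi)}{P_\nu(\xi)}\big|^2$ (up to the common factor) of \eqref{eq:Rnu_canc}. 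That identity is also what generates the correction series $\mathsf\Sigma_\nu(0,2(2t-1);1;\cdot)$ and $\mathsf\Sigma_\nu(0,4(2t-1);\cdot\,;\cdot)$ on the left of \eqref{eq:Sun_log} and \eqref{eq:Au_log_sqr}, coming from the operator hitting the explicit $\log\frac{1}{1-\xi^2}$ factors attached to the couplings — terms your outline does not account for at all.

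Your substitutes for this step do not work as stated. You locate $R_\nu(1-2t)$ inside ``the closed form of Clausen's coupling,'' but the closed forms in Lemma~\ref{lm:CFZ} contain no $R_\nu$ whatsoever; $R_\nu$ enters only as the subtraction term in the operator, engineered so that the Wro\'nskian collapses the derivative of $[P_\nu]^2$ to a quantity depending solely on $\I\frac{iP_\nu(-\xi)}{P_\nu(\xi)}$. Attributing the factor $2(2t-1)$ to $\D X_\nu/\D t$ captures only the $ak$ part and says nothing about why $b=-R_\nu(1-2t)$ is the right companion. Finally, your fallback — proving the assembled identities ``by the ODE-uniqueness device'' of Lemma~\ref{lm:FB} — is not available here: the left sides of \eqref{eq:Rama_pi}--\eqref{eq:Au_log_sqr} are first-order derivatives of quadratic combinations of Legendre functions, not solutions of the homogeneous Legendre equation, and the right sides involve the non-holomorphic data $\R$, $\I$, $|\cdot|^2$ of $iP_\nu(2t-1)/P_\nu(1-2t)$, so the two-point boundary-value uniqueness argument for \eqref{eq:LegendreODE} does not apply; the Wro\'nskian evaluation \eqref{eq:W_Pnu}, which your proposal never invokes, is the missing key.
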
\begin{remark}If\begin{align}
z_{t,4\sin^2(\nu\pi)}\colonequals\lim_{\varepsilon\to0^+}\frac{P_\nu(2(t+i\varepsilon)-1)}{2i\sin(\nu \pi )P_\nu(1-2(t+i\varepsilon))}\tag{\ref{eq:z_N_lim}$'$}\label{eq:z_N_lim'}
\end{align} defines a CM\ point for a certain $ \nu\in\left\{-\frac16,-\frac14,-\frac13,-\frac12\right\}$, such that $ \I z_{t,N}>0$ and $ [\mathbb Q(z_{t,N}):\mathbb Q]=2$ for $ N=4\sin^2(\nu\pi)\in\{1,2,3,4\}$, then both  $t$ and $ R_\nu(1-2t)$ are  algebraic numbers, being explicitly solvable in radical forms \cite[Proposition 2.2.2]{AGF_PartI}. Such statements about algebraicity follow from Ramanujan's elliptic function theories to alternative bases \cite[Chapter 33]{RN5} and the classical knowledge for  complex multiplication \cite[\S6]{Zagier2008Mod123}.   \eor\end{remark}

 After dealing with the analytic and modular parts of Theorem \ref{thm:RamanujanSun} in \S\S\ref{subsec:Clausen}--\ref{subsec:RamanujanSun}, we will construct more series identities in \S\ref{subsec:corClausen}, by differentiating and integrating (precursors to) Theorem \ref{thm:RamanujanSun}.

\subsection{Clausen couplings of Legendre functions\label{subsec:Clausen}}
We begin  with the study of some infinite series concerning \begin{align}s_{\nu,k}\colonequals\binom{2k}k\frac{(-\nu)_k(\nu+1)_k}{(k!)^{2}}\label{eq:s_nu_k}\end{align}as well as \begin{align}\varsigma_{\nu,k}\colonequals{}&
\psi ^{(0)}(k-\nu )+\psi ^{(0)}(k+\nu +1)-3 \psi ^{(0)}(k+1)+\psi ^{(0)}\left(k+\frac{1}{2}\right)\label{eq:varsigma_nu_k}
\end{align}and \begin{align}
\tau_{\nu,k}\colonequals{}&
(\varsigma_{\nu,k})^{2}+\psi ^{(1)}(k-\nu )+\psi ^{(1)}(k+\nu +1)-3 \psi ^{(1)}(k+1)+\psi ^{(1)}\left(k+\frac{1}{2}\right),\label{eq:tau_nu_k}
\end{align}where $ \psi^{(m)}(z)\colonequals\frac{\D ^{m+1}}{\D z^{m+1}}\log\Gamma(z)$.
In particular, we have  expressions involving binomial coefficients{\allowdisplaybreaks\begin{align}s_{\nu,k}={}&\begin{cases}\frac{\binom{2 k}{k}^3}{2^{6 k}}, & \nu=-\frac12, \\[2pt]
\frac{\binom{2 k}{k}^2 \binom{3 k}{k}}{2^{2 k} 3^{3 k}}, & \nu=-\frac13, \\[2pt]
\frac{\binom{2 k}{k}^2 \binom{4 k}{2 k}}{2^{8 k}}, & \nu=-\frac14, \\[2pt]
\frac{\binom{2 k}{k} \binom{3 k}{k} \binom{6 k}{3 k}}{2^{6 k} 3^{3 k}}, & \nu=-\frac16, \\
\end{cases}\tag{\ref{eq:s_nu_k}$'$}\label{eq:s_nu_k'}\end{align}and  harmonic numbers\begin{align}
\varsigma_{\nu,k}={}&\begin{cases}6 \left(\mathsf H_{2 k}-\mathsf H_k-\log 2\right), & \nu=-\frac12,\ \\
3\mathsf  H_{3 k}+2 \mathsf H_{2 k}-5 \mathsf H_k-2 \log 2-3 \log 3, & \nu=-\frac13, \\
4 \left(\mathsf H_{4 k}-\mathsf H_k-2 \log 2\right), & \nu=-\frac14, \\
3 \left(2 \mathsf H_{6 k}-\mathsf H_{3 k}-\mathsf H_k-2 \log 2-\log 3\right), & \nu=-\frac16, \\
\end{cases}\tag{\ref{eq:varsigma_nu_k}$'$}\label{eq:vargsigma_nu_k'}\\\tau_{\nu,k}-(\varsigma_{\nu,k})^{2}={}&\begin{cases}6\left[-2\mathsf H_{2k}^{(2)}+\mathsf H_k^{(2)}
+\frac{\pi^{2}}{6} \right],
& \nu=-\frac{1}{2}, \\[4pt]
-9\mathsf H_{3k}^{(2)}-4\mathsf H_{2k}^{(2)}+5\mathsf H_k^{(2)}
+\frac{4\pi^{2}}{3} ,& \nu=-\frac{1}{3}, \\[4pt]
4\left[-4\mathsf H_{4k}^{(2)}+\mathsf H_k^{(2)}
+\frac{\pi^{2}}{2} \right], & \nu=-\frac{1}{4}, \\[4pt]
3\left[-12\mathsf H_{6k}^{(2)}+3\mathsf H_{3k}^{(2)}+\mathsf H_k^{(2)}
+\frac{4\pi^{2}}{3} \right], & \nu=-\frac{1}{6}. \\
\end{cases}\tag{\ref{eq:tau_nu_k}$'$}\label{eq:tau_nu_k'}
\end{align}}\begin{lemma}[Series representation for products of Legendre functions]\label{lm:CFZ}For $ \nu\in(-1,0)$ and $ t\in\big(0,\frac12\big]$,\footnote{One may analytically continue to $ 0<|4t(1-t)|\leq1$ and  $ \R t\leq \frac12$, as stated in Theorem \ref{thm:RamanujanSun}. In terms of $ z_{t,N}$ [see \eqref{eq:z_N_lim'}] for $ N\in\{1,2,3,4\}$, the condition $ \R t\leq \frac12$ may be rewritten as $ |z_{t,N}|\geq\frac1{\sqrt N}$ and $ |\R z_{t,N}|\leq\frac12$. } we have \begin{align}[P_{\nu}(1-2t)]^2={}&\sum_{k=0}^\infty[t(1-t)]^ks_{\nu,k}\equalscolon\sigma_\nu(4t(1-t)),\label{eq:P+P+}\end{align}\begin{align}\begin{split}
&P_{\nu}(1-2t)\left[ \frac{\pi P_{\nu}(2t-1)}{\sin(\nu\pi)}+P_\nu(1-2t)\log \frac{1}{4t(1-t)} \right]\\={}&\sum_{k=0}^\infty[t(1-t)]^ks_{\nu,k}\varsigma_{\nu,k}\equalscolon\varsigma_\nu(4t(1-t)),\end{split}\label{eq:P+P-}
\end{align}and\begin{align}\begin{split}&
\left[ \frac{\pi P_{\nu}(1-2t)}{\sin(\nu\pi)} \right]^{2}+\left[ \frac{\pi P_{\nu}(2t-1)}{\sin(\nu\pi)}+P_\nu(1-2t)\log \frac{1}{4t(1-t)} \right]^2\\={}&\sum_{k=0}^\infty[t(1-t)]^ks_{\nu,k}\tau_{\nu,k}\equalscolon\tau_\nu(4t(1-t)).\label{eq:P+P+_+_P-P-}\end{split}
\end{align}\end{lemma}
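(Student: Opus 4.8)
\emph{Overall plan.} I would deduce all three identities from one $\varepsilon$-perturbed Clausen product formula, reading off its Taylor coefficients in $\varepsilon$ at $\varepsilon=0$. For $t\in\big(0,\tfrac12\big]$, Kummer's quadratic transformation $_2F_1\big(A,B;A{+}B{+}\tfrac12;w\big)={}_2F_1\big(2A,2B;A{+}B{+}\tfrac12;4w(1{-}w)\big)$, applied with $2A=-\nu+\varepsilon$, $2B=\nu+1+\varepsilon$ (so $A+B+\tfrac12=1+\varepsilon$) and $w=t$, gives $_2F_1(-\nu+\varepsilon,\nu+1+\varepsilon;1+\varepsilon;t)={}_2F_1\big(\tfrac{-\nu+\varepsilon}2,\tfrac{\nu+1+\varepsilon}2;1+\varepsilon;4t(1-t)\big)$. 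Squaring and invoking Clausen's formula \cite{Clausen1828}, $\big[{}_2F_1(A,B;A{+}B{+}\tfrac12;z)\big]^2={}_3F_2(2A,2B,A{+}B;A{+}B{+}\tfrac12,2A{+}2B;z)$ with $A=\tfrac{-\nu+\varepsilon}2$, $B=\tfrac{\nu+1+\varepsilon}2$, yields the master identity
\[
\big[{}_2F_1(-\nu+\varepsilon,\nu+1+\varepsilon;1+\varepsilon;t)\big]^2={}_3F_2\big(-\nu+\varepsilon,\nu+1+\varepsilon,\tfrac12+\varepsilon;1+\varepsilon,1+2\varepsilon;4t(1-t)\big),
\]
both sides holomorphic in $\varepsilon$ near $0$. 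Setting $\varepsilon=0$ and recognizing $\tfrac{(1/2)_k}{k!}=\tfrac{\binom{2k}k}{4^k}$ turns the right-hand side into $\sum_{k\ge0}s_{\nu,k}[t(1-t)]^k$, which is \eqref{eq:P+P+}.

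\emph{First $\varepsilon$-derivative, proof of \eqref{eq:P+P-}.} Differentiate the master identity at $\varepsilon=0$. Since $\partial_\varepsilon$ is linear, $\partial_\varepsilon\big|_0\,{}_2F_1(-\nu+\varepsilon,\nu+1+\varepsilon;1+\varepsilon;t)$ is the sum of the two derivatives supplied by Lemma \ref{lm:FB}, namely the right-hand sides of \eqref{eq:FZ1} and \eqref{eq:FZ2}; collecting logarithms, $\partial_\varepsilon\big|_0\,{}_2F_1=\tfrac{\pi P_\nu(2t-1)}{2\sin(\nu\pi)}+\tfrac12P_\nu(1-2t)\log\tfrac1{4t(1-t)}+C_\nu P_\nu(1-2t)$ with $C_\nu:=\tfrac12\big[-2\gamma_0-\psi^{(0)}(-\nu)-\psi^{(0)}(\nu+1)+2\log2\big]$. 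Hence the left-hand side of the differentiated identity equals $P_\nu(1-2t)\big[\tfrac{\pi P_\nu(2t-1)}{\sin(\nu\pi)}+P_\nu(1-2t)\log\tfrac1{4t(1-t)}\big]+2C_\nu[P_\nu(1-2t)]^2$. On the right, termwise $\varepsilon$-differentiation of the $_3F_2$ gives $\sum_k s_{\nu,k}[t(1-t)]^k(\varsigma_{\nu,k}+2C_\nu)$, because the base-point shift $3\psi^{(0)}(1)-\psi^{(0)}(-\nu)-\psi^{(0)}(\nu+1)-\psi^{(0)}(\tfrac12)$ separating the digamma \emph{differences} produced by differentiation from the digamma \emph{values} assembled into $\varsigma_{\nu,k}$ is precisely $2C_\nu$. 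The two $2C_\nu$-terms cancel by \eqref{eq:P+P+}, leaving \eqref{eq:P+P-}.

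\emph{Second $\varepsilon$-derivative, proof of \eqref{eq:P+P+_+_P-P-}.} Differentiating the master identity twice at $\varepsilon=0$, the right-hand side becomes $\sum_k s_{\nu,k}[t(1-t)]^k\big(L_k'(0)^2+L_k''(0)\big)$, where $L_k(\varepsilon)$ is the logarithm of the $k$-th $_3F_2$-coefficient; using $\psi^{(1)}(-\nu)+\psi^{(1)}(\nu+1)=\pi^2/\sin^2(\nu\pi)$ and $\psi^{(1)}(\tfrac12)=3\psi^{(1)}(1)$ this rearranges to $\tau_\nu(4t(1-t))$ plus an explicit combination of $\varsigma_\nu$, $\sigma_\nu$ (coefficients in $\mathbb Q[\gamma_0,\log2,\psi^{(0)}(-\nu),\psi^{(0)}(\nu+1)]$) and an auxiliary series $\sum_k s_{\nu,k}[t(1-t)]^k\mathsf H_k^{(2)}$. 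On the left, $\partial_\varepsilon^2\big|_0\,[{}_2F_1]^2=2\big(\partial_\varepsilon\big|_0{}_2F_1\big)^2+2P_\nu(1-2t)\,\partial_\varepsilon^2\big|_0\,{}_2F_1(-\nu+\varepsilon,\nu+1+\varepsilon;1+\varepsilon;t)$; the last ingredient comes from the Frobenius process carried to second order: differentiating the hypergeometric equation twice shows that $F_2:=\partial_\varepsilon^2\big|_0\,{}_2F_1$ solves the inhomogeneous Legendre equation $\frac{\D}{\D t}\big[t(1-t)\frac{\D F_2}{\D t}\big]+\nu(\nu+1)F_2=2P_\nu(1-2t)+2\,\partial_\varepsilon\big|_0{}_2F_1-2(1-2t)\frac{\D}{\D t}\big(\partial_\varepsilon\big|_0{}_2F_1\big)$ with $F_2(0^+),F_2(1^-)$ finite, which determines $F_2$ by variation of parameters over $\{P_\nu(1-2t),P_\nu(2t-1)\}$. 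Substituting this back, all $\sigma_\nu$- and $\varsigma_\nu$-proportional terms as well as the $\mathsf H_k^{(2)}$-series cancel against their counterparts on the right, leaving \eqref{eq:P+P+_+_P-P-}. Finally, each side being holomorphic on $(\mathbb C\smallsetminus\mathbb R)\cup(0,1)$, analytic continuation removes the restriction $t\in\big(0,\tfrac12\big]$, and standard limiting arguments extend the range to $0<|4t(1-t)|\le1$, $\R t\le\tfrac12$.

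\emph{Main obstacle.} Parts \eqref{eq:P+P+}--\eqref{eq:P+P-} are clean; the work is in \eqref{eq:P+P+_+_P-P-}. One must push the Frobenius--Zagier analysis of Lemma \ref{lm:FB} to second order in $\varepsilon$, solve the displayed inhomogeneous Legendre ODE explicitly enough that the logarithmic debris organizes transparently, and — most delicately — verify that the stray $\sum_k s_{\nu,k}[t(1-t)]^k\mathsf H_k^{(2)}$ series, which is unavoidably generated by the trigamma part of $\tau_{\nu,k}$ against the fixed factorial $k!=(1)_k$ in the Clausen $_3F_2$, is exactly annihilated by the matching component of $P_\nu(1-2t)\,F_2$. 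An equivalent but more hands-off route is to verify directly that both sides of \eqref{eq:P+P+_+_P-P-} satisfy the same inhomogeneous symmetric-square Legendre ODE and stay bounded as $t\to0^+$ and $t\to1^-$, hence coincide.
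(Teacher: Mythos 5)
Your treatment of \eqref{eq:P+P+} and \eqref{eq:P+P-} is correct and is essentially the paper's own argument: quadratic transformation plus Clausen's identity gives the $\varepsilon$-deformed master identity \eqref{eq:Clausen}, the value at $\varepsilon=0$ gives \eqref{eq:P+P+}, and the first $\varepsilon$-derivative, evaluated through Lemma \ref{lm:FB}, gives \eqref{eq:P+P-} after the constant $2C_\nu$-terms cancel against \eqref{eq:P+P+}.

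For \eqref{eq:P+P+_+_P-P-}, however, your proposal has a genuine gap. Writing $\partial_\varepsilon^2\big|_0[{}_2F_1]^2=2F_1^2+2P_\nu(1-2t)F_2$, the term $\big[\tfrac{\pi P_\nu(1-2t)}{\sin(\nu\pi)}\big]^2$ on the left of \eqref{eq:P+P+_+_P-P-} can only come from $2P_\nu(1-2t)F_2$, so you genuinely need a closed form for $F_2=\partial_\varepsilon^2\big|_0\,{}_2F_1(-\nu+\varepsilon,\nu+1+\varepsilon;1+\varepsilon;t)$, i.e.\ a second-order extension of the Frobenius--Zagier Lemma \ref{lm:FB}. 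You never produce it: the inhomogeneous Legendre equation you write for $F_2$ is correct, but your normalization ``$F_2(0^+),F_2(1^-)$ finite'' fails, since near $t=1$ the deformed series behaves like $(1-t)^{-\varepsilon}$ times regular terms, so $F_2$ diverges like $\log^2(1-t)$; the constants in the variation-of-parameters formula must instead be fixed at $t=0$ (finiteness plus $F_2(0)=0$). More importantly, the asserted cancellation of the $\varsigma_\nu$-, $\sigma_\nu$- and $\sum_k s_{\nu,k}[t(1-t)]^k\mathsf H_k^{(2)}$-type debris is exactly the hard content of \eqref{eq:P+P+_+_P-P-} and is only claimed, not verified. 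Your fallback (both sides solve the same symmetric-square ODE and are bounded at $t\to0^+$ and $t\to1^-$) is also untenable: the left-hand side is unbounded as $t\to1^-$, and indeed the identity does not hold there (it is an identity in $4t(1-t)$ only on $\R t\le\tfrac12$); any uniqueness argument must be anchored at $t=0$. The paper sidesteps all of this with a subtraction trick: using Euler's transformation \eqref{eq:EulerPnumu} it pairs the Clausen identity \eqref{eq:Clausen} with the companion identity \eqref{eq:Clausen'} for $\frac{\varepsilon\pi}{\sin(\varepsilon\pi)}P_\nu^{-\varepsilon}P_\nu^{\varepsilon}$, and takes $\partial_\varepsilon^2\big|_0$ of the \emph{difference}; the second-order order-derivatives of $P_\nu^{\mp\varepsilon}$ cancel in that difference, so only the first-order information already contained in Lemma \ref{lm:FB} is needed, while on the series side the difference of the two ${}_3F_2$'s produces exactly $\tau_{\nu,k}$. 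Either adopt that device, or supply and prove the second-order analogue of Lemma \ref{lm:FB} explicitly; as written, the third identity is not established.
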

\begin{proof}Consider the following quadratic transformation  \cite[(3.1.3)]{AAR} of hypergeometric functions for  $ t\in\big(0,\frac12\big]$: \begin{align}
{_2}F_1\left(\left. \begin{array}{@{}c@{}}
-\nu+\varepsilon,\nu+1+\varepsilon\ \\
1+\varepsilon\ \\
\end{array} \right| t\right)={_2}F_1\left(\left. \begin{array}{@{}c@{}}
\frac{-\nu+\varepsilon}{2},\frac{\nu+1+\varepsilon}{2}\ \\
1+\varepsilon\ \\
\end{array} \right| 4t(1-t)\right),
\end{align}where $ \varepsilon$ is a free parameter for the moment. Combining this with Clausen's identity for squares of $_2F_1$ \cite[Exercise 2.13 or 3.17(d)]{AAR}, we have \begin{align}
\left[{_2}F_1\left(\left. \begin{array}{@{}c@{}}
-\nu+\varepsilon,\nu+1+\varepsilon\ \\
1+\varepsilon\ \\
\end{array} \right| t\right)\right]^2={_3}F_2\left(\left. \begin{array}{@{}c@{}}
-\nu+\varepsilon,\nu+1+\varepsilon,\frac{1}{2}+\varepsilon\ \\
1+\varepsilon,1+2\varepsilon\ \\
\end{array} \right| 4t(1-t)\right)\label{eq:Clausen}
\end{align}for $ 2\varepsilon\notin\mathbb Z_{<0}$.  Setting $ \varepsilon=0$ in \eqref{eq:Clausen}, we arrive at \eqref{eq:P+P+}.

Thanks to the Frobenius--Zagier process (Lemma \ref{lm:FB}), we can differentiate  \eqref{eq:Clausen} with respect to  $\varepsilon $ at  $ \varepsilon=0$, which verifies  \eqref{eq:P+P-}.

In fact,  using  Euler's  hypergeometric transformation \cite[(2.2.7)]{AAR} \begin{align}
\begin{split}&{_2}F_1\left(\left. \begin{array}{@{}c@{}}
-\nu+\varepsilon,\nu+1+\varepsilon\ \\
1+\varepsilon\ \\
\end{array} \right| t\right)[t(1-t)]^{\varepsilon/2}\\={}&{_2}F_1\left(\left. \begin{array}{@{}c@{}}
-\nu,\nu+1\ \\
1+\varepsilon\ \\
\end{array} \right| t\right)\left(\frac{t}{1-t}\right)^{\varepsilon/2}\equalscolon\Gamma(1+\varepsilon)P^{-\varepsilon}_\nu(1-2t),\end{split}
\label{eq:EulerPnumu}\end{align}one may rework the derivations of  \eqref{eq:P+P+}--\eqref{eq:P+P-} by differentiating the associated Legendre function $ P_\nu^\mu(x),x\in(-1,1)$ with respect to its order $\mu$ \cite{Brychkov2010Pnumu}.

Furthermore, for $ \varepsilon\notin\mathbb Z$,  one can deduce \begin{align}
\frac{\varepsilon\pi}{\sin(\varepsilon\pi)}P^{-\varepsilon}_\nu(1-2t)P^{\varepsilon}_\nu(1-2t)={_3}F_2\left(\left. \begin{array}{@{}c@{}}
-\nu,\nu+1,\frac{1}{2}\ \\1+\varepsilon,1-\varepsilon\ \\
\end{array} \right| 4t(1-t)\right)\label{eq:Clausen'}
\end{align} from a companion to Clausen's identity \cite[Exercise 2.13]{AAR}. As we reexpress\begin{align}
\lim_{\varepsilon\to0}\frac{\partial^{2}}{\partial\varepsilon^{2}}\left\{ \frac{\left[\Gamma(1+\varepsilon)P^{-\varepsilon}_\nu(1-2t)\right]^{2}}{[t(1-t)]^{\varepsilon}}-\frac{\varepsilon\pi}{\sin(\varepsilon\pi)}P^{-\varepsilon}_\nu(1-2t)P^{\varepsilon}_\nu(1-2t)\right\}
\end{align}through termwise derivatives of the generalized hypergeometric series [cf.\ \eqref{eq:Clausen} and \eqref{eq:Clausen'}]\begin{align}
\left.\frac{\partial^{2}}{\partial\varepsilon^{2}}\right|_{\varepsilon=0}\left[ {_3}F_2\left(\left. \begin{array}{@{}c@{}}
-\nu+\varepsilon,\nu+1+\varepsilon,\frac{1}{2}+\varepsilon\ \\
1+\varepsilon,1+2\varepsilon\ \\
\end{array} \right| 4t(1-t)\right)- {_3}F_2\left(\left. \begin{array}{@{}c@{}}
-\nu,\nu+1,\frac{1}{2}\ \\
1+\varepsilon,1-\varepsilon\ \\
\end{array} \right| 4t(1-t)\right)\right],
\end{align} we get \eqref{eq:P+P+_+_P-P-}.
\end{proof}\begin{remark}Equivalent forms of \eqref{eq:P+P+}--\eqref{eq:P+P+_+_P-P-} have appeared in Kam Cheong Au's answer (posted under the username ``{pisco}'') to Zhi-Wei Sun's question on \url{https://mathoverflow.net/questions/436205/} about \cite[Conjecture 29]{Sun2022}. Our proof here,  being slightly different from Au's  argument, follows the presentations of Chudnovsky--Chudnovsky
\cite[(1.3)]{ChudnovskyChudnovsky1988} and Chan--Wan--Zudilin \cite[\S2]{ChanWanZudilin2013} more closely.
\eor\end{remark}\begin{remark}For $ \nu=-\frac12$, both \eqref{eq:P+P+} and \eqref{eq:P+P-}  were derived by Watson \cite{Watson1939}, who also computed derivatives  with respect to  a free parameter $\varepsilon$. \eor\end{remark}\begin{remark}Clausen's identity has a natural analog over finite fields \cite{EvansGreene2009}, which may be helpful in future investigations of Sun's supercongruences in \cite[Conjecture 29]{Sun2022}.\eor\end{remark}
\subsection{Ramanujan--Sun series\label{subsec:RamanujanSun}}
Exploiting the Wro\'nskian determinant      \begin{align} W[P_\nu(x),P_\nu(-x)]\colonequals P_\nu(x)\frac{\D P_\nu(-x)}{\D x}-P_\nu(-x)\frac{\D P_\nu(x)}{\D x}=-\frac{2\sin(\nu\pi)}{\pi(1-x^{2})},\label{eq:W_Pnu}\end{align}  one can compute\begin{align}
\frac{\pi\I \frac{iP_{\nu}(-\xi)}{P_{\nu}(\xi)}}{2\sin(\nu\pi)}\left[\big(1-\xi^{2}\big)\frac{\D }{\D \xi}-R_\nu(\xi)\right]\begin{pmatrix}[P_\nu(\xi)]^2 \\
P_\nu(\xi)P_\nu(-\xi) \\
[P_\nu(-\xi)]^2 \\
\end{pmatrix}=\begin{pmatrix}1 \\
-i\R \frac{iP_{\nu}(-\xi)}{P_{\nu}(\xi)} \\[5pt]
-\left\vert \frac{iP_{\nu}(-\xi)}{P_{\nu}(\xi)} \right\vert^{2} \\
\end{pmatrix}\label{eq:Rnu_canc}
\end{align} for $  \nu\in(-1,0)$, where the function $ R_\nu(\xi)$ is defined in \eqref{eq:R_nu_defn} for $ \xi\in(\mathbb C\smallsetminus\mathbb R)\cup(-1,1)$, $P_\nu(\xi)\neq0$,  and $P_\nu(-\xi)\neq0$, with no restrictions on $\nu$. As pointed out in Au's aforementioned post, the first (resp.\ second) entry on the right-hand side of the equation above is responsible for Ramanujan's celebrated series representations of $ \frac1\pi$ \cite[Chapter 14]{Cooper2017Theta} (resp.\ Sun's infinite series in \cite[Conjecture 29]{Sun2022}). In particular, with the notations in Theorem \ref{thm:RamanujanSun}, we have \begin{align}
\mathsf \Sigma_\nu(a,b;c_k;X_{\nu}(t))=\sum_{k=0}^\infty [t(1-t)]^k(ak+b)c_k s_{\nu,k},
\end{align}where $ s_{\nu ,k}$ is given in \eqref{eq:s_nu_k'} for $ \nu\in\left\{-\frac12,-\frac13,-\frac14,-\frac16\right\}$, so\begin{align}
\mathsf \Sigma_\nu\left(-2\xi,-R_{\nu}(\xi);1;X_{\nu}\left(\frac{1-\xi}{2}\right)\right)=\left[\big(1-\xi^{2}\big)\frac{\D }{\D \xi}-R_\nu(\xi)\right][P_\nu(\xi)]^2=\frac{2\sin(\nu\pi)}{\pi\I \frac{iP_{\nu}(-\xi)}{P_{\nu}(\xi)}}
\end{align}explains \eqref{eq:Rama_pi}, an identity of Ramanujan's type, while{\allowdisplaybreaks\begin{align}
\begin{split}&
\mathsf \Sigma_\nu\left(-2\xi,-R_{\nu}(\xi);\varsigma_{\nu,k};X_{\nu}\left(\frac{1-\xi}{2}\right)\right)+\mathsf \Sigma_\nu\left(0,-2\xi;1;X_{\nu}\left(\frac{1-\xi}{2}\right)\right)\\={}&\left[\big(1-\xi^{2}\big)\frac{\D }{\D \xi}-R_\nu(\xi)\right]\left\{ P_{\nu}(\xi)\left[ \frac{\pi P_{\nu}(-\xi)}{\sin(\nu\pi)}+P_\nu(\xi)\log \frac{1}{1-\xi^{2}} \right] \right\}\\{}&-[P_\nu(\xi)]^2\big(1-\xi^{2}\big)\frac{\D }{\D \xi}\log \frac{1}{1-\xi^{2}}\\={}&-\frac{2\sin(\nu\pi)\log\big(1-\xi^2\big)}{\pi\I \frac{iP_{\nu}(-\xi)}{P_{\nu}(\xi)}}-2i\frac{\R \frac{iP_{\nu}(-\xi)}{P_{\nu}(\xi)}}{\I \frac{iP_{\nu}(-\xi)}{P_{\nu}(\xi)}}\end{split}
\end{align}}accounts for \eqref{eq:Sun_log}, an identity of Sun's type.
As hinted in Au's post, one may also take this procedure one step further, to produce  series identities that are not found in either Ramanujan's or Sun's works. Concretely speaking, after   spelling out a linear combination of the three rows in  \eqref{eq:Rnu_canc} as \begin{align}
\begin{split}&\mathsf \Sigma_\nu\left(-2\xi,-R_{\nu}(\xi);\tau_{\nu,k};X_{\nu}\left(\frac{1-\xi}{2}\right)\right)+\mathsf \Sigma_\nu\left(0,-4\xi;\varsigma_{\nu,k};X_{\nu}\left(\frac{1-\xi}{2}\right)\right)\\={}&
\left[\big(1-\xi^{2}\big)\frac{\D }{\D \xi}-R_\nu(\xi)\right]\left\{ \left[ \frac{\pi P_{\nu}(\xi)}{\sin(\nu\pi)} \right]^{2}+\left[ \frac{\pi P_{\nu}(-\xi)}{\sin(\nu\pi)}+P_\nu(\xi)\log \frac{1}{1-\xi^{2}} \right]^2 \right\}\\{}&-2P_{\nu}(\xi)\left[ \frac{\pi P_{\nu}(-\xi)}{\sin(\nu\pi)}+P_\nu(\xi)\log \frac{1}{1-\xi^{2}} \right]\big(1-\xi^{2}\big)\frac{\D }{\D \xi}\log \frac{1}{1-\xi^{2}}
\\={}&\frac{2\pi\left( 1- \left\vert \frac{iP_{\nu}(-\xi)}{P_{\nu}(\xi)} \right\vert^{2}\right)}{\sin(\nu\pi)\I \frac{iP_{\nu}(-\xi)}{P_{\nu}(\xi)}}+4i\frac{\R \frac{iP_{\nu}(-\xi)}{P_{\nu}(\xi)}}{\I \frac{iP_{\nu}(-\xi)}{P_{\nu}(\xi)}}\log\big(1-\xi^2\big)+\frac{2\sin(\nu\pi)}{\pi\I \frac{iP_{\nu}(-\xi)}{P_{\nu}(\xi)}}\log^2\big(1-\xi^2\big),\end{split}
\end{align}and getting rid of some trailing terms with the help from \eqref{eq:Rama_pi} and \eqref{eq:Sun_log}, we arrive at \eqref{eq:Au_log_sqr}.

Furthermore,  after careful analytic continuation, one may extend the Ramanujan--Sun series in  Theorem \ref{thm:RamanujanSun}  to cases where $ t\in\big(\frac{1-\sqrt{2}}{2},0\big)$. For example, a record-breaking \cite{Preston1992} formula of Chudnovsky--Chudnovsky  \cite[(1.5)]{ChudnovskyChudnovsky1988}\begin{align}\begin{split}
\mathsf\Sigma_{-1/6}\left(a_{163},b_{163};1;X_{163}\right)\colonequals{}&\sum_{k=0}^\infty\binom{2k}k\binom{3 k}{k}\binom{6 k}{3k} \left(a_{163}k+b_{163}\right)(X_{163})^k\\={}&\frac{2^7 \sqrt{3\cdot5^3 \cdot23^3 \cdot29^3}}{\pi }\end{split}
\end{align} for \begin{align}
\left\{ \begin{array}{@{}r@{{}\colonequals{}}l}a_{163}&\smash[t]{2\cdot3^2\cdot 7\cdot11\cdot19\cdot127\cdot163,}
\\b_{163}&13\cdot 1045493,\\X_{163}&- \smash[b]{\dfrac{1}{2^{18} \cdot3^3 \cdot5^3 \cdot23^3 \cdot29^3}}
\end{array} \right.
\end{align}(resp.\ its counterpart\begin{align}\begin{split}{}&
\mathsf\Sigma_{-1/6}\left(a_{163},b_{163};3(2 \mathsf H_{6 k}-\mathsf H_{3 k}-\mathsf H_k)^{};X_{163}\right)+\mathsf\Sigma_{-1/6}\left(0,a_{163};1;X_{163}\right)\\\colonequals{}&\sum_{k=0}^\infty\binom{2k}k\binom{3 k}{k}\binom{6 k}{3k}\left[3\left(a_{163}k+b_{163}\right)(2 \mathsf H_{6 k}-\mathsf H_{3 k}-\mathsf H_k)+a_{163}\right](X_{163})^k\\={}&-\frac{2^7 \sqrt{3\cdot5^3 \cdot23^3 \cdot29^3}}{\pi }\log| X_{163}|\end{split}
\end{align}in Sun's context  \cite[Conjecture 29(iv)]{Sun2022}) follows from an application of the first (resp.\ second) entry on the right-hand side of \eqref{eq:Rnu_canc} to an analytic continuation of  \eqref{eq:P+P+} [resp.\  \eqref{eq:P+P-}] for a particular choice of parameters\begin{align}
\nu=-\frac{1}{6},\quad t=\frac{1-\xi_{163}}{2}=\frac{1}{2}-\frac{1}{2}\sqrt{\frac{3^3\cdot 7^2\cdot 11^2\cdot 19^2\cdot 127^2 \cdot163}{2^{12} \cdot 5^3 \cdot 23^3 \cdot 29^3}}\label{eq:163}
\end{align} that is associated with a modular invariant \begin{align} X_{163}=\frac{1-\xi_{163}^2}{1728}=\dfrac{1}{j\left(\frac{1+i\sqrt{163}}{2}\right)}.\end{align} Here, to compute \begin{align} R_{-1/6}(\xi_{163})=\frac{13\cdot 1045493}{2^6\sqrt{3\cdot 5^{3}\cdot 23^{3}\cdot 29^{3}\cdot 163}}\end{align} for the specific $\xi_{163}\in(1,\infty)$ given  in \eqref{eq:163}, one may set $ z=\frac{1+i\sqrt{163}}{2}$ in \cite[(2.2.20)]{AGF_PartI}\begin{align}
R_{-1/6}\left( \sqrt{\frac{j(z)-1728}{j(z)}}
\right)\sqrt{\frac{j(z)-1728}{j(z)}}={}&-\frac{E_{6}(z)}{3[E_{4}(z)]^{2}}\left[ E_2(z)-\frac{ E_6(z)}{E_4(z)} \right]
\end{align} for \begin{align}
\I z>0,\quad |\R z|\leq\frac{1}{2},\quad |z|\geq1,
\end{align}where the  Eisenstein series are given by \begin{align}
\begin{cases}E_2(z)\colonequals\displaystyle \smash[t]{1-{\frac{3}{\pi\I z}-24\sum_{n=1}^\infty\frac{ne^{2\pi inz}}{1-e^{2\pi inz}} }},\\
E_4(z)\colonequals \displaystyle 1+240\nsum\limits_{n=1}^\infty\dfrac{n^3e^{2\pi inz}}{1-e^{2\pi inz}},\\
E_6(z)\colonequals \displaystyle \smash[b]{1-{504\nsum\limits_{n=1}^\infty\dfrac{n^5e^{2\pi inz}}{1-e^{2\pi inz}}}}.\\
\end{cases}\label{eq:E2E4E6}\\[-12pt]\notag
\end{align}Using the same arithmetic data given above, one can also establish \begin{align}
\begin{split}
&\mathsf\Sigma_{-1/6}\left(a_{163},b_{163};c_{-1/6,k}^{2}+d_{-1/6,k}^{}=9(2 \mathsf H_{6 k}-\mathsf H_{3 k}-\mathsf H_k)^{2}+3\left[-12\mathsf H_{6k}^{(2)}+3\mathsf H_{3k}^{(2)}+\mathsf H_k^{(2)}
 \right];X_{163}\right)\\{}&+\mathsf\Sigma_{-1/6}\left(0,2a_{163};3(2 \mathsf H_{6 k}-\mathsf H_{3 k}-\mathsf H_k);X_{163}\right)\\\colonequals{}&\sum_{k=0}^\infty\binom{2k}k\binom{3 k}{k}\binom{6 k}{3k}\left[\left(a_{163}k+b_{163}\right)\left( c_{-1/6,k}^{2}+d_{-1/6,k}^{} \right)+6a_{163}(2 \mathsf H_{6 k}-\mathsf H_{3 k}-\mathsf H_k)\right](X_{163})^k\\={}&\frac{2^7 \sqrt{3\cdot5^3 \cdot23^3 \cdot29^3}}{\pi }\left( 163\pi^2-\log^2|X_{163}|\right)
\end{split}
\end{align}via an analytic continuation of \eqref{eq:Au_log_sqr}. It is a well-known fact  that $ \log|X_{163}|$ is very close to $ -\sqrt{163}\pi$, so the series above evaluates to a fairly small number.

Likewise, for $ \nu\in\left\{-\frac14,-\frac13,-\frac12\right\}$ and $ N=4\sin^2(\nu\pi)\in\{2,3,4\}$, the following relations \cite[(2.2.21)]{AGF_PartI}
\begin{align}
R_{\nu}(1-2\alpha_N(z))={}&-\frac{N-1}{6}\frac{N^{2}[E_{2}(Nz)]^2-N^{2}E_{4}(N z)-[E_{2}(z)]^2+E_{4}(z)}{[NE_{2}(Nz)-E_{2}(z)]^{2}}\label{eq:LegendreRamanujan_E}
\end{align}will be useful in the analytic continuation of the corresponding Ramanujan--Sun series.
\subsection{Some series descending from Clausen couplings\label{subsec:corClausen}}
Before closing this section, we mention some  simple consequences of Lemma \ref{lm:CFZ} with regard to binomial harmonic sums that lie outside the scope of Theorem \ref{thm:RamanujanSun}.
\subsubsection{Derivatives and differential equations for products of  two Legendre functions}
One may construct Table  \ref{tab:central_deriv_FZ} by evaluating \eqref{eq:Pnu0deriv} with   \eqref{eq:P+P+}--\eqref{eq:P+P-}. The bulkier  identities associated with \eqref{eq:P+P+_+_P-P-} are omitted from this table.

 \begin{table}[t]\caption{Selected series involving $ \binom{2k}k^3$, where $ h_k^{(r)}\colonequals\mathsf H_{2k}^{(r)}-\frac{1}{2^{r}}\mathsf H_k^{(r)}$, $ \lambda\colonequals\log2$,  $\zeta(s)\colonequals\sum_{k=1}^\infty\frac{1}{k^s} $, $ \beta(s)\colonequals\sum_{k=0}^\infty\frac{(-1)^k}{(2k+1)^{s}}$, and $ G\colonequals\beta(2)$ \label{tab:central_deriv_FZ}}

\begin{scriptsize}\begin{align*}\begin{array}{r@{}l}\hline\hline
\displaystyle \vphantom{\frac{\frac{\frac11}{1}}{1}}\sum_{k=0}^\infty h_k^{(2)} \frac{\binom{2k}k^3}{2^{6k}}={}&\displaystyle-\frac{\big[\Gamma\big(\frac{1}{4}\big)\big]^4}{4\pi^{3}}\left(G-\frac{\pi^2}8\right)  \\
\displaystyle \sum_{k=0}^\infty\left\{  h_k^{(4)}-\left[ h_k^{(2)}\right]^{2}\right\} \frac{\binom{2k}k^3}{2^{6k}}={}&\displaystyle -\frac{\big[\Gamma\big(\frac{1}{4}\big)\big]^4}{4\pi^{3}}\left[ \beta(4)+\left(G-\frac{\pi^2}8\right)^{2}-\frac{\pi^{4}}{96} \right] \\\displaystyle \sum_{k=0}^\infty \left\{ h_k^{(6)} -\frac{3h_k^{(4)}-\left[ h_k^{(2)}\right]^{2}}{2}h_k^{(2)}\right\} \frac{\binom{2k}k^3}{2^{6k}}={}&\displaystyle -\frac{\big[\Gamma\big(\frac{1}{4}\big)\big]^4}{4\pi^{3}} \left\{ \beta (6)+\frac{3}{2} \left[ \beta (4)-\frac{\pi^{4}}{96}\right] \left(G-\frac{\pi^2}8\right)\vphantom{\big(^2}\right.\\{}&\displaystyle\left.{}+\frac{1}{2} \left(G-\frac{\pi^2}8\right)^3 -\frac{\pi ^6}{960}\right\}    \\\hline \displaystyle \vphantom{\frac{\frac{\frac11}{1}}{1}}\sum_{k=0}^\infty\left[ h_k^{(3)} -\frac{3}{2}\big( \mathsf H_{2k}-\mathsf H_k-\lambda \big)h_k^{(2)}\right] \frac{\binom{2k}k^3}{2^{6k}}={}&\displaystyle \frac{\big[\Gamma\big(\frac{1}{4}\big)\big]^4}{32\pi^{3}}[7\zeta (3)-2\pi G]  \\\displaystyle \sum_{k=0}^\infty \left\{ h_{k} ^{(5)}-h_k^{(2)} h_k^{(3)}+3\big( \mathsf H_{2k}-\mathsf H_k-\lambda \big)\frac{\left[ h_k^{(2)}\right]^{2}-h_k^{(4)}}{4}\right\}\frac{\binom{2k}k^3}{2^{6k}}={}&\displaystyle \frac{\big[\Gamma\big(\frac{1}{4}\big)\big]^4}{32\pi^{3}}\left\{ \frac{31\zeta(5)}{4}-\pi\big[\beta(4)+G^2\big]+7\zeta(3)\left(G-\frac{\pi^2}8\right)\right\}\\\hline\hline
\end{array}\end{align*}\end{scriptsize}
\end{table}

There is a class of infinite series similar to  \eqref{eq:P+P+} that can be evaluated in closed form, despite its absence from Sun's recent conjectures  \cite{Sun2022}. Consider the following third-order differential equation \cite[(16)]{Zhou2013Pnu}
 \begin{align}
\frac{\D}{\D \xi}\left\{\big(1-\xi^2 \big)\frac{\D}{\D \xi}\left[ \big(1-\xi^2 \big)\frac{\D f(\xi)}{\D \xi}\right]+4\nu(\nu+1)\big(1-\xi^2 \big) f(\xi)\right\}+4\nu(\nu+1)
\xi f(\xi)=0\label{eq:LegendreSqrDiff_nu}
\end{align}satisfied by $ f(\xi)=[P_\nu(\xi)]^2$, which allows us to find the anti-derivative of $-\frac{\xi}{2}[P_\nu(\xi)]^2 $ in the context of \eqref{eq:P+P+}, namely\begin{align}\begin{split}&
\frac{1-\xi^2 }{4\nu(\nu+1)}\frac{\D}{\D \xi}\left[ \big(1-\xi^2 \big)P_\nu(\xi)\frac{\D P_\nu(\xi)}{\D \xi}\right]+\frac{1-\xi^2 }{2}[P_\nu(\xi)]^2\\={}&\frac{\big(1-\xi^2\big)^{2}}{4\nu(\nu+1)}\left[ \frac{\D P_\nu(\xi)}{\D \xi} \right]^{2}+\frac{1-\xi^2 }{4}[P_\nu(\xi)]^2\\={}&\sum_{k=0}^\infty\binom{2k}k\frac{(-\nu)_k(\nu+1)_k}{k!(k+1)!}\left(\frac{1-\xi^2}{4}\right)^{k+1}=\frac{1-\xi^2}{4}{_3F_2}\left( \left.\begin{array}{@{}c@{}}
\frac12,-\nu,\nu+1\ \\
1,2\\
\end{array}\right|1 -\xi^{2}\right)\end{split}
\end{align}for $ \nu\in(-1,0)$, $ \R \xi>0$, and  $ \big|1-\xi^2\big|<1$. In the $ \xi\to 0$ limit, this brings us (cf.\ \cite[(2.10)]{CampbellCantariniAurizio2022} for the case where $\nu=-1/2$) \begin{align}
_3F_2\left( \left.\begin{array}{@{}c@{}}
\frac12,-\nu,\nu+1\ \\[4pt]
1,2\\
\end{array}\right|1 \right)=\sum_{k=0}^\infty\frac{\binom{2k}k}{2^{2k}}\frac{(-\nu)_k(\nu+1)_k}{k!(k+1)!}=\frac{  \nu  (\nu +1)\pi}{4\big[ \Gamma \big(\frac{2-\nu }{2}\big) \Gamma \big(\frac{\nu +3}{2}\big)\big]^2}+\frac{\pi }{\big[\Gamma \big(\frac{1-\nu }{2}\big) \Gamma \big(\frac{\nu+2 }{2}\big)\big]^2}
\end{align}for $ \nu\in(-1,0)$.

\subsubsection{Integrals over products of  two Legendre functions}

Thanks to the closed-form evaluations (cf.\ \cite[items 7.113.1, 7.113.3, and 7.112.3]{GradshteynRyzhik}){\allowdisplaybreaks\begin{align}
\int_{0}^1[P_\nu(\xi)]^2\D\xi={}&\frac{1}{2 \nu +1}\left\{1+\frac{\sin (\nu\pi   )}{\pi} \left[ \psi ^{(0)}\left(\frac{\nu+2 }{2}\right)-\psi ^{(0)}\left(\frac{\nu+1 }{2}\right )\right]\right\},\\\int_{0}^1P_\nu(\xi)P_{\nu}(-\xi)\D\xi={}&\frac{\cos (\nu \pi  )}{2 \nu +1},\\\int_{-1}^1[P_\nu(\xi)]^2\D\xi={}&\frac{2}{2\nu+1}\left[1-\frac{2\sin^2(\nu\pi)}{\pi^{2}}\psi^{(1)}(\nu+1)\right],
\end{align}}it is possible to produce some series identities by manipulating \eqref{eq:P+P+}--\eqref{eq:P+P+_+_P-P-} in a similar fashion
as Corollaries \ref{cor:binomLS}--\ref{cor:HkLS}, as shown below.\begin{corollary}[Integral couplings of two Legendre functions]\label{cor:Int2Pnu}We have \begin{align}\begin{split}{}&
2\int_{0}^{1/2}\sigma_\nu(4t(1-t))\D t=\sum_{k=0}^\infty\frac{(-\nu)_k(\nu+1)_k}{(2k+1)(k!)^{2}}\\={}&\begin{cases}\frac{1}{2 \nu +1}\left\{1+\frac{\sin (\nu\pi   )}{\pi} \left[ \psi ^{(0)}\big(\frac{\nu+2 }{2}\big)-\psi ^{(0)}\big(\frac{\nu+1 }{2}\big)\right]\right\},&\nu\neq-\frac12,\\\frac{4G}{\pi},&\nu=-\frac12,\end{cases}\end{split}\end{align}\begin{align}\begin{split}
&2\int_{0}^{1/2}\left[\varsigma_\nu(4t(1-t))-\sigma_\nu(4t(1-t))\log\frac{1}{4t(1-t)}\right]\D t\\={}&\sum_{k=0}^\infty\frac{(-\nu)_k(\nu+1)_k}{(2k+1)(k!)^{2}}\left[ 2\gamma_{0}-2\mathsf H_k-\frac{2}{2k+1}+\psi ^{(0)}(k-\nu )+\psi ^{(0)}(k+\nu +1)\right]\\={}&\begin{cases}\frac{\pi\cot(\nu\pi)}{2\nu+1}, & \nu\neq-\frac12, \\
-\frac{\pi^{2}}{2} ,& \nu=-\frac12, \\
\end{cases}\end{split}\end{align}{\allowdisplaybreaks\begin{align}\begin{split}&
2\int_{0}^{1/2}\left[\tau_\nu(4t(1-t))-2\varsigma_\nu(4t(1-t))\log\frac{1}{4t(1-t)}+\sigma_\nu(4t(1-t))\log^{2}\frac{1}{4t(1-t)}\right]\D t\\={}&\sum_{k=0}^\infty\frac{(-\nu)_k(\nu+1)_k}{(2k+1)(k!)^{2}}\left\{ \tau_{\nu,k} -4\left(\mathsf H_{2k}-\mathsf H_k+\frac{1}{2k+1}-\log 2 \right)\varsigma_{\nu,k}\right.\\{}&\left.{}+4\left(\mathsf H_{2k}-\mathsf H_k+\frac{1}{2k+1}-\log 2 \right)^{2}+4\mathsf H_{2k}^{(2)}-2\mathsf H_k^{(2)}+\frac{4}{(2k+1)^{2}}-\frac{\pi^{2}}{3}\right\}\\={}&\begin{cases}\frac{2}{2\nu+1} \left[\frac{\pi ^2}{\sin ^2(\nu\pi   )}-2 \psi ^{(1)}(\nu +1)\right],& \nu\neq-\frac12, \\
28\zeta(3),\ & \nu=-\frac12. \\
\end{cases}\end{split}
\end{align}}Here, for $ \nu=-1/2$, the corresponding infinite  series can be evaluated by independent methods \cite[Corollary 3.16]{Zhou2022mkMpl}.\qed\end{corollary}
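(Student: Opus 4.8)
The plan is to integrate the three Clausen‑coupling identities \eqref{eq:P+P+}--\eqref{eq:P+P+_+_P-P-} of Lemma \ref{lm:CFZ} over $t\in\big(0,\tfrac12\big)$, matching the left‑hand sides against the three displayed evaluations of $\int_0^1[P_\nu(\xi)]^2\D\xi$, $\int_0^1 P_\nu(\xi)P_\nu(-\xi)\D\xi$, and $\int_{-1}^1[P_\nu(\xi)]^2\D\xi$, and matching the right‑hand sides against the term‑by‑term integrals of the power series in $t(1-t)$.

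\emph{Left-hand sides.} First I would substitute $\xi=1-2t$, so that $t(1-t)=\tfrac{1-\xi^2}{4}$, $\D t=-\tfrac12\D\xi$, $P_\nu(2t-1)=P_\nu(-\xi)$, $\log\tfrac1{4t(1-t)}=-\log(1-\xi^2)$, and $t\in\big(0,\tfrac12\big)$ runs over $\xi\in(0,1)$. The combination $\varsigma_\nu(4t(1-t))-\sigma_\nu(4t(1-t))\log\tfrac1{4t(1-t)}$ then collapses to $\tfrac{\pi}{\sin(\nu\pi)}P_\nu(\xi)P_\nu(-\xi)$, and $\tau_\nu(4t(1-t))-2\varsigma_\nu(4t(1-t))\log\tfrac1{4t(1-t)}+\sigma_\nu(4t(1-t))\log^2\tfrac1{4t(1-t)}$ collapses to $\big[\tfrac{\pi P_\nu(\xi)}{\sin(\nu\pi)}\big]^2+\big[\tfrac{\pi P_\nu(-\xi)}{\sin(\nu\pi)}\big]^2$, because every power of $\log(1-\xi^2)$ cancels; the integrand of the first identity is simply $[P_\nu(\xi)]^2$. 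Using $\int_0^1[P_\nu(-\xi)]^2\D\xi=\int_{-1}^0[P_\nu(\xi)]^2\D\xi$ to reassemble the symmetric integral, the three left‑hand sides become $\int_0^1[P_\nu(\xi)]^2\D\xi$, $\tfrac{\pi}{\sin(\nu\pi)}\int_0^1 P_\nu(\xi)P_\nu(-\xi)\D\xi$, and $\tfrac{\pi^2}{\sin^2(\nu\pi)}\int_{-1}^1[P_\nu(\xi)]^2\D\xi$, whence the quoted closed forms dispose of the cases $\nu\in(-1,0)\setminus\{-\tfrac12\}$.

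\emph{Right-hand sides.} On the series side I would interchange summation and integration, which is legitimate by dominated convergence: $\varsigma_{\nu,k}$ and $\tau_{\nu,k}$ are bounded sequences, $\sum_k 4^{-k}s_{\nu,k}=[P_\nu(0)]^2<\infty$ with $4^{-k}s_{\nu,k}=O(k^{-3/2})$, and $\log\tfrac1{4t(1-t)}\geq0$ is integrable on $\big(0,\tfrac12\big)$ and vanishes at $t=\tfrac12$. Everything then reduces to the moment $J(s)\colonequals\int_0^{1/2}[t(1-t)]^s\D t=\tfrac{\sqrt{\pi}\,\Gamma(1+s)}{4^{s+1}\Gamma(\tfrac32+s)}$ and its first two derivatives in $s$, which contribute $\psi^{(0)}$ and $\psi^{(1)}$ evaluated at $1+k$ and $\tfrac32+k$; the overall prefactor appears through $s_{\nu,k}J(k)=\tfrac{(-\nu)_k(\nu+1)_k}{2(2k+1)(k!)^2}$. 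Substituting $\psi^{(0)}(k+\tfrac12)=2\mathsf H_{2k}-\mathsf H_k-2\log2-\gamma_0$, $\psi^{(0)}(k+1)=\mathsf H_k-\gamma_0$ and the corresponding trigamma differences into the definitions \eqref{eq:varsigma_nu_k}--\eqref{eq:tau_nu_k} of $\varsigma_{\nu,k}$, $\tau_{\nu,k}$ absorbs the $\log$‑ and $\log^2$‑corrections and lands precisely on the bracketed combinations in the statement. I expect this digamma/trigamma bookkeeping---three nested layers of it for the last identity---to be the only delicate point; there is no conceptual obstacle.

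\emph{The $\nu=-\tfrac12$ cases.} Here $2\nu+1$, $\cot(\nu\pi)$, and $\tfrac{\pi^2}{\sin^2(\nu\pi)}-2\psi^{(1)}(\nu+1)$ all carry removable zeros, so I would pass to the limit $\nu\to-\tfrac12$ in the identities already established for general $\nu\in(-1,0)$, using $\psi^{(1)}\big(\tfrac12\big)=\tfrac{\pi^2}{2}$, $\psi^{(2)}\big(\tfrac12\big)=-14\zeta(3)$, the reflection formulae for $\psi^{(0)}$ and $\psi^{(1)}$, and $\int_0^1[P_{-1/2}(\xi)]^2\D\xi=\tfrac{4G}{\pi}$; alternatively, one may simply invoke \cite[Corollary 3.16]{Zhou2022mkMpl} for the three $\nu=-\tfrac12$ series, as already noted in the statement.
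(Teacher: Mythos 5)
Your proposal is correct and follows essentially the same route the paper intends: integrate the Clausen-coupling identities \eqref{eq:P+P+}--\eqref{eq:P+P+_+_P-P-} over $t\in\big(0,\tfrac12\big)$, observe that the logarithmic corrections collapse the integrands to $[P_\nu(\xi)]^2$, $\tfrac{\pi}{\sin(\nu\pi)}P_\nu(\xi)P_\nu(-\xi)$, and $\big[\tfrac{\pi P_\nu(\xi)}{\sin(\nu\pi)}\big]^2+\big[\tfrac{\pi P_\nu(-\xi)}{\sin(\nu\pi)}\big]^2$ so that the quoted closed-form integrals of two Legendre functions apply, and integrate the series termwise via the beta-moment $J(s)$ and its $s$-derivatives; the digamma/trigamma bookkeeping you outline does land exactly on the stated brackets, and the $\nu\to-\tfrac12$ limits with $\psi^{(1)}\big(\tfrac12\big)=\tfrac{\pi^2}{2}$, $\psi^{(2)}\big(\tfrac12\big)=-14\zeta(3)$ give $\tfrac{4G}{\pi}$, $-\tfrac{\pi^2}{2}$, and $28\zeta(3)$ as claimed.
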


In  \S\ref{sec:CG_coupling}, we will encounter more sophisticated corollaries to Clausen couplings.

\section{Green--Epstein variations on Sun's series\label{sec:GreenSun}}The automorphic Green's function (cf.\  \cite[p.~207]{GrossZagier1985}, \cite[pp.~238--239]{GrossZagierI} and \cite[p.~544]{GrossZagierII}) of weight $k\in2\mathbb Z_{>1}$ and level $N\in\mathbb Z_{>0}$ is defined by\begin{align}
G_{k/2}^{\mathfrak H/\overline{\varGamma}_0(N)}(z_1,z_2)\colonequals{}&-\sum_{\substack{a,b,c,d\in\mathbb Z\\ ad-Nbc=1}}Q_{\frac{k}{2}-1}
\left( 1+\frac{\left\vert z_{1} -\frac{a z_2+b}{Ncz_{2}+d}\right\vert ^{2}}{2\I z_1\I\frac{a z_2+b}{Ncz_{2}+d}} \right),\label{eq:wtkAGF}
\end{align}with  $ Q_{\nu}(X):=\int_0^\infty{\big(X+\sqrt{X^2-1}\cosh u\big)^{-\nu-1}}\D u$  being the Legendre function of the second kind for $ X>1,\nu>-1$.

In this section, we will construct analogs of  Sun's series that can be evaluated via automorphic Green's functions of weight $4$ and levels $N\in\{1,2,3\}$, corresponding to  $ Q_1(X)\colonequals-1+\frac{X}{2}\log\frac{X+1}{X-1},X>1$ in \eqref{eq:wtkAGF}. Here, the infinite series in \eqref{eq:wtkAGF} evaluates to a finite number if the two points $ z_1,z_2\in\mathfrak H\colonequals\{w\in\mathbb C|\I w>0\}$ satisfy $ \alpha_N(z_1)\neq\alpha_N(z_2)\in\mathbb C\cup\{\infty\}$ when $N\in\{2,3\}$ and $ j(z_1)\neq j(z_2)$ when $ N=1$, with modular invariants defined in \eqref{eq:alpha_N}--\eqref{eq:jKlein}.
\begin{theorem}[Green--Sun series]\label{thm:GreenSun}
 For    $|t|\leq 1$ and $ t\notin [-1,0]\cup\{1\}$, we have  the following integral representations of infinite series:{\allowdisplaybreaks\begin{align}\begin{split}
\mathsf S^G_{-1/3}(t)\colonequals{}&-\frac{3\sqrt{3}}{\pi}\sum_{k=1}^\infty\binom{2 k}{k}\binom{3 k}{k} \left(\frac{t}{3^{3}}\right)^k\left[ \psi ^{(0)}\left(k+\frac{2}{3}\right)-\psi ^{(0)}\left(k+\frac{1}{3}\right)-\frac{\pi }{\sqrt{3}}\right]\\={}&\int^1_{1-2t}[P_{-1/3}(-\xi)P_{-1/3}(1-2t)-P_{-1/3}(\xi)P_{-1/3}(2t-1)]P_{-1/3}(\xi)\D\xi,\end{split}\label{eq:GS3}\\\begin{split}
\mathsf S^G_{-1/4}(t)\colonequals{}&-\frac{2\sqrt{2}}{\pi}\sum_{k=1}^\infty\binom{2 k}{k}\binom{4 k}{2k} \left(\frac{t}{2^{6}}\right)^k\left[ \psi ^{(0)}\left(k+\frac{3}{4}\right)-\psi ^{(0)}\left(k+\frac{1}{4}\right) -\pi\right]\\={}&\int^1_{1-2t}[P_{-1/4}(-\xi)P_{-1/4}(1-2t)-P_{-1/4}(\xi)P_{-1/4}(2t-1)]P_{-1/4}(\xi)\D\xi,\end{split}\label{eq:GS4}\\\begin{split}
\mathsf S^G_{-1/6}(t)\colonequals{}&-\frac{3}{2\pi}\sum_{k=1}^\infty\binom{3 k}{k}\binom{6 k}{3k} \left(\frac{t}{2^{4}3^3}\right)^k\left[ \psi ^{(0)}\left(k+\frac{5}{6}\right)-\psi ^{(0)}\left(k+\frac{1}{6}\right)-\sqrt{3}\pi\right]\\={}&\int^1_{1-2t}[P_{-1/6}(-\xi)P_{-1/6}(1-2t)-P_{-1/6}(\xi)P_{-1/6}(2t-1)]P_{-1/6}(\xi)\D\xi.\end{split}\label{eq:GS6}
\end{align}}These series are related to automorphic Green's functions by the following sum rules for $ |t|<1$ and $ |1-t|<1$:\begin{align}\begin{split}&\R\left[\frac{\sin(\nu \pi )}{\I\frac{iP_{\nu}(1-2t)}{P_{\nu}(2t-1)}}\frac{\mathsf S^G_{\nu}(t)}{P_{\nu}(2t-1)}+\frac{\sin(\nu \pi )}{\I\frac{iP_{\nu}(2t-1)}{P_{\nu}(1-2t)}}\frac{\mathsf S^G_{\nu}(1-t)}{P_{\nu}(1-2t)}\right]-\frac{\sin (2\nu\pi   )}{2 \nu +1}
\\={}&\frac{1-2 \sin ^2(2\nu \pi   )}{4 (2 \nu +1)^2 \pi\cos ^2(\nu\pi   )}G^{\mathfrak H/\overline\varGamma_0(4\sin^2(\nu\pi))}_2\left( \frac{P_{\nu}(2t-1)}{2i\sin(\nu \pi )P_{\nu}(1-2t)},\frac{1-i\cot(\nu\pi)}{2} \right)\\={}&\frac{1-2 \sin ^2(2\nu \pi   )}{4 (2 \nu +1)^2 \pi\cos ^2(\nu\pi   )}G^{\mathfrak H/\overline\varGamma_0(4\sin^2(\nu\pi))}_2\left( \frac{P_{\nu}(1-2t)}{2i\sin(\nu \pi )P_{\nu}(2t-1)},\frac{1-i\cot(\nu\pi)}{2} \right),\end{split}
\end{align}where $N=4\sin^2(\nu\pi)\in\{1,2,3\} $ for $ \nu\in\left\{-\frac16,-\frac14,-\frac13\right\}$.\end{theorem}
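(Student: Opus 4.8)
\emph{Proof proposal.}

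\textbf{Part 1 (the integral representations).} I would first identify $\mathsf S^G_\nu(t)$ with a multiple of $\partial_\nu P_\nu(1-2t)$. Differentiating $P_\nu(1-2t)=\sum_{k\ge0}\frac{(-\nu)_k(\nu+1)_k}{(k!)^2}t^k$ termwise in $\nu$, using $\partial_\nu\log(-\nu)_k=\psi^{(0)}(-\nu)-\psi^{(0)}(k-\nu)$ and $\partial_\nu\log(\nu+1)_k=\psi^{(0)}(k+\nu+1)-\psi^{(0)}(\nu+1)$, gives $\partial_\nu P_\nu(1-2t)=\sum_{k\ge1}\frac{(-\nu)_k(\nu+1)_k}{(k!)^2}t^k\big[\psi^{(0)}(k+\nu+1)-\psi^{(0)}(k-\nu)+\psi^{(0)}(-\nu)-\psi^{(0)}(\nu+1)\big]$, and the reflection formula makes the $k$-independent tail $\psi^{(0)}(-\nu)-\psi^{(0)}(\nu+1)=\pi\cot(\nu\pi)$, which equals $-\pi/\sqrt3,-\pi,-\pi\sqrt3$ at $\nu=-\frac13,-\frac14,-\frac16$; since moreover $\frac{(-\nu)_k(\nu+1)_k}{(k!)^2}t^k$ is exactly the binomial-times-rational-weight coefficient, this yields $\mathsf S^G_\nu(t)=\frac{2\sin(\nu\pi)}{(2\nu+1)\pi}\,\partial_\nu P_\nu(1-2t)$. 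To produce the integral I would differentiate the Legendre equation $\frac{\D}{\D\xi}[(1-\xi^2)P_\nu'(\xi)]+\nu(\nu+1)P_\nu(\xi)=0$ in $\nu$: then $u_\nu:=\partial_\nu P_\nu$ solves the inhomogeneous equation with forcing $-(2\nu+1)P_\nu$. Variation of parameters with the homogeneous solutions $P_\nu(\xi),P_\nu(-\xi)$ is elementary because $(1-\xi^2)W[P_\nu(\xi),P_\nu(-\xi)]=-\frac{2\sin(\nu\pi)}{\pi}$ is constant (cf.\ \eqref{eq:W_Pnu}); a particular solution is $\frac{(2\nu+1)\pi}{2\sin(\nu\pi)}\int_x^1\big[P_\nu(x)P_\nu(-\xi)-P_\nu(-x)P_\nu(\xi)\big]P_\nu(\xi)\,\D\xi$, which is a $C^2$ function on $(0,1)$ with finite one-sided limits at the endpoints — in fact both it and $\partial_\nu P_\nu$ vanish as $x\to1^-$, the integrand being only logarithmically singular at $\xi=1$ — so their difference, a homogeneous Legendre solution, is identically $0$, exactly as in the proof of Lemma \ref{lm:FB}. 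Multiplying by $\frac{2\sin(\nu\pi)}{(2\nu+1)\pi}$ and setting $x=1-2t$ yields \eqref{eq:GS3}--\eqref{eq:GS6}, which analytic continuation extends to $t\notin[-1,0]\cup\{1\}$. (The degree $\nu=-\frac12$ is excluded because the forcing $-(2\nu+1)P_\nu$ vanishes there, demanding a second-order expansion, handled separately in \S\ref{sec:FZ_Sun}.)

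\textbf{Part 2 (the sum rules).} These are proved by comparison with the closed-form integral representation of the weight-$4$ automorphic Green's function. The crucial structural point is that $z_2=\frac{1-i\cot(\nu\pi)}{2}$ is an elliptic point of $\overline\varGamma_0(N)$ (order $3$ for $N=4\sin^2(\nu\pi)\in\{1,3\}$, order $2$ for $N=2$): the Fricke involution $W_N\colon z\mapsto-1/(Nz)$ carries it to $z_2-1\sim_{\varGamma_0(N)}z_2$, so the Hauptmodul ($\alpha_N$, or $j$ when $N=1$) takes its ``value at infinity'' there; consequently the orbit $\varGamma_0(N)z_2$ never meets the curve $\{z_{t,N}\}$ for $t$ in the lens $|t|<1,\ |1-t|<1$, and $G_2^{\mathfrak H/\overline\varGamma_0(N)}(z_{t,N},z_2)$ stays finite on it. Using Ramanujan's base changes of \S\ref{subsec:mod_para_LegendreSun} to write both arguments in terms of $t$, I would line up the left-hand expression with the representation of $G_2^{\mathfrak H/\overline\varGamma_0(N)}$ at a pair (generic point, elliptic point) as an integral of a triple product of Legendre functions established in our earlier work on automorphic Green's functions: that integrand is $P_\nu(\xi)$ times a combination of $P_\nu\big(\pm(1-2t)\big)$, and splitting the $\xi$-range into $[1-2t,1]$ and (after $\xi\mapsto-\xi$) $[2t-1,1]$ reproduces exactly $\mathsf S^G_\nu(t)$ and $\mathsf S^G_\nu(1-t)$ with the weights $\frac{\sin(\nu\pi)}{\I(\cdots)}\frac1{P_\nu(\cdot)}$ displayed in the statement. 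The leftover boundary term is the elementary constant $-\frac{\sin(2\nu\pi)}{2\nu+1}=-\frac{2\sin(\nu\pi)\cos(\nu\pi)}{2\nu+1}$, read off from $\int_0^1P_\nu(\xi)P_\nu(-\xi)\,\D\xi=\frac{\cos(\nu\pi)}{2\nu+1}$ and its companions in Corollary \ref{cor:Int2Pnu}. Finally, the equality of the two displayed forms of the right-hand side is $G_2(z_1,z_2)=G_2(W_Nz_1,W_Nz_2)$ (as $W_N$ normalises $\varGamma_0(N)$), together with $W_Nz_{t,N}=z_{1-t,N}$ and $W_Nz_2\sim_{\varGamma_0(N)}z_2$.

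\textbf{Main obstacle.} The differentiations, variation of parameters and endpoint bookkeeping are routine; the real work sits in Part 2, in two places. First, one must pin down the normalisation $\frac{1-2\sin^2(2\nu\pi)}{4(2\nu+1)^2\pi\cos^2(\nu\pi)}$ (the numerator is $\cos(4\nu\pi)$), which forces one to track simultaneously the normalisation of the $Q_1$-kernel in \eqref{eq:wtkAGF}, the Jacobian by which the hyperbolic metric transports under the uniformisation $z\mapsto\alpha_N(z)$, and the factor $1/e$ coming from the order-$e$ elliptic stabiliser of $z_2$. Second, one must carry out the analytic continuation from the interval $t\in(0,\frac12]$ — where $z_{t,N}$ lies on the imaginary axis inside the standard fundamental region, all quantities are real, and $\R[\,\cdot\,]$ is vacuous — to the entire lens $|t|<1,\ |1-t|<1$, honouring the branch prescriptions built into the definitions of $\mathsf S^G_\nu$ across $t\in[-1,0]$ and the monodromy of $t\mapsto z_{t,N}$.
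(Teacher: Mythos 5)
Your proposal follows essentially the same route as the paper: your Part 1 is exactly Lemma \ref{lm:derivPnu} (identify $\mathsf S^G_\nu(t)$ with $\tfrac{2\sin(\nu\pi)}{(2\nu+1)\pi}\,\partial P_\nu(1-2t)/\partial\nu$ and solve the $\nu$-differentiated Legendre equation by variation of parameters via the Wro\'nskian \eqref{eq:W_Pnu} and the boundary behavior at $\xi=1$), and your Part 2 matches Proposition \ref{prop:G2}, which cites the same integral representation of $G_2^{\mathfrak H/\overline\varGamma_0(N)}\big(z,\tfrac{1-i\cot(\nu\pi)}{2}\big)$ from the earlier automorphic Green's function work, combined with Ramanujan's modular parametrization, the Fricke-involution symmetry, and the constant $-\sin(2\nu\pi)/(2\nu+1)$ coming from $\int_0^1P_\nu(\xi)P_\nu(-\xi)\,\D\xi=\cos(\nu\pi)/(2\nu+1)$, exactly as you describe. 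The only difference is cosmetic: the paper additionally sketches a first-principles spectral verification of Proposition \ref{prop:G2} (the difference of the two representations is a bounded function annihilated by $\Delta^{\mathfrak H}_z-2$ vanishing at the cusps), whereas you rely solely on the cited representation.
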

\begin{remark}As we may recall from the proof of Corollary \ref{cor:LS_CM}, for $ N=4\sin^2(\nu\pi)\in\{2,3\}$, the statements in the theorem above can be paraphrased by Ramanujan's modular parametrization\begin{align}
z=\frac{iP_\nu(2\alpha_{N}(z)-1)}{\sqrt{N}P_\nu(1-2\alpha_{N}(z))},\label{eq:z_Pnu_ratio}
\end{align}which is valid for $\alpha_{N}(z)\in(\mathbb C\smallsetminus\mathbb R)\cup(0,1) $  and $ z$ in the range specified by \eqref{eq:D234}; for $N=1$, one has \begin{align}
z=\lim_{\varepsilon\to0^+}\frac{iP_{-1/6}\big(\!-\!\sqrt{[j(z+i\varepsilon)-1728]/j(z+i\varepsilon)}\big)}{P_{-1/6}\big(\sqrt{[j(z+i\varepsilon)-1728]/j(z+i\varepsilon)}\big)}\quad
\end{align}when $z$ satisfies \eqref{eq:D1}. Furthermore, extending the definition of harmonic numbers   $   \mathsf H_n\colonequals\sum_{k=1}^n\frac1{k},n\in\mathbb Z_{\geq0}$ to   $  \mathsf H_{w}\colonequals\sum_{k=1}^\infty\big(\frac{1}{k} -\frac{1}{k+w}\big)=\psi ^{(0)}(w+1)+\gamma_{0},w\in\mathbb C\smallsetminus \mathbb Z_{<0}$, one may view  \eqref{eq:GS3}--\eqref{eq:GS6} as generalizations of the binomial harmonic sums  in \eqref{eq:binomH}.  \eor\end{remark}\begin{remark}The original formulation of automorphic Green's function by Gross--Zagier   \cite{GrossZagierI} involves localization of the N\'eron--Tate heights over non-archimedean places. For future work, it is perhaps worthwhile to explore some $p$-adic components of Sun's conjectures \cite{Sun2022} using the non-archimedean  local heights
of Gross--Zagier    \cite[I.4  and III]{GrossZagierI} and/or Disegni's construction of the   $p$-adic Gross--Zagier formula \cite[Theorem B and
\S5]{Disegni2022GZp}.
\eor\end{remark}

 For each  Hecke congruence group \begin{align} \varGamma_0(N):=\left\{ \left.\left(\begin{smallmatrix}a&b\\c&d\end{smallmatrix}\right)\right| a,b,c,d\in\mathbb Z; ad-bc=1;c\equiv0\pmod N\right\}\end{align}of level $N\in\mathbb Z_{>1}$, define the Epstein zeta function on $\varGamma_0(N)$     as  \cite[Chap.\ II, (2.14)]{GrossZagierI} \begin{align}
E^{\varGamma_0(N)}(z,s):=\sum_{\hat\gamma\in\left.\left(\begin{smallmatrix}*&*\\0&*\end{smallmatrix}\right)\right\backslash\varGamma_0(N)}[\I(\hat\gamma z )]^{s},\quad z\in\mathfrak H,\R s>1.
\end{align} For $N=1$, we have \cite[p.~207]{GrossZagier1985}\begin{align}
E^{\varGamma_0(1)}(z,s)=\frac{1}{2\zeta(2s)}\sum_{\substack{m,n\in\mathbb Z\\m^2+n^2\neq0}}\frac{(\I z)^s}{|mz+n|^{2s}},
\end{align}a form that is very similar to the Eisenstein series. The Epstein zeta function is related to the automorphic Green's function by the following relation (see \cite[p.~240, (2.19)]{GrossZagierI} or \cite[p.~39, (6.5)]{Hejhal1983}):\begin{align}
E^{\varGamma_0(N)}(z,s)=-\lim_{z'\to i\infty}\frac{(2s-1)(\I z')^{s-1}}{4\pi}G_{s}^{\mathfrak H/\overline{\varGamma}_0(N)}(z,z')\label{eq:EZF_HeckeN}
\end{align} for $ s\in\mathbb Z_{>1}$  and $ N\in\mathbb Z_{>0}$.

Another goal of the current section is to establish  series representations for certain  $ E^{{\varGamma}_0(4)}(z,2)$, as stated in the next theorem.  \begin{theorem}[Epstein--Sun series]\label{thm:EpsteinSun}For    $|t|\leq 1$ and $ t\notin [-1,0]\cup\{1\}$, we have\begin{align}
\begin{split}
\mathsf S^E_{-1/2}(t)\colonequals{}&\frac{8}{\pi}\sum_{k=1}^\infty\binom{2k}k^2\left( \frac{t}{2^4} \right)^{k}\left[\mathsf H_{2k}^{(2)}-\frac14 \mathsf H_k^{(2)}\right]\\={}&\frac{16}{\pi^{3}}\int_0^t\mathbf K\big(\sqrt{s}\big)\big[\mathbf K\big(\sqrt{1-s}\big)\mathbf K\big(\sqrt{t}\big)-\mathbf K\big(\sqrt{s}\big)\mathbf K\big(\sqrt{1-t}\big)\big]\D s,\end{split}
\label{eq:SE_int_repn}\end{align}where  $\mathbf K\big(\sqrt{t}\big)=\frac{\pi}{2}P_{-1/2}(1-2t)$ is the complete elliptic integral of the first kind. There is a sum rule  for $ |t|<1$ and $ |1-t|<1$:

\begin{align}\begin{split}
&2-\R\left[\frac{1}{\I\frac{i\mathbf K(\sqrt{t})}{\mathbf K(\sqrt{1-t})}}\frac{\mathsf S_{-1/2}^E(t)}{\mathbf K\big(\sqrt{1-t}\big)}+\frac{1}{\I\frac{i\mathbf K(\sqrt{1-t})}{\mathbf K(\sqrt{t})}}\frac{\mathsf S_{-1/2}^E(1-t)}{\mathbf K\big(\sqrt{t}\big)}\right]\\={}&\frac{32}{3}E^{{\varGamma}_0(4)}\left(-\frac{\mathbf K\big(\sqrt{t}\big)}{2\big[i\mathbf K\big(\sqrt{1-t}\big) -\mathbf K\big(\sqrt{t}\big) \big]},2\right),\end{split}\label{eq:SE_sum}
\end{align}where the Epstein zeta function $ E^{\varGamma_0(4)}(z,2)$ is given by \eqref{eq:EZF_HeckeN}. \end{theorem}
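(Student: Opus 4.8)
The assertion splits into the integral representation \eqref{eq:SE_int_repn} and the sum rule \eqref{eq:SE_sum}; the former is the order-$\varepsilon^{2}$ counterpart of the mechanism behind Theorem \ref{thm:GreenSun}, while the latter carries the modular content.

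\emph{Integral representation.} First I would recognise $\mathsf S^{E}_{-1/2}(t)$ as an order-$\varepsilon^{2}$ term of a degree-perturbed Legendre function. Since $P_{\nu}=P_{-\nu-1}$, the function ${_2}F_1\!\left(\left.\begin{smallmatrix}\frac12-\varepsilon,\ \frac12+\varepsilon\\ 1\end{smallmatrix}\right|t\right)=P_{-1/2+\varepsilon}(1-2t)$ is even in $\varepsilon$, so its Taylor expansion lacks a linear term; from $\big(\tfrac12-\varepsilon\big)_{k}\big(\tfrac12+\varepsilon\big)_{k}=\prod_{j=0}^{k-1}\big[(j+\tfrac12)^{2}-\varepsilon^{2}\big]$ together with $\sum_{j=0}^{k-1}(j+\tfrac12)^{-2}=4\big(\mathsf H_{2k}^{(2)}-\tfrac14\mathsf H_{k}^{(2)}\big)$ one obtains
\[
{_2}F_1\!\left(\left.\begin{smallmatrix}\tfrac12-\varepsilon,\ \tfrac12+\varepsilon\\ 1\end{smallmatrix}\right|t\right)=P_{-1/2}(1-2t)-\frac{\pi\varepsilon^{2}}{2}\,\mathsf S^{E}_{-1/2}(t)+O(\varepsilon^{4}),
\]
hence $\partial_{\nu}^{2}P_{\nu}(1-2t)\big|_{\nu=-1/2}=-\pi\,\mathsf S^{E}_{-1/2}(t)$. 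Differentiating the Legendre equation $\frac{\D}{\D t}\big[t(1-t)f'\big]+\nu(\nu+1)f=0$ for $f=P_{\nu}(1-2t)$ twice in $\nu$ and setting $\nu=-\tfrac12$ (where $2\nu+1=0$ annihilates the first-order source term) shows that $h:=\partial_{\nu}^{2}P_{\nu}(1-2t)\big|_{-1/2}$ solves $\frac{\D}{\D t}\big[t(1-t)h'\big]-\tfrac14 h=-2\,P_{-1/2}(1-2t)$. I would then integrate this by variation of parameters against the homogeneous solutions $P_{-1/2}(1-2t)$ and $P_{-1/2}(2t-1)$, whose modified Wronskian $t(1-t)W$ equals the constant $-\tfrac1\pi$ by \eqref{eq:W_Pnu} at $\nu=-\tfrac12$, $x=1-2t$. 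The Taylor datum $P_{\nu}(1-2t)=1+\nu(\nu+1)t+\cdots$ forces $h(t)=2t+O(t^{2})$ near $t=0$, so $h$ has no logarithmic part and $h(0^{+})=0$, which removes the homogeneous contribution and yields
\[
\mathsf S^{E}_{-1/2}(t)=-\tfrac1\pi h(t)=\int_{1-2t}^{1}\!\big[P_{-1/2}(-\xi)P_{-1/2}(1-2t)-P_{-1/2}(\xi)P_{-1/2}(2t-1)\big]P_{-1/2}(\xi)\,\D\xi;
\]
this is \eqref{eq:SE_int_repn} after $P_{-1/2}(1-2\,\cdot\,)\leftrightarrow\tfrac2\pi\mathbf K(\sqrt{\,\cdot\,})$, and the extension to complex $t$ with $|t|\le1$ together with the boundary values on $[-1,0)$ proceeds exactly as in Theorem \ref{thm:LegendreSun}.

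\emph{Sum rule.} For real $t\in(0,1)$ the prefactor $\bigl[\I\tfrac{i\mathbf K(\sqrt t)}{\mathbf K(\sqrt{1-t})}\bigr]^{-1}\mathbf K(\sqrt{1-t})^{-1}$ collapses to $\mathbf K(\sqrt t)^{-1}$, so the bracket in \eqref{eq:SE_sum} equals $\mathsf S^{E}_{-1/2}(t)/\mathbf K(\sqrt t)+\mathsf S^{E}_{-1/2}(1-t)/\mathbf K(\sqrt{1-t})$; substituting \eqref{eq:SE_int_repn} and folding the second term via $s\mapsto1-s$ rewrites this as $\tfrac{16}{\pi^{3}}\int_{0}^{1}\mathbf K(\sqrt s)\mathbf K(\sqrt{1-s})\,\D s$ minus $\mathscr E(t):=\tfrac{16}{\pi^{3}}\bigl[\tfrac{\mathbf K(\sqrt{1-t})}{\mathbf K(\sqrt t)}\int_{0}^{t}\mathbf K(\sqrt s)^{2}\,\D s+\tfrac{\mathbf K(\sqrt t)}{\mathbf K(\sqrt{1-t})}\int_{t}^{1}\mathbf K(\sqrt{1-s})^{2}\,\D s\bigr]$. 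The evaluation $\int_{0}^{1}P_{\nu}(\xi)P_{\nu}(-\xi)\,\D\xi=\frac{\cos(\nu\pi)}{2\nu+1}$ recorded before Corollary \ref{cor:Int2Pnu} has $\nu\to-\tfrac12$ limit $\tfrac\pi2$, whence $\tfrac{16}{\pi^{3}}\int_{0}^{1}\mathbf K(\sqrt s)\mathbf K(\sqrt{1-s})\,\D s=2$; this cancels the standalone $2$ in \eqref{eq:SE_sum}, and the sum rule reduces to $\mathscr E(t)=\tfrac{32}{3}E^{\varGamma_{0}(4)}(z',2)$ with $z'=-\mathbf K(\sqrt t)\big/\big(2[i\mathbf K(\sqrt{1-t})-\mathbf K(\sqrt t)]\big)$. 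To close this I would invoke $E^{\varGamma_{0}(4)}(z',2)=-\lim_{w\to i\infty}\tfrac{3\,\I w}{4\pi}G_{2}^{\mathfrak H/\overline{\varGamma}_{0}(4)}(z',w)$ from \eqref{eq:EZF_HeckeN}, the Legendre-function representation of the weight-$4$ automorphic Green's function of \eqref{eq:wtkAGF} on $\varGamma_{0}(4)$ in the spirit of \cite{AGF_PartI,AGF_PartII}, and Ramanujan's modular parametrisation $z=iP_{-1/2}(2\alpha_{4}(z)-1)\big/\big(2P_{-1/2}(1-2\alpha_{4}(z))\big)$ (note that $\mathscr E(t)$ is a completed Eichler integral of $\mathbf K(\sqrt{\,\cdot\,})^{2}$, whose modular pullback is a multiple of the weight-$4$ Eisenstein series $\theta_{2}(z)^{4}\theta_{4}(z)^{4}$): inserting the parametrisation and letting the second argument of $G_{2}$ run to the cusp attached to $z'$---the image of $i\infty$ under the scaling matrix of the cusp $\tfrac12$ of $\varGamma_{0}(4)$---isolates the Epstein zeta value while the remaining finite part reproduces $\mathscr E(t)$, the constant $\tfrac{32}{3}$ emerging from the Fourier expansion of $G_{2}$ at that cusp.

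\emph{Main obstacle.} The real work is this last automorphic identification at level $N=4$. Unlike the levels $N\in\{1,2,3\}$ of Theorem \ref{thm:GreenSun}, the space $S_{4}(\varGamma_{0}(4))$ is trivial, and the coefficient $\tfrac{1-2\sin^{2}(2\nu\pi)}{4(2\nu+1)^{2}\pi\cos^{2}(\nu\pi)}$ there degenerates at $\nu=-\tfrac12$ (its denominator $(2\nu+1)^{2}\cos^{2}(\nu\pi)$ acquires a fourth-order zero, while $\mathsf S^{G}_{\nu}\to\mathsf S^{E}_{-1/2}$ stays finite), so one cannot simply pass to the $\nu\to-\tfrac12$ limit of that statement and must instead work with the Eisenstein series directly. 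This forces careful bookkeeping of (i) the scaling matrix relating the cusp $\tfrac12$ to $i\infty$ and its width, (ii) the renormalised $w\to i\infty$ limit defining $E^{\varGamma_{0}(4)}(\,\cdot\,,2)$, and (iii) the branch conventions for $P_{-1/2}$ across $(-\infty,0]$ underlying the modular parametrisation on the lens $|t|<1$, $|1-t|<1$; once these are in place, verifying the constant $\tfrac{32}{3}$ and the cancellation of the remaining additive terms is a finite computation.
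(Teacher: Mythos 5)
Your handling of \eqref{eq:SE_int_repn} is correct and is essentially the paper's own route: the paper obtains $\partial_\nu^2 P_\nu(1-2t)\big|_{\nu=-1/2}$ by differentiating the variation-of-parameters formula \eqref{eq:Pnu_deriv_nu} once more in $\nu$, whereas you apply variation of parameters directly to the twice-$\nu$-differentiated Legendre equation; the two computations are interchangeable, and your identification of the $\varepsilon^2$-coefficient with $\mathsf H_{2k}^{(2)}-\tfrac14\mathsf H_k^{(2)}$ matches the paper. (Minor slip: by \eqref{eq:PnuTaylor} the linear Taylor coefficient is $-\nu(\nu+1)$, so $h(t)=-2t+O(t^2)$, consistent with your own relation $h=-\pi\,\mathsf S^E_{-1/2}$; this does not affect the structure of the argument.)

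The genuine gap is in the sum rule \eqref{eq:SE_sum}, in two respects. First, your reduction is performed only for real $t\in(0,1)$, where the prefactors collapse and the cross term disappears; but \eqref{eq:SE_sum} is a non-holomorphic identity (it involves $\R$, $\I$ and, on the Epstein side, $|\cdot|^2$ through the argument $z$), so its validity on the segment $(0,1)$ cannot be propagated to the two-dimensional lens $|t|<1$, $|1-t|<1$ by analytic continuation. Indeed your target identity $\mathscr E(t)=\tfrac{32}{3}E^{\varGamma_0(4)}(z',2)$ is exactly the $\R\big(z+\tfrac12\big)=0$ specialization of the three-term representation \eqref{eq:Epstein4int}, whose middle term (with coefficient $\R\big(z+\tfrac12\big)$) you have discarded; even if proved, it yields only the real-$t$ case of the theorem. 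Second, and more importantly, the level-$4$ automorphic identification itself --- the analogue of \eqref{eq:Epstein4int}, which is precisely what the paper imports from \cite[(1.14)]{EZF} with a proof by spectral analysis of the Laplacian on $Y_0(4)$ akin to Proposition \ref{prop:G2} --- is never established in your proposal: you correctly flag it as ``the main obstacle'' and sketch a cusp limit of a level-$4$ weight-$4$ Green's function via \eqref{eq:EZF_HeckeN}, but the items you defer (scaling matrix and width at the cusp, the renormalized limit, branch conventions, the constant $\tfrac{32}{3}$) \emph{are} the content of that step, so the half of the theorem carrying the modular information remains unproven. To close the argument you should either prove the full three-term identity \eqref{eq:Epstein4int} for complex $z$ in the range \eqref{eq:lambda_range_Epstein_Hecke4} (e.g.\ by the spectral route you allude to, showing the difference of the two sides is a suitably bounded solution of $(\Delta^{\mathfrak H}_z-2)u=0$ on $Y_0(4)$ that must vanish), or cite \cite{EZF} for it as the paper does, and then carry out the modular-parametrization bookkeeping for general complex $t$ rather than only on the real segment.
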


The proofs and applications of the last two theorems will occupy \S\S\ref{subsec:nu_deriv}--\ref{subsec:GreenEpstein}.

In \S\ref{subsec:4F3ud}, we will rework the Guillera--Rogers theory \cite{GuilleraRogers2014} via automorphic representations of the Epstein zeta functions $ E^{\varGamma_0(N)}(z,2)$ for $ N\in\{2,3,4\}$, based on  the modular invariants $ \alpha_N(z),N\in\{2,3,4\}$  in \eqref{eq:alpha_N},    the Eisenstein series in \eqref{eq:E2E4E6}, as well as  the Legendre--Ramanujan functions $ R_\nu(1-2\alpha_N(z))$ in \eqref{eq:LegendreRamanujan_E}.\begin{theorem}[Guillera--Rogers summation formulae]\label{thm:GR_sum} If $ |4\alpha_N(z)[1-\alpha_{N}(z)]|\geq1$, $ \alpha_{N}(z)\neq\frac12$,  and $z$ satisfies the inequality constraints in \eqref{eq:D234}, then we have    (cf.\ \cite[Proposition 2]{GuilleraRogers2014}) \begin{align}\begin{split}&\sum_{k=1}^\infty\frac{(k!)^{2}[1-2\alpha_N(z)]\left[2k-\frac{R_\nu(1-2\alpha_N(z))}{1-2\alpha_N(z)}\right]}{k^3\binom{2k}k(-\nu)_k(\nu+1)_k\{\alpha_N(z)[1-\alpha_{N}(z)]\}^k}
\\={}&-\frac{8\pi^{2}}{3}\left[ E^{\varGamma_0(N)}(z,2)-E^{\varGamma_0(N)}\left(-\frac{1}{Nz},2\right) \right]\\{}&+\frac{4\pi^{2}i}{\I z}\R\int_z^{\frac{\R z}{2|\R z|}+\frac{i}{2}\sqrt{\frac{4-N}{N}}}\frac{E_4(w)-N^2E_4(Nw)}{N^{2}-1}(z-w)(\overline z-w)\D w\end{split}\label{eq:GR_Epstein}
\end{align} for  $\nu\in\left\{-\frac{1}{4},-\frac{1}{3},-\frac{1}{2}\right\}$ and $ N=4\sin^2(\nu\pi)\in\{2,3,4\}$. The same is true (by continuity) when the conditions  $|\R z|<\frac12$, $\big|z+\frac1N\big|>\frac1N $, and $\big|z-\frac1N\big|>\frac1N $  in  \eqref{eq:D234} are relaxed to $|\R z|\leq\frac12$,  $\big|z+\frac1N\big|\geq\frac1N $, and $\big|z-\frac1N\big|\geq\frac1N $.

Here, the real part  of \eqref{eq:GR_Epstein} can be reformulated with (see \cite[p.\ 240, (2.16)]{GrossZagierI}) \begin{align}
E^{\varGamma_0(N)}(z,2)-E^{\varGamma_0(N)}\left(-\frac{1}{Nz},2\right)=\frac{E^{\varGamma_0(1)}(Nz,2)-E^{\varGamma_0(1)}(z,2)}{N^{2}-1}\label{eq:EZF_add}
\end{align} for $ N\in\{2,3,4\}$. Meanwhile, the imaginary part of \eqref{eq:GR_Epstein}  is equal to  \begin{align}\begin{split}&
\frac{4\pi^{2}}{3\I z}\left( \R z-\frac{\R z}{2|\R z|} \right)\left[ \left( \R z-\frac{\R z}{2|\R z|} \right)^2+3(\I z)^2+\frac{12-N}{4N} \right]\\{}&+\frac{4\pi^{2}\R\Big[  \widetilde{\mathscr E}_4(Nz)-N\widetilde{\mathscr E}_4(z) \Big]}{N(N^{2}-1)\I z},\end{split}\label{eq:ImGRsum}
\end{align}where the real part of     \begin{align} \widetilde{\mathscr E}_4(z)\colonequals\int_z^{i\infty}[1-E_4(w)](z-w)(\overline z-w)\D w\label{eq:EichlerE4}\end{align}  satisfies (cf.\  \cite[Proposition 3]{GuilleraRogers2014})\begin{align}\R\widetilde{\mathscr E}_4(z)={}&\begin{cases}0, & 2\R z\in\mathbb Z, \\
-\frac{\R z}{3}\left[ \frac{|z|^{2}+2(\I z)^2}{|z|^{4}} +|z|^{2}+2(\I z)^2-5\right], & 2\R \frac{1}{z}\in\mathbb Z. \\
\end{cases}\label{eq:GR_ReE}
\end{align}    
\end{theorem}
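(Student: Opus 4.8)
The plan is to pull both sides of \eqref{eq:GR_Epstein} down to the modular curve $X_0(N)$ (or $X_0(1)$ when $N=1$) via Ramanujan's parametrization \eqref{eq:z_Pnu_ratio}, writing $t=\alpha_N(z)$ and $\xi=1-2t$, and to realize the left-hand side as a doubly and triply iterated ``logarithmic'' integral of a Clausen coupling. First I would set $u:=4t(1-t)=4\alpha_N(z)[1-\alpha_N(z)]$ and record that, with $a_k:=(k!)^2\big/\big[\binom{2k}k(-\nu)_k(\nu+1)_k\big]=1/s_{\nu,k}$ [cf.\ \eqref{eq:s_nu_k}], the series in \eqref{eq:GR_Epstein} is
\[
(1-2t)\Bigl[2\sum_{k\ge1}\frac{a_k\,(4/u)^{k}}{k^{2}}-\frac{R_\nu(1-2t)}{1-2t}\sum_{k\ge1}\frac{a_k\,(4/u)^{k}}{k^{3}}\Bigr],
\]
a \emph{divergent}-type Ramanujan series in the sense of \cite{GuilleraRogers2014}: the reciprocal argument $4/u$ is precisely why the hypothesis $|4\alpha_N(z)[1-\alpha_N(z)]|\ge1$ is needed for convergence. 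Both inner sums are obtained from $\sum_{k\ge0}a_k(4/v)^{k}-1$ by integrating $\D v/v$ twice, respectively three times (from $v=\infty$), so the whole task is to identify the generating function $\sum_{k\ge0}a_k x^{k}$ with a product of Legendre functions. Here is the point of contact with Lemma \ref{lm:CFZ}: whereas \eqref{eq:P+P+} reads $[P_\nu(\xi)]^2={}_3F_2\bigl(\tfrac12,-\nu,\nu+1;1,1;1-\xi^2\bigr)$, the companion Clausen identity \eqref{eq:Clausen'}, taken in the degenerate limit where the lower parameters $1\pm\varepsilon$ collide, forces a logarithm into the picture and hence a Legendre function of the second kind $Q_\nu$ (the object behind \eqref{eq:wtkAGF}); I would use this to establish an identity of the shape $\sum_{k\ge0}a_k x^{k}=c_\nu\,P_\nu(\widehat\xi)\,Q_\nu(\widehat\xi)\cdot(\text{algebraic factor})$, with $\widehat\xi$ a branch of $\xi$ associated to the reciprocal argument. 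Iterating $\D v/v$ then expresses the left side of \eqref{eq:GR_Epstein} as an integral $\int_{1-2t}^{1}(\cdots)\,\D\xi$ of products of $P_\nu$ and $Q_\nu$, in exact parallel with the integral representations in Theorems \ref{thm:GreenSun} and \ref{thm:EpsteinSun}.

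Next I would match this Legendre-product integral with the automorphic Green's function $G_2^{\mathfrak H/\overline{\varGamma}_0(N)}$. By the Legendre equation \eqref{eq:LegendreODE} and its symmetric square \eqref{eq:LegendreSqrDiff_nu}, each admissible integrand is annihilated in $\xi$ by the third-order operator displayed there; transported to the $z$-coordinate via \eqref{eq:z_Pnu_ratio} — and using $Q_1(X)=-1+\tfrac X2\log\tfrac{X+1}{X-1}$ to do one integration explicitly — the resulting function of $z$ becomes an eigenfunction of the weight-$0$ hyperbolic Laplacian with eigenvalue $2$ away from the $\varGamma_0(N)$-orbit of the diagonal, has a logarithmic singularity along it, and has the prescribed cuspidal decay; this is exactly the characterization of $-\tfrac{c}{2}\,G_2^{\mathfrak H/\overline{\varGamma}_0(N)}(z,z')$ up to a holomorphic ambiguity. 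That ambiguity is a single-valued solution of the associated inhomogeneous equation, hence a second Eichler integral of a holomorphic weight-$4$ form on $\varGamma_0(N)$; since $M_4(\varGamma_0(N))$ is spanned by the Eisenstein series $E_4(dw)$ with $d\mid N$, and since the symmetry $u\leftrightarrow 4/u$ above is the Fricke involution $z\mapsto-1/(Nz)$ visible in the self-reciprocity of $\alpha_N$ in \eqref{eq:alpha_N}, the matching forces this form to be the Fricke-anti-invariant combination $E_4(w)-N^2E_4(Nw)$, producing the last term of \eqref{eq:GR_Epstein}. Letting $z'\to i\infty$ and invoking \eqref{eq:EZF_HeckeN} converts the cuspidal constant term of $G_2$ into an Epstein zeta value, and the Fricke involution accounts for the difference $E^{\varGamma_0(N)}(z,2)-E^{\varGamma_0(N)}(-1/(Nz),2)$; comparing overall scales at one convenient point fixes $-\tfrac{8\pi^2}{3}$ and $\tfrac{4\pi^2 i}{\I z}$. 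The passage from $|4t(1-t)|<1$ to the closed region $|4t(1-t)|\ge1$, $\alpha_N(z)\ne\tfrac12$, and to the boundary of \eqref{eq:D234}, is then analytic continuation along $X_0(N)$, exactly as in the relaxation from the strict to the closed constraints in the statement.

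The three reformulations are comparatively soft. Identity \eqref{eq:EZF_add} is Gross--Zagier's relation among Epstein zeta functions of levels $N$ and $1$ [\cite[p.~240, (2.16)]{GrossZagierI}], obtained by comparing the Eisenstein-series definitions and their functional equations, and it yields the stated real-part reformulation. For the imaginary part I would write $E_4(w)-N^2E_4(Nw)=(1-N^2)-\bigl(1-E_4(w)\bigr)+N^2\bigl(1-E_4(Nw)\bigr)$ and integrate each piece against $(z-w)(\overline z-w)$: the constant $(1-N^2)$ contributes the elementary cubic in $\R z$ and $\I z$ recorded in \eqref{eq:ImGRsum}, while the two cusp-form pieces $1-E_4$ vanish at $i\infty$ and, after the substitution $w\mapsto Nw$ in one of them and after absorbing the contribution of the quadratic-irrational endpoint $\tfrac{\R z}{2|\R z|}+\tfrac i2\sqrt{\tfrac{4-N}{N}}$, assemble into $\widetilde{\mathscr E}_4(Nz)-N\widetilde{\mathscr E}_4(z)$ with $\widetilde{\mathscr E}_4$ as in \eqref{eq:EichlerE4}. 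Finally, \eqref{eq:GR_ReE} is the evaluation of that second Eichler integral at $z$ with $2\R z\in\mathbb Z$ or $2\R(1/z)\in\mathbb Z$; it follows from the Eichler--Shimura functional equation $\widetilde{\mathscr E}_4(-1/z)=z^2\widetilde{\mathscr E}_4(z)+(\text{period polynomial of }1-E_4)$ together with $\widetilde{\mathscr E}_4(z+1)=\widetilde{\mathscr E}_4(z)+(\text{polynomial})$, exactly as in \cite[Proposition 3]{GuilleraRogers2014}.

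I expect the main obstacle to be the middle step: pinning down the precise $P_\nu$--$Q_\nu$ integrand attached to the reciprocal-coefficient series (the degenerate-$\varepsilon$ limit of \eqref{eq:Clausen'} that manufactures the second-kind Legendre function), and then matching the resulting double integral to $G_2^{\mathfrak H/\overline{\varGamma}_0(N)}$ with the \emph{correct} holomorphic Eichler correction $E_4-N^2E_4(N\cdot)$ and the correct numerical constants $-R_\nu(1-2t)/(1-2t)$, $-\tfrac{8\pi^2}{3}$, $\tfrac{4\pi^2}{\I z}$ — that is, the full bookkeeping of prefactors, branch cuts, singular parts and cusp normalizations, where the author's earlier automorphic-Green's-function apparatus \cite{AGF_PartI} and the Legendre--Ramanujan function $R_\nu$ of \eqref{eq:R_nu_defn} (in its modular form \eqref{eq:LegendreRamanujan_E}, used to cancel unwanted boundary terms) do the real work.
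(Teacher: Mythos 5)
Your overall architecture (turn the upside-down sum into an integral of $[P_\nu(1-2s)]^2$-type data, pass to $z$ via \eqref{eq:z_Pnu_ratio}, read off Epstein zeta values through the $z'\to i\infty$ limit \eqref{eq:EZF_HeckeN}, then do the Eichler-integral bookkeeping, \eqref{eq:EZF_add} from Gross--Zagier, and \eqref{eq:GR_ReE} from a reflection/functional equation) is the right shape, but the step on which everything else rests is not established and, as stated, is almost certainly wrong. You claim that the weight-zero reciprocal-coefficient series $\sum_{k\ge0}a_kx^k$ with $a_k=1/s_{\nu,k}$ [cf.\ \eqref{eq:s_nu_k}] equals $c_\nu\,P_\nu(\widehat\xi)Q_\nu(\widehat\xi)$ times an algebraic factor, and that this comes from a ``degenerate limit'' of \eqref{eq:Clausen'}. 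But \eqref{eq:Clausen'} and its $\varepsilon$-derivatives only ever produce series with the \emph{direct} coefficients $s_{\nu,k}$ (possibly decorated by $\varsigma_{\nu,k},\tau_{\nu,k}$ and logarithms, as in \eqref{eq:P+P+}--\eqref{eq:P+P+_+_P-P-}); no manipulation of the Clausen coupling inverts the coefficients. Moreover, $\sum_{k\ge0}a_kx^k={_4F_3}\big(1,1,1,1;\tfrac12,-\nu,\nu+1;\tfrac x4\big)$ is an integer-power series with constant term $1$, whereas every solution of the symmetric-square (Appell) equation at the corresponding point has non-integral local exponents $\tfrac12,-\nu,\nu+1$ for $\nu\in\{-\tfrac12,-\tfrac13,-\tfrac14\}$; so the upside-down series is not a pointwise product of two Legendre functions, and your plan of iterating $\D v/v$ over such a product formula has nothing to integrate. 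The structural fact you are missing — and which is the actual engine of the paper's proof — is that the weight-three series \eqref{eq:GR4F3} satisfies an \emph{inhomogeneous} Appell equation: the fourth-order annihilator factors as (first order)$\,\circ\,\widehat{\mathsf A}_{\nu,t}$ [see \eqref{eq:BfacAppell}], so $\widehat{\mathsf A}_{\nu,t}\mathscr F_\nu(t)=\frac{1-2t}{t(1-t)}$, and variation of parameters (with the Wro\'nskian \eqref{eq:W_Pnu}), parity in $1-2t$, and absence of jump discontinuities across $\mathbb R\smallsetminus[0,1]$ pin the series down as the integral $\frac{\pi^2}{2\sin^2(\nu\pi)}\int_0^1[P_\nu(1-2s)]^2\big(\frac1{s-t}+\frac1{s-1+t}\big)\D s$ of \eqref{eq:GR_int}. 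Without this (or an equivalent substitute), the subsequent spectral matching with $G_2^{\mathfrak H/\overline{\varGamma}_0(N)}$ cannot even begin.

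Two further points are treated too lightly. First, the paper does not determine the holomorphic correction by a Fricke-anti-invariance/dimension argument; it applies the operator $2t(1-t)\frac{\D}{\D t}+R_\nu(1-2t)$ at $t=\alpha_N(z)$, uses the derivative of \eqref{eq:z_Pnu_ratio} together with the explicit identity \eqref{eq:4Pnu} to convert the alternative representation \eqref{eq:Appell_alt} into the Eichler-type integral of $E_4(w)-N^2E_4(Nw)$, and quotes the Green's-function integral representations of \cite{AGF_PartI} to identify the Epstein zeta terms — all constants come out of this computation rather than from ``comparing scales at a convenient point.'' Second, in the imaginary part the contribution of the endpoint $\frac{\R z}{2|\R z|}+\frac i2\sqrt{\frac{4-N}{N}}$ is not absorbed for free: it requires the closed-form evaluations $\int_1^\infty\frac{x[P_\nu(x)]^4-1}{(1-x^2)[P_\nu(x)]^2}\D x=\frac{\pi\cot(\nu\pi)}{2}$ and its $Q_\nu/P_\nu$-weighted companion $-\frac{\pi^2}{12}$, which produce the term proportional to $\frac{6-N}{6N}$ entering \eqref{eq:ImGRsum}. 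Your treatments of \eqref{eq:EZF_add} and \eqref{eq:GR_ReE} are fine and essentially coincide with the paper's.
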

 \subsection{Derivatives of Legendre functions with respect to their degrees\label{subsec:nu_deriv}}For any complex-valued degree $ \nu\in\mathbb C$, the Legendre function   $ P_\nu(x),-1<x\leq1$ is the unique $ C^2(-1,1]$ solution to the Legendre differential equation \begin{align}\widehat L_\nu f(x)\colonequals\frac{\D}{\D x}\left[ (1-x^2 )\frac{\D f(x)}{\D x}\right]+\nu(\nu+1) f(x)=0 \tag{\ref{eq:LegendreODE}$'$}\label{eq:LegendreODE'}\end{align}equipped with a natural boundary condition $ f(1)=1$. [Note that \eqref{eq:LegendreODE} and \eqref{eq:LegendreODE'} differ by a reparametrization $x=1-2t$.]

The next lemma exploits the differential equation \eqref{eq:LegendreODE'} and its boundary condition in the construction of an integral representation for $ \partial P_\nu(x)/\partial\nu$.

\begin{lemma}[Variation of parameters for $ \partial P_\nu(x)/\partial\nu$]\label{lm:derivPnu}For $\nu\in(-1,0)$ and $x\in\mathbb C\smallsetminus(-\infty,-1]$, we have \begin{align}
\frac{\partial P_\nu(x)}{\partial\nu}={}&\frac{(2\nu+1)\pi}{2\sin(\nu\pi)}\int^1_x[P_\nu(-\xi)P_\nu(x)-P_\nu(\xi)P_\nu(-x)]P_\nu(\xi)\D\xi,\label{eq:Pnu_deriv_nu}
\end{align}where the path of integration evades the branch cuts of the integrand.\end{lemma}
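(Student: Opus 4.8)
The plan is to establish \eqref{eq:Pnu_deriv_nu} by \emph{variation of parameters} applied to the inhomogeneous Legendre equation solved by $g_\nu(x)\colonequals\partial P_\nu(x)/\partial\nu$.

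\textbf{Step 1 (the inhomogeneous equation and its boundary data).} First I would record that $P_\nu(x)$ is jointly holomorphic in $(\nu,x)$ on $\{\nu\in\mathbb C\}\times\bigl(\mathbb C\smallsetminus(-\infty,-1]\bigr)$ — transparent from the series \eqref{eq:PnuTaylor}, whose coefficients are polynomials in $\nu$, together with the Eulerian continuation of that series off the unit disc — so that $g_\nu$ is a well-defined holomorphic function of $x$ and differentiation of \eqref{eq:LegendreODE'} under $\widehat L_\nu$ is legitimate. Since $\partial_\nu[\nu(\nu+1)]=2\nu+1$, applying $\partial/\partial\nu$ to $\widehat L_\nu P_\nu(x)=0$ gives $\widehat L_\nu g_\nu(x) = -(2\nu+1)P_\nu(x)$. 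Differentiating the normalization $P_\nu(1)=1$ (read off from \eqref{eq:PnuTaylor} at $t=0$) yields $g_\nu(1)=0$, and the same series shows $g_\nu$ is finite near $x=1$.

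\textbf{Step 2 (fundamental system and the candidate).} On $(-1,1)$ both $P_\nu(x)$ and $P_\nu(-x)$ solve $\widehat L_\nu f=0$ (the reflection $x\mapsto-x$ fixes $\widehat L_\nu$), and by \eqref{eq:W_Pnu} their Wronskian is $-\tfrac{2\sin(\nu\pi)}{\pi(1-x^{2})}\neq0$ for $\nu\in(-1,0)$. Feeding the fundamental system, the driving term $-(2\nu+1)P_\nu$, and base point $1$ into the standard self-adjoint variation-of-parameters recipe produces
\[
y_p(x)\colonequals\frac{(2\nu+1)\pi}{2\sin(\nu\pi)}\int_x^1\bigl[P_\nu(-\xi)P_\nu(x)-P_\nu(\xi)P_\nu(-x)\bigr]P_\nu(\xi)\,\D\xi,
\]
the sign being fixed by the factor $-2\sin(\nu\pi)/\pi$ already carried by the Wronskian. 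I would then check three points. First, convergence of the integral: on the path $[x,1]$ the sole singular factor is $P_\nu(-\xi)=O\bigl(\log(1-\xi)\bigr)$ near $\xi=1$, which is integrable, the integrand being smooth elsewhere. Second, $\widehat L_\nu y_p=-(2\nu+1)P_\nu$ on $(-1,1)$: writing $y_p=\tfrac{(2\nu+1)\pi}{2\sin\nu\pi}\bigl[P_\nu(x)I_1(x)-P_\nu(-x)I_2(x)\bigr]$ with $I_1(x)=\int_x^1 P_\nu(-\xi)P_\nu(\xi)\,\D\xi$ and $I_2(x)=\int_x^1 P_\nu(\xi)^2\,\D\xi$, one differentiates twice; all terms carrying $I_1,I_2$ vanish because $P_\nu(\pm x)$ are homogeneous solutions, and the boundary terms from $I_1'=-P_\nu(-x)P_\nu(x)$, $I_2'=-P_\nu(x)^2$ recombine through the Wronskian into $-(2\nu+1)P_\nu(x)$ — a computation that never touches the endpoint. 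Third, $y_p(1)=0$ with $y_p$ finite at $x=1$: the term $P_\nu(-x)I_2(x)$ vanishes like $\log(1-x)\cdot(1-x)$ as $x\to1$, while $P_\nu(x)I_1(x)\to1\cdot0$.

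\textbf{Step 3 (uniqueness and continuation).} Since $g_\nu-y_p$ solves $\widehat L_\nu(\cdot)=0$ on $(-1,1)$, it equals $A\,P_\nu(x)+B\,P_\nu(-x)$ for constants $A,B$; finiteness of the left-hand side at $x=1$ forces $B=0$ because $P_\nu(-x)$ has a logarithmic singularity there for $\nu\notin\mathbb Z$, and then $0=g_\nu(1)-y_p(1)=A\,P_\nu(1)=A$. Hence $g_\nu\equiv y_p$ on $(-1,1)$; both sides being holomorphic in $x$ on $\mathbb C\smallsetminus(-\infty,-1]$ — the right-hand side once the contour is deformed to skirt the branch cuts of $\xi\mapsto P_\nu(\pm\xi)$ — the identity theorem propagates \eqref{eq:Pnu_deriv_nu} to the full cut plane. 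I expect the only genuine care to lie in the boundary analysis at $x=1$ in Steps 2–3 — pinning the value $y_p(1)=0$ from the precise logarithmic rate of $P_\nu(-x)$ and killing the homogeneous coefficient $B$ — rather than in anything structurally deep; a careless argument would founder exactly at that endpoint.
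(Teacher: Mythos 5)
Your proposal is correct and follows essentially the same route as the paper: differentiate the Legendre equation in $\nu$ to get $\widehat L_\nu\,\partial_\nu P_\nu=-(2\nu+1)P_\nu$, apply variation of parameters with the fundamental system $\{P_\nu(x),P_\nu(-x)\}$ and the Wro\'nskian \eqref{eq:W_Pnu}, and kill the homogeneous pieces using $\partial_\nu P_\nu(1)=0$ together with the logarithmic singularity of $P_\nu(-x)$ at $x=1$, then continue analytically. Your Step 3 merely makes explicit the constant-fixing that the paper states as choosing $c_+=c_-=0$ in \eqref{eq:Lnu_inhom}.
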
\begin{proof}Differentiating the Legendre differential equation in \eqref{eq:LegendreODE'}  with respect to the degree $\nu$, we get\begin{align}
0=\frac{\partial}{\partial\nu}\left[\widehat L_\nu P_\nu(x)\right]=\widehat L_\nu\frac{\partial P_\nu(x)}{\partial\nu}+(2\nu+1) P_\nu(x).
\end{align}In view of the Wro\'nskian determinant \eqref{eq:W_Pnu}, we may use the standard variation of parameters \cite[\S5.23]{Ince1956ODE} for inhomogeneous differential equations to
check that \begin{align}
\widehat L_\nu\left\{ c_{+}P_\nu(x)+c_{-}P_\nu(-x)-\frac{\pi}{2\sin(\nu\pi)}\int^1_x[P_\nu(-\xi)P_\nu(x)-P_\nu(\xi)P_\nu(-x)]h(\xi)\D\xi \right\}=h(x)\label{eq:Lnu_inhom}
\end{align}for two constants $ c_\pm$, along with a suitably regular function $ h(x),x\in(-1,1)$.

As the natural boundary condition $ P_\nu(x)=1+O(x-1)$  stipulates that\begin{align}\lim_{x\to1-0^+}
\frac{\partial P_\nu(x)}{\partial\nu}=0,
\end{align}while\begin{align}
\quad P_\nu(-x)=\frac{\sin(\nu\pi)}{\pi}\left[ \psi ^{(0)}(\nu +1)+\psi ^{(0)}(-\nu )+2\gamma_{0}+\log\frac{1-x}{2}\right]+O((1-x)\log(1-x))\label{eq:Pnu_log_beh}
\end{align} as $ x\to1-0^+$, we must pick $ c_+=c_-=0$ and $ h(\xi)=-(2\nu+1)P_\nu(\xi)$ in the operand of \eqref{eq:Lnu_inhom} for a correct integral representation of $ \partial P_\nu(x)/\partial\nu$, as declared in \eqref{eq:Pnu_deriv_nu}.
\end{proof}

For  $\nu\in\left\{-\frac{1}{3},-\frac{1}{4},-\frac{1}{6}\right\} $,      $|t|\leq 1$ and $ t\notin [-1,0]\cup\{1\}$, spelling out\begin{align}\begin{split}
\mathsf S_\nu^{G}(t)\colonequals{}&\frac{2\sin(\nu\pi)}{(2\nu+1)\pi}\frac{\partial P_\nu(1-2t)}{\partial\nu}\\={}&\frac{2\sin(\nu\pi)}{(2\nu+1)\pi}\sum_{k=0}^\infty\frac{(-\nu)_{k}(\nu+1)_k}{(1)_{k}}\left[ \pi  \cot (\nu\pi   )-\psi ^{(0)}(k-\nu )+\psi ^{(0)}(k+\nu +1) \right]\frac{t^{k}}{k!}\\={}&\int^1_{1-2t}[P_\nu(-\xi)P_\nu(1-2t)-P_\nu(\xi)P_\nu(2t-1)]P_\nu(\xi)\D\xi,\end{split}
\end{align} we get \eqref{eq:GS3}--\eqref{eq:GS6} in Theorem \ref{thm:GreenSun}.

From either the symmetry $ P_\nu(x)=P_{-\nu-1}(x)$ or the $\nu=-1/2$ case of Lemma \ref{lm:derivPnu}, we know that \begin{align}
\left.\frac{\partial P_\nu(x) }{\partial \nu}\right|_{\nu=-1/2}=0.
\end{align} Differentiating \eqref{eq:Pnu_deriv_nu} again in $\nu$, we get \begin{align}
\left.\frac{\partial ^{2}P_{\nu}(x)}{\partial \nu^{2}}\right|_{\nu=-1/2}=-\pi\int^1_x[P_{-1/2}(-\xi)P_{-1/2}(x)-P_{-1/2}(\xi)P_{-1/2}(-x)]P_{-1/2}(\xi)\D\xi,
\end{align}which entails \eqref{eq:SE_int_repn} in Theorem \ref{thm:EpsteinSun}.

\subsection{Automorphic Green's functions and Epstein zeta functions of weight $4$\label{subsec:GreenEpstein}}The integral forms of  Green--Sun series $ \mathsf S_\nu^{G}(t)$ are pertinent to the representation of $ G_2^{\mathfrak H/\overline\varGamma_0(N)}(z,z')$ [defined in \eqref{eq:wtkAGF}], where $z'=\frac{1}{2}+\frac{i}{2}\sqrt{\frac{4-N}{N}} $ is a special CM point satisfying $ j(z')=0$ (for $ N=1$) or $ \alpha_N(z')=\infty$ (for $N\in\{2,3\} $).

\begin{proposition}[Some automorphic Green's functions of weight $4$]\label{prop:G2}The following identities are true:{\allowdisplaybreaks\begin{align}\begin{split}&
G_2^{\mathfrak H/\overline\varGamma_0(1)}\left(z,\frac{1+i\sqrt{3}}{2}\right)=G_2^{\mathfrak H/\overline\varGamma_0(1)}\left(-\frac{1}{z},\frac{1+i\sqrt{3}}{2}\right)\\={}&-\frac{4\pi}{3\I z}\R\int_{-\sqrt{\frac{j(z)-1728}{j(z)}}}^1[P_{-1/6}(x)]^2\D x+\frac{8\pi \R z}{3\I z}\R\int_{\sqrt{\frac{j(z)-1728}{j(z)}}}^1iP_{-1/6}(x)P_{-1/6}(-x)\D x\\{}&-\frac{4\pi|z|^{2}}{3\I z}\R\int_{\sqrt{\frac{j(z)-1728}{j(z)}}}^1[P_{-1/6}(x)]^2\D x,\text{ where }\I z>0,|z|\geq1,|\R z|<\frac{1}{2},\end{split}\label{eq:G2PSL2Zint}\\\begin{split}&G_2^{\mathfrak H/\overline\varGamma_0(2)}\left(z,\frac{1+i}{2}\right)=G_2^{\mathfrak H/\overline\varGamma_0(2)}\left(-\frac{1}{2z},\frac{1+i}{2}\right)\\={}&-\frac{\pi}{4\I z}\R\int_{2\alpha_{2}(z)-1}^1[P_{-1/4}(x)]^2\D x+\frac{\pi \R z}{\sqrt{2}\I z}\R\int_{1-\alpha_2(z)}^1iP_{-1/4}(x)P_{-1/4}(-x)\D x\\{}&-\frac{\pi|z|^{2}}{2\I z}\R\int_{1-\alpha_2(z)}^1[P_{-1/4}(x)]^2\D x,\text{ where }\I z>0,|\R z|<\frac{1}{2},\left\vert z+\frac{1}{2} \right\vert>\frac{1}{2},\left\vert z-\frac{1}{2} \right\vert>\frac{1}{2},\end{split}\\\begin{split}&G_2^{\mathfrak H/\overline\varGamma_0(3)}\left(z,\frac{3+i\sqrt{3}}{6}\right)=G_2^{\mathfrak H/\overline\varGamma_0(3)}\left(-\frac{1}{3z},\frac{3+i\sqrt{3}}{6}\right)\\={}&-\frac{\pi}{9\I z}\R\int_{2\alpha_{3}(z)-1}^1[P_{-1/3}(x)]^2\D x+\frac{\pi \R z}{3\sqrt{3}\I z}\R\int_{1-\alpha_3(z)}^1iP_{-1/3}(x)P_{-1/3}(-x)\D x\\{}&-\frac{\pi|z|^{2}}{3\I z}\R\int_{1-\alpha_3(z)}^1[P_{-1/3}(x)]^2\D x,\text{ where }\I z>0,|\R z|<\frac{1}{3},\left\vert z+\frac{1}{3} \right\vert>\frac{1}{3},\left\vert z-\frac{1}{3} \right\vert>\frac{1}{3}.\end{split}\label{eq:G2Hecke3int}
\end{align}}\end{proposition}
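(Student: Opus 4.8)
The plan is to transplant the characterisation of the weight-$4$ automorphic Green's function from $\mathfrak H$ to the Legendre picture via Ramanujan's modular parametrisation, to identify it there with a truncated period integral of products of Legendre functions of degree $\nu$, and to pin the constants down from the closed forms collected in \S\ref{subsec:int_LS}. Fix $\nu\in\bigl\{-\tfrac16,-\tfrac14,-\tfrac13\bigr\}$ and $N=4\sin^2(\nu\pi)\in\{1,2,3\}$, and recall from the proof of Corollary~\ref{cor:LS_CM} Ramanujan's base changes together with the parametrisation $z=\dfrac{iP_\nu(2\alpha_N(z)-1)}{\sqrt N\,P_\nu(1-2\alpha_N(z))}$ (and its $j$-counterpart for $N=1$, with modular invariants as in \eqref{eq:alpha_N}--\eqref{eq:jKlein}), biholomorphic on the domains \eqref{eq:D234}, \eqref{eq:D1}. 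Under it $\I z$, $\R z$ and $|z|^2$ turn — via the Wro\'nskian \eqref{eq:W_Pnu} — into explicit bilinear expressions in the two Legendre solutions, and $z'=\tfrac12+\tfrac{i}{2}\sqrt{\tfrac{4-N}{N}}$ is precisely the CM point lying over $\alpha_N=\infty$ ($N\in\{2,3\}$) or $j=0$ ($N=1$).

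As a function of $z$ alone, $G_2^{\mathfrak H/\overline{\varGamma}_0(N)}(z,z')$ is $\overline{\varGamma}_0(N)$-invariant, decays at every cusp, carries the prescribed logarithmic singularity along the $\overline{\varGamma}_0(N)$-orbit of $z'$, and off that orbit satisfies the automorphic Laplace eigenfunction equation of eigenvalue $\tfrac k2\bigl(\tfrac k2-1\bigr)=2$ for $k=4$ (see \cite{GrossZagier1985,GrossZagierI}). For $\overline{\varGamma}_0(N)$-invariant solutions written through the uniformising coordinate, this equation reduces to the symmetric square \eqref{eq:LegendreSqrDiff_nu} of the Legendre operator, whose homogeneous solution space is spanned by $[P_\nu(x)]^2$, $P_\nu(x)P_\nu(-x)$ and $[P_\nu(-x)]^2$. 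Since the $\overline{\varGamma}_0(N)$-orbit of $z'$ collapses to a single point of the Legendre line — at which $G_2(\cdot,z')$ has only a logarithmic singularity — variation of parameters for this third-order operator (carried out exactly as in the proof of Lemma~\ref{lm:derivPnu}, now with the cancellation identity \eqref{eq:Rnu_canc} in place of the Wro\'nskian) produces the particular solution with that behaviour, assembled from the indefinite integrals $\int^1[P_\nu]^2$ and $\int^1 P_\nu P_\nu(-\cdot)$ weighted by $1$, $\R z$ and $|z|^2$ — the coefficients of the Eichler kernel $(z-w)(\overline z-w)$ transported to the Legendre line; equivalently, the particular solution is an incomplete level-$N$ analogue of the Eichler integral \eqref{eq:EichlerE4}. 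Transporting the lower endpoints along Ramanujan's base changes — which for $N\in\{2,3\}$ involves the quadratic and cubic modular equations, accounting for the endpoints $2\alpha_N(z)-1$ and $1-\alpha_N(z)$, and for $N=1$ gives $\pm\sqrt{[j(z)-1728]/j(z)}$ — puts this particular solution into the shape displayed in the proposition.

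It remains to fix every constant and to record the domain of validity. The leading scalar is read off by comparing the logarithmic singularity of $G_2(\cdot,z')$ at $z'$ with the behaviour \eqref{eq:Pnu_log_beh} of $P_\nu(-x)$ near the degenerate endpoint, and with the normalisation $P_\nu(1)=1$; the additive term, and the exact pairing of the two endpoints after the reflection $x\mapsto-x$, rest on the closed forms $\int_{-1}^1P_\nu(x)P_\nu(-x)\,\D x=\tfrac{2\cos(\nu\pi)}{2\nu+1}$ and $\int_{-1}^1[P_\nu(x)]^2\,\D x=\tfrac{2}{2\nu+1}\bigl[1-\tfrac{2\sin^2(\nu\pi)}{\pi^2}\psi^{(1)}(\nu+1)\bigr]$ of Corollary~\ref{cor:Int2Pnu}. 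Specialising to $\nu\in\bigl\{-\tfrac16,-\tfrac14,-\tfrac13\bigr\}$ then yields the leading coefficients $-\tfrac{4\pi}{3}$, $-\tfrac{\pi}{4}$, $-\tfrac{\pi}{9}$ of \eqref{eq:G2PSL2Zint}--\eqref{eq:G2Hecke3int} together with their $\R z$- and $|z|^2$-companions, while the stated inequality constraints on $z$ are the images under the parametrisation of the domains \eqref{eq:D1}, \eqref{eq:D234}, so the identity extends there by analytic continuation.

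I expect the main difficulty to be analytic rather than algebraic. Once $z$ leaves the imaginary axis the Legendre coordinate becomes complex, so $P_\nu(x)$ and $P_\nu(-x)$ straddle their respective branch cuts simultaneously, and one has to verify that the particular solution produced by variation of parameters stays single-valued and smooth — which ultimately reduces to the continuity statement \eqref{eq:R_nu_defn_ext} for $R_\nu$ — and that the polynomial-in-$z$ remainder thrown off by the constant part of the Eichler kernel cancels exactly against the endpoint data, leaving just the three integrals of the proposition. As a consistency check the same three identities can also be recovered from the sum rule in Theorem~\ref{thm:GreenSun} by setting $t=\alpha_N(z)$, expanding $\mathsf S^G_\nu(t)$ and $\mathsf S^G_\nu(1-t)$ through the integral representation of \S\ref{subsec:nu_deriv}, and simplifying with Corollary~\ref{cor:Int2Pnu}.
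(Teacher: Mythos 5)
Your route is not the paper's, and as it stands it has two genuine gaps. The paper proves the proposition either by specializing \cite[(2.2.14)--(2.2.15)]{AGF_PartI} or, from first principles, by a uniqueness argument on the quotient: the difference between the series \eqref{eq:wtkAGF} and the claimed integral expression is shown to extend to a smooth, bounded function on $Y_0(N)$ that is annihilated by $\Delta_z^{\mathfrak H}-2$ and decays at the cusps, and it then vanishes because the hyperbolic Laplacian on $Y_0(N)$ has no positive eigenvalue \cite[\S4.1]{IwaniecGSM17}. Your central step replaces this by ``variation of parameters for the third-order operator \eqref{eq:LegendreSqrDiff_nu}'', on the claim that for $\overline\varGamma_0(N)$-invariant functions the eigenvalue equation reduces to that ODE in the uniformizing coordinate. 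That claim is false: $(\Delta_z^{\mathfrak H}-2)f=0$ is a genuine PDE in the two real coordinates of $z$ (equivalently of the Hauptmodul), with an infinite-dimensional local solution space, and $G_2(\cdot,z')$ is not holomorphic in $\alpha_N(z)$, so it cannot lie in the three-dimensional holomorphic solution space of \eqref{eq:LegendreSqrDiff_nu} spanned by $[P_\nu(x)]^2$, $P_\nu(x)P_\nu(-x)$, $[P_\nu(-x)]^2$. What actually makes the right-hand side an eigenfunction is the Eichler-integral structure in $z$: using $P_\nu(2\alpha_N(z)-1)=-i\sqrt N\,z\,P_\nu(1-2\alpha_N(z))$, the Wro\'nskian \eqref{eq:W_Pnu}, and \eqref{eq:4Pnu}, the $z$-derivatives of the three holomorphic integrals are proportional to $z^2$, $z$, and $1$ times a weight-$4$ holomorphic form, and only the specific combination weighted by $1$, $\R z$, $|z|^2$ and divided by $\I z$ is killed by $\Delta_z^{\mathfrak H}-2$. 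You gesture at this (``the Eichler kernel $(z-w)(\overline z-w)$ transported to the Legendre line'') but never verify it, and the one-variable variation-of-parameters you rely on cannot manufacture this two-real-variable object; likewise your account of the lower endpoints via ``quadratic and cubic modular equations'' is asserted, not derived.

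Second, even if the eigen-equation step were repaired, your sketch supplies no uniqueness mechanism. Matching the logarithmic singularity at $z'$ and quoting the closed forms recorded before Corollary \ref{cor:Int2Pnu} cannot pin the identity down: once the ODE reduction fails, the possible discrepancy is not confined to a finite-dimensional space of explicit functions, and one must rule out adding a smooth bounded eigenfunction of eigenvalue $2$ vanishing at the cusps. That exclusion is exactly the spectral input the paper invokes, and it is absent from your argument; so are the checks of singularity cancellation and cusp decay beyond a brief mention of branch-cut issues. Finally, your proposed consistency check through Theorem \ref{thm:GreenSun} is circular in the paper's logical order, since that theorem is deduced from Lemma \ref{lm:derivPnu} together with this very proposition.
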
\begin{proof}These are specializations of \cite[(2.2.14) and (2.2.15)]{AGF_PartI}.

One can also verify \eqref{eq:G2PSL2Zint}--\eqref{eq:G2Hecke3int} from first principles, as follows. Check that the difference between the series representation [cf.\ \eqref{eq:wtkAGF}] of $G_2^{\mathfrak H/\overline\varGamma_0(N)}\Big(z,\frac{1}{2}+\frac{i}{2}\sqrt{\frac{4-N}{N}}\Big)$ and its claimed integral representation extends to a bounded and smooth function on  $ Y_0(N)\colonequals \varGamma_0(N)\backslash\mathfrak H$ that is annihilated by the differential operator\begin{align}
\Delta_z^{\mathfrak H}-2\equiv(\I z)^2\left[ \frac{\partial^2}{(\partial\R z)^2} +\frac{\partial^2}{(\partial\I z)^2}\right]-2,
\end{align} and tends to zero as $z$ approaches any cusp in $ \varGamma_0(N)\backslash(\mathbb Q\cup\{i\infty\})$. Since the Laplacian $ \Delta_z^{\mathfrak H}$ in this context does not possess a positive eigenvalue \cite[\S4.1]{IwaniecGSM17}, the aforementioned difference must be identically vanishing on $\varGamma_0(N)\backslash\mathfrak H$.       \end{proof}

At this point, it is clear that Theorem \ref{thm:GreenSun} issues directly from Lemma \ref{lm:derivPnu} and Proposition \ref{prop:G2}.

According to some particular cases of the conjectures by Gross--Zagier  {\cite[p.~317]{GrossZagierI}} and Gross--Kohnen--Zagier {\cite[p.~556]{GrossZagierII}}, we have \begin{align}
\exp \left[\I z\I z'G^{\mathfrak H/\overline {\varGamma}_0(N)}_{2}(z,z')\right]\in\overline{\mathbb Q}\quad \text{if }[\mathbb Q(z):\mathbb Q]=[\mathbb Q(z'):\mathbb Q]=2 \text{ and }N\in\{1,2,3,4\},
\end{align}
with the understanding that $ \exp(-\infty)=0\in\overline{\mathbb Q}$ whenever the automorphic Green's function $ G^{\mathfrak H/\overline {\varGamma}_0(N)}_{2}(z,z')$ diverges to $-\infty$ for either $ j(z)=j(z'), N=1$ or $ \alpha_N(z)=\alpha_N(z'),N\in\{2,3,4\}$. This long-standing algebraicity conjecture (in its full generality) has been recently settled by Bruinier--Li--Yang \cite{BruinierLiYang2022}---a remarkable feat that builds upon the milestones of Zhang \cite{SWZhang1997}, Mellit \cite{MellitThesis}, Viazovska \cite{ViazovskaThesis,Viazovska2015,Viazovska2012,Viazovska2011}, Bruinier--Ehlen--Yang \cite{BruinierEhlenYang2019}, and Li \cite{Li2018,Li2021}.

Therefore, we have an advanced analog of Corollary \ref{cor:LS_CM}, as given below.

\begin{corollary}[Some arithmetic properties of Green--Sun series]\label{cor:GreenSun}For each CM point $ \frac{P_{\nu}(2t-1)}{2i\sin(\nu \pi )P_{\nu}(1-2t)}$ satisfying $|t|<1 $,  $ |1-t|<1$, and $\nu\in\left\{-\frac16,-\frac14,-\frac13\right\}$, we have an explicitly computable algebraic number\begin{align}
\exp\left\{\frac{\pi}{\tan(\nu\pi)}\R\left[\left\vert \frac{P_{\nu}(2t-1)}{P_{\nu}(1-2t)} \right\vert^2\frac{\mathsf S^G_{\nu}(t)}{P_{\nu}(2t-1)}+\frac{\mathsf S^G_{\nu}(1-t)}{P_{\nu}(1-2t)}\right]-\frac{2\pi\cos^{2} (\nu\pi   )}{2 \nu +1}\frac{\I\frac{iP_{\nu}(2t-1)}{P_{\nu}(1-2t)}}{\sin(\nu\pi)}\right\}\in\overline{\mathbb Q},
\end{align} whose factorization follows from \cite[Theorem 1.2]{BruinierLiYang2022}.\qed\end{corollary}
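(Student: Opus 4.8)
The plan is to recognize the exponent in the statement, after a purely trigonometric rearrangement of the sum rule in Theorem \ref{thm:GreenSun}, as a \emph{rational} multiple of $\I z\,\I z'\,G_2^{\mathfrak H/\overline{\varGamma}_0(N)}(z,z')$, and then invoke the Gross--Zagier/Gross--Kohnen--Zagier algebraicity theorem of Bruinier--Li--Yang. Throughout, put $N=4\sin^2(\nu\pi)\in\{1,2,3\}$, $z=\frac{P_\nu(2t-1)}{2i\sin(\nu\pi)P_\nu(1-2t)}$ (the CM point of the hypothesis, which coincides with $z_{t,N}$ in \eqref{eq:z_N_lim}--\eqref{eq:z_N_lim'}, so that $\alpha_N(z)=t$ for $N\in\{2,3\}$, while $j(z)$ is the corresponding invariant when $N=1$), and $z'=\frac{1-i\cot(\nu\pi)}{2}$, which is the special CM point $\frac{1+i\sqrt3}{2}$, $\frac{1+i}{2}$, $\frac{3+i\sqrt3}{6}$ of Proposition \ref{prop:G2} for $\nu=-\frac16,-\frac14,-\frac13$, respectively.

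First I would set $u:=\frac{iP_\nu(2t-1)}{P_\nu(1-2t)}$ and use $u\cdot\frac{iP_\nu(1-2t)}{P_\nu(2t-1)}=-1$, so that $\I\frac{iP_\nu(1-2t)}{P_\nu(2t-1)}=\I u\big/\big|\tfrac{P_\nu(2t-1)}{P_\nu(1-2t)}\big|^{2}$ and both terms inside the real part of the sum rule acquire the common real scalar $\sin(\nu\pi)/\I u$, which may be pulled outside $\R[\,\cdot\,]$. Rearranging the sum rule then gives
\[
\R\!\left[\Big|\tfrac{P_\nu(2t-1)}{P_\nu(1-2t)}\Big|^{2}\tfrac{\mathsf S^G_\nu(t)}{P_\nu(2t-1)}+\tfrac{\mathsf S^G_\nu(1-t)}{P_\nu(1-2t)}\right]=\tfrac{\I u}{\sin(\nu\pi)}\!\left[\tfrac{\sin(2\nu\pi)}{2\nu+1}+\tfrac{1-2\sin^{2}(2\nu\pi)}{4(2\nu+1)^{2}\pi\cos^{2}(\nu\pi)}\,G_2^{\mathfrak H/\overline{\varGamma}_0(N)}(z,z')\right].
\]
Next I would multiply by $\pi/\tan(\nu\pi)$ and subtract $\frac{2\pi\cos^{2}(\nu\pi)}{2\nu+1}\cdot\frac{\I u}{\sin(\nu\pi)}$ (the second summand of the corollary's exponent, recalling $\I\frac{iP_\nu(2t-1)}{P_\nu(1-2t)}=\I u$); the identity $\sin(2\nu\pi)=2\sin(\nu\pi)\cos(\nu\pi)$ makes the $\frac{\sin(2\nu\pi)}{2\nu+1}$ contribution cancel the subtracted term exactly, so the corollary's exponent $E$ collapses to
\[
E=\tfrac{1-2\sin^{2}(2\nu\pi)}{4(2\nu+1)^{2}\sin^{2}(\nu\pi)\cos(\nu\pi)}\,\I u\;G_2^{\mathfrak H/\overline{\varGamma}_0(N)}(z,z')=c_\nu\,\I z\,\I z'\;G_2^{\mathfrak H/\overline{\varGamma}_0(N)}(z,z'),
\]
where $\I z\,\I z'=\frac{\cos(\nu\pi)}{4\sin^{2}(\nu\pi)}\,\I u$ (read off from the explicit forms of $z$ and $z'$ together with $\cot(\nu\pi)=\cos(\nu\pi)/\sin(\nu\pi)$) and $c_\nu:=\frac{1-2\sin^{2}(2\nu\pi)}{(2\nu+1)^{2}\cos^{2}(\nu\pi)}=\frac{\cos(4\nu\pi)}{(2\nu+1)^{2}\cos^{2}(\nu\pi)}$ is the rational number $-\frac32,-8,-18$ for $\nu=-\frac16,-\frac14,-\frac13$. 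The rationality of $c_\nu$ is what I would rely on in the final step, since an irrational power of an algebraic number may be transcendental.

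Finally I would supply the arithmetic. Since $\mathsf S^G_\nu$ is a convergent series and $z$ is a CM point with $|t|<1$, its modular invariant ($\alpha_N(z)=t$, resp.\ $j(z)$ for $N=1$) cannot equal $\alpha_N(z')=\infty$ (resp.\ $j(z')=0$), so $G_2^{\mathfrak H/\overline{\varGamma}_0(N)}(z,z')$ is finite. Both $z$ and $z'$ generate imaginary quadratic fields, hence the Gross--Zagier/Gross--Kohnen--Zagier algebraicity conjecture---established in full generality by Bruinier--Li--Yang \cite{BruinierLiYang2022}---yields $\exp\big[\I z\,\I z'\,G_2^{\mathfrak H/\overline{\varGamma}_0(N)}(z,z')\big]\in\overline{\mathbb Q}$. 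Consequently $\exp E=\big(\exp\big[\I z\,\I z'\,G_2^{\mathfrak H/\overline{\varGamma}_0(N)}(z,z')\big]\big)^{c_\nu}$ is a rational power of a nonzero algebraic number, therefore algebraic, and its factorization into prime ideals is obtained by applying \cite[Theorem 1.2]{BruinierLiYang2022} to the CM value in the exponential and then rescaling by $c_\nu$. I expect no essential difficulty: the genuine content lives in Theorem \ref{thm:GreenSun} and in the deep input \cite{BruinierLiYang2022}, and the only points needing care are the bookkeeping of the trigonometric simplification (the cancellation of the $\mathbb Q\pi$-terms and the verification that $c_\nu\in\mathbb Q$) together with the observation that the CM hypothesis rules out the divergent case $G_2=-\infty$, which matters here because $c_\nu<0$.
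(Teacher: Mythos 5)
Your proposal is correct and follows essentially the same route as the paper: the corollary is exactly the sum rule of Theorem \ref{thm:GreenSun} rearranged so that the exponent becomes $c_\nu\,\I z\,\I z'\,G_2^{\mathfrak H/\overline{\varGamma}_0(N)}(z,z')$ with $c_\nu\in\mathbb Q$ (indeed $-\tfrac32,-8,-18$), combined with the Bruinier--Li--Yang algebraicity theorem quoted just before the statement. Your explicit trigonometric bookkeeping, the verification that $c_\nu$ is rational, and the remark that the CM hypothesis with $|t|<1$, $|1-t|<1$ excludes the divergent case $G_2=-\infty$ are precisely the details the paper leaves implicit.
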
\begin{remark}Table \ref{tab:Green} supplies some concrete examples for Theorem \ref{thm:GreenSun} and Corollary \ref{cor:GreenSun}. Here, to keep the table within manageable size, we have picked a small fraction of the modular invariants listed in \cite[Tables 14.3--14.5]{Cooper2017Theta}.\eor\end{remark}

\begin{table}[h]\caption{Selected representations of $ G_2^{\mathfrak H/\overline\varGamma_0(N)}\Big(z,\frac{1}{2}+\frac{i}{2}\sqrt{\frac{4-N}{N}}\Big)$ for $ N\in\{1,2,3\}$\label{tab:Green}}

\begin{scriptsize}\begin{align*}\begin{array}{l|l|@{}r@{}l}\hline\hline
z\vphantom{\frac{\frac{\frac11}1}{\frac{\frac11}1}}&\displaystyle\frac{1}{j(z)} & \multicolumn{2}{l}{\dfrac{\sqrt{3}\I z}{2}G_2^{\mathfrak H/\overline\varGamma_0(1)}\bigg(z,\displaystyle\frac{1+i\sqrt{3}}{2}\bigg)} \\\hline
i\sqrt{2} &\displaystyle\frac{1}{2^{6}\cdot5^3}\vphantom{\dfrac{\frac{11}1}{}}& {}&3\log\dfrac{\sqrt{6}-1}{\sqrt{6}+1}-6\log\big(\sqrt{2}+\sqrt{3}\big)\\&&{}={}&\dfrac{8\pi^{2}\sqrt{\pi}}{\sqrt{3}\Gamma\big(\frac18\big)\Gamma\big(\frac38\big)}\sqrt[4]{\dfrac{2}{5}}\bigg[ \sqrt{2}\mathsf S^G_{-1/6} \bigg( \dfrac{1}{2}-\dfrac{7}{10}\sqrt{\dfrac{2}{5}} \bigg)+\mathsf S^G_{-1/6} \bigg( \dfrac{1}{2}+\dfrac{7}{10}\sqrt{\dfrac{2}{5}} \bigg)\bigg]-3\sqrt{2}\pi \\[12pt]
i\sqrt{3} &\dfrac{1}{2^4\cdot3^3\cdot5^3} &{}&-6\log5\\&&{}={}&\dfrac{8\pi^{3}}{3\big[\Gamma\big(\frac13\big)\big]^3}\dfrac{\sqrt[3]2}{\sqrt[4]5}\left[ \sqrt{3}\mathsf S^G_{-1/6} \left( \dfrac{1}{2}-\dfrac{11}{10\sqrt{5}} \right)+\mathsf S^G_{-1/6} \left( \dfrac{1}{2}+\dfrac{11}{10\sqrt{5}} \right)\right]-3\sqrt{3}\pi \\[12pt]\hline\hline \multicolumn{4}{c}{\vphantom{a}}\\\hline\hline z\vphantom{\frac{\frac{\frac11}1}{\frac{\frac11}1}}&\dfrac{\alpha_{2}(z)[1-\alpha_2(z)]}{2^{6}} & \multicolumn{2}{l}{\dfrac{\I z}{2}G_2^{\mathfrak H/\overline\varGamma_0(2)}\bigg(z,\dfrac{1+i}{2}\bigg)} \\\hline
i &\dfrac{1}{2^3\cdot3^4}\vphantom{\dfrac{\frac{11}1}{}}&{}&-\log3\\&&{} ={}& \displaystyle\frac{\pi^{2}\sqrt{\pi}}{2\sqrt{3}\big[\Gamma\big(\frac14\big)\big]^2}\left[ \sqrt{2} \mathsf S^G_{-1/4}\left( \frac{1}{9} \right)+ \mathsf S^G_{-1/4}\left( \frac{8}{9} \right)\right]-\frac{\pi}{2}\\[12pt]
i\sqrt{\dfrac{3}{2}} &\dfrac{1}{2^8\cdot3^2} &{}&-\log\big(\sqrt{2}+\sqrt{3}\big)\\&&{}={}&\displaystyle\frac{\pi^2\sqrt{\pi}}{2\Gamma\big(\frac{1}{24}\big)\Gamma\big(\frac{11}{24}\big)}\sqrt{\frac{3}{\sqrt{3}-1}}\bigg[ \sqrt{3} \mathsf S^G_{-1/4}\bigg( \frac{1}{2} -\frac{\sqrt{2}}{3}\bigg)+ \mathsf S^G_{-1/4}\bigg( \frac{1}{2} +\frac{\sqrt{2}}{3} \bigg)\bigg]-{}\frac{3\pi}{2\sqrt{6}} \\[12pt]\hline\hline\multicolumn{4}{c}{\vphantom{a}}\\\hline\hline z\vphantom{\frac{\frac{\frac11}1}{\frac{\frac11}1}}&\dfrac{\alpha_{3}(z)[1-\alpha_3(z)]}{3^{3}} & \multicolumn{2}{l}{\dfrac{\sqrt{3}\I z}{6}G_2^{\mathfrak H/\overline\varGamma_0(3)}\bigg(z,\dfrac{3+i\sqrt{3}}{6}\bigg)} \\\hline
i\sqrt{\dfrac{2}{3}} &\dfrac{1}{2^3\cdot3^3}\vphantom{\dfrac{\frac{\frac11}1}{}}& {}&-\dfrac{\log\big(\sqrt{2}+1\big)}{3}\\&&{}={}&\displaystyle\frac{2\sqrt{2}\pi^2\sqrt{\pi}}{9\sqrt{3}\Gamma\big(\frac{1}{24}\big)\Gamma\big(\frac{11}{24}\big)}\sqrt{\frac{\sqrt{2}}{\sqrt{3}-1}}\left[ \sqrt{2} \mathsf S^G_{-1/3}\left( \frac{1}{2} -\frac{1}{2\sqrt{2}}\right)+ \mathsf S^G_{-1/3}\left( \frac{1}{2} +\frac{1}{2\sqrt{2}} \right)\right]-\frac{\pi }{3 \sqrt{6}} \\[15pt]
\dfrac{2i}{\sqrt{3}} &\dfrac{1}{2\cdot3^6} &{}&\dfrac{\log 2}{3}-\dfrac{\log 3}{2}\\&&{}={}& \displaystyle\frac{4\sqrt[3]2\pi^{3}}{81\big[\Gamma\big(\frac13\big)\big]^3}\left[ {2} \mathsf S^G_{-1/3}\left( \frac{1}{2} -\frac{5}{6 \sqrt{3}}\right)+ \mathsf S^G_{-1/3}\left( \frac{1}{2} +\frac{5}{6 \sqrt{3}} \right)\right]-\frac{\pi }{3 \sqrt{3}}\\[12pt]\hline\hline\end{array}\end{align*}\end{scriptsize}
\end{table}

For all $ z\in\mathfrak H$, the modular invariant $\alpha_4(z)$
 evaluates to a  complex number with finite magnitude. Therefore, a natural $N=4$ analog of  Proposition \ref{prop:G2} will invoke asymptotic behavior of automorphic Green's functions [cf.\ \eqref{eq:EZF_HeckeN}], which in turn, ushers in the Epstein zeta function.

\begin{table}
\caption{Selected representations of $ E^{\varGamma_0(4)}\big(\!-\!\frac{1}{4z},2\big)$, where $ \Li_2(w)\colonequals\sum_{k=1}^\infty\frac{w^k}{k^2}$ and $ G=\I \Li_2(i)$\label{tab:EZF4}}

\begin{scriptsize}\begin{align*}\begin{array}{l|l|@{}r@{}l}\hline\hline
\vphantom{\frac{\frac{\frac\int1}1}{\frac{\frac11}1}}z+\dfrac{1}{2}&\dfrac{\alpha_4\big(z+\frac12\big)\big[1-\alpha_4\big(z+\frac12\big)\big]}{2^{4}} & \multicolumn{2}{l}{E^{\varGamma_0(4)}\left(-\dfrac{1}{4z},2\right)} \\\hline \dfrac i2&\dfrac1{2^6}\vphantom{\dfrac{\frac{11}1}{}}&&\dfrac{3G}{2\pi^2}\\&&={}&\displaystyle\frac{3}{16}-\frac{3\sqrt{\pi}}{4\big[\Gamma\big(\frac14\big)\big]^{2}}\mathsf S_{-1/2}^E\left( \frac{1}{2} \right)\\[12pt] \dfrac{i\sqrt{2}}{2}&\dfrac{ \big(\sqrt2-1\big)^{3} }{2^{3}} &&\dfrac{3}{2  \sqrt{2}\pi ^2}\left[ 4\I \Li_2\big( e^{\pi i/4} \big)-3G \right]\\&&={}&\displaystyle\frac{3}{16}-\frac{3\sqrt{\pi}}{4\sqrt{2+\sqrt{2}}\Gamma\big(\frac18\big)\Gamma\big(\frac38\big)}\left[ \sqrt{2} \mathsf S_{-1/2}^E\big( \big(\sqrt2-1\big)^{2} \big)+ \mathsf S_{-1/2}^E\big(2\big(\sqrt2-1\big) \big)\right]\\[12pt]
\dfrac{i\sqrt{3}}{2}&\dfrac1{2^8}&{}&\dfrac{45}{16\sqrt{3}\pi^{2}}\I\Li_2\big(e^{2\pi i/3}\big)\\&&{}={}&\displaystyle\frac{3}{16}-\frac{\sqrt[3]2\sqrt[4]3\pi}{8\big[\Gamma\big(\frac13\big)\big]^3}\left[ \sqrt{3} \mathsf S_{-1/2}^E\left( \frac{2-\sqrt{3}}{2^{2}} \right)+ \mathsf S_{-1/2}^E\left(\frac{2+\sqrt{3}}{2^{2}} \right)\right]\\[12pt]i&\dfrac{\big(\sqrt{2}-1\big)^6}{2 \sqrt{2}}&&\dfrac{3 }{2 \pi ^2}\left[5G-4 \I\Li_2\big(e^{\pi i/4}\big)\right]\\&&={}&\displaystyle\frac{3}{16}-\frac{3\sqrt{\pi}}{4\big(2+\sqrt{2}\big)\big[\Gamma\big(\frac14\big)\big]^{2}}\left[2 \mathsf S_{-1/2}^E\big( \big(\sqrt2-1\big)^{4} \big)+ \mathsf S_{-1/2}^E\big(2^{2}\sqrt{2}\big(\sqrt2-1\big)^{2} \big)\right]\\[12pt]\dfrac{i\sqrt{5}}{2}&\dfrac{1}{2^6}\bigg( \dfrac{\sqrt{5}-1}{2} \bigg)^6&&\dfrac{3}{4 \pi ^2 \sqrt{5}}\left[ 5 \I\Li_2\big(e^{{ 3\pi i }/{10}}\big)+5 \I\Li_2\big(e^{{ 7\pi i}/{10}}\big)-6G\right]\\&&={}&\displaystyle\frac{3}{16}-\frac{3\sqrt[8]5\Gamma\big(\frac1{10}\big)}{4\sqrt[5]{2^{3}}\big[\Gamma\big(\frac1{20}\big)\big]^2}\frac{\sqrt[4]{\sqrt{5}-2}}{\sqrt{1+\sqrt{5}-\sqrt{5+2 \sqrt{5}}}}\left[ \sqrt{5}\mathsf S_{-1/2}^E\left( \frac{\Big(1-\sqrt{\sqrt{5}-2}\Big)^4}{2^3 \left(\frac{\sqrt{5}-1}{2}\right)} \right) \right.\\&&&\displaystyle\left.{}+ \mathsf S_{-1/2}^E\left( \frac{2 \left(\frac{\sqrt{5}-1}{2}\right)^7}{\Big(1-\sqrt{\sqrt{5}-2}\Big)^4} \right)\right]\\[12pt]\dfrac{i\sqrt{6}}{2}&\dfrac{\big(2-\sqrt{3}\big)^3 \big(\sqrt{3}-\sqrt{2}\big)^3}{2^3 \big(\sqrt{2}-1\big)^2}&&\dfrac{3}{4 \pi ^2 \sqrt{6}}\left[ 7 \I\Li_2\big(e^{{ \pi i }/{3}}\big)+36 \I\Li_2\big(e^{{ \pi i}/{12}}\big)-12 \Li_2\big(e^{{ 5\pi i}/{12}}\big)-16G\right]\\&&={}&\displaystyle\frac{3}{16}-\frac{3\sqrt{\pi}\sqrt[4]3\sqrt[4]{2-\sqrt{3}}}{4\Gamma\big(\frac{1}{24}\big)\Gamma\big(\frac{11}{24}\big)}\frac{\sqrt{\sqrt{3}-\sqrt{2}}}{\sqrt{\sqrt{2}-1}}\left[\sqrt{6} \mathsf S_{-1/2}^E\big( \big(\sqrt{3}-\sqrt{2}\big)^2 \big(2-\sqrt{3}\big)^2 \big)\vphantom{\left(\frac{\big(\big)^2}{\big(\big)^2}\right)}\right.\\[10pt]{}&&&\displaystyle{}\left.{}+ \mathsf S_{-1/2}^E\left( \frac{2 \big(\sqrt{3}-\sqrt{2}\big) \big(2-\sqrt{3}\big)}{\big(\sqrt{2}-1\big)^2} \right)\right]\\[12pt]
\dfrac{i\sqrt{7}}{2}&\dfrac1{2^{12}}&{}&\dfrac{21}{16\sqrt{7}\pi^2}\left[ \I\Li_2\big(e^{2\pi i/7}\big)+\I\Li_2\big(e^{4\pi i/7}\big)-\I\Li_2\big(e^{6\pi i/7}\big) \right]\\&&{}={}&\displaystyle\frac{3}{16}-\frac{3\pi}{8\sqrt[4]7\Gamma\big(\frac17\big)\Gamma\big(\frac27\big)\Gamma\big(\frac47\big)}\left[ \sqrt{7} \mathsf S_{-1/2}^E\left( \frac{8-3\sqrt{7}}{2^{4}} \right)+ \mathsf S_{-1/2}^E\left(\frac{8+3\sqrt{7}}{2^{4}} \right)\right]\\[12pt]\hline\hline\end{array}\end{align*}\end{scriptsize}\end{table}

\begin{proposition}[Integral and series representations of Epstein zeta functions]If $z\in\mathfrak H$ satisfies the following inequalities:\begin{align}
\left|\R\left(z+\frac{1}{2}\right)\right|<\frac{1}{2},\quad\left\vert z+\frac{1}{4} \right\vert>\frac{1}{4},\quad \left\vert z+\frac{3}{4} \right\vert>\frac{1}{4},\label{eq:lambda_range_Epstein_Hecke4}
\end{align} then we have\begin{align}\begin{split}
&E^{\varGamma_0(4)}\left( -\frac{1}{4z},2 \right)\\={}&\frac{3}{4\pi^{3}\I z }\R\int_0^{1-\alpha_4\left( z+\frac{1}{2} \right)}\big[\mathbf K\big(\sqrt{t}\big)\big]^2\D t-\frac{3\R\big(z+\frac12\big)}{\pi^{3}\I z }\R\int_0^{\alpha_4\left( z+\frac{1}{2} \right)}i\mathbf K\big(\sqrt{t}\big)\mathbf K\big(\sqrt{1-t}\big)\D t\\{}&+\frac{3\big|z+\frac12\big|^2}{\pi^{3}\I z }\R\int_0^{\alpha_4\left( z+\frac{1}{2} \right)}\big[\mathbf K\big(\sqrt{t}\big)\big]^2\D t,\end{split}\label{eq:Epstein4int}
\end{align}hence \eqref{eq:SE_sum} in Theorem \ref{thm:EpsteinSun}. \end{proposition}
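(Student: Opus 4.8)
\emph{Strategy.} The plan is to deduce \eqref{eq:Epstein4int} from the Green's function--Epstein identity \eqref{eq:EZF_HeckeN} at $s=2$, $N=4$, namely $E^{\varGamma_0(4)}(w,2)=-\lim_{z'\to i\infty}\frac{3\I z'}{4\pi}G_2^{\mathfrak H/\overline{\varGamma}_0(4)}(w,z')$, and then to read off the sum rule \eqref{eq:SE_sum} by specializing $z$ to a translate of the Ramanujan parameter attached to $t$. First I would take the $N=4$ member of the generic-$z'$ integral representations of $G_2^{\mathfrak H/\overline{\varGamma}_0(N)}(w,z')$ from \cite[(2.2.14) and (2.2.15)]{AGF_PartI} --- the same formulas whose $z'$-specializations produced Proposition \ref{prop:G2} for $N\in\{1,2,3\}$ --- put $w=-\frac{1}{4z}$ (the level-$4$ Fricke involution applied to $z$, so $\alpha_4(-\frac1{4z})=1-\alpha_4(z)$ by \eqref{eq:alpha_N}, while $E^{\varGamma_0(4)}(-\frac1{4z},2)$ is still $\varGamma_0(4)$-invariant in $z$ because $W_4$ normalizes $\varGamma_0(4)$), and let $z'\to i\infty$. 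Here the clean CM specialization $z'=\frac12+\frac i2\sqrt{\frac{4-N}{N}}$ used for $N\in\{1,2,3\}$ degenerates: for $N=4$ it is the cusp $z'=\frac12$, since $\alpha_4$ never reaches $\infty$ --- which is exactly why an Epstein zeta function, rather than a closed-form Green's value, appears. As $z'\to i\infty$ one has $\alpha_4(z')\to0$, $\mathbf K\big(\sqrt{\alpha_4(z')}\big)\to\frac\pi2$, while $\mathbf K\big(\sqrt{1-\alpha_4(z')}\big)$ diverges logarithmically; the divergence cancels against the prefactor $\I z'$ through Ramanujan's parametrization \eqref{eq:z_Pnu_ratio}, and the surviving finite limit is the right-hand side of \eqref{eq:Epstein4int}, the half-period shift $z\mapsto z+\frac12$ being inherited from \eqref{eq:jKlein}. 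The inequalities \eqref{eq:lambda_range_Epstein_Hecke4} cut out a region on which $\alpha_4(z+\frac12)$ stays in the cut plane where $\mathbf K$ is single-valued, the three bounding arcs corresponding to the cusps $0,\frac12,i\infty$ of $\varGamma_0(4)$.

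\emph{A self-contained check.} Alternatively, mirroring the ``first principles'' argument in the proof of Proposition \ref{prop:G2}, let $D(z)$ denote the difference of the two sides of \eqref{eq:Epstein4int}. I would verify that (i) $D$ descends to $\varGamma_0(4)\backslash\mathfrak H$, using the $\varGamma_0(4)$-invariance noted above together with the modular transformations of $\alpha_4$ and the behavior of $1,\R(z+\frac12),|z+\frac12|^2$ under $\varGamma_0(4)$; (ii) $(\Delta_z^{\mathfrak H}-2)D=0$, which for the Eisenstein piece is $\Delta E^{\varGamma_0(4)}(\cdot,s)=s(s-1)E^{\varGamma_0(4)}(\cdot,s)$ at $s=2$ (the Laplacian commuting with the Fricke isometry), and for the integral piece follows by differentiating through the upper limit $\alpha_4(z+\frac12)$, invoking $\frac{d}{dz}\alpha_4(z+\frac12)\propto[P_{-1/2}(1-2\alpha_4(z+\frac12))]^2/\I z$, the Legendre equation \eqref{eq:LegendreODE'}, and the Wronskian \eqref{eq:W_Pnu} for $P_{-1/2}$ to kill the second-order terms; (iii) $D$ is bounded and vanishes at each of the three cusps, the integral piece being designed to reproduce the $(\I w)^2$-growth (and the $O(1)$ term) of $E^{\varGamma_0(4)}$ there. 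Since $\Delta_z^{\mathfrak H}$ has no positive eigenvalue on $\varGamma_0(4)\backslash\mathfrak H$ and a bounded cusp-vanishing solution of the eigenequation is automatically square-integrable \cite[\S4.1]{IwaniecGSM17}, this forces $D\equiv0$.

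\emph{From \eqref{eq:Epstein4int} to \eqref{eq:SE_sum}, and the main obstacle.} For $|t|<1$, $|1-t|<1$, put $z=z_{t,4}-\frac12$ with $z_{t,4}=\frac{i\mathbf K\big(\sqrt{1-t}\big)}{2\mathbf K\big(\sqrt t\big)}$, the $N=4$, $\nu=-\frac12$ Ramanujan parameter of \eqref{eq:z_N_lim'}; then $\alpha_4(z+\frac12)=\alpha_4(z_{t,4})=t$ by \eqref{eq:z_Pnu_ratio}, and $-\frac1{4z}=-\frac{\mathbf K(\sqrt t)}{2[i\mathbf K(\sqrt{1-t})-\mathbf K(\sqrt t)]}$ is exactly the CM argument appearing in \eqref{eq:SE_sum}. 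Substituting $\alpha_4(z+\frac12)=t$ into \eqref{eq:Epstein4int} turns the three integrals into $\R\int_0^{1-t}[\mathbf K(\sqrt s)]^2\D s$, $\R\int_0^{t}i\mathbf K(\sqrt s)\mathbf K(\sqrt{1-s})\D s$, $\R\int_0^{t}[\mathbf K(\sqrt s)]^2\D s$, with coefficients explicit in $\I z_{t,4},\R z_{t,4},|z_{t,4}|^2$; then the integral form \eqref{eq:SE_int_repn} of $\mathsf S^E_{-1/2}$ from \S\ref{subsec:nu_deriv} (which rearranges to $\frac{\pi^3}{16}\mathsf S^E_{-1/2}(t)=\mathbf K(\sqrt t)\int_0^t\mathbf K(\sqrt s)\mathbf K(\sqrt{1-s})\D s-\mathbf K(\sqrt{1-t})\int_0^t[\mathbf K(\sqrt s)]^2\D s$), its $t\leftrightarrow1-t$ companion, the $s\leftrightarrow1-s$ symmetry of $\mathbf K(\sqrt s)\mathbf K(\sqrt{1-s})$, and the constant $\int_0^1\mathbf K(\sqrt s)\mathbf K(\sqrt{1-s})\D s=\frac{\pi^3}{8}$ (the $\nu\to-\frac12$ limit of $\int_0^1P_\nu(\xi)P_\nu(-\xi)\D\xi=\frac{\cos(\nu\pi)}{2\nu+1}$ among the identities preceding Corollary \ref{cor:Int2Pnu}) together regroup the right-hand side of \eqref{eq:Epstein4int} into $\frac{3}{32}\big(2-\R[\cdots]\big)$, which is \eqref{eq:SE_sum}; analytic continuation off $(0,1)$ covers the full range $t\notin[-1,0]\cup\{1\}$. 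I expect the crux to lie in pinning down the exact combination of $\alpha_4(z+\frac12)$-integrals (with coefficients $1,\R(z+\frac12),|z+\frac12|^2$) that is simultaneously annihilated by $\Delta_z^{\mathfrak H}-2$ and reproduces the $(\I w)^2$-growth at all three cusps $0,\frac12,i\infty$ of $\varGamma_0(4)$ --- equivalently, carrying the generic-$z'$ Green's function of \cite{AGF_PartI} to the $z'\to i\infty$ limit without mishandling the logarithmically divergent $\mathbf K\big(\sqrt{1-\alpha_4(z')}\big)$ against the $\I z'$ prefactor. The bookkeeping around the Fricke involution and the half-period shift is the most error-prone part, so I would cross-check the final identity numerically against the CM values tabulated in Table \ref{tab:EZF4}.
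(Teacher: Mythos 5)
Your proposal matches the paper's (deliberately brief) proof: the paper states that \eqref{eq:Epstein4int} is a reformulation of \cite[(1.14)]{EZF}, established by the same spectral argument as Proposition \ref{prop:G2} — i.e.\ precisely your ``self-contained check'' that the difference of the two sides extends to a bounded, cusp-vanishing solution of $(\Delta_z^{\mathfrak H}-2)D=0$ on $Y_0(4)$ and hence vanishes — and your passage from \eqref{eq:Epstein4int} to \eqref{eq:SE_sum} via $z=z_{t,4}-\tfrac12$, the rearranged \eqref{eq:SE_int_repn}, and $\int_0^1\mathbf K\big(\sqrt{s}\big)\mathbf K\big(\sqrt{1-s}\big)\D s=\tfrac{\pi^3}{8}$ correctly supplies the ``hence'' step that the paper leaves implicit. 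Only cosmetic slips: $\frac{\D}{\D z}\alpha_4\big(z+\tfrac12\big)=2\pi i\,\alpha_4(1-\alpha_4)\big[P_{-1/2}(1-2\alpha_4)\big]^2$ carries no $1/\I z$ factor, and as $z'\to i\infty$ the Green's function decays like $1/\I z'$, so the finite limit in \eqref{eq:EZF_HeckeN} comes from that decay against the $\I z'$ prefactor rather than from a cancellation of the logarithmic growth of $\mathbf K\big(\sqrt{1-\alpha_4(z')}\big)$.
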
\begin{proof}The integral representation in \eqref{eq:Epstein4int} is a reformulation of \cite[(1.14)]{EZF}. Its proof draws on the spectral analysis of the Laplacian $ \Delta_z^{\mathfrak H}$ on $ Y_0(4)\colonequals\varGamma_0(4)\backslash\mathfrak H$, akin to the situation in Proposition  \ref{prop:G2}. \end{proof}\begin{remark}We note that (see \cite[p.\ 240, (2.16)]{GrossZagierI} or \cite[(3.4)]{EZF})\begin{align}
E^{\varGamma_0(4)}(z,2)={}&\frac{1}{15}\left[ E^{\varGamma_0(1)}(4z,2)-\frac{E^{\varGamma_0(1)}(2z,2)}{4} \right],
\end{align}where  solvable CM values of the Epstein zeta function \cite[p.\ 207]{GrossZagier1985}\begin{align}
E^{\varGamma_0(1)}(z,2)=\frac{45}{\pi^{4}}\sum_{\substack{m,n\in\mathbb Z\\m^2+n^2\neq0}}\frac{(\I z)^2}{|mz+n|^{4}}
\end{align}follow from  standard procedures \cite{LatticeSum2013,GlasserZucker1980,WeiGuo2022,Williams1999,Zagier1986,ZagierDilog2007,ZuckerRobertson1984}. This explains the entries in Table \ref{tab:EZF4} that involve special values of the dilogarithm function $ \Li_2(w)$.\eor\end{remark}

\subsection{Integral representations of certain $ _4F_3$ series\label{subsec:4F3ud}}
We give a new proof for  the main identities of Guillera--Rogers  \cite[Propositions 2 and 3]{GuilleraRogers2014}.\begin{proposition}[Ramanujan series upside-down]For $ \nu\in(-1,0)$, $ |4t(t-1)|\geq1$, and $ t\neq\frac12$, the following formula holds true:\begin{align}\begin{split}&
\sum_{k=1}^\infty\frac{(k!)^{2}}{k^3\binom{2k}k(-\nu)_k(\nu+1)_k[t(1-t)]^k}\\={}&\frac{\pi^2}{2\sin^2(\nu\pi)}\int_0^1[P_\nu(1-2s)]^2\left( \frac{1}{s-t}+ \frac{1}{s-1+t}\right)\D s,\end{split}\label{eq:GR_int}
\end{align}where the path of integration runs along the unit interval on the real axis.
Particular cases of  \eqref{eq:GR_int}  allow us to recover the real  part of   \eqref{eq:GR_Epstein}.

For   $\nu\in\left\{-\frac{1}{4},-\frac{1}{3},-\frac{1}{2}\right\}$ and  $ N=4\sin^2(\nu\pi)\in\{2,3,4\}$, we have a modular parametrization\begin{align}\begin{split}{}&
\frac{\pi^2}{2\sin^2(\nu\pi)}\int_0^1\frac{[P_\nu(1-2s)]^2}{[P_\nu(1-2\alpha_{N}(z))]^2}\left[ \frac{1}{s-\alpha_{N}(z)}+ \frac{1}{s-1+\alpha_{N}(z)}\right]\D s\\={}&4\pi^{3}i\int_{z}^{\frac{\R z}{2|\R z|}+\frac{i}{2}\sqrt{\frac{4-N}{N}}}\frac{N^{2}E_{4}(Nw)-E_4(w)}{N^{2}-1}(z-w)^2\D w\end{split}\tag{\ref{eq:GR_int}$^{*}$}\label{eq:GR_int_mod}
\end{align}  for $ z$ satisfying \eqref{eq:D234} and $ \R z\neq0$,
hence the imaginary part of   \eqref{eq:GR_Epstein}. \end{proposition}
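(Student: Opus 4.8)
\emph{Plan and Step 1.} I would first prove the analytic identity \eqref{eq:GR_int} by reducing it to a closed evaluation of the Legendre moments $J_{k}\colonequals\int_{-1}^{1}\xi(1-\xi^{2})^{k-1}[P_{\nu}(\xi)]^{2}\,\D\xi$, and then obtain \eqref{eq:GR_int_mod} --- and with it both halves of \eqref{eq:GR_Epstein} --- by feeding \eqref{eq:GR_int} into the modular dictionary of \S\ref{subsec:GreenEpstein}. (One could instead match the fourth-order hypergeometric ODE satisfied by the left-hand ${}_{4}F_{3}$ of \eqref{eq:GR_int} against one for the right-hand integral, but that seems more cumbersome.) Assume $|4t(1-t)|>1$; write the kernel as the uniformly convergent geometric series $\frac{1}{s-t}+\frac{1}{s-1+t}=\frac{2s-1}{t(1-t)-s(1-s)}=(2s-1)\sum_{j\ge1}\frac{[s(1-s)]^{j-1}}{[t(1-t)]^{j}}$ (legitimate since $\sup_{s\in[0,1]}|s(1-s)|/|t(1-t)|=\tfrac{1}{4|t(1-t)|}<1$). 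Interchanging sum and integral and substituting $\xi=1-2s$ (so $s(1-s)=\tfrac{1-\xi^{2}}{4}$, $2s-1=-\xi$) rewrites the right-hand side of \eqref{eq:GR_int} as $-\frac{\pi^{2}}{\sin^{2}(\nu\pi)}\sum_{k\ge1}x^{k}J_{k}$ with $x\colonequals\frac{1}{4t(1-t)}$, while $\binom{2k}{k}=\frac{4^{k}(1/2)_{k}}{k!}$ and $(k!)^{3}/k^{3}=((k-1)!)^{3}$ make the left-hand side of \eqref{eq:GR_int} equal to $\sum_{k\ge1}\frac{((k-1)!)^{3}}{(1/2)_{k}(-\nu)_{k}(\nu+1)_{k}}x^{k}$. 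Hence \eqref{eq:GR_int} is equivalent to $J_{k}=-\frac{\sin^{2}(\nu\pi)}{\pi^{2}}\cdot\frac{((k-1)!)^{3}}{(1/2)_{k}(-\nu)_{k}(\nu+1)_{k}}$ for all $k\ge1$, and the boundary locus $|4t(1-t)|=1$, $t\ne\frac{1}{2}$, then follows from Abel's theorem (the terms on both sides decay like $k^{-3/2}$).

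\emph{The moments.} To evaluate $J_{k}$ I would use the third-order ODE \eqref{eq:LegendreSqrDiff_nu} for $f(\xi)\colonequals[P_{\nu}(\xi)]^{2}$, in the form $4\nu(\nu+1)\xi f=-\frac{\D}{\D\xi}\big\{(1-\xi^{2})\frac{\D}{\D\xi}[(1-\xi^{2})f']+4\nu(\nu+1)(1-\xi^{2})f\big\}$. Multiplying by $(1-\xi^{2})^{k-1}$, integrating over $[-1,1]$, integrating by parts several times, and collapsing the auxiliary moments $\int_{-1}^{1}\xi^{a}(1-\xi^{2})^{b}f\,\D\xi$ back onto $J_{k},J_{k-1}$ with $\xi^{3}(1-\xi^{2})^{k-2}=\xi(1-\xi^{2})^{k-2}-\xi(1-\xi^{2})^{k-1}$, I expect the recursion $J_{k}=\frac{2(k-1)^{3}}{(2k-1)(k-1-\nu)(k+\nu)}\,J_{k-1}$ for $k\ge2$, which telescopes to the asserted formula. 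For $k\ge2$ every boundary term vanishes --- at $\xi=1$ because $P_{\nu}$ is regular there, and at $\xi=-1$ because each boundary contribution retains a factor $(1-\xi^{2})^{m}$ with $m\ge1$ that overcomes the $(\log(1+\xi))^{2}$ blow-up of $f$ --- but for $k=1$ the same computation keeps the boundary value $\lim_{\xi\to-1^{+}}\{\cdots\}$; by the endpoint expansion \eqref{eq:Pnu_log_beh} only the coefficient $\tfrac{\sin(\nu\pi)}{\pi}$ of $\log(1+\xi)$ in $P_{\nu}$ survives in the limit, giving $\lim_{\xi\to-1^{+}}\{\cdots\}=\frac{8\sin^{2}(\nu\pi)}{\pi^{2}}$ and hence $J_{1}=\frac{2\sin^{2}(\nu\pi)}{\pi^{2}\nu(\nu+1)}$, which is the $k=1$ instance of the claimed formula.

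\emph{Modular step.} For $\nu\in\{-\frac14,-\frac13,-\frac12\}$ and $N=4\sin^{2}(\nu\pi)\in\{2,3,4\}$, I would put $t=\alpha_{N}(z)$ in \eqref{eq:GR_int} and substitute $s=\alpha_{N}(w)$, using Ramanujan's parametrization \eqref{eq:z_Pnu_ratio} together with its Wronskian consequence (from \eqref{eq:W_Pnu}) $[P_{\nu}(1-2\alpha_{N}(z))]^{2}=\frac{i\sin(\nu\pi)}{\sqrt{N}\,\pi}\cdot\frac{\alpha_{N}'(z)}{\alpha_{N}(z)[1-\alpha_{N}(z)]}$. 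This turns the normalized integrand of \eqref{eq:GR_int_mod} into a weight-four object in $w$, holomorphic except for simple poles at $w\equiv z$ and $w\equiv-\frac{1}{Nz}\pmod{\varGamma_{0}(N)}$; recasting it through \eqref{eq:LegendreRamanujan_E} and the weight-four combination $\frac{N^{2}E_{4}(Nw)-E_{4}(w)}{N^{2}-1}$ paired with the Eichler kernel $(z-w)^{2}$, and deforming (after dropping trailing terms) the $\alpha_{N}$-preimage of $[0,1]$ onto the geodesic from $z$ to the CM point $\frac{\R z}{2|\R z|}+\frac{i}{2}\sqrt{\frac{4-N}{N}}$ at which $\alpha_{N}=\infty$, yields \eqref{eq:GR_int_mod}. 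The left-hand sum of \eqref{eq:GR_Epstein} differs from that of \eqref{eq:GR_int} precisely by the numerator factor $2k(1-2\alpha_{N}(z))-R_{\nu}(1-2\alpha_{N}(z))$, which is engineered so that, using also the Wronskian identity \eqref{eq:Rnu_canc}, the full sum is expressed through \eqref{eq:GR_int} and \eqref{eq:GR_int_mod}; its real part, evaluated via the Epstein-zeta integral representations of \S\ref{subsec:GreenEpstein} and \eqref{eq:EZF_HeckeN}--\eqref{eq:EZF_add}, is the real part of \eqref{eq:GR_Epstein}, and its imaginary part, evaluated via the Eichler integral formulae \eqref{eq:GR_ReE}, is \eqref{eq:ImGRsum}.

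\emph{Main difficulty.} The delicate point is the endpoint analysis at $\xi=-1$ in the moment step: one must check that all boundary terms from the iterated integration by parts really do cancel for $k\ge2$ in spite of the logarithmic singularity of $P_{\nu}$, and then isolate the \emph{exact} surviving constant $\frac{8\sin^{2}(\nu\pi)}{\pi^{2}}$ for $k=1$ out of \eqref{eq:Pnu_log_beh}. That single number carries all of the non-elliptic, transcendental content of the Guillera--Rogers identities; everything else reduces to manipulations with Pochhammer symbols and, in the modular step, with objects already constructed earlier in the paper.
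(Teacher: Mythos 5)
Your first half is correct, and it takes a genuinely different route from the paper. The paper proves \eqref{eq:GR_int} by showing that the $_4F_3$ on the left and the Cauchy-kernel integral on the right are both annihilated (up to the same inhomogeneity $\frac{1-2t}{t(1-t)}$) by the Appell operator $\widehat{\mathsf A}_{\nu,t}$, and then kills the homogeneous discrepancy by an evenness/monodromy argument. You instead expand $\frac{1}{s-t}+\frac{1}{s-1+t}=\frac{2s-1}{t(1-t)-s(1-s)}$ geometrically and reduce everything to the moments $J_k=\int_{-1}^{1}\xi(1-\xi^2)^{k-1}[P_\nu(\xi)]^2\,\D\xi$. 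I checked your recursion: multiplying \eqref{eq:LegendreSqrDiff_nu} by $(1-\xi^2)^{k-1}$ and integrating by parts does give $J_k=\frac{2(k-1)^3}{(2k-1)(k+\nu)(k-1-\nu)}J_{k-1}$ for $k\geq2$ (all boundary terms carry a factor $(1-\xi^2)^m$, $m\geq1$, against the $\log^2$ singularity from \eqref{eq:Pnu_log_beh}), and the $k=1$ boundary evaluation indeed yields $\frac{8\sin^2(\nu\pi)}{\pi^2}$ and $J_1=\frac{2\sin^2(\nu\pi)}{\pi^2\nu(\nu+1)}$, matching the closed form needed term by term. This is a more elementary proof of \eqref{eq:GR_int} than the paper's, and it localizes the transcendental input in a single boundary constant; the Abel-type continuity argument you invoke for $|4t(1-t)|=1$, $t\neq\frac12$, is fine since the kernel stays bounded on $[0,1]$ away from $t=\frac12$ and both series have $O(k^{-3/2})$ terms.

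The modular step, however, has a genuine gap. A direct substitution $s=\alpha_N(w)$ in \eqref{eq:GR_int} followed by a ``contour deformation onto the geodesic from $z$ to $\frac{\R z}{2|\R z|}+\frac i2\sqrt{\frac{4-N}{N}}$'' cannot yield \eqref{eq:GR_int_mod}: after pulling back with $\alpha_N'(w)=2\pi i\,\alpha_N(w)[1-\alpha_N(w)][P_\nu(1-2\alpha_N(w))]^2$ and \eqref{eq:4Pnu}, the left-hand integrand becomes (a constant times) $\frac{s(1-s)}{s(1-s)-t(1-t)}\cdot\frac{N^2E_4(Nw)-E_4(w)}{N^2-1}$ with $s=\alpha_N(w)$, $t=\alpha_N(z)$ --- a meromorphic object with poles at $w\equiv z$ and $w\equiv-\frac1{Nz}$ integrated over the $\alpha_N$-preimage of $[0,1]$ --- whereas the right-hand integrand is the pole-free Eichler kernel $(z-w)^2\frac{N^2E_4(Nw)-E_4(w)}{N^2-1}$ on a path with $z$ itself as an endpoint. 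These are different integrands on non-homotopic paths, so no change of variables plus deformation identifies them. What is missing is precisely the intermediate quadratic identity the paper proves (its \eqref{eq:Appell_alt}), which rewrites the Cauchy-kernel integral as $\int_t^\infty[P_\nu(1-2s)P_\nu(2t-1)-P_\nu(2s-1)P_\nu(1-2t)]^2\frac{2s-1}{s(1-s)}\D s$; it is this squared-Wronskian structure that, via \eqref{eq:z_Pnu_ratio} and \eqref{eq:W_Pnu}, produces the factor $-N(z-w)^2[P_\nu(1-2\alpha_N(w))]^2[P_\nu(1-2\alpha_N(z))]^2$ and hence the weight-$4$ Eichler integral. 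You would need to prove \eqref{eq:Appell_alt} (the paper does so by another Appell-operator uniqueness argument) or an equivalent statement, e.g.\ that both sides of \eqref{eq:GR_int_mod} satisfy the same third-order ODE in $z$ with matching singular behavior. Likewise, deducing the real and imaginary parts of \eqref{eq:GR_Epstein} is not automatic once \eqref{eq:GR_int_mod} is in hand: the paper still has to act with $2t(1-t)\frac{\D}{\D t}+R_\nu(1-2t)$, identify the Epstein contribution through the $z'\to i\infty$ limit \eqref{eq:EZF_HeckeN}, split the Eichler integral into the three pieces of \eqref{eq:ImGRsum}, and prove \eqref{eq:GR_ReE} from Ramanujan's reflection formula together with two nontrivial $P_\nu Q_\nu$ integral evaluations; your proposal asserts these conclusions without supplying any of that work.
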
\begin{proof}Consider the homogeneous Appell differential equation \cite{Appell1881}\begin{align}\begin{split}
\widehat {\mathsf A}_{\nu,t} f(t)\colonequals\frac{\partial }{\partial t}\left\{\left[t (1-t)\frac{\partial }{\partial t} \right]^2+4 \nu  (\nu +1)t(1-t)\right\}f(t)-2\nu  (\nu +1)(1-2t)f(t)=0\end{split}\label{eq:AppellODE}
\end{align}whose solution space is spanned by $ [P_\nu(2t-1)]^2$, $P_\nu(2t-1)P_\nu(1-2t)$, and $[P_\nu(1-2t)]^2$. The left-hand side of \eqref{eq:GR_int}, being  equal to [cf.\ \eqref{eq:defn_pFq}]\begin{align}
\mathscr F_\nu(t)\colonequals\frac{{_4F_3}\left(\left.\begin{array}{@{}c@{}}
1, 1, 1, 1 \\
\frac{3}{2},1-\nu ,\nu +2 \\
\end{array}\right| \frac{1}{4t(1-t)      }\right)}{-2t(1-t)\nu(\nu+1)},\label{eq:GR4F3}
\end{align} is annihilated by \begin{align}\begin{split}\widehat {\mathsf B}_{\nu,t}\colonequals{}& t^3(1-t)^3 (1-2 t) \frac{\partial^{4} }{\partial t^{4}}+2 t^2 (1-t)^2 [3-11 t(1-t) ] \frac{\partial^{3} }{\partial t^{3}}\\{}&+ t(1-t) (1-2 t) \{7+2 [2 \nu  (\nu +1)-15] t(1-t)\}\frac{\partial^{2} }{\partial t^{2}}\\{}&+\left(1+2t (1-t) \left\{5 \nu  (\nu +1)-7-2 [8 \nu  (\nu +1)-9]t (1-t) \right\}\right) \frac{\partial }{\partial t}+2 \nu  (\nu +1) (1-2 t)^3
\\={}&\left\{  t (1-t)(1-2 t)\frac{\partial }{\partial t} +[1-2t(1-t)]\right\} \widehat {\mathsf A}_{\nu,t} .\end{split}\label{eq:BfacAppell}
\end{align} In other words, there is a constant $C$ such that\begin{align}
\widehat {\mathsf A}_{\nu,t}\mathscr F_\nu(t)=C\frac{1-2t}{t(1-t)}.
\end{align}One can find $ C=1$ by comparing the asymptotic expansions of both sides in the $t\to\infty$ regime.

From \eqref{eq:AppellODE}, one can easily check that the operator $\widehat{ \mathsf A}_{\nu,t} ^{\vphantom*}=-\widehat{ \mathsf A}_{\nu,t}^* $ is anti-symmetric. Direct computations also show that $ \widehat {\mathsf A}_{\nu,t}\frac{1}{s-t}=-\widehat {\mathsf A}_{\nu,s}\frac{1}{s-t}$ and  $ \widehat {\mathsf A}_{\nu,t}\frac{1}{s-1+t}=\widehat {\mathsf A}_{\nu,s}\frac{1}{s-1+t}$. Integrating by parts with respect to $s$, and exploiting the asymptotic behavior of $ P_\nu(1-2s),s\to1^-$ [cf.\ \eqref{eq:Pnu_log_beh}],  we can verify that\begin{align}
\widehat {\mathsf A}_{\nu,t} \int_0^1[P_\nu(1-2s)]^2\left( \frac{1}{s-t}+ \frac{1}{s-1+t}\right)\D s=\frac{2\sin^2(\nu\pi)}{\pi^2}\frac{1-2t}{t(1-t)}.
\end{align}  Thus, for  $|4t(1-t)|>1 $ and $\I t\neq0$, the two sides of  \eqref{eq:GR_int} differ by the restriction of a function\footnote{Recall that $ P_\nu(1-2t),t\in\mathbb C\smallsetminus[1,\infty)$ is holomorphic and $ P_\nu(1-2t+i0^+)-P_\nu(1-2t-i0^+)=2i\sin(\nu\pi)P_\nu(2t-1)$ for $t>1$. Therefore, if the function $ \varphi_\nu(t)$ in \eqref{eq:Fnu} satisfies $ \varphi_{\nu}(t+i0^+)=\varphi_\nu(t-i0^+)$ for $ t\in\mathbb R\smallsetminus[0,1]$, then $\varphi_\nu(t)$ has to be a constant multiple of $[P_\nu(2t-1)]^2-[P_\nu(1-2t)]^2-\frac{2i\I t}{|\I t|} \sin(\nu\pi)P_\nu(1-2t)P_\nu(2t-1)$ for $\I t\neq0$. If one further knows  that $ \varphi_\nu(t)=\varphi_\nu(1-t)$ is an even function of $1-2t$, then $ \varphi_\nu(t)$ must vanish identically.}\begin{align}
\varphi_{\nu}(t)=\begin{cases}c_{1}^+[P_\nu(2t-1)]^2+c_{2} ^{+}P_\nu(1-2t)P_\nu(2t-1)+c_{3} ^{+}[P_\nu(1-2t)]^2, & \I t>0, \\
c_{1}^-[P_\nu(2t-1)]^2+c_{2} ^{-}P_\nu(1-2t)P_\nu(2t-1)+c_{3} ^{-}[P_\nu(1-2t)]^2, & \I t<0, \\
\end{cases}\label{eq:Fnu}
\end{align}  where $ c_j^\pm,j\in\{1,2,3\}$ are constants. Since both sides of   \eqref{eq:GR_int}  are even functions of $ 1-2t$ that remain holomorphic for $|4t(1-t)|>1 $,  the function $\varphi_{\nu}(t)$ must be identically vanishing, which establishes   \eqref{eq:GR_int} in full.

To verify the real part of \eqref{eq:GR_Epstein} for $z $ subject to the constraints in  \eqref{eq:D234}, simply note that the identities\begin{align}\left\{\begin{array}{@{}r@{{}={}}r}
E^{\varGamma_0(N)}(z,2)&+\smash[t]{\frac{3}{16\sin^2(\nu\pi)}\R\left\{\left.\left[ 2t(1-t)\frac{\D}{\D t}+R_\nu(1-2t) \right]\int_0^1\frac{[P_\nu(1-2s)]^2}{s-1+t}\D s\right|_{t=\alpha_N(z)}\right\}}\\[10pt]E^{\varGamma_0(N)}\smash[b]{\left(-\frac{1}{Nz },2\right)}&\smash[b]{-\frac{3}{16\sin^2(\nu\pi)}\R\left\{\left.\left[ 2t(1-t)\frac{\D}{\D t}+R_\nu(1-2t) \right]\int_0^1\frac{[P_\nu(1-2s)]^2}{s-t}\D s\right|_{t=\alpha_N(z)}\right\}}\end{array}\right.
\end{align} follow from the integral representations of  automorphic Green's functions $ G_2^{\mathfrak H/\overline{\varGamma}_0(N)}(z,z')$ (cf.\ \cite[(2.2.13) and (2.2.15)]{AGF_PartI})  along with their asymptotic behavior in the $ z'\to i \infty$ limit [cf.\ \eqref{eq:EZF_HeckeN}].

For $ \nu\in(-1,0)$  and $ t\notin\mathbb R$, one can prove \begin{align}\begin{split}0={}&
\int_0^1[P_\nu(1-2s)]^2\left( \frac{1}{s-t}+ \frac{1}{s-1+t}\right)\D s\\{}&-\int_{t}^\infty[P_\nu(1-2s)P_\nu(2t-1)-P_\nu(2s-1)P_\nu(1-2t)]^2\frac{2s-1}{s(1-s)}\D s\end{split}\label{eq:Appell_alt}
\end{align}  (where the path of integration in the second integral does not intersect the real axis) by checking that the right-hand side enjoys three properties: (1)~It is an even function of $1-2t$; (2)~It is annihilated by   $\widehat{\mathsf A}_\nu $; (3) It lacks jump discontinuities\footnote{For two fixed real numbers $ \R s,\R t>1$, we have $ \lim_{\I s\to0^+,\I t\to0^+}[P_\nu(1-2s)P_\nu(2t-1)-P_\nu(2s-1)P_\nu(1-2t)]^2=\lim_{\I s\to0^-,\I t\to0^-}[P_\nu(1-2s)P_\nu(2t-1)-P_\nu(2s-1)P_\nu(1-2t)]^2\in\mathbb R$.  One has a similar argument for two fixed real numbers $ \R s,\R t<0$. }  for $ t\in\mathbb R\smallsetminus[0,1]$. To relate \eqref{eq:Appell_alt} to  \eqref{eq:GR_int_mod}, simply differentiate   Ramanujan's  modular parametrization in \eqref{eq:z_Pnu_ratio} with the help from the Wro\'nskian determinant in  \eqref{eq:W_Pnu}:  \begin{align}\frac{
\D z}{\D\alpha_N(z)}=\frac{1}{2\pi i\alpha_N(z)[1-\alpha_N(z)][P_\nu(1-2\alpha_N(z))]^2},
\end{align} and observe that (cf.\ \cite[Theorem 4.31]{Cooper2017Theta})\begin{align}
[1-2\alpha_{N}(z)][P_\nu(1-2\alpha_N(z))]^4=\frac{N^{2}E_4(Nz)-E_4(z)}{N^{2}-1}.\label{eq:4Pnu}
\end{align}      On the right-hand side of  \eqref{eq:GR_int_mod}, one can pick a path connecting $ z$ to  $ \frac{\R z}{2|\R z|}+\frac{i}{2}\sqrt{\frac{4-N}{N}}$ that does not cross the positive $ \I z$-axis, on which $ \alpha_N(z)\in(0,1)$ corresponds to the branch cut for the left-hand side of  \eqref{eq:GR_int_mod}.

To derive the imaginary part of \eqref{eq:GR_Epstein} from   \eqref{eq:GR_int_mod}, note that\begin{align}
\left.\left[ 2t(1-t)\frac{\D}{\D t}+R_\nu(1-2t) \right]f(t)\right|_{t=\alpha_N(z)}=-\frac{\I z}{\pi}\frac{\partial}{\partial\I z}\left\{\frac{1}{\I z}\frac{f(\alpha_N(z))}{[P_\nu(1-2\alpha_N(z))]^2}\right\}
\end{align}for  $z$ satisfying \eqref{eq:D234}.

The  imaginary part of \eqref{eq:GR_Epstein}   splits into three summands:\begin{itemize}
\item
an elementary contribution\begin{align}\begin{split}&
-\frac{4\pi^{2}}{\I z}\R\int_z^{\frac{\R z}{2|\R z|}+\frac{i}{2}\sqrt{\frac{4-N}{N}}}(z-w)(\overline z-w)\D w\\={}&\frac{4\pi^{2}}{3\I z}\left( \R z-\frac{\R z}{2|\R z|} \right)\left[ \left( \R z-\frac{\R z}{2|\R z|} \right)^2+3(\I z)^2-\frac{3(4-N)}{4N} \right];\end{split}
\end{align}\item an expression \begin{align}\begin{split}&
\frac{4\pi^{2}}{\I z}\R\int_z^{i\infty}\frac{E_4(w)-N^2E_4(Nw)-1+N^2}{N^{2}-1}(z-w)(\overline z-w)\D w\\={}&\frac{4\pi^{2}\R\Big[  \widetilde{\mathscr E}_4(Nz)-N\widetilde{\mathscr E}_4(z) \Big]}{N(N^{2}-1)\I z}\end{split}
\end{align}involving Eichler-type integrals defined in \eqref{eq:EichlerE4};\item an integral{\allowdisplaybreaks\begin{align}\begin{split}&
\frac{4\pi^{2}}{\I z}\R\int^{\frac{\R z}{2|\R z|}+\frac{i}{2}\sqrt{\frac{4-N}{N}}}_{i\infty}\frac{E_4(w)-N^2E_4(Nw)-1+N^2}{N^{2}-1}(z-w)(\overline z-w)\D w\\={}&\frac{8\pi^{2}}{\I z}\left( \R z-\frac{\R z}{2|\R z|} \right)\R\int^{\frac{\R z}{2|\R z|}+\frac{i}{2}\sqrt{\frac{4-N}{N}}}_{i\infty}\frac{E_4(w)-N^2E_4(Nw)-1+N^2}{N^{2}-1}\left( w-\frac{\R z}{2|\R z|} \right)\D w\\={}&\frac{8}{\I z}\left( \R z-\frac{\R z}{2|\R z|} \right)\int_1^\infty\frac{x[P_\nu(x)]^{4}-1}{(1-x^{2})[P_\nu(x)]^{2}}\left[\frac{\pi\cot(\nu\pi)}{2}- \frac{Q_\nu(x)}{P_\nu(x)} \right]\D x\\={}&\frac{8\pi^{2}}{\I z}\left( \R z-\frac{\R z}{2|\R z|} \right)\frac{6-N}{6N}\end{split}
\end{align}where the identities\begin{align}\begin{split}&
\int_{1}^\infty\frac{x[P_\nu(x)]^{4}-1}{(1-x^{2})[P_\nu(x)]^{2}}\D x=\lim_{\delta\to0^+}
\left\{\int_{1+\delta}^\infty\frac{x[P_\nu(x)]^{2}\D x}{1-x^{2}}+\frac{Q_\nu(1+\delta)}{P_\nu(1+\delta)}\right\}\\={}&\lim_{\delta\to0^+}
\left\{\int_{1+\delta}^\infty\frac{x[P_\nu(x)]^{2}\D x}{1-x^{2}}-\frac{2\gamma_{0}+2\psi^{(0)}(\nu)+\log\frac{\delta}{2}}{2}\right\}=\frac{\pi\cot(\nu\pi)^{}}{2},\end{split}\\\begin{split}&
\int_{1}^\infty\frac{x[P_\nu(x)]^{4}-1}{(1-x^{2})[P_\nu(x)]^{2}}\frac{Q_\nu(x)}{P_\nu(x)}\D x=\lim_{\delta\to0^+}
\left\{\int_{1+\delta}^\infty\frac{xP_\nu(x)Q_\nu(x)\D x}{1-x^{2}}+\frac{[Q_\nu(1+\delta)]^{2}}{2[P_\nu(1+\delta)]^{2}}\right\}\\={}&\lim_{\delta\to0^+}
\left\{\int_{1+\delta}^\infty\frac{xP_\nu(x)Q_\nu(x)\D x}{1-x^{2}}+\frac{\big[2\gamma_{0}+2\psi^{(0)}(\nu)+\log\frac{\delta}{2}\big]^{2}}{8}\right\}=-\frac{\pi^{2}}{12},\end{split}
\end{align}}are deducible from the methods in \cite[\S3.1]{AGF_PartI}.
\end{itemize} These  can be rearranged into \eqref{eq:ImGRsum}.
When $ 2\R z\in\mathbb Z$, the integrand of   $ \widetilde{\mathscr E}_4(z)$ is real-valued along an integration path parallel to the $\I z$-axis, thus making   $ \widetilde{\mathscr E}_4(z)$ purely imaginary; when $ 2\R \frac1z\in\mathbb Z$, we may divide Ramanujan's reflection formula \cite[p.\ 276]{RN2}\begin{align}
\mathscr E_4(z)-z^2\mathscr E_4\left( -\frac{1}{z} \right)=-\frac{z^4-5 z^2+1}{3 z}+\frac{30 i\zeta (3) \left(z^2-1\right) }{\pi ^3}
\end{align} for   $\mathscr E_4(z)\colonequals\int_{z}^{i\infty} [1-E_4(w)](z-w)^2\D w=\frac{60 i}{\pi^3}\sum_{k=1}^\infty\frac{1}{k^3(e^{-2\pi ik z}-1)}$ by $ \I z$ before differentiating with respect to $ \I z$, so that  the resulting  reflection formula\begin{align}\frac{|z|^{2}}{(\I z)^2}\R\widetilde {\mathscr E}_4\left( -\frac{1}{z} \right)-
\frac{\R\widetilde {\mathscr E}_4(z)}{(\I z)^2}=\frac{\R z}{3(\I z)^2}\left[ \frac{|z|^{2}+2(\I z)^2}{|z|^{4}} +|z|^{2}+2(\I z)^2-5\right]
\end{align}brings us a closed-form evaluation of $ \R\widetilde{\mathscr E}_4(z)$ in \eqref{eq:GR_ReE}.
  \end{proof}\begin{remark}As pointed out by Guillera--Rogers \cite[\S4.1]{GuilleraRogers2014} and Sun \cite[\S3]{Sun2022},  particular cases of  \eqref{eq:GR_Epstein}  have been  investigated by several authors, including Zeilberger \cite{Zeilberger1993pun}, Guillera \cite{Guillera2008ext10,Guillera2013div}, and Hessami Pilehrood--Hessami Pilehrood \cite{Pilehroods2011Hurwitz}.\eor\end{remark}\begin{remark}As noted by Guillera--Rogers \cite[around (43)]{GuilleraRogers2014}, there are no automorphic interpretations for the $\nu=-\frac16$ case. This comes as no surprise, since the case violates $ \varGamma_0(1)$ symmetry in two aspects: (1)~The following integral representation (cf.\ \cite[(2.2.13)--(2.2.14)]{AGF_PartI}) \begin{align}\begin{split}
E^{\varGamma_0(1)}(z,2)={}&-\frac{3}{4}\R\left[ 2t(1-t)\frac{\D}{\D t}+R_{-1/6}(1-2t) \right]\int_0^1[P_{-1/6}(1-2s)]^2\\{}&\left.\times\left( \frac{1}{s-t}- \frac{1}{s-1+t}\right)\D s\right|_{t=\frac12-\frac12\sqrt{\frac{j(z)-1728}{j(z)}}}\end{split}
\end{align}is incompatible with the real part for the right-hand side of  \eqref{eq:GR_int}; (2)~When $ t=\frac12-\frac12\sqrt{\frac{j(z)-1728}{j(z)}}$ for a certain $z$ satisfying $ \I z>0,|\R z|<\frac12,|z|>1$, the expression [cf.\ \eqref{eq:4Pnu}]\begin{align}
(1-2t)[P_{-1/6}(1-2t)]^4=\sqrt{\frac{j(z)-1728}{j(z)}} E_4(z)
\end{align}  disqualifies  the right-hand side of  \eqref{eq:GR_int} from being an Eichler integral for a holomorphic modular form of weight $4$. \eor\end{remark}

\section{Clebsch--Gordan couplings and binomial harmonic sums\label{sec:CG_coupling}}
Using the Mehler--Dirichlet integral representation for the Legendre functions \begin{align}\label{eq:MD_Pnu}P_\nu(\cos\theta):={}&\frac{2}{\pi}\int_0^\theta\frac{\cos\frac{(2\nu+1)\beta}{2}\D\beta}{\sqrt{2(\cos\beta-\cos\theta)}},& \theta\in(0,\pi)\quad \end{align}of arbitrary complex degrees $ \nu\in\mathbb C$, we define the generalized Clebsch--Gordan integrals  \begin{align}
T_{\mu,\nu}\colonequals{}&\int_{-1}^1 P_{\mu}(x)P_\nu(x)P_\nu(-x)\D x,&\mu,\nu\in{}&\mathbb{C},\\\widetilde T_\nu\colonequals{}&\int_{-1}^1 \frac{[P_\nu(x)]^{2}P_\nu(-x)\D x}{(1-x^{2})^{\frac{1-\nu}{2}}},&\R\nu{}>&-1.
\end{align} About a decade ago, closed forms were found \cite{Zhou2013Pnu,Zhou2013Int3Pnu} for $ T_{\nu,\nu}$, $ T_{2\nu-1,\nu}$, and $ \widetilde T_\nu$, along with some generalized hypergeometric representations  for $T_{\mu,\nu}$. Recently, interest in these Clebsch--Gordan integrals has been revived by  Cantarini \cite{Cantarini2022} and Campbell \cite{Campbell2023CCG,Campbell2023mma,Campbell2024lemn}.

In \S\S\ref{subsec:CG}--\ref{subsec:BM_CG}, we review and re-derive the main results from  \cite{Zhou2013Pnu,Zhou2013Int3Pnu}, using automated manipulations of generalized hypergeometric series. Some series identities therein can be reinterpreted as closed-form evaluations of binomial harmonic sums, as in  \eqref{eq:CGeg1}--\eqref{eq:CGeg3}.

In \S\ref{subsec:Pmnnu}, we extend the Clebsch--Gordan couplings to associated Legendre functions [see \eqref{eq:EulerPnumu}], and study yet another type of integral over the products of three Legendre functions, namely\begin{align}
U_{\mu,\nu}\colonequals{}&\int_{-1}^1 \frac{P_{\mu}(x)+P_{\mu}(-x)}{2}[P_\nu(x)]^{2}\D x,&\mu,\nu\in{}&\mathbb{C}.\label{eq:SigmaCG_defn}
\end{align}These efforts will culminate in a new theorem that is not found in  \cite{Zhou2013Pnu,Zhou2013Int3Pnu}.\begin{theorem}[Correspondence between $T_{\mu,\nu}$ and $U_{\mu,\nu}$]\label{thm:TU}
If we define \begin{align}
c_{\mu,\nu}(\varepsilon)\colonequals\frac{\Gamma \big(\frac{1}{2}+\varepsilon\big) \Gamma (-\nu+\varepsilon) \Gamma (\nu+1+\varepsilon)}{\Gamma (1+\varepsilon) \Gamma \big(\frac{2-\mu }{2}+\varepsilon\big) \Gamma \big(\frac{\mu+3 }{2}+\varepsilon \big)},
\end{align}then we have \begin{align}T_{\mu,\nu}={}&\frac{2\sin(\mu\pi)\sin(\nu\pi)}{\mu(\mu+1)\pi^{2}}
\left.\frac{\partial}{\partial\varepsilon}\right|_{\varepsilon=0}\left[\frac{c_{\mu,\nu}(\varepsilon)}{c_{\mu,\nu}(0)}{_4F_3}\left(\left.\begin{array}{@{}c@{}}
1,\frac{1}{2}+\varepsilon,-\nu+\varepsilon,\nu+1+\varepsilon \\[4pt]
1+\varepsilon,\frac{2-\mu }{2}+\varepsilon,\frac{\mu+3 }{2}+\varepsilon\ \\
\end{array}\right| 1\right)\right],\label{eq:SigmaCG4F3'}\\U_{\mu,\nu}={}&\frac{\sin(\mu\pi)\sin^2(\nu\pi)}{\mu(\mu+1)\pi^{3}}
\left.\frac{\partial^{2}}{\partial\varepsilon^{2}}\right|_{\varepsilon=0}\left[\frac{c_{\mu,\nu}(\varepsilon)}{c_{\mu,\nu}(0)}{_4F_3}\left(\left.\begin{array}{@{}c@{}}
1,\frac{1}{2}+\varepsilon,-\nu+\varepsilon,\nu+1+\varepsilon \\[4pt]
1+\varepsilon,\frac{2-\mu }{2}+\varepsilon,\frac{\mu+3 }{2}+\varepsilon\ \\
\end{array}\right| 1\right)\right],\label{eq:SigmaCG4F3''}
\end{align}when $ \mu\notin\mathbb Z$.\end{theorem}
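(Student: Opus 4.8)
The plan is to reduce both identities to a single master computation: find a closed-form expression for the integral
\[
I_{\mu,\nu}(\varepsilon)\colonequals\int_{-1}^{1}\bigl[P_\nu(x)\bigr]^2\,\frac{P_\mu(x)+P_\mu(-x)}{2}\,(1-x^2)^{\varepsilon}\,\D x
\]
together with its "twisted" companion obtained by inserting $P_\nu(-x)/P_\nu(x)$, so that the desired formulas arise by differentiating once or twice with respect to $\varepsilon$ at $\varepsilon=0$. The Mehler--Dirichlet representation \eqref{eq:MD_Pnu} turns each $P_\nu$ into a one-dimensional integral of a cosine against an algebraic kernel; substituting $x=\cos\theta$ and assembling the three copies converts $T_{\mu,\nu}$ and $U_{\mu,\nu}$ into a multiple integral that, after the trigonometric folding $\cos\frac{(2\nu+1)\beta}{2}$ and the standard Euler-type angular integrations, collapses to a terminating-parameter hypergeometric sum. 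Concretely, I would use Euler's integral representation \eqref{eq:Euler_int} for the associated Legendre functions via \eqref{eq:EulerPnumu}, so that $P_\mu^{-\varepsilon}(x)$ and the symmetrized combination $\tfrac12[P_\mu(x)+P_\mu(-x)]$ acquire explicit $\Gamma$-factor prefactors; the remaining $x$-integral over $(-1,1)$ against $[P_\nu(x)]^2$ is then evaluated by Clausen's coupling formula (Lemma \ref{lm:CFZ}), which replaces $[P_\nu(1-2t)]^2$ by a ${}_3F_2$ and hence makes the $x$-integral a Beta integral term by term.

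First I would establish the analytic identity: for $\mu\notin\mathbb Z$, $\nu\in(-1,0)$, and small $\varepsilon$,
\[
\int_{-1}^{1}\frac{P_\mu^{-\varepsilon}(x)+P_\mu^{-\varepsilon}(-x)}{2}\,[P_\nu(x)]^2\,\D x
=\frac{2\sin(\mu\pi)\sin(\nu\pi)}{\mu(\mu+1)\pi^2}\,\frac{c_{\mu,\nu}(\varepsilon)}{c_{\mu,\nu}(0)}\,
{}_4F_3\!\left(\!\begin{array}{@{}c@{}}1,\tfrac12+\varepsilon,-\nu+\varepsilon,\nu+1+\varepsilon\\[3pt]1+\varepsilon,\tfrac{2-\mu}{2}+\varepsilon,\tfrac{\mu+3}{2}+\varepsilon\end{array}\!\Big|\,1\right).
\]
This is the $\varepsilon$-deformation of the already-known closed form for $T_{\mu,\nu}$ from \cite{Zhou2013Pnu,Zhou2013Int3Pnu}, and the prefactor $c_{\mu,\nu}(\varepsilon)$ is designed precisely to be the ratio of $\Gamma$-functions produced by the Euler--Beta integrations with the regularizing weight $(1-x^2)^\varepsilon$. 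Once this master identity is in place, \eqref{eq:SigmaCG4F3'} follows by differentiating once in $\varepsilon$ at $\varepsilon=0$ and invoking the Frobenius--Zagier relation \eqref{eq:FZ1} of Lemma \ref{lm:FB} to recognize that $\partial_\varepsilon|_{\varepsilon=0}P_\mu^{-\varepsilon}(x)$ contributes the term $-P_\mu(x)\log(1-x^2)/2$ whose integral against $[P_\nu(x)]^2$ is exactly $T_{\mu,\nu}$ up to the bookkeeping constants; whereas the $\varepsilon^0$-part reproduces $U_{\mu,\nu}$ itself after noting that $\tfrac12[P_\mu^{-\varepsilon}(x)+P_\mu^{-\varepsilon}(-x)]\to\tfrac12[P_\mu(x)+P_\mu(-x)]$. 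Similarly \eqref{eq:SigmaCG4F3''} follows by differentiating twice: the second $\varepsilon$-derivative of the left-hand integrand brings down $\tfrac14[P_\mu(x)+P_\mu(-x)]\log^2(1-x^2)$ plus cross terms, and matching orders in the Laurent expansion of the ${}_4F_3$ side isolates the clean statement for $U_{\mu,\nu}$.

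The main obstacle I anticipate is convergence and branch-cut control: the ${}_4F_3$ at argument $1$ sits at the boundary of its disc of convergence, and the parameters $-\nu+\varepsilon$, $\nu+1+\varepsilon$, $\tfrac12+\varepsilon$ against $1+\varepsilon$, $\tfrac{2-\mu}{2}+\varepsilon$, $\tfrac{\mu+3}{2}+\varepsilon$ must be checked to give a Saalsch\"utz-type excess that keeps the series (and its $\varepsilon$-derivatives, which introduce harmonic-number factors) summable; this is where the hypothesis $\mu\notin\mathbb Z$ genuinely enters, ruling out the pole configurations $\tfrac{2-\mu}{2}+\varepsilon\in\mathbb Z_{\le 0}$. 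A secondary technical point is justifying the interchange of $\partial_\varepsilon$ with the $x$-integration (and with the termwise summation), which I would handle by dominated convergence on a compact $\varepsilon$-neighbourhood of $0$ using the logarithmic bounds on $P_\mu^{-\varepsilon}$ near $x=\pm 1$ recorded in \eqref{eq:Pnu_log_beh}. Once these analytic niceties are dispatched, the identification of the derivative terms with $T_{\mu,\nu}$ and $U_{\mu,\nu}$ is purely formal, driven by Lemma \ref{lm:FB} and the definitions \eqref{eq:SigmaCG_defn}.
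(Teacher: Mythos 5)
Your plan attaches the $\varepsilon$-deformation to the wrong factor, and this breaks both halves of the argument. In the ${}_4F_3$ of the theorem, the shifted parameters $\tfrac12+\varepsilon,-\nu+\varepsilon,\nu+1+\varepsilon;1+\varepsilon$ arise from the $\varepsilon$-deformed Clausen identity \eqref{eq:Clausen} applied to $[P_\nu^{-\varepsilon}(x)]^2$ (via Euler's transformation \eqref{eq:EulerPnumu}), and the shifts in $\tfrac{2-\mu}{2}+\varepsilon,\tfrac{\mu+3}{2}+\varepsilon$ come from the extra factor $[t(1-t)]^{\varepsilon}$ that $[P_\nu^{-\varepsilon}]^2$ carries into the moment formula \eqref{eq:Pnu_avg'}. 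If you keep $[P_\nu(x)]^2$ undeformed and deform only the order of $P_\mu$ (or insert a weight $(1-x^2)^{\varepsilon}$), termwise integration via \eqref{eq:Pnu_avg'} instead produces Pochhammer factors of the type $(1\pm\tfrac\varepsilon2)_k$, i.e.\ a ${}_5F_4$ whose $\nu$-parameters are \emph{not} shifted; the prefactor $c_{\mu,\nu}(\varepsilon)$, which contains $\Gamma(-\nu+\varepsilon)\Gamma(\nu+1+\varepsilon)$, cannot appear. So your "master identity" is false as stated. Moreover, your identification of the first derivative with $T_{\mu,\nu}$ rests on the incorrect formula $\partial_\varepsilon|_{\varepsilon=0}P_\mu^{-\varepsilon}(x)=-\tfrac12 P_\mu(x)\log(1-x^2)$: by Brychkov's formula (used in the paper, cf.\ \cite{Brychkov2010Pnumu}) the order-derivative is $\frac{\psi^{(0)}(\mu+1)+\psi^{(0)}(-\mu)}{2}P_\mu(x)-\frac{\pi P_\mu(-x)}{2\sin(\mu\pi)}$ up to sign, with no logarithm; after symmetrization in $x\mapsto-x$ this stays in the span of $P_\mu(x)+P_\mu(-x)$, so differentiating your family can only reproduce $U$-type integrals against $[P_\nu(x)]^2$ and never creates the factor $P_\nu(-x)$ that defines $T_{\mu,\nu}$. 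The convergence and interchange issues you flag are real but minor; the failure is structural.

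What is actually needed is to deform the order of $P_\nu$ (and, in tandem, take $P_\mu^{2\varepsilon}$): coupling $[P_\nu^{-\varepsilon}(x)]^2$ and $P_\nu^{\varepsilon}(x)P_\nu^{-\varepsilon}(x)$ to $P_\mu^{2\varepsilon}(x)+P_\mu^{2\varepsilon}(-x)$ through \eqref{eq:Clausen}, \eqref{eq:Clausen'} and \eqref{eq:Pnu_avg'} yields, at $\beta=0$, $\delta=2\varepsilon$, exactly the ${}_4F_3$ of the theorem together with an $\varepsilon$-free ${}_3F_2$ (these are \eqref{eq:PmunuCGa} and \eqref{eq:PmunuCGb}). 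Identity \eqref{eq:SigmaCG4F3'} is then the Frobenius--Zagier first derivative of the $\nu$-coupling, equivalently \eqref{eq:SmunuCGc}: one expands $T_{\mu,\nu}$ with \eqref{eq:P+P-}, where the product $P_\nu(1-2t)P_\nu(2t-1)$ appears on the Legendre side, and integrates termwise against $P_\mu$ using \eqref{eq:Pnu_avg}. For \eqref{eq:SigmaCG4F3''} one must take $\partial_\varepsilon^2$ of a specific \emph{combination} of the two deformed couplings, normalized by $a_\mu(\varepsilon)$, $b_\mu(\varepsilon)$ and $c_{\mu,\nu}(\varepsilon)/c_{\mu,\nu}(0)$, so that Brychkov's order-derivative formula and digamma identities make all $P_\nu(x)P_\nu(-x)$ and $[P_\nu(-x)]^2$ cross terms cancel, leaving only $U_{\mu,\nu}$. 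Your single-family proposal contains no counterpart of this two-term cancellation, so the "purely formal" matching you describe in the final step cannot be carried out.
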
 \subsection{Clebsch--Gordan couplings of Legendre functions\label{subsec:CG}}
In this subsection, we recall some results from our previous works \cite{Zhou2013Pnu,Zhou2013Int3Pnu}, and compare them to certain consequences of  Lemma \ref{lm:CFZ}.
  \begin{proposition}[Evaluations of $T_{\mu,\nu} $]\label{prop:2013Pnu}If $ 2\mu\notin\mathbb Z$ and $2\nu\notin\mathbb Z$, then we have the following representations\begin{subequations} {\allowdisplaybreaks\begin{align}
\begin{split}T_{\mu,\nu}={}&\frac{2}{\pi}\frac{\sin(\mu\pi)\cos(\nu\pi)}{2\nu+1}\left[ \, \frac{1}{\mu}\,{_5F_4}\left(\left.\begin{array}{@{}c@{}}
\frac{1}{2},\frac{1}{2},-\frac{\mu }{2},-\nu ,\nu+1 \\[4pt]
1,\frac{2-\mu }{2},\frac{1-2\nu}{2} ,\frac{2\nu+3}{2} \\
\end{array}\right| 1\right) -\frac{1}{\mu+1}\, {_5F_4}\left(\left.\begin{array}{@{}c@{}}
\frac{1}{2},\frac{1}{2},\frac{1+\mu}{2},-\nu ,\nu+1\ \\[4pt]
1,\frac{3+\mu}{2},\frac{1-2\nu}{2} ,\frac{2\nu+3}{2}
\end{array}\right| 1\right) \right]\end{split}\label{eq:SmunuCGa}\\\begin{split}={}&\frac{2}{\pi^2}\frac{\sin(\mu\pi)\sin(\nu\pi)}{\mu(\mu+1)}\left[ \, \frac{1}{\nu}\,{_4F_3}\left(\left.\begin{array}{@{}c@{}}
1,\frac{1-\mu}{2},\frac{\mu+2 }{2},-\nu \\[4pt]
\frac{2-\mu }{2},\frac{\mu+3 }{2},1-\nu \\
\end{array}\right| 1\right) -\frac{1}{\nu+1} \,{ _4F_3}\left(\left.\begin{array}{@{}c@{}}
1,\frac{1-\mu}{2},\frac{\mu+2 }{2},\nu+1\ \\[4pt]
\frac{2-\mu }{2},\frac{\mu+3 }{2},\nu+2 \\
\end{array}\right| 1\right) \right]\end{split}\label{eq:SmunuCGb}\\\begin{split}={}&\frac{2}{\pi^2}\frac{\sin(\mu\pi)\sin(\nu\pi)}{\mu(\mu+1)}\left\{\left.\frac{\partial}{\partial\varepsilon}\right|_{\varepsilon=0}{_4F_3}\left(\left.\begin{array}{@{}c@{}}
1,\frac{1}{2}+\varepsilon,-\nu+\varepsilon,\nu+1+\varepsilon\ \\[4pt]
1+\varepsilon,\frac{2-\mu }{2}+\varepsilon,\frac{\mu+3 }{2}+\varepsilon\ \\
\end{array}\right| 1\right)\right.\\{}&\left.{}-\left[ \psi ^{(0)}\left(\frac{\mu+3 }{2}\right)+\psi ^{(0)}\left(\frac{2-\mu }{2}\right)-\psi ^{(0)}(\nu +1)-\psi ^{(0)}(-\nu )+2\log 2 \right]\right.\\&\left.{}\times{_3F_2}\left(\left.\begin{array}{@{}c@{}}
\frac{1}{2},-\nu ,\nu+1\ \\[4pt]
\frac{2-\mu }{2},\frac{\mu+3 }{2} \\
\end{array}\right| 1\right)\right\}
\end{split}\label{eq:SmunuCGc}\end{align}}\end{subequations}involving  generalized hypergeometric series [cf.\ \eqref{eq:defn_pFq}]. In particular, we have\begin{align}
T_{\nu,\nu}={}&\frac{1+2\cos(\nu\pi )}{3}\frac{\pi\Gamma\big(\frac{\nu+1 }{2}\big) \Gamma\big(\frac{3 \nu+2 }{2}\big)}{\left[ \Gamma\big(\frac{1-\nu}{2}\big)\right]^2\left[ \Gamma\big(\frac{\nu+2 }{2}\big)\right]^3 \Gamma\big(\frac{3 \nu+3 }{2}\big)},\label{eq:Snunu}\\T_{2\nu-1,\nu}={}&\frac{\sin(\nu\pi )\sin(2\nu\pi )}{\nu^{2}\pi^{2}},\label{eq:S2nu-1nu}
\end{align}while interpreting indeterminate forms  through l'H\^opital's rule.\end{proposition}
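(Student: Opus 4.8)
The plan is to establish the three hypergeometric presentations \eqref{eq:SmunuCGa}--\eqref{eq:SmunuCGc} of $T_{\mu,\nu}$ first, observing that they are mutually equivalent under standard transformations of terminating-free ${}_pF_q(1)$, and only then to specialize. If one is permitted to invoke Theorem \ref{thm:TU}, then \eqref{eq:SmunuCGc} is essentially immediate: applying the product rule to $\partial_\varepsilon\bigl[c_{\mu,\nu}(\varepsilon)c_{\mu,\nu}(0)^{-1}\,{}_4F_3(\cdots\mid 1)\bigr]_{\varepsilon=0}$ in \eqref{eq:SigmaCG4F3'} produces a term $\frac{\D}{\D\varepsilon}\log c_{\mu,\nu}(\varepsilon)\big|_{\varepsilon=0}$ times the ${}_3F_2(1)$ value of the ${}_4F_3$ at $\varepsilon=0$, and since $\frac{\D}{\D\varepsilon}\log c_{\mu,\nu}(\varepsilon)\big|_{\varepsilon=0}=\psi^{(0)}(\tfrac12)+\psi^{(0)}(-\nu)+\psi^{(0)}(\nu+1)-\psi^{(0)}(1)-\psi^{(0)}(\tfrac{2-\mu}{2})-\psi^{(0)}(\tfrac{\mu+3}{2})$ with $\psi^{(0)}(\tfrac12)-\psi^{(0)}(1)=-2\log2$, this reproduces \eqref{eq:SmunuCGc} verbatim. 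From \eqref{eq:SmunuCGc} the two-term ${}_4F_3$ form \eqref{eq:SmunuCGb} follows by splitting the Clausen-type series underlying the ${}_4F_3$ according to $\frac1\nu$ versus $\frac1{\nu+1}$ before resummation, and the two-term ${}_5F_4$ form \eqref{eq:SmunuCGa} by performing the termwise integration in the complementary order (see the next paragraph), with the $\frac1\mu$ versus $\frac1{\mu+1}$ split coming from the even part of $P_\mu$.

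Because Theorem \ref{thm:TU} is itself proved later, I would also want the self-contained derivation, which is how \cite{Zhou2013Pnu,Zhou2013Int3Pnu} obtained these formulae. First I would write $P_\nu(x)$ and $P_\nu(-x)=P_\nu(\cos(\pi-\theta))$ through the Mehler--Dirichlet integral \eqref{eq:MD_Pnu}; after Fubini, the resulting double $\beta$-integral collapses (via elementary trigonometry and a Beta integral) into a Clausen-companion ${}_3F_2$ in the variable $1-x^2$ for the product $P_\nu(x)P_\nu(-x)$, where one carries an auxiliary parameter $\varepsilon$ exactly as in \eqref{eq:Clausen'} and Euler's transformation \eqref{eq:EulerPnumu}, so that $\varepsilon=0$ recovers the product and $\partial_\varepsilon|_0$ (legitimized by Lemma \ref{lm:FB}) will later install the $\psi^{(0)}$'s of \eqref{eq:SmunuCGc}. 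Next I would integrate the even part $\tfrac12[P_\mu(x)+P_\mu(-x)]$ against $(1-x^2)^m$ over $[-1,1]$, a Beta integral whose $\Gamma$-factors are exactly what $c_{\mu,\nu}(\varepsilon)$ and the prefactor $\frac{2\sin(\mu\pi)\sin(\nu\pi)}{\mu(\mu+1)\pi^2}$ record; resumming the resulting double series in $m$ gives the ${}_4F_3(1)$ of \eqref{eq:SigmaCG4F3'}. Differentiating that identity in $\varepsilon$ at $0$ yields \eqref{eq:SmunuCGc}; alternatively, expanding $\tfrac12[P_\mu(x)+P_\mu(-x)]$ as a ${}_2F_1$ in $x^2$ and integrating it against the ${}_3F_2$-in-$(1-x^2)$ series for $P_\nu(x)P_\nu(-x)$ term by term gives \eqref{eq:SmunuCGa}. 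At each step one checks that the relevant ${}_pF_q$ converge absolutely at $z=1$ for generic $\mu,\nu$ (the parameter excesses are positive there), which justifies the interchanges; the cases of integral $\mu$ or $\nu$ then follow by analytic continuation, with the poles of $\frac1{\mu(\mu+1)}$ and of the inner $\frac1\nu,\frac1{\nu+1}$ cancelling between the two paired terms, so the l'H\^opital reading of \eqref{eq:SmunuCGa}--\eqref{eq:S2nu-1nu} is the correct one.

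For the closed forms, I would set $\mu=\nu$ in \eqref{eq:SmunuCGa}: with this identification the two ${}_5F_4(1)$'s merge, via contiguous relations, into a single \emph{very-well-poised} ${}_5F_4(1)$, which Dougall's summation turns into the $\Gamma$-quotient of \eqref{eq:Snunu}, the coefficient $\frac{1+2\cos(\nu\pi)}{3}$ being what is left after collecting the trigonometric prefactors and applying the reflection formula for $\Gamma$. For \eqref{eq:S2nu-1nu} I would set $\mu=2\nu-1$ in \eqref{eq:SmunuCGb}: then $\tfrac{1-\mu}{2}=1-\nu$, $\tfrac{\mu+2}{2}=\nu+\tfrac12$, $\tfrac{2-\mu}{2}=\tfrac32-\nu$, $\tfrac{\mu+3}{2}=\nu+1$, so a numerator parameter cancels a denominator parameter in each ${}_4F_3(1)$, collapsing both to ${}_3F_2(1)$'s with a unit upper parameter; these (or their prescribed difference) are evaluated in closed form by a Thomae relation followed by a Gauss ${}_2F_1(1)$ summation, and the two contributions assemble into $\frac{\sin(\nu\pi)\sin(2\nu\pi)}{\nu^2\pi^2}$.

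\textbf{Main obstacle.} I expect the hard part to be the hypergeometric bookkeeping rather than any single integral: producing the Clausen-companion ${}_3F_2$ for $P_\nu(x)P_\nu(-x)$ with the auxiliary $\varepsilon$ placed so that Lemma \ref{lm:FB} yields precisely the combination of digammas in \eqref{eq:SmunuCGc}, and then threading the chain of ${}_4F_3$ and ${}_5F_4$ transformation and contiguous relations that (i) identifies \eqref{eq:SmunuCGa}, \eqref{eq:SmunuCGb}, \eqref{eq:SmunuCGc} with one another and (ii) recasts the $\mu=\nu$ instance into a form to which Dougall's theorem applies. A secondary technicality is justifying the interchanges of sum and integral at the boundary point $z=1$, as opposed to on $|z|<1$ with a limit.
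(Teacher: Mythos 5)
Your treatment of \eqref{eq:SmunuCGc} is essentially the paper's: the paper proves it by inserting the Clausen couplings \eqref{eq:P+P+}--\eqref{eq:P+P-} (Lemma \ref{lm:CFZ}) into $T_{\mu,\nu}=2\int_0^1P_\mu(1-2t)P_\nu(1-2t)P_\nu(2t-1)\,\D t$ and integrating termwise via \eqref{eq:Pnu_avg}, and it notes explicitly that \eqref{eq:SmunuCGc} is equivalent to \eqref{eq:SigmaCG4F3'}, so your logarithmic-derivative-of-$c_{\mu,\nu}(\varepsilon)$ computation is consistent (though deriving the Clausen-companion ${}_3F_2$ from Mehler--Dirichlet is not how Lemma \ref{lm:CFZ} gets it, and your passages from \eqref{eq:SmunuCGc} to \eqref{eq:SmunuCGb} and \eqref{eq:SmunuCGa} are asserted rather than carried out; in the paper \eqref{eq:SmunuCGa}--\eqref{eq:SmunuCGb} are simply cited from \cite{Zhou2013Pnu}). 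Your $\mu=2\nu-1$ reduction of \eqref{eq:SmunuCGb} to two ${}_3F_2(1)$'s with a unit numerator parameter is correct and their prescribed combination is indeed elementary, although you leave the final ``Thomae plus Gauss'' step unverified; the paper instead gets \eqref{eq:S2nu-1nu} from the well-poised form \eqref{eq:SmunuCG_BMc}, which terminates after its constant term at $\mu=2\nu-1$.

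The genuine gap is your proof of \eqref{eq:Snunu}. Recombining $\frac1\mu{}F_1-\frac1{\mu+1}F_2$ in \eqref{eq:SmunuCGa} does not produce a very-well-poised ${}_5F_4$: termwise one finds $(\mu+1)\frac{(-\mu/2)_k}{((2-\mu)/2)_k}-\mu\frac{((1+\mu)/2)_k}{((3+\mu)/2)_k}=-\mu(\mu+1)\frac{(4k+1)}{(2k-\mu)(2k+\mu+1)}$, which is exactly the very-well-poised, $2$-balanced ${}_7F_6(1)$ of \eqref{eq:SmunuCG_BMa} with $a=\frac12$. At $\mu=\nu$ no numerator parameter cancels a denominator parameter, the series does not terminate for generic $\nu$, and neither Dougall's theorem (which requires termination) nor the ${}_5F_4$ reduction \eqref{eq:5F4red} (which requires the special three-parameter well-poised shape) applies; ``contiguous relations'' will not change the number of free parameters. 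This is precisely why the paper's self-contained argument for \eqref{eq:Snunu} is long: Dougall is used only at the terminating specializations $-\frac\mu2,\frac{\mu+1}{2},-\nu,\nu+1\in\mathbb Z_{\le0}$, and the general case is reached through the auxiliary function $\varphi(\nu)$ of \eqref{eq:varphi_defn}, the extra verifications at $\nu=2n+1\pm\frac13$ (where the division by $1+2\cos(\nu\pi)$ degenerates, handled via holonomic recursions), a growth estimate, and Carlson's theorem --- or else by citing the heavy computations of \cite{Zhou2013Pnu}, which is what the proposition's own proof does. As written, your one-line ``merge and apply Dougall'' step hides the main difficulty of the whole proposition and does not go through.
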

 \begin{proof}The $ _5F_4$ (resp.\ $ _4F_3$) representation in \eqref{eq:SmunuCGa} [resp.\ \eqref{eq:SmunuCGb}] comes from \cite[(19$^*$)]{Zhou2013Pnu} (resp.\ \cite[(19)]{Zhou2013Pnu}).

To prove  \eqref{eq:SmunuCGc} [which is equivalent to \eqref{eq:SigmaCG4F3'}], use   \eqref{eq:P+P+}--\eqref{eq:P+P-} for\begin{align}\begin{split}
T_{\mu,\nu}={}&2\int_0^1P_\mu(1-2t)P_\nu(1-2t)P_\nu(2t-1)\D t\\={}&\frac{2\sin(\nu\pi)}{\pi}\int_0^1P_\mu(1-2t)\sum_{k=0}^\infty[t(1-t)]^ks_{\nu,k}\left[\varsigma_{\nu,k}-\log\frac{1}{4t(1-t)}\right]\D t,\end{split}
\end{align} and then perform termwise integration with the assistance from \eqref{eq:Pnu_avg}.

One can find the closed forms of $ T_{\nu,\nu}$ and $ T_{2\nu-1,\nu}$  in \cite[(19$_{(\nu,\nu)}$)]{Zhou2013Pnu} and \cite[(19$_{(2\nu-1,\nu)}$)]{Zhou2013Pnu}.\end{proof}
\begin{remark}The generalized hypergeometric series [cf.\ \eqref{eq:defn_pFq}]
\begin{align}{_pF_{p-1}}\left(\left.\begin{array}{@{}c@{}}
a_{1},\dots,a_p \\[4pt]
b_{1},\dots,b_{p-1} \\
\end{array}\right| 1\right):=1+\sum_{n=1}^\infty\frac{\prod_{j=1}^p(a_{j})_n}{\prod_{k=1}^{p-1}(b_{k})_n }\frac{1}{n!}\label{eq:defn_pFq(1)}
\end{align} converges when $(\{a_1,\dots,a_p\}\cup \{b_1,\dots,b_{p-1}\})\cap\mathbb Z_{\leq 0}=\varnothing$ and \begin{align}
\R\sum_{j=1}^pa_{j}<\R\sum_{k=1}^{p-1}b_{k}.\label{eq:pFq_ind_ineq}
\end{align}Therefore, all the generalized hypergeometric series appearing in \eqref{eq:SmunuCGa}--\eqref{eq:SmunuCGc}  are  indeed convergent.  \eor\end{remark}
\begin{remark}Laporta \cite[(48)]{Laporta2008} and Wan \cite[(54)--(55)]{Wan2012} independently discovered \begin{align}
\int_0^1\big[\mathbf K\big(\sqrt{t}\big)\big]^2\mathbf K\big(\sqrt{1-t}\big)\D t=\frac{\big[\Gamma\big(\frac14\big)\big]^8}{384\pi^2}
\end{align}through numerical experiments, where $   \mathbf K\big(\sqrt{t}\big)=\frac{\pi}{2}P_{-1/2}(1-2t)$. The same was proved by the present author \cite{Zhou2013Pnu} (via a special case  $T_{-1/2,-1/2}$) and by Rogers--Wan--Zucker \cite{RogersWanZucker2015} (as one of various lattice sums representable through modular  $L$-values).\eor\end{remark}
\begin{table}[t]\caption{Selected series related to $ T_{\nu,\nu}$ \label{tab:CG1}}

\begin{scriptsize}\begin{align*}\begin{array}{c|r@{}c@{}l}\hline\hline
\nu & \multicolumn{3}{l}{T_{\nu,\nu}}\\\hline
-\dfrac{1}{2} &\vphantom{\frac{\frac{\frac1 \int}1}1}\dfrac{\big[\Gamma\big(\frac14\big)\big]^8}{24\pi^{5}}&{}= {}&\displaystyle4\sum_{k=0}^\infty\frac{\binom{2k}k^4}{2^{8k}}\frac{1}{4k+1}
\\[10pt]-\dfrac{1}{3}&\dfrac{3 \sqrt{3}\big[\Gamma\big(\frac13\big)\big]^9}{32\pi^{5}}&{}= {}&\displaystyle\frac{27\sqrt{3}}{4\pi}\sum_{k=0}^\infty\frac{\binom{2k}k^3}{\binom{6k}{3k}}\frac{4 k+1}{(6 k+1)^2 (3 k+1)}
\\[10pt]-\dfrac{1}{4}&\dfrac{\big[\Gamma\big(\frac18\big)\Gamma\big(\frac38\big)\big]^2}{3\pi^3}&{}= {}&\displaystyle\frac{32}{\pi}\sum_{k=0}^\infty\frac{\binom{2k}k^2}{2^{4k}}\frac{1}{(8 k+1) (8 k+3)}
\\[10pt]-\dfrac{1}{6}&\dfrac{\big[\Gamma\big(\frac14\big)\big]^4}{\sqrt{2}\sqrt[4]3\pi^{3}}&{}= {}&\displaystyle\frac{27\sqrt{3}}{\pi}\sum_{k=0}^\infty\frac{\binom{2k}k\binom{6k}{3k}}{2^{8k}}\frac{4k+1}{(3 k+1) (12 k+1) (12 k+5)}\\[12pt]\hline\hline\end{array}\end{align*}\end{scriptsize}

\end{table}

\begin{table}[t]\caption{Selected series related to $ T_{2\nu-1,\nu}$\label{tab:CG2}}

\begin{scriptsize}\begin{align*}\begin{array}{r|r@{}c@{}l}\hline\hline
n & \multicolumn{3}{l}{\left.\dfrac{1}{\pi}\dfrac{\partial^{n}T_{2\nu-1,\nu}}{\partial\nu^{n}}\right|_{\nu=-1/2}\vphantom{\frac{\frac\int1}1}}\\\hline
1 &\dfrac{8}{\pi^2}&{}= {}&\displaystyle\sum_{k=0}^\infty \frac{\binom{2k}k^4}{2^{8k}}\frac{4k+1}{(k+1)(1-2k)}\vphantom{\frac{\frac11}1}\\
2 & \dfrac{64}{\pi ^2} &{}= {}&\displaystyle6\sum_{k=0}^\infty \frac{\binom{2k}k^4}{2^{8k}}\frac{4k+1}{(k+1)^{2}(1-2k)^{2}}\\
3 & \dfrac{576}{\pi ^2}-56 &{}={}&\displaystyle6\sum_{k=0}^\infty \frac{\binom{2k}k^4}{2^{8k}}\frac{(4k+1)\Big\{2 (k+1) (1-2 k) \left[\mathsf H_k^{(2)}-2\mathsf  H_{2 k}^{(2)}-\frac{5 \pi ^2}{12}\right]+\frac{9}{(k+1) (1-2 k)}-2\Big\}}{(k+1)^{2}(1-2k)^{2}}\\4&\dfrac{6144}{\pi ^2}-896&{}={}&\displaystyle 144\sum_{k=0}^\infty \frac{\binom{2k}k^4}{2^{8k}}\frac{(4k+1)\Big\{2 (k+1)^{2} \left[\mathsf H_k^{(2)}-2\mathsf  H_{2 k}^{(2)}-\frac{5 \pi ^2}{12}\right]+\frac{4 (k+1)^{2}}{(1-2 k)^{2}}+1\Big\}}{(k+1)^{4}(1-2k)^{2}}\\\hline\hline
\end{array}\end{align*}\end{scriptsize}
\end{table}

\begin{remark}
In Table \ref{tab:CG1} (resp.\  Table \ref{tab:CG2}), we  showcase the series representations based on \eqref{eq:SmunuCGa} for   $T_{\nu,\nu} $ (resp.\  low-order derivatives of $T_{2\nu-1,\nu} $), with limit procedures $ \mu=\nu\to-\frac12$ (resp.\   $\nu=\frac{\mu+1}{2}\to-\frac12$) wherever necessary. As one may note, \begin{align}
\left.\dfrac{1}{\pi}\dfrac{\partial T_{2\nu-1,\nu}}{\partial\nu}\right|_{\nu=-1/2},\quad \frac{T_{-1/2,-1/2}}{4},\quad\frac{1}{8}\left.\frac{\partial^{2}T_{\nu,\nu}}{\partial\nu^{2}}\right|_{\nu=-1/2}
\end{align}can be  paraphrased into \eqref{eq:CGeg1}--\eqref{eq:CGeg3} mentioned in the introduction.

To spell out the summands for series  \eqref{eq:SmunuCGb} and \eqref{eq:SmunuCGc}, one needs generalized binomial coefficients $\binom n m\colonequals\frac{\Gamma(n+1)}{\Gamma(m+1)\Gamma(n-m+1)}$, as well as values of the digamma function at fractional arguments, which do not fit the pattern in \eqref{eq:binomH}.  \eor\end{remark}\begin{remark}Some series identities  in Tables \ref{tab:CG1} and  \ref{tab:CG2} involve  $ \binom{2k}k^4$, which  agree in part with a result of Cantarini and D'Aurizio \cite[Corollaries 1 and 2]{CantariniAurizio2019}: the relations\begin{align}
\pi^{2}\sum_{k=0}^\infty\frac{\binom{2k}k^4}{2^{8k}}\frac{4k+1}{p(k)}\in\mathbb Q,\quad \pi^{2}\sum_{k=0}^\infty\frac{\binom{2k}k^4}{2^{8k}}\frac{4k+1}{[p(k)]^{2}}\in\mathbb Q
\end{align} hold for infinitely many polynomials $ p(k)\in\mathbb Z[k]$ with integer coefficients. Formulae for infinite series involving $ \binom{2k}k^4$ are otherwise rare, being notably absent from Sun's recent conjectures \cite[\S3]{Sun2022}.   \eor\end{remark}\begin{remark}Cantarini--D'Aurizio \cite{CantariniAurizio2019}, Campbell--Cantarini--D'Aurizio \cite{CampbellCantariniAurizio2022},  Cantarini \cite{Cantarini2022}, and Campbell \cite{Campbell2023mma}  have evaluated a wide variety of binomial harmonic sums using Fourier--Legendre expansions, which are independent of the approach we employ in the present article. Campbell has also published a recent proof \cite{Campbell2024lemn} of \eqref{eq:CGeg2} by Fourier--Legendre expansions.    \eor\end{remark}
  \begin{proposition}[Evaluations of $\widetilde{T}_{\nu} $]\label{prop:2013Int3Pnu}If  $\R\nu>-1$, then \begin{align}
\widetilde{T}_{\nu}={}&\frac{1}{\pi}\left(\frac{\cos\frac{\nu\pi}{2}}{2^{\nu}}\right)^3\left[ \frac{\Gamma\big(\frac{1+\nu}{2}\big)}{\Gamma\big(\frac{2+\nu}{2}\big)} \right]^{4}\label{eq:SnuCGa}\\\begin{split}={}&\frac{ \sin (\nu\pi   ) \cos \frac{\nu\pi}{2} }{2^{\nu }\pi  }\left[ \frac{\Gamma\big(\frac{1+\nu}{2}\big)}{\Gamma\big(\frac{2+\nu}{2}\big)} \right]^{2}\sum_{k=0}^\infty\frac{\big(\frac{1+\nu}{2}\big)_k^2(-\nu)_k}{(k!)^{3}}\\{}&\times\left[ 2 \psi ^{(0)}\left(k+\frac{\nu+1 }{2}\right)+\psi ^{(0)}(k-\nu )-3 \psi ^{(0)}(k+1) \right].\end{split}\label{eq:SnuCGb}
\end{align}\end{proposition}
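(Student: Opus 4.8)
\emph{Proof plan.} I would obtain the series representation \eqref{eq:SnuCGb} by a Clausen-coupling plus termwise-integration argument, and then derive (or, as in \cite{Zhou2013Int3Pnu}, independently establish) the gamma-quotient \eqref{eq:SnuCGa}. First I would substitute $x=1-2t$, so that $\widetilde T_\nu=2\int_0^1[P_\nu(1-2t)]^2P_\nu(2t-1)[4t(1-t)]^{-(1-\nu)/2}\,\D t$, using $1-x^2=4t(1-t)$. The key input is that
\[P_\nu(1-2t)P_\nu(2t-1)=\frac{\sin(\nu\pi)}{\pi}\sum_{k=0}^\infty[t(1-t)]^ks_{\nu,k}\bigl[\varsigma_{\nu,k}+\log(4t(1-t))\bigr],\]
which follows by subtracting $\log\frac1{4t(1-t)}$ times \eqref{eq:P+P+} from \eqref{eq:P+P-} and dividing by $\pi/\sin(\nu\pi)$: Lemma~\ref{lm:CFZ} gives this for $t\in(0,\tfrac12]$, and since both sides are invariant under $t\mapsto1-t$ (the right-hand side manifestly, the left-hand side because $P_\nu(1-2t)P_\nu(2t-1)$ is symmetric) it persists on all of $(0,1)$, and then on $\Re\nu>-1$ by analyticity in $\nu$. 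Crucially, this product formula must be used with one factor of the \emph{genuine} function $P_\nu(1-2t)$ kept outside the series (the bare identity $[P_\nu(1-2t)]^2=\sum_k[t(1-t)]^ks_{\nu,k}$ is only valid for $\Re t\le\tfrac12$), so that $\widetilde T_\nu$ becomes a term-by-term integral of $P_\nu(1-2t)[t(1-t)]^{k+\frac{\nu-1}{2}}$ against $1$ and against $\log(4t(1-t))$; note $\Re\nu>-1$ is exactly where the integrand is endpoint-integrable and the series converges.

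Next I would invoke the beta-type moment \eqref{eq:Pnu_avg} at the shifted exponent $\sigma=k+\tfrac{\nu-1}{2}$, together with its first $\sigma$-derivative to absorb the logarithm. The gamma factors $\Gamma\bigl(\tfrac{\nu+3}{2}+\sigma\bigr)$ and $\Gamma\bigl(\sigma+\tfrac{2-\nu}{2}\bigr)$ collapse to $\Gamma(k+\nu+1)$ and $\Gamma\bigl(k+\tfrac12\bigr)$, the powers of $2$ recombine, and the digammas coming from $\varsigma_{\nu,k}$ [see \eqref{eq:varsigma_nu_k}] and from $\partial_\sigma$ cancel down to $2\psi^{(0)}\bigl(k+\tfrac{\nu+1}{2}\bigr)+\psi^{(0)}(k-\nu)-3\psi^{(0)}(k+1)$. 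Tidying the constant prefactor with the duplication formula $\Gamma(\nu+1)=\tfrac{2^\nu}{\sqrt\pi}\Gamma\bigl(\tfrac{\nu+1}{2}\bigr)\Gamma\bigl(\tfrac{\nu+2}{2}\bigr)$ and the reflection formula $\Gamma\bigl(\tfrac{\nu+1}{2}\bigr)\Gamma\bigl(\tfrac{1-\nu}{2}\bigr)=\pi/\cos\tfrac{\nu\pi}{2}$ yields precisely \eqref{eq:SnuCGb}.

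The main obstacle is the closed form \eqref{eq:SnuCGa}, i.e.\ summing the digamma-weighted \emph{balanced} (Saalschützian) unit-argument series just obtained. This is genuinely delicate: the bare series $\sum_k\frac{(-\nu)_k(\frac{\nu+1}{2})_k^2}{(k!)^3}={}_3F_2\bigl(-\nu,\tfrac{\nu+1}{2},\tfrac{\nu+1}{2};1,1;1\bigr)$ is not itself a gamma quotient, so the digamma weight is essential and no single classical summation theorem applies. Writing $2\psi^{(0)}\bigl(k+\tfrac{\nu+1}{2}\bigr)+\psi^{(0)}(k-\nu)-3\psi^{(0)}(k+1)$ as, up to a $k$-independent constant, $\partial_\varepsilon\big|_{\varepsilon=0}\log\frac{(\frac{\nu+1}{2}+\varepsilon)_k^2(-\nu+\varepsilon)_k}{(1+3\varepsilon)_k(k!)^2}$ recasts the sum as $\partial_\varepsilon\big|_{\varepsilon=0}$ of the balanced family ${}_3F_2\bigl(-\nu+\varepsilon,\tfrac{\nu+1}{2}+\varepsilon,\tfrac{\nu+1}{2}+\varepsilon;1+3\varepsilon,1;1\bigr)$ plus a known constant multiple of its value at $\varepsilon=0$, and pinning down that combination requires a real computation—a chain of Thomae transformations, or a Wilf--Zeilberger/Zeilberger certification of the parametrized family. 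The cleanest and most robust route, and the one taken in \cite{Zhou2013Int3Pnu}, is instead to prove \eqref{eq:SnuCGa} directly by evaluating the triple-Legendre integral $\widetilde T_\nu$ through its Clebsch--Gordan/spherical-rotation interpretation via the Mehler--Dirichlet formula \eqref{eq:MD_Pnu}, or by a variation-of-parameters argument in the spirit of Lemma~\ref{lm:derivPnu} that pins $\widetilde T_\nu$ down from a functional relation in $\nu$ together with its behaviour as $\nu\to0^+$ and $\nu\to(-1)^+$; \eqref{eq:SnuCGb} is then automatically consistent.
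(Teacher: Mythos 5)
Your proof of \eqref{eq:SnuCGb} is essentially the paper's own: expand $P_\nu(1-2t)P_\nu(2t-1)$ via \eqref{eq:P+P+}--\eqref{eq:P+P-} with one genuine factor $P_\nu(1-2t)$ kept outside the series, then integrate termwise using \eqref{eq:Pnu_avg} and its $\sigma$-derivative at $\sigma=k+\frac{\nu-1}{2}$ (your added remarks on the $t\mapsto 1-t$ symmetry and the continuation to $\R\nu>-1$ are correct, and the digamma cancellation and prefactor simplification check out). For \eqref{eq:SnuCGa} the paper does exactly what you propose, namely cite \cite[(1.1)]{Zhou2013Int3Pnu} rather than sum the digamma-weighted series directly (a self-contained Bailey--Meijer rederivation appears only later, in \S\ref{subsec:BM_CG}), so your deferral matches the paper's proof.
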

 \begin{proof}The last equality in \cite[(1.1)]{Zhou2013Int3Pnu} implies \eqref{eq:SnuCGa}. To show  \eqref{eq:SnuCGb}, apply   \eqref{eq:P+P+}--\eqref{eq:P+P-} to\begin{align}\begin{split}
\widetilde T_\nu={}&2\int_0^1\frac{[P_\nu(1-2t)]^{2}P_\nu(2t-1)\D t}{[4t(1-t)]^{\frac{1-\nu}{2}}}\\={}&\frac{2\sin(\nu\pi)}{\pi}\int_0^1\frac{P_\nu(1-2t)}{[4t(1-t)]^{\frac{1-\nu}{2}}}\sum_{k=0}^\infty[t(1-t)]^ks_{\nu,k}\left[\varsigma_{\nu,k}-\log\frac{1}{4t(1-t)}\right]\D t,\end{split}
\end{align} and then integrate termwise  using \eqref{eq:Pnu_avg}.
 \end{proof}

\subsection{Bailey--Meijer formulations of Clebsch--Gordan couplings\label{subsec:BM_CG}}In this subsection, we~give self-contained proofs of \eqref{eq:Snunu}, \eqref{eq:S2nu-1nu} and  \eqref{eq:SnuCGa}, without going through the heavy computations  in existent literature \cite{Zhou2013Pnu,Zhou2013Int3Pnu}.

A major tool in this subsection is Bailey's reduction \cite[(3.4)]{Bailey1932}  of a convergent and well-poised $ _7F_6 $ series into (what is currently known as) a special class of the Meijer $G $-function (cf.\  \cite[Proposition 2]{Zudilin2004odd_zeta} and \cite[Figure 3]{BSW2013}):\begin{align}
\begin{split}&
{_7F_6}\left(\left. \begin{array}{@{}r@{}r@{,{}}r@{,{}}r@{,{}}r@{,{}}r@{,{}}r@{}}
a,{}&1+\frac{a}{2}&b&c&d&e&f \\[4pt]
&\frac{a}{2}&1+a-b&1+a-c&1+a-d&1+a-e&1+a-f \\
\end{array} \right|1\right)\\={}&\frac{\Gamma (1+a-b) \Gamma (1+a-c) \Gamma (1+a-d) \Gamma (1+a-e) \Gamma (1+a-f)}{\Gamma (1+a) \Gamma (b) \Gamma (c) \Gamma (d) \Gamma (1+a-b-c) \Gamma (1+a-b-d) \Gamma (1+a-c-d) \Gamma (1+a-e-f)}\\{}&\times G_{4,4}^{2,4}\left(1\left|
\begin{array}{@{}c@{}}
 e+f-a,1-b,1-c,1-d \\[4pt]
 0,1+a-b-c-d,e-a,f-a \\
\end{array}
\right.\right).
\end{split}   \label{eq:Bailey7F6}
\end{align}Here, on the left-hand side of \eqref{eq:Bailey7F6}, hypergeometric parameters  sitting in the same column\footnote{In this article, we typeset each well-poised series with aligned columns for its hypergeometric parameters, following the practice of Slater \cite[\S2.4]{Slater}. } all add up to the same number $1+a$ (hence the name ``well-poised''); on the right-hand side of \eqref{eq:Bailey7F6}, the Meijer $G$-function is defined by an integral of Mellin--Barnes type:\begin{align}
G^{m,n}_{p,q}\left(z\left|
\begin{array}{@{}c@{}}
 a_{1},\dots ,a_p \\[4pt]
 b_{1},\dots,b_q \\
\end{array}
\right.\right):=\frac{1}{2\pi i}\resizebox{1.25\width}{1.25\height}{$\displaystyle\int$}_{\hspace{-0.8em}C}\frac{\prod_{j=1}^n\Gamma(1-a_{j}-s)\prod_{k=1}^m\Gamma(b_{k}+s)}{\prod_{j=n+1}^p\Gamma(a_{j}+s)\prod_{k=m+1}^q\Gamma(1-b_{k}-s)}\frac{\D s}{z^s},\label{eq:MeijerG_defn}
\end{align}where the canonical design of Barnes' contour $C$ ensures that   the right-hand side of \eqref{eq:MeijerG_defn} equals the sum over the residues of \begin{align}
-\frac{\prod_{j=1}^n\Gamma(1-a_{j}-s)\prod_{k=1}^m\Gamma(b_{k}+s)}{\prod_{j=n+1}^p\Gamma(a_{j}+s)\prod_{k=m+1}^q\Gamma(1-b_{k}-s)}\frac{1}{z^{s}}
\end{align}at all the poles in  $ \prod_{j=1}^n\Gamma(1-a_{j}-s)$, when the contour closes rightwards. By convention, all the empty products are treated as unity.

\begin{proposition}[Bailey--Meijer formulations of $ T_{\mu,\nu}$]\begin{enumerate}[leftmargin=*,  label=\emph{(\alph*)},ref=(\alph*),
widest=d, align=left] \item If  $ 2\mu\notin\mathbb Z$ and $2\nu\notin\mathbb Z$, then we have{\allowdisplaybreaks\begin{subequations}\begin{align}
T_{\mu,\nu}={}&\frac{2\sin (\mu\pi   )\cos (\nu\pi   )}{  \mu  (\mu +1) (2 \nu +1)\pi}{_7F_{6}}\left(\left.\begin{array}{@{}r@{}r@{,{}}r@{,{}}r@{,{}}r@{,{}}r@{,{}}r@{}}
\frac{1}{2},{}&\frac{5}{4}&\frac{1}{2}&-\frac{\mu }{2}&\frac{\mu +1}{2}&-\nu &\nu +1 \\[4pt]
&\frac{1}{4}&1&\frac{\mu +3}{2}&\frac{2-\mu }{2}&\frac{2\nu+3}{2}&\frac{1-2\nu}{2}  \\
\end{array}\right| 1\right)\label{eq:SmunuCG_BMa}\\={}{}&\frac{2\sin (\mu\pi   )\sin(\nu\pi   )}{ \mu  (\mu +1) \nu  (\nu +1)\pi ^2}{_6F_{5}}\left(\left.\begin{array}{@{}r@{}r@{,{}}r@{,{}}r@{,{}}r@{,{}}r@{}}
1,{}&\frac{3}{2}&\frac{1-\mu }{2}&\frac{\mu +2}{2}&-\nu &\nu +1\\[4pt]
&\frac{1}{2}&\frac{\mu +3}{2}&\frac{2-\mu }{2}&\nu +2&1-\nu  \\
\end{array}\right| 1\right);\label{eq:SmunuCG_BMb}
\intertext{if  $ 2\mu\notin\mathbb Z$, $2\nu\notin\mathbb Z$, and $ \R \nu>-1$, then we have}\begin{split}
T_{\mu,\nu}={}&\frac{2 (1-2 \nu )\sin (\mu\pi   )\sin(\nu\pi   )}{\mu  (\mu +1) \nu\pi ^2  }{_7F_{6}}\left(\left.\begin{array}{@{}r@{}r@{,{}}r@{,{}}r@{,{}}r@{,{}}r@{,{}}r@{}}
\frac{1-2\nu}{2},{}&\frac{5-2 \nu}{4}&1&\frac{1}{2} &\frac{\mu -2 \nu +1}{2} &-\frac{\mu+2\nu }{2} &-\nu\\[4pt]
&\frac{1-2 \nu }{4}&\frac{1-2\nu}{2}&1-\nu&\frac{2-\mu }{2}&\frac{\mu +3}{2} &\frac{3}{2} \\
\end{array}\right| 1\right)\\={}&\frac{2 (1-2 \nu )\sin (\mu\pi   )\sin(\nu\pi   )}{ \mu  (\mu +1) \nu\pi ^2 }{_6F_{5}}\left(\left.\begin{array}{@{}r@{}r@{,{}}r@{,{}}r@{,{}}r@{,{}}r@{}}
1,{}&\frac{5-2 \nu}{4} &\frac{1}{2}&\frac{\mu -2 \nu +1}{2} &-\frac{\mu+2\nu }{2} &-\nu\\[4pt]
&\frac{1-2 \nu }{4}&1-\nu&\frac{2-\mu }{2}&\frac{\mu +3}{2} &\frac{3}{2} \\
\end{array}\right| 1\right);\end{split}\label{eq:SmunuCG_BMc}
\intertext{if   $ 2\mu\notin\mathbb Z$, $2\nu\notin\mathbb Z$, and $ \R \nu<0$,  then we have}\begin{split}
T_{\mu,\nu}={}&-\frac{2 (2 \nu +3)\sin (\mu\pi   )\sin(\nu\pi   )}{\mu  (\mu +1) (\nu +1)\pi ^2 }{_7F_{6}}\left(\left.\begin{array}{@{}r@{}r@{,{}}r@{,{}}r@{,{}}r@{,{}}r@{,{}}r@{}}
\frac{2\nu+3}{2},{}&\frac{2 \nu +7}{4}&1&\frac{1}{2}&\frac{\mu+2\nu +3}{2} &\frac{2-\mu+2\nu }{2}&\nu +1 \\[4pt]&\frac{2\nu+3}{4}&\frac{2\nu+3}{2}&
\nu +2&\frac{2-\mu }{2} &\frac{\mu +3}{2}&\frac{3}{2} \\
\end{array}\right| 1\right)\\={}&-\frac{2 (2 \nu +3)\sin (\mu\pi   )\sin(\nu\pi   )}{ \mu  (\mu +1) (\nu +1)\pi ^2}{_6F_{5}}\left(\left.\begin{array}{@{}r@{}r@{,{}}r@{,{}}r@{,{}}r@{,{}}r@{}}
1,{}&\frac{2 \nu +7}{4}&\frac{1}{2}&\frac{\mu+2\nu +3}{2} &\frac{2-\mu+2\nu }{2}&\nu +1 \\[4pt]&\frac{2\nu+3}{4}&
\nu +2&\frac{2-\mu }{2} &\frac{\mu +3}{2}&\frac{3}{2} \\
\end{array}\right| 1\right).\end{split}\label{eq:SmunuCG_BMd}\intertext{Furthermore, for   $ \mu\notin\mathbb Z$ and $2\nu\notin\mathbb Z$,  we have }\begin{split}
T_{\mu,\nu}={}&\sin (\mu\pi   )\sin(\nu\pi   )\left(\frac{1}{\nu  (\nu +1)\pi ^2 }{_4F_{3}}\left(\left.\begin{array}{@{}c@{}}
1,1,\frac{1-\mu }{2},\frac{\mu+2 }{2} \\[4pt]
\frac{3}{2},1-\nu ,\nu +2 \\
\end{array}\right| 1\right)\right.\\{}&+\frac{\Gamma \big(\frac{\mu+1 }{2}\big) \Gamma \big(\!-\!\frac{\mu }{2}\big)}{\pi}\left\{\frac{ \tan (\nu\pi   )}{2 \nu +1 }\frac{ 1}{\Gamma \big(\frac{1-\mu }{2}\big) \Gamma \big(\frac{\mu+2 }{2}\big)}{_3F_2}\left(\left.\begin{array}{@{}c@{}}
\frac{1}{2},\frac{1+\mu }{2},-\frac{\mu }{2} \\[4pt]
\frac{1-2\nu}{2} ,\frac{2\nu+3}{2} \\
\end{array}\right| 1\right)\right.\\{}&-\frac{ \sin (  \nu\pi ) }{2^{2 \nu +1}\pi ^2 }\frac{[\Gamma (-\nu ) \Gamma (2 \nu +1)]^2}{\Gamma \big(\frac{2-\mu+2\nu }{2}\big) \Gamma \big(\frac{\mu+2\nu +3}{2} \big)}{_3F_2}\left(\left.\begin{array}{@{}c@{}}
-\frac{\mu+2\nu+1 }{2},\frac{\mu-2\nu }{2} ,-\nu \\[4pt]
\frac{1-2\nu}{2} ,-2\nu\ \\
\end{array}\right| 1\right)\\{}&-\left.\left.\frac{2^{2 \nu +1}\sin (  \nu\pi ) }{\pi ^2}\frac{[\Gamma (\nu +1) \Gamma (-2 \nu -1)]^2}{ \Gamma \big(\frac{\mu -2 \nu +1}{2} \big) \Gamma \big(\!-\!\frac{\mu+2\nu }{2} \big)}{_3F_2}\left(\left.\begin{array}{@{}c@{}}
\frac{1-\mu+2\nu }{2},\frac{2+\mu+2\nu }{2} ,\nu+1 \\[4pt]
\frac{2\nu+3}{2} ,2\nu+2\ \\
\end{array}\right| 1\right)\right\}\right).\end{split}\label{eq:SmunuCG_BMe}
\end{align}\end{subequations}}\item The expression \begin{align}
\varphi(\nu)\colonequals\frac{T_{\nu,\nu} -\frac{1+2\cos(\nu\pi )}{3}\frac{\pi\Gamma\left(\frac{\nu+1 }{2}\right) \Gamma\left(\frac{3 \nu+2 }{2}\right)}{\left[ \Gamma\left(\frac{1-\nu}{2}\right)\right]^2\left[ \Gamma\left(\frac{\nu+2 }{2}\right)\right]^3 \Gamma\left(\frac{3 \nu+3 }{2}\right)}}{[1+2\cos(\nu\pi )]\cos\frac{\nu\pi}{2}}\label{eq:varphi_defn}
\end{align} extends to a    holomorphic function for $ \R\nu\geq0$ that vanishes for every $\nu\in\mathbb Z_{\geq0} $ and has at most $ O\big(e^{\pi|\nu|/2}\big)$ growth rate for large $|\nu|$, hence $\varphi(\nu) \equiv0$.\item For $\nu\in(-1,0)\smallsetminus\left\{ -\frac{1}{2} \right\} $, we have {\allowdisplaybreaks\begin{subequations}\begin{align}T_{2\nu-1,\nu}={}&\frac{\sin(\nu\pi )\sin(2\nu\pi )}{\nu^{2}\pi^{2}}\frac{\nu \pi\cot(\nu\pi)}{(1-2\nu)(1+2\nu)}{_5F_{4}}\left(\left.\begin{array}{@{}r@{}r@{,{}}r@{,{}}r@{,{}}r@{}}
 \frac12,{}&\frac54&\frac12&-\nu&\nu\\[4pt]&\frac14&1&\frac{3}{2}+\nu&\frac{3}{2}-\nu
 \\
\end{array}\right| 1\right)\label{eq:CG2nuFa}\\={}&\frac{\sin(\nu\pi )\sin(2\nu\pi )}{\nu^{2}\pi^{2}}\frac{1}{(1+\nu)(1-2\nu)}{_5F_{4}}\left(\left.\begin{array}{@{}r@{}r@{,{}}r@{,{}}r@{,{}}r@{}}
 1,{}&\frac32&1&-\nu&\frac{1}{2}+\nu\\[4pt]&\frac12&1&2+\nu&\frac{3}{2}-\nu
 \\
\end{array}\right| 1\right)\label{eq:CG2nuFb}\\={}&\frac{\sin(\nu\pi )\sin(2\nu\pi )}{\nu^{2}\pi^{2}}\label{eq:CG2nuFc}\\={}&\frac{\sin(\nu\pi )\sin(2\nu\pi )}{\nu^{2}\pi^{2}}\frac{\nu  (2 \nu +3)}{(\nu+1)(2\nu-1)}{_5F_{4}}\left(\left.\begin{array}{@{}r@{}r@{,{}}r@{,{}}r@{,{}}r@{}}
 \frac{2\nu+3}{2},{}&\frac{2\nu+7}{4}&1&\frac{1}{2}&1+2\nu\\[4pt]&\frac{2\nu+3}{4}&\frac{2\nu+3}{2}&2+\nu&\frac{3}{2}-\nu
 \\
\end{array}\right| 1\right)\label{eq:CG2nuFd}.\end{align}\end{subequations}}\end{enumerate}\end{proposition}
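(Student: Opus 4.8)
The plan is to derive part~(a) from Bailey's well-poised reduction \eqref{eq:Bailey7F6} and then obtain parts~(b) and~(c) as the degenerations $\mu=\nu$ and $\mu=2\nu-1$ of the representations produced in~(a). For part~(a) I would compute $T_{\mu,\nu}=\int_{-1}^1P_\mu(x)P_\nu(x)P_\nu(-x)\,\D x$ from first principles: setting $x=\cos\theta$ and $P_\nu(-\cos\theta)=P_\nu(\cos(\pi-\theta))$, inserting the Mehler--Dirichlet formula \eqref{eq:MD_Pnu} for the two Legendre factors of degree $\nu$ together with the hypergeometric series for $P_\mu(\cos\theta)$, and performing the resulting beta-type integrations in $\theta$ and the two inner variables, one is left with a single series at unit argument whose numerator parameters pair off to a common sum, i.e.\ the very-well-poised ${}_7F_6(1)$ of \eqref{eq:SmunuCG_BMa}; its ${}_6F_5$ companion \eqref{eq:SmunuCG_BMb} follows by a standard contraction of well-poised series, or equivalently by recombining the two ${}_4F_3$'s of \eqref{eq:SmunuCGb} through a partial-fraction manipulation in the summation index. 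Applying \eqref{eq:Bailey7F6} to \eqref{eq:SmunuCG_BMa} then rewrites $T_{\mu,\nu}$ as one Meijer $G^{2,4}_{4,4}$ whose parameter array is invariant under $\mu\leftrightarrow-1-\mu$ and $\nu\leftrightarrow-1-\nu$, and every remaining representation is read off from this single $G$-function: closing Barnes' contour in \eqref{eq:MeijerG_defn} to the right and summing residues along the four arithmetic ladders of poles yields the ${}_4F_3$-plus-three-${}_3F_2$ decomposition \eqref{eq:SmunuCG_BMe}, while the many-to-oneness of Bailey's map supplies the alternative well-poised ${}_7F_6$ preimages pivoted around $a=\tfrac{1-2\nu}2$ (for $\R\nu>-1$) and $a=\tfrac{2\nu+3}2$ (for $\R\nu<0$), which are \eqref{eq:SmunuCG_BMc} and \eqref{eq:SmunuCG_BMd}; convergence of each series is guaranteed by \eqref{eq:pFq_ind_ineq}.

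For part~(b) I would run the Carlson-type rigidity argument announced in the statement. The integral $T_{\nu,\nu}=\int_{-1}^1[P_\nu(x)]^2P_\nu(-x)\,\D x$ is an entire function of $\nu$ (the only singularities of the integrand on $[-1,1]$ are integrable logarithms of $P_\nu$ at $x=\mp1$), and a direct inspection of the $\Gamma$-factors shows the prospective closed form $\tfrac{1+2\cos\nu\pi}{3}\cdot\tfrac{\pi\Gamma(\frac{\nu+1}2)\Gamma(\frac{3\nu+2}2)}{[\Gamma(\frac{1-\nu}2)]^2[\Gamma(\frac{\nu+2}2)]^3\Gamma(\frac{3\nu+3}2)}$ to be holomorphic on $\R\nu\ge0$, so the numerator of $\varphi(\nu)$ in \eqref{eq:varphi_defn} is holomorphic there. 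The denominator $[1+2\cos\nu\pi]\cos\tfrac{\nu\pi}2$ vanishes only at the odd positive integers --- where the closed form has a double zero from $[\Gamma(\frac{1-\nu}2)]^{-2}$ and $T_{n,n}=(-1)^n\int_{-1}^1[P_n(x)]^3\,\D x=0$ since $P_n$ is an odd polynomial --- and at the points $\nu_0$ with $1+2\cos\nu_0\pi=0$, where the closed form again vanishes while the gamma-quotient stays finite, so holomorphy of $\varphi$ at $\nu_0$ amounts to $T_{\nu_0,\nu_0}=0$, a fact I would extract from the well-poised representations of part~(a) (these $\nu_0$ sit on the branch cut of the relevant ${}_7F_6$, along which an extra relation among $\int P_\nu^3$, $\int P_\nu^2Q_\nu$, $\int P_\nu Q_\nu^2$ forces the vanishing). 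Thus $\varphi$ is holomorphic on $\R\nu\ge0$ and vanishes at every $\nu\in\mathbb Z_{\ge0}$ (at even $n$ the claimed closed form reduces, via the duplication formula, to Gaunt's classical value of $\int_{-1}^1[P_n(x)]^3\,\D x$, which equals $T_{n,n}$). Finally, Stirling's formula gives the exact decay rate $e^{-2\pi|\I\nu|}$ of the $\Gamma$-quotient along vertical lines, $T_{\nu,\nu}$ grows at most like $e^{2\pi|\I\nu|}$ by crude Mehler--Dirichlet estimates on its three factors, and $|[1+2\cos\nu\pi]\cos\tfrac{\nu\pi}2|\asymp e^{3\pi|\I\nu|/2}$ away from its zeros, so $\varphi(\nu)=O(e^{\pi|\nu|/2})$ on $\R\nu\ge0$; since $\tfrac\pi2<\pi$, Carlson's theorem forces $\varphi\equiv0$, whence \eqref{eq:Snunu} holds on $\R\nu\ge0$, and by analytic continuation for $\nu\in(-1,0)$, covering $\nu\in\left\{-\tfrac16,-\tfrac14,-\tfrac13,-\tfrac12\right\}$.

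For part~(c) I would substitute $\mu=2\nu-1$ into the four well-poised forms of part~(a). In \eqref{eq:SmunuCG_BMa} this makes $\tfrac{\mu+3}2=\nu+1$ cancel a lower parameter and $-\tfrac\mu2=\tfrac{1-2\nu}2$ cancel another, so the ${}_7F_6$ collapses to the very-well-poised ${}_5F_4(1)$ with numerator parameters $\tfrac12,\tfrac54,\tfrac12,-\nu,\nu$ over $\tfrac14,1,\tfrac32+\nu,\tfrac32-\nu$ and prefactor $-\tfrac{\sin2\nu\pi\cos\nu\pi}{\nu\pi(4\nu^2-1)}$, which is exactly \eqref{eq:CG2nuFa}; the analogous cancellations in \eqref{eq:SmunuCG_BMb}, \eqref{eq:SmunuCG_BMc}, \eqref{eq:SmunuCG_BMd} give \eqref{eq:CG2nuFb} and \eqref{eq:CG2nuFd}. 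That ${}_5F_4(1)$ is then evaluated by the classical very-well-poised ${}_5F_4$ summation (Dougall's theorem, with $a=b=\tfrac12$, $c=-\nu$, $d=\nu$), which returns $\tfrac{\Gamma(\frac32+\nu)\Gamma(\frac32-\nu)}{[\Gamma(\frac32)]^2\Gamma(1+\nu)\Gamma(1-\nu)}=\tfrac{(1-4\nu^2)\tan\nu\pi}{\pi\nu}$ after the reflection and duplication formulae; feeding this back into \eqref{eq:CG2nuFa} collapses everything to $T_{2\nu-1,\nu}=\tfrac{\sin\nu\pi\sin2\nu\pi}{\nu^2\pi^2}$, which is \eqref{eq:CG2nuFc} (equivalently \eqref{eq:S2nu-1nu}).

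\textbf{Main obstacle.} The bulk of the labor, and the step most likely to be delicate, is part~(a): building the very-well-poised ${}_7F_6$ for $T_{\mu,\nu}$ from the triple Mehler--Dirichlet integral and then tracking Bailey's many-to-one reduction to the single $G^{2,4}_{4,4}$, matching every residue ladder and every alternative preimage to the precise prefactors and parameter arrays of \eqref{eq:SmunuCG_BMa}--\eqref{eq:SmunuCG_BMe}, in particular the three-${}_3F_2$-plus-${}_4F_3$ split \eqref{eq:SmunuCG_BMe}. The remaining subtle point is the holomorphy of $\varphi$ across the zeros of $1+2\cos\nu\pi$ in part~(b) --- i.e.\ establishing $T_{\nu_0,\nu_0}=0$ there directly from a well-poised representation rather than by an elementary estimate --- whereas the Carlson growth bound in~(b) and the ${}_5F_4(1)$ summation in~(c) are then routine.
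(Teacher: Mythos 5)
Your parts (a) and (c) are essentially the paper's own route: Bailey's reduction \eqref{eq:Bailey7F6} identifying one Meijer $G^{2,4}_{4,4}(1)$ with several very-well-poised preimages, residue summation (contour closed rightwards) for \eqref{eq:SmunuCG_BMe}, and the substitution $\mu=2\nu-1$ plus the well-poised ${}_5F_4$ summation \eqref{eq:5F4red} for \eqref{eq:CG2nuFa}--\eqref{eq:CG2nuFd} (your cancellation bookkeeping and the Dougall evaluation are correct). The one structural difference in (a) is the entry point: the paper does \emph{not} derive \eqref{eq:SmunuCG_BMa} first; it starts from \eqref{eq:SmunuCGc} (already proved in the preceding proposition via the Clausen coupling) and recognizes that digamma-weighted series as the Mellin--Barnes integral \eqref{eq:gmunu}, from which all four well-poised forms \eqref{eq:SmunuCG_BMa}--\eqref{eq:SmunuCG_BMd} drop out as Bailey preimages. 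Your proposed ``first-principles'' construction of the very-well-poised ${}_7F_6$ from a triple Mehler--Dirichlet integral is exactly the heavy computation this subsection is designed to bypass; the sketch (``beta-type integrations \dots parameters pair off'') does not establish it, though the detour is avoidable since \eqref{eq:SmunuCGc} is at your disposal.

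The genuine gaps are in part (b). First, Carlson's theorem needs $\varphi(n)=0$ for \emph{every} $n\in\mathbb Z_{\geq0}$. At odd $n=2n'+1$ your argument (parity gives $T_{n,n}=0$; the closed form has a double zero from $[\Gamma(\frac{1-\nu}{2})]^{-2}$) only shows the numerator of \eqref{eq:varphi_defn} vanishes to first order against the simple zero of $\cos\frac{\nu\pi}{2}$, i.e.\ that $\varphi$ is holomorphic there --- not that $\varphi(2n'+1)=0$. You need $T_{\nu,\nu}=O\big((\nu-2n'-1)^2\big)$, which the paper gets from \eqref{eq:SmunuCG_BMb'}: the prefactor $\sin^2(\nu\pi)$ supplies the double zero while the ${}_6F_5$ terminates (because $\big(\frac{1-\nu}{2}\big)_k$ kills $k>n'$) and stays bounded; and rescaling so that only even integers are needed does not rescue you, since the exponential type then becomes exactly $\pi$. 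Second, holomorphy of $\varphi$ across the real zeros $\nu_0=2n+1\pm\frac13$ of $1+2\cos(\nu\pi)$ requires proving $T_{\nu_0,\nu_0}=0$ \emph{independently} of \eqref{eq:Snunu}; your appeal to ``an extra relation among $\int P_\nu^3$, $\int P_\nu^2Q_\nu$, $\int P_\nu Q_\nu^2$ on the branch cut'' is not an argument. The paper proves these vanishings from the terminating ${}_6F_5$ specializations of \eqref{eq:SmunuCG_BMc}, the two-term recursions \eqref{eq:SCG_rec} produced by creative telescoping, and the directly verified initial values $T_{2/3,2/3}=T_{4/3,4/3}=0$. (A minor slip: the gamma quotient in \eqref{eq:Snunu} does not decay like $e^{-2\pi|\I\nu|}$ on vertical lines --- it grows like $e^{\pi|\I\nu|}$, and only after multiplying by $1+2\cos(\nu\pi)$ and dividing by $[1+2\cos(\nu\pi)]\cos\frac{\nu\pi}{2}$ does one land on the stated $O\big(e^{\pi|\nu|/2}\big)$ bound, so your final growth estimate is right but the intermediate bookkeeping is not.)
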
\begin{proof}\begin{enumerate}[leftmargin=*,  label={(\alph*)},ref=(\alph*),
widest=d, align=left] \item We can rewrite \eqref{eq:SmunuCGc} as\begin{align}\begin{split}T_{\mu,\nu}={}&\frac{2 \sin (\mu \pi  ) \sin (\nu\pi   )}{ \mu  (\mu +1)\pi ^2}\sum_{k=0}^\infty\frac{\big(\frac{1}{2}\big)_k (-\nu )_k (\nu +1)_k}{ \big(\frac{2-\mu }{2}\big)_k \big(\frac{\mu +3}{2}\big)_kk!}\\{}&{}\times\left[ -\psi ^{(0)}\left(k+\frac{\mu+3 }{2}\right)-\psi ^{(0)}\left(k-\frac{\mu-2 }{2}\right)+\psi ^{(0)} (k+\nu +1)\right.\\{}&\left.{}+\psi ^{(0)} (k-\nu )-\psi ^{(0)} (k+1)+\psi ^{(0)} \left(k+\frac{1}{2}\right) \right].\end{split}\end{align}For $ \nu\in(-1,0)$, this is equal to \begin{align}\begin{split}&-\frac{ \Gamma \big(\frac{\mu+1 }{2}\big) \Gamma \big(\!-\!\frac{\mu }{2}\big) \sin (\mu\pi   )\sin ^2(\nu\pi   )}{2 \pi ^{3}\sqrt{\pi}}\lim_{\varepsilon\to0^+}\int_{\varepsilon-i\infty}^{\varepsilon+i\infty}g_{\mu,\nu}(s)\frac{\D s}{2\pi i}\\={}&
-\frac{ \Gamma \big(\frac{\mu+1 }{2}\big) \Gamma \big(\!-\!\frac{\mu }{2}\big) \sin (\mu\pi   )\sin ^2(\nu\pi   )}{2 \pi ^{3}\sqrt{\pi}}G_{4,4}^{2,4}\left(1\left|
\begin{array}{@{}c@{}}
 \frac{1}{2},0,\nu +1,-\nu  \\[4pt]
 0,0,\frac{\mu }{2},-\frac{\mu+1}{2}  \\
\end{array}
\right.\right)\end{split}\label{eq:gmunu}
\end{align} where \begin{align}g_{\mu,\nu}(s)\colonequals
\frac{[\Gamma (s)]^2 \Gamma \big(\frac{1}{2}-s\big) \Gamma (1-s) \Gamma (-\nu -s) \Gamma (\nu +1-s)}{\Gamma \big(\frac{2-\mu }{2}-s\big) \Gamma \big(\frac{\mu +3}{2}-s\big)}
\end{align}and the contour closes leftwards.

Due to  the invariance of the generalized hypergeometric series and the Meijer  $G$-functions under certain permutations of their parameters, there are four (ostensibly) different outcomes after we apply Bailey's reduction formula \eqref{eq:Bailey7F6} to \eqref{eq:gmunu}, as listed in \eqref{eq:SmunuCG_BMa}--\eqref{eq:SmunuCG_BMd}.  One can analytically continue these identities beyond the regime where  $ \nu\in(-1,0)$,  so  long as the generalized hypergeometric series in question remain convergent, according to the condition set forth in \eqref{eq:pFq_ind_ineq}.

Alternatively, we may close the contour rightwards in \eqref{eq:gmunu}, after which residue calculus brings us  \eqref{eq:SmunuCG_BMe} for $ \nu\in(-1,0)$. Subsequent analytic continuation confirms \eqref{eq:SmunuCG_BMe}  in its entirety.

       \item By Dougall's theorem (see \cite[(2.3.4.1)]{Slater} or \cite[(2.2.9)]{AAR}), the well-poised series in  \eqref{eq:SmunuCG_BMa} reduces to a ratio of gamma values when $ -\frac{\mu }{2},\frac{\mu +1}{2},-\nu ,\nu +1\in\mathbb Z_{\leq0}$. These are precisely the cases of \cite[(19$ _{(\mu,n)}$) and (19$ _{(2m,\nu)}$)]{Zhou2013Pnu}, which were previously derived by totally different means.

In particular, the last paragraph allows us to confirm that $ \varphi(\nu)=0$ for all $ \nu\in2\mathbb Z_{\geq0}$.

To show that $ \varphi(2n+1)=0$ for  $ n\in\mathbb Z_{\geq0}$, we need to verify that the numerator in  \eqref{eq:varphi_defn} vanishes to the order of $ O((\nu-2n-1)^2)$ as $\nu\to2n+1$. This can be done via a particular case of   \eqref{eq:SmunuCG_BMb}: \begin{align}\begin{split}
T_{\nu,\nu}={}&\frac{2\sin^{2}(\nu\pi   )}{  \nu^{2}  (\nu +1)^{2}\pi ^2}{_6F_{5}}\left(\left.\begin{array}{@{}r@{}r@{,{}}r@{,{}}r@{,{}}r@{,{}}r@{}}
1,{}&\frac{3}{2}&\frac{1-\nu }{2}&\frac{\nu+2}{2}&-\nu &\nu +1\\[4pt]
&\frac{1}{2}&\frac{\nu +3}{2}&\frac{2-\nu }{2}&\nu +2&1-\nu  \\
\end{array}\right| 1\right)\\={}&\frac{2\sin^{2}(\nu\pi   )}{\nu^{2}  (\nu +1)^{2}\pi ^2}\sum_{k=0}^\infty\frac{\big(\frac32\big)_k\big(\frac{1-\nu }{2}\big)_k\big(\frac{\nu+2 }{2}\big)_k}{\big(\frac12\big)_k\big(\frac{\nu+3 }{2}\big)_k\big(\frac{2-\nu }{2}\big)_k}\frac{\nu(\nu+1)}{(\nu-k)(\nu+k+1)},\end{split}\tag{\ref{eq:SmunuCG_BMb}$'$}\label{eq:SmunuCG_BMb'}
\end{align}   where the infinite series terminates to a bounded quantity, as  $\nu$ approaches an  odd positive integer.

To demonstrate that $ \varphi(\nu)$ has a holomorphic  extension for $ \R\nu\geq0$, we still have to check that the numerator of   \eqref{eq:varphi_defn} vanishes when $ \nu=2n+1\pm\frac13$ for   $ n\in\mathbb Z_{\geq0}$.  For this purpose, we specialize    \eqref{eq:SmunuCG_BMc}  to two sequences of terminating  series \begin{align}T_{{2n+\frac{4}{3}},2n+\frac{4}{3}}={}&-\frac{27 (12 n+5){_6F_{5}}\left(\left.\begin{array}{@{}r@{}r@{,{}}r@{,{}}r@{,{}}r@{,{}}r@{}}
1,&\frac{1}{2}&-n+\frac{7}{12} &-n-\frac{1}{6} &-3n-2 &-2n-\frac{4}{3}\\[4pt]
&-2n-\frac{1}{3}&-n-\frac{5}{12}&-n+\frac{1}{3}&n+\frac{13}{6} &\frac{3}{2} \\
\end{array}\right| 1\right)}{8 \pi ^2 (3 n+2)^2 (6 n+7)},
\tag{\ref{eq:SmunuCG_BMc}$'_+$}\\T_{{2n+\frac{2}{3}},2n+\frac{2}{3}}={}&-\frac{27 (12 n+1){_6F_{5}}\left(\left.\begin{array}{@{}r@{}r@{,{}}r@{,{}}r@{,{}}r@{,{}}r@{}}
1,&\frac{1}{2}&-n+\frac{11}{12} &-n+\frac{1}{6} &-3n-1 &-2n-\frac{2}{3}\\[4pt]
&-2n+\frac{1}{3}&-n-\frac{1}{12}&-n+\frac{2}{3}&n+\frac{11}{6} &\frac{3}{2} \\
\end{array}\right| 1\right)}{8 \pi ^2 (3 n+1)^2 (6 n+5)},\tag{\ref{eq:SmunuCG_BMc}$'_-$}
\end{align} which can be computed   through the recursions\footnote{These recursions can be generated automatically from the \texttt{Annihilator} command in Christoph Koutschan's \texttt{HolonomicFunctions} package \cite{Koutschan2013} for \textit{Mathematica}, which implements the Wilf--Zeilberger algorithm \cite{WZ1990,Zeilberger1990a,Zeilberger1991}. It turns out that both recursions in \eqref{eq:SCG_rec} are valid for non-integer values of $n$\ as well [cf.\ \eqref{eq:Snunu}].}\begin{align}
\left\{\begin{array}{@{}r@{{}={}}l}T_{{2(n+1)+\frac{4}{3}},2(n+1)+\frac{4}{3}}&\smash[t]{\frac{(n+1) (3 n+4) (6 n+7)^2}{(2 n+3) (3 n+5)^2 (6 n+11)} }T_{{2n+\frac{4}{3}},2n+\frac{4}{3}}, \\[8pt]
 \smash[b]{T_{{2(n+1)+\frac{2}{3}},2(n+1)+\frac{2}{3}}}&\smash[b]{\frac{(n+1) (3 n+2) (6 n+5)^2}{(2 n+3) (3 n+4)^2 (6 n+7)} T_{{2n+\frac{2}{3}},2n+\frac{2}{3}}},
\end{array}
\right.\label{eq:SCG_rec}\end{align}   with initial conditions\begin{align}
\left\{\begin{array}{@{}r@{{}={}}l}{T_{{\frac{4}{3}},\frac{4}{3}}}&0,\\[8pt]
\smash[b]{T_{{\frac{2}{3}},\frac{2}{3}}}&0.  \\
\end{array}
\right.\end{align}Thus far, we have verified that  $ \varphi(\nu)$  remains finite as  $ \nu\to2n+1\pm\frac13$ for   $ n\in\mathbb Z_{\geq0}$.

To prove that  $ \varphi(\nu)$ has at most  $ O\big(e^{\pi|\nu|/2}\big)$ growth rate for large $|\nu|$, apply Stirling's formula for Euler's gamma function to the right-hand sides of \eqref{eq:Snunu} and \eqref{eq:SmunuCG_BMb'}.

By Carlson's theorem \cite[Theorem 2.8.1]{AAR}, we conclude that $\varphi(\nu) $ is identically vanishing.     \item Setting $\mu=2\nu-1$ in \eqref{eq:SmunuCG_BMa}--\eqref{eq:SmunuCG_BMd}, we arrive at \eqref{eq:CG2nuFa}--\eqref{eq:CG2nuFd}.

For \eqref{eq:CG2nuFc}, the hypergeometric series terminates after the leading constant term. [Plugging $ \mu=2\nu-1$ into \eqref{eq:SmunuCG_BMe},  while employing some standard reduction formulae for $ _3F_2$ and $_2F_1$, one  gets the same result as \eqref{eq:CG2nuFc}.]

One may also turn \eqref{eq:CG2nuFa}, \eqref{eq:CG2nuFb}, and  \eqref{eq:CG2nuFd}  into \eqref{eq:CG2nuFc} by a reduction formula (see \cite[(2.3.4.5)]{Slater} or \cite[Corollary 3.5.2]{AAR}) \begin{align}
\begin{split}&{_5F_{4}}\left(\left.\begin{array}{@{}r@{}r@{,{}}r@{,{}}r@{,{}}r@{}}
 a,{}&1+\frac{a}{2}&b&c&d\\[4pt]&\frac{a}{2}&1+a-b&1+a-c&1+a-d \\
\end{array}\right| 1\right)\\={}&\frac{\Gamma(1+a-b)\Gamma(1+a-c)\Gamma(1+a-d)\Gamma(1+a-b-c-d)}{\Gamma(1+a)\Gamma(1+a-b-c)\Gamma(1+a-b-d)\Gamma(1+a-c-d)}\end{split}\label{eq:5F4red}
\end{align}for a special class of convergent and well-poised $_5F_4$.
   \qedhere\end{enumerate}  \end{proof}
\begin{proposition}[Bailey--Meijer formulations of $ \widetilde T_\nu$]For $ \R\nu>-1$ and $ \nu\notin\mathbb Z$, the following identities hold true:\begin{align}
\widetilde T_\nu={}&\frac{1}{\pi}\left(\frac{\cos\frac{\nu\pi}{2}}{2^{\nu}}\right)^3\left[ \frac{\Gamma\big(\frac{1+\nu}{2}\big)}{\Gamma\big(\frac{2+\nu}{2}\big)} \right]^{2}\frac{2^{2 \nu +1} (1-\nu )}{\nu  (\nu +1) \cot\frac{\nu\pi   }{2}}{_5F_{4}}\left(\left.\begin{array}{@{}r@{}r@{,{}}r@{,{}}r@{,{}}r@{}}
 \frac{1-\nu}{2},{}&\frac{5-\nu }{4}&\frac{1-\nu}{2}&-\nu&\frac{1+\nu}{2}\\[4pt]&\frac{1-\nu}{4}&1&\frac{3+\nu}{2}&1-\nu
 \\
\end{array}\right| 1\right)\label{eq:Snu5F4}\\={}&\frac{1}{\pi}\left(\frac{\cos\frac{\nu\pi}{2}}{2^{\nu}}\right)^3\left[ \frac{\Gamma\big(\frac{1+\nu}{2}\big)}{\Gamma\big(\frac{2+\nu}{2}\big)} \right]^{4}.\label{eq:Snu5F4red}
\end{align}\end{proposition}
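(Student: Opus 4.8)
The plan is to feed the self-contained series representation \eqref{eq:SnuCGb} (which rests only on Lemma~\ref{lm:CFZ} and the beta-type evaluation \eqref{eq:Pnu_avg}) into the Bailey--Meijer machinery already used in the proof of the previous proposition. Write $\mathfrak c_\nu$ for the gamma prefactor displayed in \eqref{eq:SnuCGb}, so that the claim is $\widetilde T_\nu=\mathfrak c_\nu\sum_{k\ge0}\frac{(\frac{1+\nu}{2})_k^{2}(-\nu)_k}{(k!)^{3}}\big[2\psi^{(0)}\big(k+\tfrac{\nu+1}{2}\big)+\psi^{(0)}(k-\nu)-3\psi^{(0)}(k+1)\big]$. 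The first step is to recognise this digamma-weighted series as a Mellin--Barnes integral, exactly along the lines of the passage around \eqref{eq:gmunu}: set
\[
\tilde g_\nu(s)\colonequals\frac{[\Gamma(s)]^{2}\,\Gamma\big(\tfrac{1+\nu}{2}-s\big)^{2}\,\Gamma(-\nu-s)}{\Gamma\big(\tfrac{1+\nu}{2}\big)^{2}\,\Gamma(-\nu)\,\Gamma(1-s)} ,
\]
and note that to the left of a vertical contour $\R s=c\in(0,\tfrac12)$ the only poles are the double poles of $[\Gamma(s)]^{2}$ at $s=-k$, $k\in\mathbb Z_{\ge0}$. Differentiating the analytic part of $\tilde g_\nu$ at each such pole produces precisely the combination $2\psi^{(0)}(k+\tfrac{1+\nu}{2})+\psi^{(0)}(k-\nu)-3\psi^{(0)}(k+1)$ --- the two copies of the half-integer-type $\psi^{(0)}$ from $\Gamma(\tfrac{1+\nu}{2}-s)^{2}$, the $\psi^{(0)}(k-\nu)$ from $\Gamma(-\nu-s)$, and the $-3\psi^{(0)}(k+1)$ coming jointly from $1/\Gamma(1-s)$ and from the logarithmic derivative of $(s+k)\Gamma(s)$ at $s=-k$ --- while the leftover gamma ratios rebuild $(\tfrac{1+\nu}{2})_k^{2}(-\nu)_k/(k!)^{3}$. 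Hence the series in \eqref{eq:SnuCGb} equals $\pm\frac1{2\pi i}\int_C\tilde g_\nu(s)\,\D s$ (the sign and the admissibility of closing the contour on either side being checked as in the preceding proof), and this integral is, up to the normaliser $\Gamma(\tfrac{1+\nu}{2})^{2}\Gamma(-\nu)$, a Meijer $G$-function $G_{4,4}^{2,4}(1\mid\cdot)$ with upper parameters $\tfrac{1-\nu}{2},\tfrac{1-\nu}{2},1+\nu,0$ and lower parameters $0,0,0,0$ --- exactly the shape occurring on the right-hand side of Bailey's reduction \eqref{eq:Bailey7F6}.

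The second step is to evaluate that $G$-function by reading \eqref{eq:Bailey7F6} backwards. Matching the parameter list forces the identification $a=e=f=\tfrac{1-\nu}{2}$, $b=\tfrac{1+\nu}{2}$, $c=-\nu$, $d=1$ in Bailey's formula, so the $G_{4,4}^{2,4}$ becomes, up to explicit gamma factors, the well-poised $_7F_6$ with those parameters. But now three of its upper parameters equal $a$ (namely $a$, $e$ and $f$), two lower parameters equal $1$ (namely $1+a-e$ and $1+a-f$), and $d=1$ together with $1+a-d=a$ supplies one further cancelling pair; after telescoping these, the $_7F_6$ collapses to precisely the very-well-poised $_5F_4$ displayed in \eqref{eq:Snu5F4}. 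This is the analogue, for $\widetilde T_\nu$, of the degeneration $\mu=2\nu-1$ that reduced the $_7F_6$'s of the $T_{2\nu-1,\nu}$ computation to the $_5F_4$'s \eqref{eq:CG2nuFa}--\eqref{eq:CG2nuFd}; the only genuinely new labour is carrying the scalar factors $2^{\nu}$, $\sin\tfrac{\nu\pi}{2}$, $\cos\tfrac{\nu\pi}{2}$ and the gamma quotients through the chain $\psi$-series $\rightsquigarrow G_{4,4}^{2,4}\rightsquigarrow {}_7F_6\rightsquigarrow {}_5F_4$, which produces the prefactor $\tfrac1\pi\big(\tfrac{\cos(\nu\pi/2)}{2^{\nu}}\big)^{3}\big[\Gamma(\tfrac{1+\nu}{2})/\Gamma(\tfrac{2+\nu}{2})\big]^{2}\cdot\tfrac{2^{2\nu+1}(1-\nu)}{\nu(\nu+1)\cot(\nu\pi/2)}$ of \eqref{eq:Snu5F4}.

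The last step reads off \eqref{eq:Snu5F4red} from \eqref{eq:Snu5F4}. The $_5F_4$ there is very-well-poised with $a=b=\tfrac{1-\nu}{2}$, $c=-\nu$, $d=\tfrac{1+\nu}{2}$, for which $1+a-b=1$, $1+a-c=\tfrac{3+\nu}{2}$, $1+a-d=1-\nu$, $1+a-b-c-d=\tfrac{1+\nu}{2}$ and $1+a-c-d=1$; the summation formula \eqref{eq:5F4red} then collapses it to $\Gamma(\tfrac{3+\nu}{2})\Gamma(1-\nu)\Gamma(\tfrac{1+\nu}{2})\big/\big[\Gamma(\tfrac{3-\nu}{2})\Gamma(1+\nu)\Gamma(\tfrac{1-\nu}{2})\big]$. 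Legendre's duplication formula and Euler's reflection formula (through $\Gamma(\tfrac{1+\nu}{2})\Gamma(\tfrac{1-\nu}{2})=\pi/\cos\tfrac{\nu\pi}{2}$ and $\Gamma(\tfrac{2+\nu}{2})\Gamma(\tfrac{2-\nu}{2})=\tfrac{\pi\nu}{2}/\sin\tfrac{\nu\pi}{2}$) turn the whole product into $\tfrac1\pi\big(\tfrac{\cos(\nu\pi/2)}{2^{\nu}}\big)^{3}\big[\Gamma(\tfrac{1+\nu}{2})/\Gamma(\tfrac{2+\nu}{2})\big]^{4}$, which is \eqref{eq:Snu5F4red} and re-proves \eqref{eq:SnuCGa} without recourse to \cite{Zhou2013Int3Pnu}. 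All these identities have been obtained for $\nu\in(-1,0)$; since every hypergeometric series in play is convergent throughout $\R\nu>-1$, $\nu\notin\mathbb Z$ by the balancing condition \eqref{eq:pFq_ind_ineq}, analytic continuation extends them to that full range. I expect the main obstacle to be the first step: pinning down $\tilde g_\nu$ so that its double-pole residues reproduce \eqref{eq:SnuCGb} on the nose (sign, contour orientation, normalising gamma constant), and justifying that $\frac1{2\pi i}\int_C\tilde g_\nu\,\D s$ may be closed so as to equal simultaneously the digamma series and the $G_{4,4}^{2,4}$ fed into Bailey's formula; everything after that is routine gamma-function bookkeeping.
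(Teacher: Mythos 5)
Your proposal is correct and follows essentially the same route as the paper: it converts \eqref{eq:SnuCGb} into the Barnes integral of $\widetilde g_\nu(s)=[\Gamma(s)]^2\Gamma(-\nu-s)[\Gamma(\tfrac{1+\nu}{2}-s)]^2/\Gamma(1-s)$, identifies it with the Meijer $G_{4,4}^{2,4}$ of \eqref{eq:MeijerSnuCG}, reads Bailey's reduction \eqref{eq:Bailey7F6} backwards, and finishes with the well-poised summation \eqref{eq:5F4red} plus reflection and duplication. Your parameter matching $a=e=f=\tfrac{1-\nu}{2}$, $b=\tfrac{1+\nu}{2}$, $c=-\nu$, $d=1$ does reproduce \eqref{eq:Snu5F4} with exactly the stated prefactor, and the value $\Gamma(\tfrac{3+\nu}{2})\Gamma(1-\nu)\Gamma(\tfrac{1+\nu}{2})\big/\big[\Gamma(\tfrac{3-\nu}{2})\Gamma(1+\nu)\Gamma(\tfrac{1-\nu}{2})\big]$ then yields \eqref{eq:Snu5F4red}, matching the paper's argument (which additionally records the companion output \eqref{eq:Snu5F4'}, not needed for the stated claim).
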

\begin{proof}For $\nu\in(-1,0)$, one may convert \eqref{eq:SnuCGb} into\begin{align}\begin{split}
 \widetilde T_\nu={}&-\frac{ \sin (\pi  \nu ) \cos \frac{\nu\pi   }{2}}{2^{\nu }\pi  \big[\Gamma \big(\frac{2+\nu }{2}\big)\big]^2 \Gamma (-\nu )}\lim_{\varepsilon\to0^+}\int_{\varepsilon-i\infty}^{\varepsilon+i\infty}\widetilde g_\nu(s)\frac{\D s}{2\pi i}\\={}&-\frac{ \sin (\pi  \nu ) \cos \frac{\nu\pi   }{2}}{2^{\nu }\pi  \big[\Gamma \big(\frac{2+\nu }{2}\big)\big]^2 \Gamma (-\nu )}G_{4,4}^{2,4}\left(1\left|
\begin{array}{@{}c@{}}
 0,\frac{1-\nu }{2},\frac{1-\nu }{2},\nu +1 \\[4pt]
 0,0,0,0
\end{array}
\right.\right)\\={}&-\frac{ \sin (\pi  \nu ) \cos \frac{\nu\pi   }{2}}{2^{\nu }\pi  \big[\Gamma \big(\frac{2+\nu }{2}\big)\big]^2 \Gamma (-\nu )}G_{3,3}^{2,3}\left(1\left|
\begin{array}{@{}c@{}}
 \frac{1-\nu }{2},\frac{1-\nu }{2},\nu +1 \\[4pt]
 0,0,0
\end{array}
\right.\right),\end{split}\label{eq:MeijerSnuCG}
\end{align}where\begin{align}
\widetilde g_\nu(s)\colonequals\frac{[\Gamma (s)]^2 \Gamma (-s-\nu )\big[ \Gamma \big(\frac{\nu +1}{2}-s\big)\big]^2}{\Gamma (1-s)}.
\end{align}Applying Bailey's reduction \eqref{eq:Bailey7F6} to the second line in \eqref{eq:MeijerSnuCG}, we receive two (ostensibly) different outputs. One of them is given in \eqref{eq:Snu5F4},  where the  $_5F_4 $ series converges for  $\R\nu>-1$; the other is
\begin{align}
-\frac{4 \sin (\nu\pi   ) \cos \frac{\nu\pi}{2}   }{\pi ^{3/2} }\frac{\big[\Gamma \big(\frac{\nu+1 }{2}\big)\big]^3}{\Gamma\big(\frac{2+\nu}{2}\big)\Gamma (\nu +2)}
{_5F_{4}}\left(\left.\begin{array}{@{}r@{}r@{,{}}r@{,{}}r@{,{}}r@{}}
 1+\nu,{}&\frac{3+\nu}{2}&\frac{1+\nu}{2}&1+\nu&\frac{1+\nu}{2}\\[4pt]&\frac{1+\nu}{2}&\frac{3+\nu}{2}&1&\frac{3+\nu}{2}
 \\
\end{array}\right| 1\right),\tag{\ref{eq:Snu5F4}$'$}\label{eq:Snu5F4'}
\end{align}
where the ${_5F_4} $ series converges for  $\R\nu<0$.
In both \eqref{eq:Snu5F4} and \eqref{eq:Snu5F4'}, the well-poised
 ${_5F_4} $ series are reducible by \eqref{eq:5F4red}. This proves \eqref{eq:Snu5F4red}.   \end{proof}
\subsection{Generalizations to associated Legendre functions\label{subsec:Pmnnu}}
\begin{lemma}[Clebsch--Gordan coupling of associated Legendre functions]When $ 2\varepsilon\notin\mathbb Z_{<0}$ and $ \R(\beta+\varepsilon)>\frac{|\R\delta|}{2}-1$, we have \begin{align}\begin{split}&
\int_0^1 \big(1-x^{2}\big)^\beta\left[P_\mu^\delta(x)+P_\mu^\delta(-x)\right] \left[P_\nu^{-\varepsilon}(x)\right]^2\D x\\={}&\frac{  2^{\delta -2 \varepsilon } \pi\Gamma \big(1+\beta -\frac{\delta }{2}+\varepsilon \big) \Gamma \big(1+\beta +\frac{\delta }{2}+\varepsilon\big)  }{[\Gamma (1+\varepsilon )]^2 \Gamma \big(\frac{1-\delta-\mu}{2}\big) \Gamma \big(\frac{2-\delta+\mu }{2}\big) \Gamma \big(\beta +\varepsilon -\frac{\mu-2 }{2}\big) \Gamma \big(\beta +\varepsilon +\frac{\mu+3 }{2}\big)}\\{}&\times {_5F_4}\left(\left.
\begin{array}{@{}c@{}}\frac{1}{2}+\varepsilon ,1+\beta -\frac{\delta }{2}+\varepsilon ,1+\beta +\frac{\delta }{2}+\varepsilon ,\varepsilon -\nu ,1+\varepsilon +\nu \\[4pt]1+\varepsilon,1+2 \varepsilon ,\beta +\varepsilon -\frac{\mu-2 }{2},\beta +\varepsilon +\frac{\mu+3 }{2}\end{array}\right|1\right).\end{split}\label{eq:PmunuCGa}
\end{align}When $ \varepsilon\notin\mathbb Z$ and $ \R\beta>\frac{|\R\delta|}{2}-1$, we have \begin{align}\begin{split}&
\int_0^1 \big(1-x^{2}\big)^\beta\left[P_\mu^\delta(x)+P_\mu^\delta(-x)\right] P_\nu^\varepsilon(x)P_\nu^{-\varepsilon}(x)\D x\\={}&\frac{\sin(\varepsilon\pi)}{\varepsilon\pi}\frac{2^{\delta } \pi \Gamma \big(1+\beta -\frac{\delta }{2} \big) \Gamma \big(1+\beta +\frac{\delta }{2} \big)}{\Gamma \big(\frac{1-\delta-\mu}{2}\big) \Gamma \big(\frac{2-\delta+\mu }{2}\big) \Gamma \big(\beta  -\frac{\mu-2 }{2}\big) \Gamma \big(\beta  +\frac{\mu+3 }{2}\big)} {_5F_4}\left(\left.
\begin{array}{@{}c@{}}\frac{1}{2},1+\beta -\frac{\delta }{2} ,1+\beta +\frac{\delta }{2},-\nu ,1+\nu \\[4pt]1-\varepsilon,1+\varepsilon,\beta  -\frac{\mu-2 }{2},\beta +\frac{\mu+3 }{2}\end{array}\right|1\right).\end{split}\label{eq:PmunuCGb}
\end{align}\end{lemma}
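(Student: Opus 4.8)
The plan is to reduce both \eqref{eq:PmunuCGa} and \eqref{eq:PmunuCGb} to the termwise integration of a Clausen-type expansion against a single ``moment integral'', and then to recognise the outcome as the stated ${}_{5}F_{4}$ at $1$. Concretely, I would first substitute $x=1-2t$, so that $1-x^{2}=4t(1-t)$ and $\int_{0}^{1}f(x)\,\D x=2\int_{0}^{1/2}f(1-2t)\,\D t$. Since $P_{\mu}^{\delta}(1-2t)+P_{\mu}^{\delta}(2t-1)$, the weight $[4t(1-t)]^{\beta}$, and the factors built out of $P_{\nu}^{\pm\varepsilon}(1-2t)$ are all invariant under $t\mapsto 1-t$, this folds back to $\tfrac12\int_{0}^{1}$. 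Next, Euler's hypergeometric transformation \eqref{eq:EulerPnumu} together with Clausen's identity \eqref{eq:Clausen} yields, for $t\in\big(0,\tfrac12\big]$,
\[
[P_{\nu}^{-\varepsilon}(1-2t)]^{2}=\frac{[t(1-t)]^{\varepsilon}}{[\Gamma(1+\varepsilon)]^{2}}\sum_{k=0}^{\infty}\frac{(-\nu+\varepsilon)_{k}(\nu+1+\varepsilon)_{k}\big(\tfrac12+\varepsilon\big)_{k}}{(1+\varepsilon)_{k}(1+2\varepsilon)_{k}\,k!}\,[4t(1-t)]^{k},
\]
while the companion identity \eqref{eq:Clausen'} gives the analogous expansion for $P_{\nu}^{\varepsilon}(1-2t)P_{\nu}^{-\varepsilon}(1-2t)$, now with prefactor $\frac{\sin(\varepsilon\pi)}{\varepsilon\pi}$ and summand $\frac{(-\nu)_{k}(\nu+1)_{k}(\tfrac12)_{k}}{(1+\varepsilon)_{k}(1-\varepsilon)_{k}\,k!}[4t(1-t)]^{k}$. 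Interchanging summation and integration (legitimate on the stated parameter ranges, where everything converges absolutely), each summand of \eqref{eq:PmunuCGa} or \eqref{eq:PmunuCGb} reduces to a constant times the moment $J_{\sigma}:=\int_{0}^{1}[t(1-t)]^{\sigma}\big[P_{\mu}^{\delta}(1-2t)+P_{\mu}^{\delta}(2t-1)\big]\,\D t$ evaluated at $\sigma=\beta+\varepsilon+k$ (resp.\ $\sigma=\beta+k$).

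The crux is the closed form of $J_{\sigma}$. By symmetry $J_{\sigma}=2\int_{0}^{1}[t(1-t)]^{\sigma}P_{\mu}^{\delta}(1-2t)\,\D t$; inserting the Ferrers representation $P_{\mu}^{\delta}(1-2t)=\Gamma(1-\delta)^{-1}\big(\tfrac{1-t}{t}\big)^{\delta/2}\,{}_{2}F_{1}(-\mu,\mu+1;1-\delta;t)$ [a specialisation of \eqref{eq:EulerPnumu}] and integrating term by term against the Beta kernel $t^{\sigma-\delta/2+m}(1-t)^{\sigma+\delta/2}$ gives
\[
J_{\sigma}=\frac{2\,\Gamma\big(\sigma+1-\tfrac{\delta}{2}\big)\Gamma\big(\sigma+1+\tfrac{\delta}{2}\big)}{\Gamma(1-\delta)\,\Gamma(2\sigma+2)}\;{}_{3}F_{2}\!\left(\left.\begin{array}{@{}c@{}}-\mu,\ \mu+1,\ \sigma+1-\tfrac{\delta}{2}\\[2pt]1-\delta,\ 2\sigma+2\end{array}\right|1\right).
\]
The decisive observation is that the two upper parameters $-\mu$ and $\mu+1$ add up to $1$, and that moreover $(1-\delta)+(2\sigma+2)=1+2\big(\sigma+1-\tfrac{\delta}{2}\big)$; hence this ${}_{3}F_{2}$ is precisely of the shape covered by Whipple's classical summation theorem (see \cite[\S2.3]{Slater} or \cite[Chap.~3]{AAR}). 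It therefore collapses to a quotient of four Gamma values, and a single use of the Legendre duplication formula brings it to
\[
J_{\sigma}=\frac{2^{\delta}\pi\,\Gamma\big(1+\sigma-\tfrac{\delta}{2}\big)\Gamma\big(1+\sigma+\tfrac{\delta}{2}\big)}{2^{2\sigma}\,\Gamma\big(\sigma+\tfrac{\mu+3}{2}\big)\Gamma\big(\sigma+1-\tfrac{\mu}{2}\big)\Gamma\big(\tfrac{2-\delta+\mu}{2}\big)\Gamma\big(\tfrac{1-\delta-\mu}{2}\big)},
\]
which (up to the factor $2$ from the symmetrisation) specialises consistently with \eqref{eq:Pnu_avg} at $\delta=0$, a convenient sanity check.

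Finally I would substitute $J_{\beta+\varepsilon+k}$ (resp.\ $J_{\beta+k}$) back into the series. The powers of $2$ and $4$ coming from the Clausen expansion, from $[4t(1-t)]^{\beta}$, and from $J_{\sigma}$ combine into a single $k$-independent constant; pulling the $k=0$ Gamma factor out of each Pochhammer $\Gamma(c+k)=\Gamma(c)(c)_{k}$ then converts the remaining sum verbatim into the ${}_{5}F_{4}$ at $1$ displayed on the right of \eqref{eq:PmunuCGa} (resp.\ \eqref{eq:PmunuCGb}), with leading constant equal to the asserted prefactor. The identities, proved first on the open set where all the relevant series converge absolutely (this is exactly what forces $2\varepsilon\notin\mathbb Z_{<0}$ and $\R(\beta+\varepsilon)>\tfrac{|\R\delta|}{2}-1$, resp.\ $\varepsilon\notin\mathbb Z$ and $\R\beta>\tfrac{|\R\delta|}{2}-1$, so that the Beta integrals and the Clausen ${}_{3}F_{2}$'s are meaningful), then extend by analytic continuation in the parameters. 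The main obstacle is not conceptual but bookkeeping: one must carry the branch conventions and the Gamma factors faithfully through the Euler/Clausen step and the Beta integrations, and justify the termwise integration — which is routine since $|4t(1-t)|\le1$ on $\big(0,\tfrac12\big]$ and $P_{\mu}^{\delta}(1-2t)$ is integrable there under the stated lower bound on $\R(\beta+\varepsilon)$. The one genuinely non-mechanical input is recognising that the auxiliary ${}_{3}F_{2}$ is summable by Whipple's theorem.
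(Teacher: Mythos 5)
Your overall route is essentially the paper's: expand $[P_\nu^{-\varepsilon}]^2$ and $P_\nu^{\varepsilon}P_\nu^{-\varepsilon}$ via Euler's transformation \eqref{eq:EulerPnumu} combined with Clausen's identity \eqref{eq:Clausen} and its companion \eqref{eq:Clausen'}, then integrate termwise against the even moment of $P_\mu^\delta$. The only real difference is that the paper quotes that moment, \eqref{eq:Pnu_avg'}, from \cite[item 7.132.1]{GradshteynRyzhik}, whereas you rederive it from the Ferrers representation, Beta integrals, and Whipple's ${}_3F_2(1)$ summation; this sub-derivation is correct (the parameters $-\mu,\mu+1$ and $1-\delta,2\sigma+2$ do satisfy Whipple's constraints, and your Gamma quotient agrees with \eqref{eq:Pnu_avg'} once the factor $4^{-\sigma}$ from $[t(1-t)]^\sigma=[(1-x^2)/4]^\sigma$ is accounted for), so your argument is self-contained where the paper's is citation-based. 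The prefactor bookkeeping you outline does reproduce \eqref{eq:PmunuCGa}--\eqref{eq:PmunuCGb}.

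One step is misstated, although it is easily repaired. The factors $[P_\nu^{-\varepsilon}(1-2t)]^2$ and $P_\nu^{\varepsilon}(1-2t)P_\nu^{-\varepsilon}(1-2t)$ are \emph{not} invariant under $t\mapsto1-t$ (equivalently $x\mapsto-x$): for instance at $\varepsilon=0$, $P_\nu(x)\to1$ while $P_\nu(-x)$ diverges logarithmically as $x\to1$. Hence you cannot fold the raw integral $2\int_0^{1/2}\D t$ into $\int_0^1\D t$ at the outset, and the Clausen expansions themselves are only valid for $t\in\big(0,\tfrac12\big]$ (for $t>\tfrac12$ the same ${}_3F_2$ in $4t(1-t)$ represents the square at the reflected argument $2t-1$). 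The correct order is: keep the integral over $x\in(0,1)$, i.e.\ $t\in\big(0,\tfrac12\big)$, where \eqref{eq:Clausen} and \eqref{eq:Clausen'} apply; after termwise expansion each integrand depends only on $t(1-t)$ and the symmetric combination $P_\mu^\delta(x)+P_\mu^\delta(-x)$, and only then does $2\int_0^{1/2}$ equal the full-interval moment $J_{\beta+\varepsilon+k}$ (resp.\ $J_{\beta+k}$). With that reordering your computation goes through verbatim and both identities follow on the stated parameter ranges, with analytic continuation as you indicate.
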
\begin{proof}Instead of \eqref{eq:Pnu_avg}, we need   a  more general formula (see \cite[item 7.132.1]{GradshteynRyzhik}, which corrects \cite[\S18.1(16)]{ET2}) \begin{align}\begin{split}&
\int_0^1 \big(1-x^{2}\big)^\sigma\left[P_\mu^\delta(x)+P_\mu^\delta(-x)\right]\D x\\={}&\frac{2^{\delta}\pi\Gamma\big(1+\sigma+\frac{\delta}{2}\big)\Gamma\big(1+\sigma-\frac{\delta}{2}\big)}{\Gamma\big(\frac{\mu +3}{2}+\sigma\big)\Gamma\big(\sigma -\frac{\mu-2}{2}\big)\Gamma\big(\frac{2+\mu-\delta}{2}\big)\Gamma\big(\frac{1-\mu-\delta }{2}\big)},\quad \R \sigma>\frac{|\R\delta|}{2}-1.\end{split}\label{eq:Pnu_avg'}\tag{\ref{eq:Pnu_avg}$'$}
\end{align} Joining this integral identity with \eqref{eq:Clausen} and \eqref{eq:Clausen'}, while switching the order of summation and integration, we obtain  \eqref{eq:PmunuCGa} and  \eqref{eq:PmunuCGb}. \end{proof}
\begin{remark}In \eqref{eq:PmunuCGa}, upon trigonometric substitution to the scenario where $ \beta=\nu=-\frac12$ and  $\delta=\varepsilon=\mu=0$, we have \begin{align}
\int_0^{\pi/2}\left[\mathbf K\left( \sin\frac{\theta}{2} \right)\right]^2\D      \theta={}&\frac{\pi^{3}}{8}{_4F_3}\left( \left.\begin{array}{@{}c@{}}
\frac12, \frac12,\frac12, \frac12\\[4pt]
1,1,1
\end{array}\right|1 \right).
\intertext{One may compare this to}
\int_0^{\pi/2}\left[\mathbf K\left( \sin\theta \right)\right]^2\D      \theta={}&\frac{\pi^{3}}{4}{_4F_3}\left( \left.\begin{array}{@{}c@{}}
\frac12, \frac12,\frac12, \frac12\\[4pt]
1,1,1
\end{array}\right|1 \right),
\end{align} a formula proved by Wan \cite[(35)]{Wan2012}. This specific $_4F_3 $ series is related to a critical $L$-value (see \cite[32]{RogersWanZucker2015} or \cite[\S7]{Zagier2018ATDE}) of a weight-4 cusp form attached to the rigid Calabi--Yau threefold  $ \sum_{j=1}^4\big(x_j+\frac{1}{x_j}\big)=0$ \cite[\S2]{Zudilin2018hypCY}.    \eor\end{remark}

Now, we are ready to complete the proof of Theorem \ref{thm:TU} by putting a finishing touch to \eqref{eq:SigmaCG4F3''}.
\begin{proof}[Proof of \eqref{eq:SigmaCG4F3''}] With \begin{align}
a_{\mu}(\varepsilon)\colonequals{}&\frac{2^{2 \varepsilon +1} [\sin (\mu\pi   )-\sin (2\varepsilon \pi   )]}{\pi ^{3/2} (\mu-2 \varepsilon  ) (\mu +2 \varepsilon +1) }\frac{\Gamma \big(\frac{1}{2}+\varepsilon\big) }{\Gamma (1+\varepsilon)},\\b_{\mu}(\varepsilon)\colonequals{}&\frac{2^{2\varepsilon}\pi  }{\Gamma \big(\frac{2-\mu }{2}\big) \Gamma \big(\frac{\mu +3}{2}\big) \Gamma \big(\frac{1-\mu-2\varepsilon}{2} \big) \Gamma \big(\frac{\mu+2-2\varepsilon}{2} \big)},
\end{align} we consider the following particular cases of   \eqref{eq:PmunuCGa}  and  \eqref{eq:PmunuCGb}:\begin{align}
\int_0^1 \left[P_\mu^{2\varepsilon}(x)+P_\mu^{2\varepsilon}(-x)\right] \left[P_\nu^{-\varepsilon}(x)\right]^2\D x={}&a_{\mu}(\varepsilon){_4F_3}\left(\left.\begin{array}{@{}c@{}}
1,\frac{1}{2}+\varepsilon,-\nu+\varepsilon,\nu+1+\varepsilon \\[4pt]
1+\varepsilon,\frac{2-\mu }{2}+\varepsilon,\frac{\mu+3 }{2}+\varepsilon\ \\
\end{array}\right| 1\right),\tag{\ref{eq:PmunuCGa}$'$}\\\int_0^1 \left[P_\mu^{2\varepsilon}(x)+P_\mu^{2\varepsilon}(-x)\right] P_\nu^\varepsilon(x)P_\nu^{-\varepsilon}(x)\D x={}&b_{\mu}(\varepsilon){_3F_2}\left(\left.\begin{array}{@{}c@{}}
\frac{1}{2},-\nu ,\nu+1\ \\[4pt]
\frac{2-\mu }{2},\frac{\mu+3 }{2} \\
\end{array}\right| 1\right),
\tag{\ref{eq:PmunuCGb}$'$}\label{eq:PmunuCGb'}
\end{align} whose right-hand sides involve two generalized hypergeometric series that showed up previously in \eqref{eq:SmunuCGc}.

Simplifying  \begin{align} \lim_{\varepsilon\to0}\frac{\partial^{2}}{\partial\varepsilon^{2}}
\int_0^1 \left[P_\mu^{2\varepsilon}(x)+P_\mu^{2\varepsilon}(-x)\right]\left\{ \frac{\left[P_\nu^{-\varepsilon}(x)\right]^2}{a_{\mu}(\varepsilon)}\frac{c_{\mu,\nu}(\varepsilon)}{c_{\mu,\nu}(0)}-\frac{P_\nu^\varepsilon(x)P_\nu^{-\varepsilon}(x)}{b_{\mu}(\varepsilon)}\right\}\D x
\end{align}with the assistance from the first-order derivative of any
 associated Legendre function with respect to its order  (cf.\ \cite[(3)]{Brychkov2010Pnumu}) \begin{align}
\left.\frac{\partial}{\partial\varepsilon}\right|_{\varepsilon=0}P_\nu^\varepsilon(x)=\frac{\psi ^{(0)}(\nu +1)+\psi ^{(0)}(-\nu )}{2}  P_{\nu }(x )-\frac{\pi  P_{\nu }(-x )}{2 \sin (\nu\pi   )},
\end{align} and a pair of digamma identities{\begin{align}
\psi ^{(0)}\left(\frac{\mu+3 }{2}\right)={}&\frac{2}{\mu +1}+\frac{\pi}{\tan \frac{\mu\pi  }{2}}  -\psi ^{(0)}\left(\frac{2-\mu }{2}\right)+2 \psi ^{(0)}(\mu )-2\log 2,\\\psi ^{(0)}\left(\frac{1-\mu }{2}\right)={}&\frac{2}{\mu}-\pi  \tan \frac{\mu\pi  }{2}-\psi ^{(0)}\left(\frac{\mu+2 }{2}\right)+2 \psi ^{(0)}(\mu )-2\log2,
\end{align}}we get \eqref{eq:SigmaCG4F3''}.    \end{proof}

\subsection*{Acknowledgments}A considerable amount of the materials developed in this work were based on my unpublished notes prepared at Princeton University during 2011--2013, for Prof.\ Weinan E's working seminar on mathematical physics.

This paper is a write-up of my talk ``Sun's series at vanishing and positive genera'' presented during the 2nd Workshop on Supercongruences,  $\pi$ Series, and Related Fields,  at Nanjing University on Nov.\ 17--19, 2023. I thank the organizers of this workshop for their hospitality and all the participants for their thoughtful feedback during and after this event.

I am indebted to two anonymous referees, whose  thoughtful feedback helped me improve the presentation of this note.

\end{document}